\newcommand{\indicator}[1]{\ensuremath{\mathbf{1}_{\{#1\}}}}
\newcommand{\oindicator}[1]{\ensuremath{\mathbf{1}_{{#1}}}}
\DeclareMathOperator{\var}{Var}
\DeclareMathOperator{\tr}{tr}
\DeclareMathOperator{\rank}{rank}
\newcommand{\Prob}{\mathbb{P}}
\newcommand{\E}{\mathbb{E}}
\newcommand{\C}{\mathbb{C}}
\renewcommand{\P}{\mathbb{P}}
\newcommand{\N}{\mathbb{N}}
\renewcommand\Re{\operatorname{Re}}
\renewcommand\Im{\operatorname{Im}}
\newcommand{\eps}{\varepsilon}
\def\R{\mathbb{R}}
\renewcommand{\d}{\, d }
\theoremstyle{plain}
\newtheorem{theorem}{Theorem}[section]
\newtheorem{lemma}[theorem]{Lemma}
\newtheorem{corollary}[theorem]{Corollary}
\newtheorem{proposition}[theorem]{Proposition}
\theoremstyle{definition}
\newtheorem{definition}[theorem]{Definition}
\theoremstyle{remark}
\newtheorem{remark}[theorem]{Remark}
\let\@wraptoccontribs\wraptoccontribs \makeatother
\begin{document}
	
	\title[Extreme eigenvalues of Laplacian random matrices]{Extreme eigenvalues of Laplacian random matrices with Gaussian entries}

	\author[A. Campbell]{Andrew Campbell}
	\address{Institute of Science and Technology Austria, Am Campus 1, 3400 Klosterneuburg, Austria}
	\email{andrew.campbell@ist.ac.at}
	\thanks{A. Campbell was partially supported by the European Research Council Grant No. 101020331. }
	
	\author[K. Luh]{Kyle Luh}
	\address{Department of Mathematics\\ University of Colorado\\ Campus Box 395\\ Boulder, CO 80309-0395\\USA}
	\email{kyle.luh@colorado.edu}
	\thanks{K. Luh was supported in part by the Ralph E. Powe Junior Faculty Enhancement Award.}
	
	\author[S. O'Rourke]{Sean O'Rourke}
	\address{Department of Mathematics\\ University of Colorado\\ Campus Box 395\\ Boulder, CO 80309-0395\\USA}
	\email{sean.d.orourke@colorado.edu}
	\thanks{S. O'Rourke has been supported in part by NSF CAREER grant DMS-2143142. }
	
	\contrib[with an appendix by]{Santiago Arenas-Velilla and Victor Perez-Abreu}
	
	
	
	\begin{abstract}
		A Laplacian matrix is a real symmetric matrix whose row and column sums are zero.  
		We investigate the limiting distribution of the largest eigenvalues of a Laplacian random matrix with Gaussian entries.  Unlike many classical matrix ensembles, this random matrix model contains dependent entries.  Our main results show that the extreme eigenvalues of this model exhibit Poisson statistics.  In particular, after properly shifting and scaling, we show that the largest eigenvalue converges to the Gumbel distribution as the dimension of the matrix tends to infinity.  
		While the largest diagonal entry is also shown to have Gumbel fluctuations, there is a rather surprising difference between its deterministic centering term and the centering term required for the largest eigenvalues.  
	\end{abstract}

	\maketitle 
	
	\section{Introduction}
	
	The famous Tracy--Widom distribution appears as the limiting distribution for the largest eigenvalue of many classical random matrix ensembles \cite{MR3800840,MR4019881,MR4091114,MR2485010,MR2545551,MR2398763,MR3582818,MR2988403,MR4260217,MR2308592,MR4474568,MR1727234,MR3161313,MR1257246,MR1385083}.  However, for many other matrix ensembles---such as those containing certain structural properties or significantly less independence---other distributions can appear as the limiting law for the largest eigenvalue \cite{MR2797947,MR2797949,MR2288065}.  
	
	The present paper focuses on the limiting distributions for the largest eigenvalues of random Laplacian matrices with Gaussian entries.  
	
	\begin{definition} \label{def:Laplacian}
		For an $n \times n$ real symmetric matrix $A$, we define the \emph{Laplacian matrix} $\mathcal{L}_A$ of $A$ as 
		\[ \mathcal{L}_A := D_A - A, \]
		where $D_A = (D_{ij})$ is the diagonal matrix containing the row sums of $A$:
		\[ D_{ii} = \sum_{j=1}^n A_{ij}. \]
		We refer to $\mathcal{L}_A$ as a \emph{Laplacian matrix}.  
	\end{definition}
	
	Every real symmetric matrix that maps the all-ones vector to zero can be represented as the Laplacian matrix $\mathcal{L}_A$ of some real symmetric matrix $A$.  In the literature, Laplacian matrices are also known as Markov matrices.  
	
	When $A$ is a random real symmetric matrix, we say $\mathcal{L}_A$ is a random Laplacian matrix.  
	Random Laplacian matrices play an important role in many applications involving complex graphs \cite{MR2248695}, analysis of algorithms for the $\mathbb{Z}_2$ synchronization problem \cite{MR3777782},  community detection in the stochastic block model \cite{MR3349181}, and other optimization problems in semidefinite programming \cite{MR3777782}.  In the theoretical physics literature, random Laplacian matrices have also been used to study random impedance networks \cite{MR1747178, PhysRevE.67.047101}.  
	
	This paper focuses on the fluctuations of the largest eigenvalues of $\mathcal{L}_A$ when $A$ is drawn from the Gaussian Orthogonal Ensemble.  
	Recall that an $n \times n$ real symmetric matrix $A$ is drawn from the Gaussian Orthogonal Ensemble (GOE) if the upper-triangular entries $A_{ij}$, $1 \leq i \leq j \leq n$ are independent Guassian random variables, where $A_{ij}$ has mean zero and variance $\frac{1 + \delta_{ij}}{n}$ and $\delta_{ij}$ is the Kronecker delta.  
	
	When $A$ is drawn from the GOE, the limiting empirical spectral distribution for $\mathcal{L}_A$ is given by the free convolution of the semicircle law and the Gaussian distribution \cite{MR2759729,MR2206341,MR4440252} (see also the earlier derivations given in \cite{MR1747178, PhysRevE.67.047101}).   More generally, this is also the limiting spectral distribution of  $\mathcal{L}_A$ when $A$ is a properly normalized Wigner matrix \cite{MR2759729,MR2206341,MR4440252}; extensions of this result are also known for generalized Wigner matrices \cite{MR4440252}, inhomogeneous and dilute Erd\H{os}--R\'{e}nyi random graphs \cite{MR4193182,MR2967963}, and block Laplacian matrices \cite{MR3321497}.  Moreover, the asymptotic location of the largest eigenvalue of $\mathcal{L}_A$, for a large class of Wigner matrices $A$, has been established by Ding and Jiang \cite{MR2759729}.  When $A$ is an $n \times n$ matrix drawn from the GOE, these results show that the largest eigenvalue of $\mathcal{L}_A$ is asymptotically close to $\sqrt{2 \log n}$ as the dimension $n$ tends to infinity.  The generalized Wigner case was studied in \cite{MR4440252}, while the smallest eigenvalues were investigated in \cite{MR2948689}.  Spectral norm bounds are also known \cite{MR2206341,MR4440252}. 
	
	The goal of this paper is to study the largest eigenvalues of $\mathcal{L}_A$ when $A$ is drawn from the GOE.   This particular random matrix appears in the $\mathbb{Z}_2$ synchronization problem of recovering binary labels with Gaussian noise \cite{MR3777782}.  While we focus on the edge of the spectrum, the eigenvalues in the bulk (and corresponding eigenvectors) were studied by Huang and Landon \cite{MR4058984} for the case when $A$ is a Wigner matrix or the adjacency matrix of a sparse Erd\H{os}--R\'{e}nyi random graph.  Similar to \cite{MR4058984}, we also use resolvent techniques to prove our main results.  
	
	For any $n \times n$ real symmetric matrix $M$, we let $\lambda_n(M) \leq \cdots \leq \lambda_1(M)$ be the ordered eigenvalues of $M$.  
	Recall that the standard Gumbel distribution has cumulative distribution function 
	\begin{equation} \label{def:GumbelCDF}
		F(x) := \exp \left(-e^{-x} \right), \qquad x \in \mathbb{R}. 
	\end{equation} 
	Define
	\begin{equation} \label{def:anbn}
		a_n := \sqrt{2 \log n} \qquad \text{and} \qquad b_n := \sqrt{2 \log n} - \frac{ \log \log n + \log(4 \pi) - 2 }{2 \sqrt{2 \log n}}. 
	\end{equation}
	
	While our main results describe the joint behavior of several eigenvalues, for simplicity, we start with the largest eigenvalue of $\mathcal{L}_A$, which we show has Gumbel fluctuations.  
	
	\begin{theorem}[Largest eigenvalue] \label{thm:largest}
		Let $A$ be an $n \times n$ matrix drawn from the GOE.  Then the centered and rescaled largest eigenvalue of $\mathcal{L}_A$ 
		\[ a_n \left( \lambda_1(\mathcal{L}_A) - b_n \right) \]
		converges in distribution as $n \to \infty$ to the standard Gumbel distribution, where $a_n$ and $b_n$ are defined in \eqref{def:anbn}.  
	\end{theorem}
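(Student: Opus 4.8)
The plan is to reduce the statement, via a Schur complement applied at the coordinate carrying the largest diagonal entry, to the classical extreme value theory for Gaussians, and to trace the discrepancy between $b_n$ and the Gumbel centering of $n$ independent standard Gaussians to a single deterministic second-order perturbation term of size $1/a_n$.

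\emph{Step 1 (structure and reduction).} Write $\mathcal{L}_A = \Delta - B$, where $\Delta = \diag(d_1,\dots,d_n)$ with $d_i := (\mathcal{L}_A)_{ii} = \sum_{j\ne i}A_{ij}$ and $B$ is $A$ with its diagonal set to zero, so that $\|B\| = 2 + o(1)$ with high probability. The vector $(d_1,\dots,d_n)$ is centered Gaussian with $\var(d_i) = \frac{n-1}{n}$ and $\cov(d_i,d_j) = \frac1n$ for $i\ne j$; hence $d_i \stackrel{\mathrm d}{=} \sqrt{\tfrac{n-2}{n}}\,e_i + \sqrt{\tfrac1n}\,e_0$ for i.i.d.\ standard normals $e_0,\dots,e_n$. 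Since the term $\sqrt{1/n}\,e_0$ and the deviation $(1-\sqrt{(n-2)/n})\max_i e_i$ each perturb $\max_i d_i$ by $o_{\mathbb P}(1/a_n)$, and $a_n(\max_i e_i - \beta_n)$ converges to the standard Gumbel law with $\beta_n := a_n - \tfrac{\log\log n + \log(4\pi)}{2a_n}$ by the Fisher--Tippett--Gnedenko theorem, we obtain $a_n(\max_i d_i - \beta_n) \Rightarrow$ Gumbel. Because $b_n = \beta_n + 1/a_n$, it therefore suffices to show that, with probability tending to $1$,
\begin{equation}\label{eq:plan-shift}
\lambda_1(\mathcal{L}_A) = \max_i d_i + \frac1{a_n} + o\!\left(\frac1{a_n}\right).
\end{equation}

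\emph{Step 2 (Schur complement at the leading coordinate).} Let $i^\ast$ be an index attaining $\max_i d_i$, let $\mathcal{L}^{(i^\ast)}$ denote $\mathcal{L}_A$ with row and column $i^\ast$ deleted, with eigenvalues $\mu_1 \ge \cdots \ge \mu_{n-1}$ and orthonormal eigenvectors $u_1,\dots,u_{n-1}$, and let $b\in\R^{n-1}$ be the $i^\ast$-th column of $B$ restricted to the other coordinates (so $b_j = A_{i^\ast j}\sim N(0,1/n)$). By Cauchy interlacing $\lambda_1(\mathcal{L}_A) > \mu_1$ almost surely, and the Schur complement formula gives, for $\lambda \notin \mathrm{spec}(\mathcal{L}^{(i^\ast)})$,
\[
\det(\mathcal{L}_A - \lambda I) = \det\!\big(\mathcal{L}^{(i^\ast)} - \lambda I\big)\, g(\lambda), \qquad g(\lambda) := d_{i^\ast} - \lambda - Q(\lambda), \qquad Q(\lambda) := b^{\mathsf{T}}(\mathcal{L}^{(i^\ast)}-\lambda)^{-1}b ;
\]
equivalently $\big[(\mathcal{L}_A - \lambda)^{-1}\big]_{i^\ast i^\ast} = \big((d_{i^\ast}-\lambda) - Q(\lambda)\big)^{-1}$, so $\lambda_1(\mathcal{L}_A)$ is a pole of the diagonal resolvent entry $G_{i^\ast i^\ast}$. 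Since $g$ is continuous and strictly decreasing on $(\mu_1,\infty)$ (because $Q'(\lambda) = b^{\mathsf{T}}(\mathcal{L}^{(i^\ast)}-\lambda)^{-2}b \ge 0$), with $g(\mu_1^+) = +\infty$ and $g(+\infty) = -\infty$, it has a unique zero there, which is $\lambda_1(\mathcal{L}_A)$; moreover $Q < 0$ on $(\mu_1,\infty)$, so already $\lambda_1(\mathcal{L}_A) > \max_i d_i$. In view of \eqref{eq:plan-shift} it remains to prove that $Q(\lambda_1(\mathcal{L}_A)) = -1/a_n + o(1/a_n)$.

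\emph{Step 3 (evaluating $Q$).} Crude bounds ($\lambda_1(\mathcal{L}_A) \ge \max_i d_i$ from the diagonal and $\le \max_i d_i + \|B\|$), or the Ding--Jiang estimate, confine $\lambda_1(\mathcal{L}_A)$ to a window $W_n := [\sqrt{2\log n}-C, \sqrt{2\log n}+C]$. Decompose $\mathcal{L}^{(i^\ast)} = \widetilde{\mathcal{L}}^{(i^\ast)} + \diag(b)$, where $\widetilde{\mathcal{L}}^{(i^\ast)}$ is the Laplacian matrix of the GOE minor $(A_{jk})_{j,k\ne i^\ast}$ and is independent of $b$, while $\|\diag(b)\| = O(\sqrt{\log n / n})$. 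Granting the a priori input of Step 4 — that, with high probability, $\lambda_1(\mathcal{L}_A)$ (and every point of a sufficiently fine net in $W_n$) lies at distance at least $n^{-1/2}$ from $\mathrm{spec}(\mathcal{L}^{(i^\ast)})$ and from $\mathrm{spec}(\widetilde{\mathcal{L}}^{(i^\ast)})$ — the remaining steps are routine: a resolvent expansion removes the $\diag(b)$ term at cost $o(1/a_n)$; conditionally on $\widetilde{\mathcal{L}}^{(i^\ast)}$ the quadratic form $b^{\mathsf{T}}(\widetilde{\mathcal{L}}^{(i^\ast)}-\lambda)^{-1}b$ is Gaussian, and the Hanson--Wright inequality gives $b^{\mathsf{T}}(\widetilde{\mathcal{L}}^{(i^\ast)}-\lambda)^{-1}b = \frac1n\tr(\widetilde{\mathcal{L}}^{(i^\ast)}-\lambda)^{-1} + o(1/a_n)$ with overwhelming probability (the Hilbert--Schmidt norm of the resolvent being $n^{1/2+o(1)}$ thanks to the separation, with the $n^{o(1)}$ near-edge eigenvalues contributing at most $n^{1+o(1)}$ and the rest $O(n)$ to its square); and finally $\frac1n\tr(\widetilde{\mathcal{L}}^{(i^\ast)}-\lambda)^{-1}$ equals $m_\rho(\lambda) + o(1/a_n)$, where $\rho$ is the free convolution of the semicircle law on $[-2,2]$ with the standard Gaussian and $m_\rho$ its Stieltjes transform, upon invoking the rate of convergence of the empirical spectral distribution of a Laplacian random matrix to $\rho$ supplied by the Appendix (the finitely many near-edge eigenvalues being separated off as before, and the bulk of $\rho$ sitting at distance $\Omega(\sqrt{\log n})$ from $W_n$). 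As $\rho$ is symmetric with variance $1 + 1 = 2$, one has $m_\rho(\lambda) = -\lambda^{-1} - 2\lambda^{-3} + O(\lambda^{-5}) = -1/a_n + o(1/a_n)$ for $\lambda\in W_n$. Chaining these estimates and evaluating at $\lambda = \lambda_1(\mathcal{L}_A)$ yields $Q(\lambda_1(\mathcal{L}_A)) = -1/a_n + o(1/a_n)$ and hence \eqref{eq:plan-shift}.

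\emph{Step 4 (the main obstacle).} The heart of the matter is the a priori control at the very top of the spectrum, where eigenvalues are spaced only $\Theta(1/a_n)$ apart yet resolvents can be polynomially large: one must show that $\lambda_1(\mathcal{L}_A)$ is not super-polynomially close to $\mathrm{spec}(\mathcal{L}^{(i^\ast)})$ — since $\lambda_1(\mathcal{L}_A) > \mu_1$, that $\lambda_1(\mathcal{L}_A) - \mu_1 \ge n^{-1/2}$, say — and similarly for $\widetilde{\mathcal{L}}^{(i^\ast)}$ and along the net. The variational bound $\lambda_1(\mathcal{L}_A) - \mu_1 \gtrsim (b^{\mathsf{T}}u_1)^2/a_n \gtrsim (n\,a_n)^{-1}$ is, by itself, too weak, so the plan is to establish simultaneously for $\mathcal{L}_A$ and all of its one-row-deleted minors a preliminary localization $\lambda_1 = d_{(1)} + o(1)$, $\mu_1 = d_{(2)} + o(1)$ of the top eigenvalues near the diagonal-entry order statistics $d_{(1)} > d_{(2)} > \cdots$ (a Bauer--Fike/BBP-type resolvent analysis, leveraging the Appendix's rate rather than a naive recursion on $n$), and to combine it with an anti-concentration estimate showing $d_{(1)} - d_{(2)} \ge n^{-1/2}$ with high probability (the top order statistics of $n$ weakly correlated Gaussians are polynomially separated). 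Carrying out this edge analysis quantitatively, and checking that the Appendix's rate is strong enough — namely $o(1/\sqrt{\log n})$ in the relevant smoothed sense, so that the identity $m_\rho(\lambda) \approx -1/a_n$ survives at the Gumbel scale — is where the real work lies. The same circle of estimates, applied to eigenvalue counting functions over the windows $[b_n + x a_n^{-1},\infty)$, also upgrades the result to the Poisson-statistics / joint statement for the extreme eigenvalues.
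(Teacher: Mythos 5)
Your reduction in Steps 1--2 is correct and genuinely different from the paper's route: the paper first conjugates away the all-ones direction (Lemma \ref{klem:firstreduction}) to reach the independent-diagonal model $L = D - A$, and then runs a local-law analysis at complex spectral parameter $\eta = n^{-1/4}$ together with a Bai--Silverstein eigenvalue-counting argument; you instead keep the dependent Laplacian, isolate the leading diagonal coordinate by a Schur complement, and try to solve $d_{i^\ast} - \lambda - Q(\lambda) = 0$ on the real axis. The algebra of Step 2 is right (strict interlacing, monotonicity of $g$, $Q<0$, and $b_n - b_n' = 1/a_n$ all check out), and the target identity $Q(\lambda_1) = -1/a_n + o(1/a_n)$ is exactly the mechanism behind the paper's shift $E_{(1)} = D_{(1)} - \Re m(E_{(1)}+i\eta) = D_{(1)} + 1/a_n + O(1/\log n)$. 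However, the proposal has two genuine gaps, and together they constitute most of the actual work.

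First, Step 3 invokes ``the rate of convergence of the empirical spectral distribution of a Laplacian random matrix to $\rho$ supplied by the Appendix.'' No such input exists: the Appendix only treats the maximum \emph{diagonal entry} of $\mathcal{L}_A$. The estimate you need, $\frac1n\tr(\widetilde{\mathcal{L}}^{(i^\ast)}-\lambda)^{-1} = -1/\lambda + o(1/\sqrt{\log n})$ uniformly for real $\lambda$ near $\sqrt{2\log n}$, is essentially the content of the paper's Sections \ref{sec:stability}--\ref{sec:proofofmaintechnical} (Theorems \ref{thm:stieltjes_transforms}, \ref{thm:concentration}, \ref{thm:s:expected_stieltjes_transforms}); if you want to avoid that machinery you must supply an independent argument (e.g.\ a truncated moment expansion of $\sum_j(\mu_j-\lambda)^{-1}$ over the bulk, using sub-Gaussian moment growth of $\frac1n\tr M^p$ and the edge eigenvalue count), which the proposal does not do. Second, and more seriously, Step 4 does not close. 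Working at real $\lambda$ forces you to control $\dist(\lambda_1(\mathcal{L}_A), \mathrm{spec}(\widetilde{\mathcal{L}}^{(i^\ast)}))$, and your proposed route --- a preliminary localization $\lambda_1 = d_{(1)} + o(1)$, $\mu_1 = d_{(2)} + o(1)$ combined with $d_{(1)}-d_{(2)} \ge n^{-1/2}$ --- is quantitatively inconsistent: the gap $d_{(1)}-d_{(2)}$ is typically only $\Theta(1/a_n)$, so an $o(1)$ localization error swamps it and yields no lower bound at all on $\lambda_1 - \mu_1$; to conclude separation at scale $n^{-1/2}$ you would need localization to precision $o(n^{-1/2})$, far finer than anything your inputs (or the paper's, which locates eigenvalues only to $o(n^{-1/4})$) provide. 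The net argument over $W_n$ has the same circularity: a net of spacing $n^{-1/2}$ necessarily contains bad points near $\mathrm{spec}(\widetilde{\mathcal{L}}^{(i^\ast)})$, and deciding that $\lambda_1$ avoids them requires the very resolvent control you are trying to establish. This is precisely the obstruction the paper sidesteps by keeping $\Im z = n^{-1/4}$ throughout and extracting eigenvalue locations from the difference $I_1 - I_2$ of imaginary parts at two heights. (A smaller, fixable issue: conditioning on $i^\ast$ being the argmax of $d$ biases the law of $b$, since $d_{i^\ast} = \sum_j b_j$; the Hanson--Wright step should be run for each fixed index and combined by a union bound.)
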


	\begin{remark} \label{rem:symmetry}
		By symmetry, an analogous version of Theorem \ref{thm:largest} holds for the smallest eigenvalue of $\mathcal{L}_A$ as well.  
	\end{remark}
	
	The value of the centering term $b_n$ is surprising as it does not coincide with the location of the largest diagonal entry of $\mathcal{L}_A$.  It was widely suspected that the behavior of the largest eigenvalue of $\mathcal{L}_A$ was dictated by the largest diagonal entry.\footnote{In fact, this was the result of \cite{arenas2021extremal}, which unfortunately contains an error in its proof \cite{SV}.}  The largest diagonal entry of $\mathcal{L}_A$ does indeed have a Gumbel distribution but with a different deterministic shift.  Define
	\begin{equation} \label{eq:def:bn'}
		b_n' := \sqrt{2 \log n} - \frac{\log \log n + \log(4 \pi)}{2 \sqrt{2 \log n}}.
	\end{equation}
	It is shown in Appendix \ref{sec:appendix} that, when $A$ is drawn from the GOE, the centered and rescaled largest diagonal entry of $\mathcal{L}_A$ 
	\[ a_n \left(\max_{1 \leq i \leq n} (\mathcal{L}_A)_{ii} - b_n' \right) \]
	converges in distribution as $n \to \infty$ to the standard Gumbel distribution, where $a_n$ is specified in \eqref{def:anbn}.  In other words, while the scaling factors are the same, the centering terms for the largest eigenvalue and the largest diagonal entry are different.  The terms $a_n$ and $b_n'$ are the correct scaling and centering terms, respectively, when considering the limiting fluctuations for the maximum of $n$ independent and identically distributed (iid) standard Gaussian random variables (see, for example, \cite[Theorem 1.5.3]{MR691492}), as described below.  We refer the reader to \cite{MR900810,MR691492} for more details concerning the extreme values of sequences of iid random variables. 
	
	It is useful to compare our results with those of classical extreme value theory.  To that end, for each $n \geq 1$, let $\xi_1, \ldots, \xi_n$ be a sample of $n$ iid standard Gaussian random variables, and let $\xi^{(n)}_1 > \xi^{(n)} > \cdots > \xi^{(n)}_n$ be their order statistics.  Classical results (see, for instance, \cite[Theorem 1.5.3]{MR691492}) imply that the scaled and centered largest order statistic 
	\[ a_n ( \xi^{(n)}_1 - b_n') \]
	converges in distribution to the standard Gumbel distribution as $n \to \infty$, where $a_n$ is defined in \eqref{def:anbn} and $b_n'$ is defined in \eqref{eq:def:bn'}.  
	More generally, for any fixed integer $k \geq 1$, there exists (see, for example, \cite{MR691492} Theorem 2.3.1 and the remarks thereafter) a non-trivial joint cumulative distribution $F_k: \mathbb{R}^k \to [0, 1]$ so that  
	\begin{equation} \label{eq:normal_limit}
		\lim_{n \to \infty} \Prob \left( a_n \left( \xi^{(n)}_1 - b'_n \right) \leq x_1, \ldots, a_n \left( \xi^{(n)}_k - b'_n \right) \leq x_k \right) = F_k(x_1, \ldots, x_k) 
	\end{equation} 
	for any fixed $x_1, \ldots, x_k \in \mathbb{R}$. (In particular, $F_1$ is the cumulative distribution function of the standard Gumbel distribution.)   
	By using the correct deterministic centering term, we show the same behavior for the $k$ largest eigenvalues of $\mathcal{L}_A$. 
	\begin{theorem}[$k$ largest eigenvalues] \label{thm:main}
	Let $A$ be an $n \times n$ matrix drawn from the GOE, and fix an integer $k \geq 1$.  Then, for any fixed $x_1, \ldots, x_k \in \mathbb{R}$, 
	\[ \lim_{n \to \infty} \Prob\left( a_n \left( \lambda_1(\mathcal{L}_A) - b_n \right) \leq x_1, \ldots, a_n \left( \lambda_k(\mathcal{L}_A) - b_n \right) \leq x_k \right) = F_k(x_1, \ldots, x_k), \]
	where $a_n$ and $b_n$ are defined in \eqref{def:anbn} and $F_k$ is defined in \eqref{eq:normal_limit}.  
	\end{theorem}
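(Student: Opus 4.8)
The plan is to reduce the spectral problem to the classical extreme value statement \eqref{eq:normal_limit} for the diagonal entries of $\mathcal{L}_A$, after correcting for the discrepancy between $b_n$ and $b_n'$. Write $\mathcal{L}_A = D_A - A$. The diagonal matrix $D_A$ has entries $d_i := \sum_{j=1}^n A_{ij}$; since the $A_{ij}$ are independent Gaussians with $A_{ii}\sim N(0,2/n)$ and $A_{ij}\sim N(0,1/n)$ for $i\neq j$, each $d_i$ is Gaussian with variance $\tfrac{2}{n} + \tfrac{n-1}{n} \to 1$, and the $d_i$ are only weakly correlated (covariance $O(1/n)$ for $i\neq j$). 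A short argument — comparison with a genuinely iid Gaussian sample via a Gaussian interpolation or a direct second-moment computation on the point process $\sum_i \delta_{a_n(d_i - b_n')}$ — shows that the order statistics of $\{d_i\}$ satisfy the same joint limit law \eqref{eq:normal_limit} as $\{\xi_i^{(n)}\}$. This is essentially the content of the appendix, and I would either cite it or reprove the point-process version, which is what is actually needed below.

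Next I would control the perturbation $-A$. The key quantitative input is that $A$, being a normalized GOE matrix, has operator norm $\|A\|_{\mathrm{op}} = 2 + o(1)$ with overwhelming probability, while the relevant eigenvalues of $\mathcal{L}_A$ live near $\sqrt{2\log n} \to \infty$. So $\|A\|_{\mathrm{op}}$ is much smaller than the scale $\sqrt{2\log n}$ on which the top eigenvalues sit, but it is \emph{not} small compared to the spacing $1/a_n = 1/\sqrt{2\log n}$ between consecutive top eigenvalues — this is precisely why a crude eigenvalue-interlacing bound ($|\lambda_i(\mathcal{L}_A) - \lambda_i(D_A)| \le \|A\|_{\mathrm{op}}$) is far too lossy and why the naive guess $b_n = b_n'$ fails. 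Instead I would use the resolvent / Schur complement approach hinted at in the introduction (following Huang–Landon \cite{MR4058984}). Concretely, $\lambda$ is an eigenvalue of $\mathcal{L}_A$ iff $\det(D_A - A - \lambda I) = 0$; writing this as $\det\big((D_A - \lambda I)(I - (D_A-\lambda I)^{-1}A)\big)=0$, the top eigenvalues near a given $d_i$ are governed by an equation of the form $d_i - \lambda = \big(A(I - (D_A - \lambda I)^{-1}A)^{-1}\big)_{ii} + (\text{off-diagonal corrections})$. For $\lambda \approx \sqrt{2\log n}$ and $i$ an index achieving a near-maximal $d_i$, the resolvent $(D_A - \lambda I)^{-1}$ restricted to the remaining coordinates has entries of size $O(1/\sqrt{2\log n})$, so the right-hand side is dominated by $A_{ii} + \sum_{j\neq i} \frac{A_{ij}^2}{d_j - \lambda} + O(\text{lower order})$. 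The first term $A_{ii} = O(1/\sqrt n)$ is negligible, but the sum $\sum_{j\neq i} \frac{A_{ij}^2}{d_j - \lambda}$ concentrates around $\frac{1}{n}\sum_{j\neq i}\frac{1}{d_j - \lambda} \approx \mathbb{E}\frac{1}{\xi - \lambda}$ for $\xi\sim N(0,1)$ and $\lambda\to\infty$, which behaves like $-1/\lambda + O(1/\lambda^3) \approx -1/\sqrt{2\log n}$. Thus $\lambda_i(\mathcal{L}_A) \approx d_i + \frac{1}{\sqrt{2\log n}}$ to leading order, and translating the shift $+\tfrac{1}{\sqrt{2\log n}}$ through the definitions of $a_n, b_n, b_n'$ accounts exactly for the difference $b_n - b_n' = \tfrac{1}{\sqrt{2\log n}}$. (Indeed $a_n(b_n - b_n') = \tfrac{a_n}{a_n} \cdot 1 = 1$... more precisely $a_n(b_n' - b_n) = 1$, matching the computed deterministic correction.)

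To make this rigorous and to get the \emph{joint} statement for the top $k$ eigenvalues, I would: (i) localize — show that with high probability the top $k$ eigenvalues of $\mathcal{L}_A$ are each within $o(1/a_n)$ of $d_i + 1/\sqrt{2\log n}$ for the $k$ indices $i$ with largest $d_i$, using the resolvent expansion on the good event $\{\|A\|_{\mathrm{op}} \le 2 + \eps\} \cap \{\max_i |d_i| \le (1+\eps)\sqrt{2\log n}\}$ together with a net argument in $\lambda$; (ii) show the error terms in the fixed-point equation are uniformly $o(1/a_n)$ after multiplication by $a_n$, which requires controlling the fluctuations of $\sum_{j} A_{ij}^2/(d_j-\lambda)$ around its mean (a concentration estimate for a quadratic form with the $d_j$ frozen, conditionally on $D_A$, plus a bound on how $d_j$-fluctuations feed back — here the near-independence of $A_{ij}$ from $D_A$ after an appropriate conditioning must be handled carefully); (iii) conclude that the point process $\sum_i \delta_{a_n(\lambda_i(\mathcal{L}_A) - b_n)}$ has the same limit as $\sum_i \delta_{a_n(d_i - b_n')}$, which by step one is the limit governing $\{\xi_i^{(n)}\}$, giving \eqref{eq:normal_limit}. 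The main obstacle is step (ii): obtaining the resolvent expansion with error terms that are $o(1/a_n) = o(1/\sqrt{\log n})$ — an error tolerance that shrinks with $n$ — uniformly over the $\approx k$ relevant indices and over a fine enough $\lambda$-net, while correctly disentangling the dependence between the perturbation $A$ and the "unperturbed" spectrum $D_A$ (they share the same randomness). This is where the resolvent machinery of \cite{MR4058984}, suitably adapted to the spectral edge rather than the bulk, does the real work.
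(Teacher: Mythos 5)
Your heuristic picture is the right one: the shift $1/\sqrt{2\log n}$ does come from the self-consistent equation $d - \lambda \approx s(\lambda) \approx -1/\lambda$, the top eigenvalues do track shifted near-maximal diagonal entries, and the discrepancy $b_n - b_n' = 1/\sqrt{2\log n}$ is exactly accounted for this way (modulo a sign slip in your parenthetical: since $b_n > b_n'$, it is $a_n(b_n - b_n') = 1$). But there is a genuine gap at the very first step of making this rigorous: you never resolve the dependence between the ``unperturbed'' spectrum $D_A$ and the perturbation $A$, which you correctly flag as the crux but then leave as something to ``handle carefully.'' The entries $d_i = \sum_j A_{ij}$ are linear functionals of the same Gaussians appearing in $A$, so conditionally on $D_A$ the matrix $A$ is no longer GOE, and your step (ii) — concentration of $\sum_j A_{ij}^2/(d_j - \lambda)$ ``with the $d_j$ frozen'' — cannot be run with off-the-shelf quadratic-form estimates. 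The paper's proof hinges on a distributional identity (Lemma \ref{klem:firstreduction}, borrowed from Huang--Landon) that removes this dependence entirely: conjugating $\mathcal{L}_A$ by an orthogonal matrix whose last column is the all-ones direction, the resulting $(n-1)\times(n-1)$ block is equal in distribution to $A' + R^{\mathrm{T}}\tilde{D}R + gI$ with $A'$, $\tilde{D}$, $g$ \emph{jointly independent}. All of the resolvent machinery (the local law of Theorem \ref{thm:stieltjes_transforms}, the stability and concentration results, and the eigenvalue-counting argument) is then carried out for the independent model $L = D - A$, and Theorem \ref{thm:main} follows by transferring back through Lemma \ref{klem:scalingandg} and Proposition \ref{kprop:finalreduction}. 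This reduction uses the Gaussian structure in an essential way and is the missing idea in your outline; without it (or an equivalent explicit conditioning argument computing the conditional law of $A$ given its row sums), your program does not close.

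A secondary gap: your localization step treats $(D_A - \lambda I)^{-1}$ as uniformly of size $O(1/\sqrt{\log n})$, but for the $O(n^{\eps})$ indices $j$ with $d_j$ near $\sqrt{2\log n}$ the entries $1/(d_j-\lambda)$ are large, and these resonant terms must be isolated (the paper does this throughout via Proposition \ref{prop:count} and splitting sums over the exceptional index set). Moreover, achieving the required accuracy $o(1/a_n)$ is done in the paper not by a direct expansion but by a two-scale comparison of the counting quantity $I_1$ at $\eta = n^{-1/4}$ and $\sqrt{2}\,\eta$, which pins each top eigenvalue to within $o(\eta)$ of the solution $E_{(k)}$ of $D_{(k)} - E - \Re m(E+i\eta) = 0$; your outline acknowledges the difficulty but does not supply a mechanism of comparable precision.
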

	
	Theorem \ref{thm:largest} follows immediately from Theorem \ref{thm:main} by taking $k = 1$.  In addition, Theorem \ref{thm:main} allows us to study the gaps between the eigenvalues.  While one can consider the joint distribution of gaps between several consecutive eigenvalues, for simplicity we only present a result for the limiting gap distribution of the largest two eigenvalues.  
	\begin{corollary}[Gap distribution] \label{cor:gapdist}
	Let $A$ be an $n \times n$ matrix drawn from the GOE.  Then, for any $x > 0$, 
	\[ \lim_{n \to \infty} \Prob \left( a_n \left( \lambda_1(\mathcal{L}_A) - \lambda_2(\mathcal{L}_A) \right) > x \right) = e^{-x}, \]
	where $a_n$ is defined in \eqref{def:anbn}. 
	\end{corollary}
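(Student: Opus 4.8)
The plan is to obtain Corollary~\ref{cor:gapdist} as a direct consequence of Theorem~\ref{thm:main} with $k=2$. Write
\[ a_n\bigl(\lambda_1(\mathcal{L}_A) - \lambda_2(\mathcal{L}_A)\bigr) = a_n\bigl(\lambda_1(\mathcal{L}_A) - b_n\bigr) - a_n\bigl(\lambda_2(\mathcal{L}_A) - b_n\bigr), \]
so that the scaled gap is a fixed continuous (indeed linear) function of the pair $\bigl(a_n(\lambda_1(\mathcal{L}_A)-b_n),\, a_n(\lambda_2(\mathcal{L}_A)-b_n)\bigr)$, whose joint law converges to $F_2$ by Theorem~\ref{thm:main}. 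By the continuous mapping theorem, $a_n(\lambda_1(\mathcal{L}_A) - \lambda_2(\mathcal{L}_A))$ converges in distribution to $X_1 - X_2$, where $(X_1,X_2)$ has joint cumulative distribution function $F_2$. It then remains only to compute the law of $X_1 - X_2$.

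For that I would use the point-process description underlying \eqref{eq:normal_limit}: the limit law $F_k$ is the joint distribution function of the $k$ largest points of a Poisson point process on $\mathbb{R}$ with intensity $e^{-x}\,dx$, which is precisely what the results quoted from \cite{MR691492} provide. Equivalently, in terms of iid standard exponential random variables $E_1, E_2, \ldots$ with partial sums $\Gamma_j := E_1 + \cdots + E_j$, one has $(X_1, X_2) \stackrel{d}{=} (-\log\Gamma_1, -\log\Gamma_2)$, so that $X_1 - X_2 = \log\bigl(1 + E_2/E_1\bigr)$. Since $\Prob(E_2/E_1 > t) = \E[e^{-tE_1}] = (1+t)^{-1}$ for $t > 0$, this gives, for every $x > 0$,
\[ \Prob\bigl(X_1 - X_2 > x\bigr) = \Prob\!\left(\frac{E_2}{E_1} > e^{x} - 1\right) = \frac{1}{1 + (e^{x}-1)} = e^{-x}. \]
(One can reach the same identity directly: conditioning on $X_1 = t$, the point $X_2$ is the maximum of an independent Poisson process with intensity $e^{-s}\,ds$ on $(-\infty, t)$, whence $\Prob(X_1 - X_2 > x \mid X_1 = t) = \exp(-e^{-t}(e^{x}-1))$; integrating this against the Gumbel density $e^{-t}\exp(-e^{-t})$ of $X_1$ and substituting $u = e^{-t}$ leaves $\int_0^\infty \exp(-u e^{x})\,du = e^{-x}$.)

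Because $x \mapsto 1 - e^{-x}$ is continuous, every $x > 0$ is a continuity point of the distribution function of $X_1 - X_2$, so convergence in distribution yields $\lim_{n\to\infty}\Prob\bigl(a_n(\lambda_1(\mathcal{L}_A) - \lambda_2(\mathcal{L}_A)) > x\bigr) = e^{-x}$, as desired. There is no substantive obstacle here: the only point requiring care is citing the precise form of $F_2$---namely that it is governed by the Poisson process with intensity $e^{-x}\,dx$---from the classical extreme value theory references, after which the conclusion is the continuous mapping theorem together with the elementary computation above. If one prefers to avoid invoking the explicit structure of $F_2$, an alternative is to run the same reasoning at the level of the normalized point process of eigenvalues, which the proof of Theorem~\ref{thm:main} should already control.
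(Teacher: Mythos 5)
Your proposal is correct and follows essentially the same route as the paper: reduce to the joint limit $F_2$ via Theorem~\ref{thm:main} with $k=2$, and then identify the law of the gap from the classical extreme-value/Poisson-process structure of $F_2$. The only difference is that the paper defers the final computation to the cited references, whereas you carry it out explicitly (correctly) via the representation $(X_1,X_2)\stackrel{d}{=}(-\log\Gamma_1,-\log\Gamma_2)$.
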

	\begin{proof}
	In view of Theorem \ref{thm:main}, the function $F_2$ defined in \eqref{eq:normal_limit} describes the limiting joint distribution of $a_n \left( \lambda_1(\mathcal{L}_A) - b_n \right)$ and $a_n \left( \lambda_2(\mathcal{L}_A) - b_n \right)$.  $F_2$ also describes the limiting joint distribution of the largest order statistics $a_n \left( \xi^{(n)}_1 - b'_n \right)$ and $a_n \left( \xi^{(n)}_2 - b'_n \right)$, where the limiting gap distribution can be deduced from standard techniques in extreme value theory (see, for instance, \cite{MR1994955,MR900810}). 
	\end{proof}
	
	In addition to Corollary \ref{cor:gapdist}, Theorem \ref{thm:main} also allows us to show that the extreme eigenvalues exhibit Poisson statistics.  To this end, let $\mathcal{P}_n$ be the random point process constructed from the rescaled eigenvalues of $\mathcal{L}_A$:
	\begin{equation} \label{eq:def:Pn}
		\mathcal{P}_n = \sum_{j=1}^n \delta_{ a_n ( \lambda_j(\mathcal{L}_A) - b_n) }, 
	\end{equation}
	where $a_n$ and $b_n$ are defined in \eqref{def:anbn} and $\delta_x$ is a point mass at $x$.  
	\begin{corollary}[Poisson point process limit of extreme eigenvalues] \label{cor:poisson}
	Let $A$ be an $n \times n$ matrix drawn from the GOE.  Then the random point process $\mathcal{P}_n$, defined in \eqref{eq:def:Pn} and constructed from the eigenvalues of $\mathcal{L}_A$, converges in distribution in the vague topology as $n \to \infty$ to the Poisson point process $\mathcal{P}$ with intensity measure $\mu$ with density $d\mu = e^{-x}dx$. 
	\end{corollary}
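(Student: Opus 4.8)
The plan is to derive the Poisson convergence directly from Theorem~\ref{thm:main}, using the classical fact that the distribution functions $F_k$ of \eqref{eq:normal_limit} are precisely the finite-dimensional distributions of the upper order statistics of $\mathcal{P}$. Let $\gamma_1 > \gamma_2 > \cdots$ be the atoms of $\mathcal{P}$ listed in decreasing order; these are a.s.\ well defined since $\mu((t,\infty)) = e^{-t} < \infty$ for every $t$ while $\mu((-\infty,t]) = \infty$, so $\mathcal{P}$ has a largest, second-largest, \dots\ atom and infinitely many atoms overall. The standard extreme value result for iid Gaussians (\cite[Theorem 2.3.1]{MR691492} and the remarks cited around \eqref{eq:normal_limit}) says exactly that the point process $\mathcal{Q}_n := \sum_{j=1}^n \delta_{a_n(\xi^{(n)}_j - b'_n)}$ built from an iid standard Gaussian sample converges vaguely to $\mathcal{P}$; in particular $(\gamma_1,\dots,\gamma_k)$ has distribution function $F_k$ for every fixed $k$.

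Given this, I would invoke Kallenberg's convergence criterion. Since the atoms of both $\mathcal{P}_n$ and $\mathcal{P}$ can only escape to $-\infty$, one works with Radon measures on $(-\infty,+\infty]$ (equivalently, restricts all test sets to be bounded below), and it suffices to check, for every finite union $U$ of bounded-below intervals, that $\P(\mathcal{P}_n(U)=0)\to\P(\mathcal{P}(U)=0)$ and $\E[\mathcal{P}_n(U)]\to\mu(U)$. Both statements reduce to finitely many of the largest eigenvalues: for a threshold $t$ one has $\{\mathcal{P}_n((t,\infty)) \le j\} = \{a_n(\lambda_{j+1}(\mathcal{L}_A) - b_n) \le t\}$, and for disjoint bounded-below intervals $I_1,\dots,I_m$ with $t_0 := \min_\ell \inf I_\ell$, the count vector $(\mathcal{P}_n(I_1),\dots,\mathcal{P}_n(I_m))$ is, on the event that at most $N$ atoms of $\mathcal{P}_n$ exceed $t_0$, a fixed function $g$ of $(a_n(\lambda_j(\mathcal{L}_A)-b_n))_{j\le N}$, namely the function counting how many of its $N$ real arguments land in each $I_\ell$. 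Theorem~\ref{thm:main} with $k=N$ gives convergence in law of $(a_n(\lambda_j(\mathcal{L}_A)-b_n))_{j\le N}$ to $(\gamma_1,\dots,\gamma_N)$, the $N$ largest atoms of $\mathcal{P}$; since $g$ is continuous away from configurations with a coordinate on an endpoint of some $I_\ell$, and $(\gamma_1,\dots,\gamma_N)$ a.s.\ avoids those, the continuous mapping theorem yields convergence of the truncated counts.

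The only step requiring genuine care is the truncation, i.e.\ controlling eigenvalues ranked below $N$: for fixed $t_0$, $\P(\mathcal{P}_n((t_0,\infty)) > N) = \P(a_n(\lambda_{N+1}(\mathcal{L}_A)-b_n) > t_0)$, which by Theorem~\ref{thm:main} (with $k=N+1$) converges as $n\to\infty$ to $\P(\mathcal{P}((t_0,\infty)) > N)$, and the latter tends to $0$ as $N\to\infty$ because $\mathcal{P}((t_0,\infty))$ is an honest Poisson random variable with mean $e^{-t_0}$. This furnishes a uniform-in-$n$ tail estimate that lets one pass from the truncated counts to the full counts, giving $\P(\mathcal{P}_n(U)=0)\to\P(\mathcal{P}(U)=0)$; the intensity convergence $\E[\mathcal{P}_n(U)]\to\mu(U)$ follows in the same way (or directly from $\E[\mathcal{P}_n((t,\infty))]=\sum_{j\ge1}\P(a_n(\lambda_j(\mathcal{L}_A)-b_n)>t)$ together with dominated convergence justified by the same tail bound). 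Assembling these pieces gives $\mathcal{P}_n \Rightarrow \mathcal{P}$ vaguely. I do not expect a serious obstacle here: all the probabilistic content is in Theorem~\ref{thm:main}, and this corollary is essentially a repackaging of that statement through standard point-process machinery, with the truncation bound the only mildly technical ingredient.
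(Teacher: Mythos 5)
Your proposal follows essentially the same route as the paper: reduce everything to the joint law of the top $N$ rescaled eigenvalues via Theorem \ref{thm:main}, identify $F_N$ with the law of the $N$ largest atoms of $\mathcal{P}$ (this is the paper's Lemma \ref{lemma:order and poisson equivalence}), and handle lower-ranked eigenvalues by the tightness bound $\Prob(\mathcal{P}_n((t_0,\infty))>N)=\Prob(a_n(\lambda_{N+1}(\mathcal{L}_A)-b_n)>t_0)$, which is exactly the paper's \eqref{eq:Pntight}--\eqref{eq:Pbnd}. The only substantive difference is the convergence criterion you invoke, and there is a flaw there: your justification of the intensity convergence $\E[\mathcal{P}_n(U)]\to\mu(U)$ does not work. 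The tail estimate you derive is tightness (for each $\eps>0$ there is $N$ with $\sup_n\Prob(\mathcal{P}_n(U)>N)<\eps$), but tightness plus convergence in distribution does not yield convergence of expectations; to interchange the limit with the sum $\E[\mathcal{P}_n(U)]=\sum_{N\ge 0}\Prob(\mathcal{P}_n(U)>N)$ you would need a summable dominating bound uniform in $n$ (uniform integrability), and Theorem \ref{thm:main}, being a fixed-$k$ statement, does not supply one. A genuine first-moment bound on the number of eigenvalues above $b_n+t/a_n$ would require going back to the resolvent estimates, not just to Theorem \ref{thm:main}.

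Fortunately this step is dispensable. Your continuous-mapping-plus-truncation argument already establishes the full joint convergence $(\mathcal{P}_n(I_1),\dots,\mathcal{P}_n(I_m))\to_d(\mathcal{P}(I_1),\dots,\mathcal{P}(I_m))$ for finite collections of intervals, and by the version of Kallenberg's theorem the paper cites (Theorem 16.16 of \cite{MR1876169}), convergence in distribution of the counts over a dissecting class of bounded sets with $\mathcal{P}(\partial U)=0$ a.s.\ suffices for vague convergence, with no intensity condition at all. So the correct repair is simply to drop the avoidance-plus-intensity formulation and quote the count-convergence criterion instead; with that substitution your argument is sound and is, in substance, the paper's proof with the explicit sum over tuples $p^{(n)}_{k_1,j_1,\dots,k_l,j_l}$ replaced by the continuous mapping theorem.
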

	
	\begin{remark}
	By symmetry, analogous versions of Theorem \ref{thm:main}, Corollary \ref{cor:gapdist}, and Corollary \ref{cor:poisson} hold for the smallest eigenvalue of $\mathcal{L}_A$ as well. 
	\end{remark}

	In contrast to many classical models of random matrices with Gaussian entries, where the limiting behavior of the eigenvalues at the edge is describe by a determinantal point process \cite{MR1257246,MR1385083}, Corollary \ref{cor:poisson} shows Poisson statistics for the extreme eigenvalues.  In random matrix theory, Poisson statistics also describe the extreme eigenvalues for matrices with heavy-tailed or sparse entries (see, for example, \cite{MR2548495,MR2234922,MR2081462,MR2125574,MR4515695} and references therein).
	
	We note that the Poisson point process $\mathcal{P}$ also arises when one considers the largest order statistics from a sample $\xi_1, \ldots, \xi_n$ of $n$ iid standard normal random variables.  That is, if $\xi^{(n)}_1 > \xi^{(n)} > \cdots > \xi^{(n)}_n$ are the order statistics of $\xi_1, \ldots, \xi_n$, then it follows from standard results (see, for example, \cite{MR900810} Proposition 3.21) that the point process 
	\[ \mathcal{Q}_n = \sum_{j=1}^n \delta_{ a_n( \xi^{(n)}_j - b_n')} \]
	converges in distribution in the vague topology as $n \to \infty$ to the Poisson point process $\mathcal{P}$.  Recall that $a_n$ is defined in \eqref{def:anbn} and $b_n'$ is defined in \eqref{eq:def:bn'}.  
	
	In many matrix models with Gaussian entries (such as the GOE), there are explicit formulas for the density of the eigenvalues.  The authors are not aware of any formulas for the eigenvalues of $\mathcal{L}_A$ when $A$ is drawn from the GOE.  In particular, $\mathcal{L}_A$ is not orthogonally invariant in this case.  The proof of Theorem \ref{thm:main}---which is outlined in Section \ref{sec:overview} below---instead relies on comparing the largest eigenvalues of $\mathcal{L}_A$ to the largest eigenvalues of a matrix model with independent entries and then applying resolvent techniques to analyze this new model.  The fact that the entries are Gaussian is crucial to our method, but we conjecture that Theorem \ref{thm:main} should still hold when $A$ is a Wigner matrix with some appropriate moment assumptions on the entries.  
	
	To conclude this section, we present the proof of Corollary \ref{cor:poisson} using Theorem \ref{thm:main}.  
	
	\begin{proof}[Proof of Corollary \ref{cor:poisson}]
	We begin with some preliminaries.  
	Let $a \in \mathbb{R}$, and fix an integer $k \geq 0$.  It follows that 
	\[ \Prob( \mathcal{P}_n([a, \infty)) \geq k) = \Prob( a_n (\lambda_k(\mathcal{L}_A) - b_n) \geq a). \]
	Thus, from Theorem \ref{thm:main}, we conclude that $\mathcal{P}_n([a, \infty))$ converges in distribution to $\mathcal{P}([a, \infty))$ for any $a \in \mathbb{R}$.  In particular, this implies that $\{\mathcal{P}_n([a, \infty)) \}_{n \geq 1}$ is tight.  Hence, for any $a \in \mathbb{R}$ and any $\eps > 0$, there exists an integer $N > 0$ so that
	\begin{equation} \label{eq:Pntight}
		\Prob( \mathcal{P}_n([a, \infty)) > N) < \eps 
	\end{equation}
	for all $n$ and 
	\begin{equation} \label{eq:Pbnd}
		\Prob( \mathcal{P}([a, \infty)) > N) < \eps. 
	\end{equation}
	
	We will now use \eqref{eq:Pntight} and \eqref{eq:Pbnd} to prove Corollary \ref{cor:poisson}.  
	In view of Theorem 16.16 in \cite{MR1876169} or alternatively the results in Chapter 4.2 of \cite{MR3642325}, it suffices to show that $\mathcal{P}_n(U)$ converges in distribution to $\mathcal{P}(U)$ for any set $U \subset \mathbb{R}$ which is a union of finitely many disjoint bounded intervals.  In addition, since $\Prob( \mathcal{P}(\{c\}) > 0 ) = 0$ for all $c \in \mathbb{R}$, it suffices to assume that $U = [a_1, b_1) \cup \cdots \cup [a_l, b_l)$ for some integer $l \geq 1$ and real numbers $a_1 < b_1 < \cdots < a_l < b_l$.  Thus, by writing each interval $[a_i, b_i)$ as the difference of $[a_i, \infty)$ and $[b_i, \infty)$,  we find
	\begin{align*}
		\Prob &\left( \mathcal{P}_n(U) = k \right) = \sum_{k_1, j_1, \ldots, k_l, j_l} p^{(n)}_{k_1, j_1, \ldots, k_l, j_l}
	\end{align*}
	where the sum on the right-hand side is over all non-negative integers $k_1, j_1, \ldots, k_l, j_l$ so that $k_1 - j_1 + k_2 - j_2 + \cdots + k_l - j_l = k$ and
	\begin{align*}
		&p^{(n)}_{k_1, j_1, \ldots, k_l, j_l} \\
		&\qquad=  \Prob \left( \mathcal{P}_n([a_1, \infty)) = k_1,  \mathcal{P}_n([b_1, \infty)) = j_1, \ldots,  \mathcal{P}_n([a_l, \infty)) = k_l,  \mathcal{P}_n([b_l, \infty)) = j_l \right). 
	\end{align*}
	Since 
	\begin{align*}
		p^{(n)}_{k_1, j_1, \ldots, k_l, j_l} &= \Prob (  a_n (\lambda_{k_1+1}(\mathcal{L}_A) - b_n) < a_1 \leq a_n (\lambda_{k_1}(\mathcal{L}_A) - b_n), \ldots,  \\
		&\qquad\qquad\qquad a_n (\lambda_{j_l+1}(\mathcal{L}_A) - b_n) < b_l \leq a_n (\lambda_{j_l}(\mathcal{L}_A) - b_n) ), 
	\end{align*}
	by Theorem \ref{thm:main} each $p^{(n)}_{k_1, j_1, \ldots, k_l, j_l}$ converges to $p_{k_1, j_1, \ldots, k_l, j_l}$, where (see Lemma \ref{lemma:order and poisson equivalence}) 
	\begin{align*}
		&p_{k_1, j_1, \ldots, k_l, j_l} \\
		&\qquad=  \Prob \left( \mathcal{P}([a_1, \infty)) = k_1,  \mathcal{P}([b_1, \infty)) = j_1, \ldots,  \mathcal{P}([a_l, \infty)) = k_l,  \mathcal{P}([b_l, \infty)) = j_l \right). 
	\end{align*}	
	
	Let $\eps > 0$.  
	By \eqref{eq:Pntight} and \eqref{eq:Pbnd} (and the fact that $\{p^{(n)}_{k_1, j_1, \ldots, k_l, j_l}\}_{k_1, j_1, \ldots, k_l, j_l \geq 0}$ is a collection of probabilities of disjoint events) there exists $N > 0$ so that
	\[ \left| \sum_{k_1, j_1, \ldots, k_l, j_l}^N p^{(n)}_{k_1, j_1, \ldots, k_l, j_l} - \sum_{k_1, j_1, \ldots, k_l, j_l} p^{(n)}_{k_1, j_1, \ldots, k_l, j_l} \right| < \eps \]
	and 
	\[ \left| \sum_{k_1, j_1, \ldots, k_l, j_l}^N p_{k_1, j_1, \ldots, k_l, j_l} - \sum_{k_1, j_1, \ldots, k_l, j_l} p_{k_1, j_1, \ldots, k_l, j_l} \right| < \eps, \]
	where the notation $\sum_{k_1, j_1, \ldots, k_l, j_l}^N$ denotes the sum $\sum_{k_1, j_1, \ldots, k_l, j_l}$ with the additional restriction that none of the indices $k_1, j_1, \ldots, k_l, j_l$ exceed  $N$.  In particular, $\sum_{k_1, j_1, \ldots, k_l, j_l}^N$ denotes a finite sum, and hence 
	\[ \lim_{n \to \infty} \sum_{k_1, j_1, \ldots, k_l, j_l}^N p^{(n)}_{k_1, j_1, \ldots, k_l, j_l} = \sum_{k_1, j_1, \ldots, k_l, j_l}^N p_{k_1, j_1, \ldots, k_l, j_l}. \]
	Therefore, we conclude that
	\begin{align*}
		\sum_{k_1, j_1, \ldots, k_l, j_l} p_{k_1, j_1, \ldots, k_l, j_l} - \eps &\leq \sum_{k_1, j_1, \ldots, k_l, j_l}^N p_{k_1, j_1, \ldots, k_l, j_l}  \\
		&\leq \liminf_{n \to \infty} \sum_{k_1, j_1, \ldots, k_l, j_l}^N p^{(n)}_{k_1, j_1, \ldots, k_l, j_l} \\
		&\leq \liminf_{n \to \infty} \sum_{k_1, j_1, \ldots, k_l, j_l} p^{(n)}_{k_1, j_1, \ldots, k_l, j_l} \\
		&\leq \limsup_{n \to \infty} \sum_{k_1, j_1, \ldots, k_l, j_l} p^{(n)}_{k_1, j_1, \ldots, k_l, j_l} \\
		&\leq \limsup_{n \to \infty} \sum_{k_1, j_1, \ldots, k_l, j_l}^N p^{(n)}_{k_1, j_1, \ldots, k_l, j_l} + \eps \\
		&\leq \sum_{k_1, j_1, \ldots, k_l, j_l}^N p_{k_1, j_1, \ldots, k_l, j_l} + \eps \\
		&\leq \sum_{k_1, j_1, \ldots, k_l, j_l} p_{k_1, j_1, \ldots, k_l, j_l} + \eps.
	\end{align*}
	Since $\eps > 0$ was arbitrary, the proof is complete. 
	\end{proof}

	\section*{Acknowledgements}
	
	The authors thank Santiago Arenas-Velilla and Victor P\'erez-Abreu for comments on an earlier draft of this manuscript and for contributing Appendix \ref{sec:appendix}.  The authors also thank Yan Fyodorov for providing useful references.  
	
	\section{Overview and main reduction} \label{sec:overview}
	
	Let $A$ be an $n \times n$ matrix drawn from the GOE.  Define
	\begin{equation} \label{def:L}
		L := D - A,
	\end{equation}
	where $D$ is an $n \times n$ diagonal matrix with iid standard normal entries, independent of $A$.  We will show that the largest eigenvalues of $L$ have the same asymptotic behavior as that described in Theorem \ref{thm:main}.  
	
	\begin{theorem} \label{thm:main_L}
		Let $L$ be the $n \times n$ random matrix defined above, and fix an integer $k \geq 1$.  Then, for any fixed $x_1, \ldots, x_k \in \mathbb{R}$, 
	\[ \lim_{n \to \infty} \Prob\left( a_n \left( \lambda_1(L) - b_n \right) \leq x_1, \ldots, a_n \left( \lambda_k(L) - b_n \right) \leq x_k \right) = F_k(x_1, \ldots, x_k), \]
	where $a_n$ and $b_n$ are defined in \eqref{def:anbn} and $F_k$ is defined in \eqref{eq:normal_limit}. 
	\end{theorem}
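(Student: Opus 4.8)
The plan is to prove Theorem~\ref{thm:main_L} by showing that the largest eigenvalues of $L = D - A$ are governed by the largest diagonal entries of $D$, via a Schur-complement / self-consistent-equation analysis of the resolvent that simultaneously yields localization of the corresponding eigenvectors. First I would reduce to a single estimate. Let $D_{(1)} \ge \cdots \ge D_{(n)}$ be the order statistics of the diagonal entries $D_{11},\dots,D_{nn}$, which are iid standard Gaussians, and for $1 \le i \le k$ let $v(i)$ be the index attaining $D_{v(i)v(i)} = D_{(i)}$. Classical extreme value theory (e.g.\ \cite[Theorems~1.5.3 and~2.3.1]{MR691492}) gives $\big(a_n(D_{(1)} - b_n'),\dots,a_n(D_{(k)} - b_n')\big) \Rightarrow F_k$. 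Since $b_n = b_n' + a_n^{-1}$ by \eqref{def:anbn} and \eqref{eq:def:bn'},
\begin{equation}\label{eq:planidentity}
	a_n\big(\lambda_i(L) - b_n\big) = \Big[\,a_n\big(\lambda_i(L) - D_{(i)}\big) - 1\,\Big] + a_n\big(D_{(i)} - b_n'\big),
\end{equation}
so by Slutsky's lemma it suffices to prove $a_n(\lambda_i(L) - D_{(i)}) \to 1$ in probability for each $i$ (joint convergence is then automatic, since the limits are deterministic); part of the task is to verify that the eigenvalue produced near $D_{(i)} + a_n^{-1}$ is in fact $\lambda_i(L)$.

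The core is a self-consistent equation for the diagonal resolvent entries. Set $\eta := (\log n)^{-5}$ and let $z = E + i\eta$ with $E$ in the a priori window $W_n := [b_n - 3,\, b_n + 3]$; the standard bounds $\|A\| \le 2 + o(1)$ and $D_{(1)} = (1+o(1))\sqrt{2\log n}$ (with high probability), with Weyl's inequality, place $\lambda_1(L),\dots,\lambda_k(L)$ in $W_n$ with high probability. With $G(z) = (L-z)^{-1}$, $G^{(j)}(z) = (L^{(j)} - z)^{-1}$ (deleting row and column $j$), and $\mathbf{a}_j := (A_{ij})_{i \ne j}$, the Schur complement identity reads $1/G_{jj}(z) = D_{jj} - A_{jj} - z - \langle \mathbf{a}_j, G^{(j)}(z)\mathbf{a}_j\rangle$. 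Since $A$ is drawn from the GOE and $D$ is independent of $A$, the Gaussian vector $\mathbf{a}_j \sim \mathcal N(0, \tfrac1n I)$ is independent of $G^{(j)}(z)$, so the quadratic form concentrates around $\tfrac1n\tr G^{(j)}(z)$ with fluctuations of order $(n\eta)^{-1} = o(a_n^{-1})$ with overwhelming probability (Gaussian chaos bounds). Combining this with a rank-one resolvent estimate $\tfrac1n|\tr G^{(j)}(z) - \tr G(z)| = O((n\eta)^{-1})$ and a local law for $L$ at these spectral parameters—namely $\tfrac1n\tr G(z) = m_\mu(z) + o(a_n^{-1})$, where $m_\mu(w) = \int (t-w)^{-1}\,d\mu(t)$ is the Stieltjes transform of the limiting spectral measure $\mu = \mu_{\mathrm{sc}} \boxplus \mathcal N(0,1)$ of $L$ and satisfies $m_\mu(z) = -z^{-1} + O(z^{-3}) = -(1+o(1))a_n^{-1}$ on $W_n$—one obtains, uniformly over $j$ and over $z$ as above, with high probability,
\begin{equation}\label{eq:planSCE}
	\frac{1}{G_{jj}(z)} = D_{jj} - z - m_\mu(z) + \mathcal{E}_j(z), \qquad |\mathcal{E}_j(z)| = o(a_n^{-1}).
\end{equation}
A union bound over $j$ and over a polynomially fine net in $z$, using the $\eta^{-2}$-Lipschitz bound for the resolvent, upgrades fixed-$z$ statements to this uniform form; one discards the negligible event that some $E \in W_n$ lies within $\eta$ of an eigenvalue of $L^{(j)}$.

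The eigenvalues of $L$ with nonzero $j$-th eigenvector coordinate are the poles of $G_{jj}$, i.e.\ the real zeros of $1/G_{jj}$; by \eqref{eq:planSCE} these are, up to an $o(a_n^{-1})$ perturbation, the zeros of $\psi_j(w) := D_{jj} - w - m_\mu(w)$, and since $\psi_j'(w) = -1 - m_\mu'(w) = -(1+o(1))$, the function $\psi_j$ has a unique zero $w_j$ in $W_n$ with $w_j = D_{jj} - m_\mu(w_j) = D_{jj} + (1+o(1))a_n^{-1}$, and the perturbed equation has a root within $o(a_n^{-1})$ of $w_j$. Taking $j = v(i)$ produces an eigenvalue $\lambda^{(i)}$ of $L$ with $a_n(\lambda^{(i)} - D_{(i)}) = 1 + o(1)$; a residue computation (using $1/|\psi_{v(i)}'| = 1+o(1)$) shows the associated eigenvector has mass $1 - o(1)$ on coordinate $v(i)$, so these eigenvectors are essentially distinct standard basis vectors. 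That $\lambda^{(i)} = \lambda_i(L)$ is a counting argument: applying \eqref{eq:planSCE} to all $j$ with $D_{jj}$ within $O(1)$ of $W_n$ (there are $n^{o(1)}$ such $j$) matches every eigenvalue of $L$ in $W_n$ with some $D_{jj} + (1+o(1))a_n^{-1}$, and on the high-probability event that $D_{(1)},\dots,D_{(k+1)}$ are pairwise separated on scale $a_n^{-1}$—which, by the extreme value limit, has probability tending to $1$ as the separation parameter $\downarrow 0$—the monotone, $o(a_n^{-1})$-perturbed assignment $D_{(i)} \mapsto \lambda^{(i)}$ preserves the ordering among the top $k$. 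Sending $n \to \infty$ and then relaxing the separation parameter gives $a_n(\lambda_i(L) - D_{(i)}) \to 1$ in probability, and \eqref{eq:planidentity} with Slutsky's lemma completes the proof.

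The main obstacle I expect is establishing \eqref{eq:planSCE} at the stringent precision $o(a_n^{-1}) = o((\log n)^{-1/2})$. This needs a local law for the deformed matrix $L = D - A$ at spectral arguments of order $\sqrt{\log n}$—macroscopically to the right of the semicircular bulk but inside the exponentially light right tail of $\mu$—down to scale $\eta = (\log n)^{-5}$, combined with the quadratic-form concentration, all while respecting that $g_j(z) := (D_{jj} - z - m_\mu(z))^{-1}$ is \emph{large} for the top coordinates (so that one must argue throughout with $1/G_{jj}$ rather than $G_{jj}$). A secondary difficulty is the eigenvalue-counting and ordering step, which relies on the almost-sure separation of the top order statistics in the extreme value limit and on controlling the $n^{o(1)}$ near-edge diagonal entries. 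Gaussianity of $A$ enters only in the quadratic-form concentration; a Hanson–Wright-type inequality would suffice, which is the basis for the conjectured extension to Wigner $A$.
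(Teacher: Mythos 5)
Your overall strategy agrees with the paper's at the top level: both reduce Theorem \ref{thm:main_L} to the location estimate $\lambda_i(L)=D_{(i)}+(1+o(1))a_n^{-1}$ (equivalently $E_{(i)}=D_{(i)}+\frac{1}{\sqrt{2\log n}}+O(\frac1{\log n})$ in the paper's notation) and then quote classical extreme value theory. But your route to that estimate is different: you analyze the individual diagonal resolvent entries $G_{jj}$ via the Schur complement and try to read off eigenvalues as perturbed zeros of $w\mapsto D_{jj}-w-m_\mu(w)$, whereas the paper proves a local law for the normalized trace comparing $m_n(z)$ to $s_n(z+\E_A m_n(z))$ at scale $\eta=n^{-1/4}$ (Theorem \ref{thm:stieltjes_transforms}) and then subtracts imaginary parts at the two scales $\eta$ and $\sqrt2\eta$ to build a smoothed counting function ($I_1$ versus $I_2$ in Section \ref{sec:proofofmain}) that simultaneously locates each $\lambda_i(L)$ and certifies that exactly one eigenvalue sits near each $E_{(i)}$.

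The genuine gap is the passage from your self-consistent equation, which you establish only at $\Im z=(\log n)^{-5}$, to the claim that $L$ has an eigenvalue within $o(a_n^{-1})$ of $w_j$ and that no spurious eigenvalues appear. Eigenvalues are poles of $G_{jj}$ on the real axis; to perturb the zero of $\psi_j$ you implicitly need the error $\mathcal E_j(w)$ to be uniformly $o(a_n^{-1})$ for \emph{real} $w$ near $w_j$, but on the real axis the quadratic form $\langle\mathbf a_j,G^{(j)}(w)\mathbf a_j\rangle=\sum_\alpha |\langle\mathbf a_j,v_\alpha\rangle|^2(\mu_\alpha-w)^{-1}$ has poles at every eigenvalue $\mu_\alpha$ of the minor $L^{(j)}$ --- and these cluster at precisely the locations $D_{ii}+a_n^{-1}$, $i\ne j$, that you are trying to resolve --- so the error cannot be made uniformly small and $1/G_{jj}$ acquires one zero in each gap between consecutive $\mu_\alpha$'s in the window. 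Conversely, at $\Im z=\eta$ the size of $\Im G_{jj}(E+i\eta)$ alone does not certify an eigenvalue within $o(a_n^{-1})$ of $E$: from $\sum_i\eta|u_i(j)|^2((\lambda_i-E)^2+\eta^2)^{-1}\asymp\eta|\mathcal E_j|^{-2}$ one only gets $|\lambda_i-E|\le\sqrt n\,|\mathcal E_j|$ without prior knowledge of the overlaps, and your residue computation for the overlap presupposes the pole location being proved. The same omission affects the counting step: the assertion that the self-consistent equation ``matches every eigenvalue of $L$ in $W_n$ with some $D_{jj}+(1+o(1))a_n^{-1}$'' is exactly what needs proof, and interlacing of $L$ with $L^{(j)}$ pins the count only to within one per window. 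To close the gap you would need either to carry the Schur-complement analysis onto the real axis with explicit control of the near poles $\mu_\alpha$ and their weights (plus an induction over minors to locate the $\mu_\alpha$ themselves), or to import a smoothed counting device such as the paper's two-scale subtraction. The remaining ingredients of your plan --- the reduction via $b_n=b_n'+a_n^{-1}$, the concentration of the Gaussian quadratic form, and the trace local law at precision $o(a_n^{-1})$ on the window $E\asymp\sqrt{2\log n}$ --- are sound and consistent with what the paper proves.
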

	\begin{remark}
	Since $L$ has the same distribution as $D + A$ and $A - D$, Theorem \ref{thm:main_L} also applies to the largest eigenvalues of these matrices.  In fact, in the forthcoming proofs, it will sometimes be convenient to work with these other, equivalent expressions for $L$.  
	\end{remark}
	Theorem \ref{thm:main_L} was motivated by many results in the literature for models of deformed Wigner matrices, including the results in \cite{MR3449389,MR3502606,MR3208886,MR3500269,MR3800833,MR2288065} and references therein.  
	In this section, we prove Theorem \ref{thm:main} using Theorem \ref{thm:main_L}.  We begin by introducing the notation used here and throughout the paper.  
	
	\subsection{Notation}
	For a complex number $z$, we let $\Re(z)$ be the real part and $\Im(z)$ be the imaginary part of $z$.  We will use $i$ for both the imaginary unit and as an index in summations; the reader can tell the difference based on context.  We denote the upper-half plane as $\mathbb{C}_+ := \{z \in \mathbb{C} : \Im(z) > 0\}$.  
	
	For a matrix $A$, we let $A_{ij}$ be the $(i,j)$-entry of $A$.  The transpose of $A$ is denoted $A^\mathrm{T}$, and $A^\ast$ is the conjugate transpose of $A$.  We use $\tr A$ to denote the trace of $A$, and $\det A$ is the determinant of $A$.  If $A$ is an $n \times n$ real symmetric matrix, we let $\lambda_n(A) \leq \cdots \leq \lambda_1(A)$ be its eigenvalues.  For any matrix $M$, let $\|M\|$ be its spectral norm (also known as the operator norm). We let $I$ be the identity matrix.  If $M$ is a square matrix and $z \in \mathbb{C}$, we will sometimes write $M + z$ for the matrix $M + zI$.  
	For a vector $v$, we use $\|v\|_2$ to mean the standard Euclidean norm.

	For an event $E$, $\Prob(E)$ is its probability.
	We let $\oindicator{E}$ be the indicator function of the event $E$.  
	For a random variable $\xi$, $\E \xi$ is its expectation.  $\E_A \xi$ is the expectation of $\xi$ with respect to the GOE matrix $A$ and $\E_D \xi$ is its expectation with respect to the random diagonal matrix $D$.

	For a natural number $n$, we let $[n] = \{1, \ldots, n\}$ be the discrete interval.  For a finite set $S$, $|S|$ will denote the cardinality of $S$.  The function $\log(\cdot)$ will always denote the natural logarithm.  
	
	Asymptotic notation is used under the assumption that $n$ tends to infinity, unless otherwise noted.  We use $X = O(Y)$, $Y=\Omega(X)$, $X \ll Y$, or $Y \gg X$ to denote the estimate $|X| \leq C Y$ for some constant $C > 0$, independent of $n$, and all $n \geq C$.  If $C$ depends on other parameters, e.g. $C = C_{k_1, k_2, \ldots, k_p}$, we indicate this with subscripts, e.g. $X = O_{k_1, k_2, \ldots, k_p}(Y)$.  The notation $X = o(Y)$ denotes the estimate $|X| \leq c_n Y$ for some sequence $(c_n)$ that converges to zero as $n \to \infty$, and, following a similar convention, $X = \omega(Y)$ means $|X| \geq c_n Y$ for some sequence $(c_n)$ that converges to infinity as $n \to \infty$.  Finally, we write $X = \Theta(Y)$ if $X \ll Y \ll X$.  
	
	\subsection{Proof of Theorem \ref{thm:main}}

	Using Theorem \ref{thm:main_L}, we now complete the proof of Theorem \ref{thm:main}.

	We begin with an observation from \cite{MR4058984}.  Let $A$ be an $n \times n$ matrix drawn from the GOE and $\mathcal{L}_A$ its corresponding Laplacian matrix as defined in Definition \ref{def:Laplacian}.  Let $\tilde{R}$ be any fixed, $n \times n$ orthogonal matrix with last column
	\begin{equation} \label{eq:def:e}
	\mathbf{e} := (1/\sqrt{n}, \ldots, 1/\sqrt{n})^\mathrm{T}
	\end{equation}
	 and we use the notation $\tilde{R} = (R|\mathbf{e})$, where $R$ contains the first $n-1$ columns of $\tilde{R}$.  The eigenvalues of $\mathcal{L}_A$ coincide with those of $\tilde{R}^\mathrm{T} \mathcal{L}_A \tilde{R}$.  The set of eigenvalues of $\tilde{R}^\mathrm{T} \mathcal{L}_A \tilde{R}$ are simply the $n-1$ eigenvalues of $R^\mathrm{T}
	\mathcal{L}_A R$ along with a zero eigenvalue. In particular, with probability $1 - o(1)$, the behavior of the largest eigenvalue of $\mathcal{L}_A$ will be that of the largest eigenvalue of $R^\mathrm{T} \mathcal{L}_A R$ since these largest eigenvalues are positive with probability tending to one (see Theorem 1 in \cite{MR2759729} or, alternatively, see the proof of Proposition \ref{kprop:finalreduction} below).    
	\begin{lemma} [Proposition 2.10, \cite{MR4058984}] \label{klem:firstreduction}
		The random matrix $R^\mathrm{T} \mathcal{L}_A R$ is equal in distribution to the matrix $A' + R^\mathrm{T} \tilde DR + gI$, where $A'$ is an $(n-1) \times (n-1)$ GOE matrix, $\tilde{D}$ is an $n \times n$ diagonal matrix with iid centered Gaussian random variables with variance $n/(n-1)$ along the diagonal, $g$ is a centered Gaussian random variable with variance $1/(n-1)$, and $A'$, $\tilde D$ and $g$ are jointly independent. 
	\end{lemma}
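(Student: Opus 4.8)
The plan is to prove Lemma \ref{klem:firstreduction} by directly computing the distribution of $R^\mathrm{T} \mathcal{L}_A R$, exploiting the orthogonal invariance of the GOE and the explicit structure of the Laplacian. First I would write $\mathcal{L}_A = D_A - A$ as in Definition \ref{def:Laplacian} and conjugate by $\tilde R = (R|\mathbf{e})$, so that $\tilde R^\mathrm{T} \mathcal{L}_A \tilde R = \tilde R^\mathrm{T} D_A \tilde R - \tilde R^\mathrm{T} A \tilde R$. Since $\tilde R$ is a fixed orthogonal matrix, $A' := \tilde R^\mathrm{T} A \tilde R$ is again an $n \times n$ GOE matrix by orthogonal invariance; the interesting point is that the last column $\mathbf e$ of $\tilde R$ was chosen precisely so that $\mathcal L_A \mathbf e = 0$, which forces the last row and column of $\tilde R^\mathrm{T}\mathcal L_A\tilde R$ to vanish and restricts attention to the top-left $(n-1)\times(n-1)$ block $R^\mathrm{T}\mathcal L_A R = R^\mathrm{T} D_A R - R^\mathrm{T} A R$.

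The heart of the argument is analyzing $R^\mathrm{T} D_A R$. The diagonal of $D_A$ is the vector of row sums of $A$, i.e. $D_A = \diag(A\mathbf 1)$ where $\mathbf 1$ is the all-ones vector; equivalently, writing $\mathbf e = \mathbf 1/\sqrt n$, the $i$-th diagonal entry is $\sqrt n\,(A\mathbf e)_i$. Each $(A\mathbf e)_i$ is a Gaussian with variance equal to $\mathbf e^\mathrm{T}(\text{row covariance})\mathbf e$; a short computation using the GOE variances $\frac{1+\delta_{ij}}{n}$ gives $\var((A\mathbf e)_i) = \frac{n+1}{n^2}$, so that $\sqrt n (A\mathbf e)_i$ has variance $\frac{n+1}{n}$. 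Here one must be slightly careful: these $n$ entries of $A\mathbf e$ are not independent, but their dependence is weak and, crucially, the relevant structure is captured by decomposing $A\mathbf e$ into its component along $\mathbf e$ and its component in $\mathbf e^\perp$. Writing $A\mathbf e = (\mathbf e^\mathrm{T} A\mathbf e)\mathbf e + P_{\mathbf e^\perp} A \mathbf e$, the scalar $\mathbf e^\mathrm{T} A \mathbf e$ is Gaussian with variance $2/n$ (from the diagonal GOE normalization), and by the invariance properties of the GOE the projected vector $P_{\mathbf e^\perp} A\mathbf e$, expressed in the $R$-coordinates, behaves like a Gaussian vector with covariance $\frac{1}{n} I_{n-1}$ and is independent of $\mathbf e^\mathrm{T} A\mathbf e$. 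Tracking these pieces through the conjugation by $R$ should produce exactly the claimed decomposition: the isotropic part yields the diagonal matrix $\tilde D$ with entries of variance $n/(n-1)$ after rescaling (the $\sqrt n$ factor combined with $\frac{1}{n}$ and the $(n-1)$-dimensional bookkeeping), and the scalar part yields the $gI$ term with $g$ Gaussian of variance $1/(n-1)$, with independence of $A'$, $\tilde D$, $g$ inherited from the independence of the corresponding Gaussian components.

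Concretely, I would organize the proof as: (i) reduce to the $(n-1)\times(n-1)$ block via the vanishing last row/column; (ii) establish $A' = \tilde R^\mathrm{T} A\tilde R$ is GOE and $R^\mathrm{T} A R$ is the top-left GOE block, using orthogonal invariance; (iii) decompose $D_A = \sqrt n\,\diag(A\mathbf e)$ and split $A\mathbf e$ into its $\mathbf e$-parallel scalar and $\mathbf e$-perpendicular vector parts; (iv) identify the law of each part and its independence from $A'$ (the off-$\mathbf e$ block of $A$ being independent of the $\mathbf e$-row by the product structure of the GOE density); (v) conjugate by $R$ and collect terms, matching the variances $n/(n-1)$ and $1/(n-1)$. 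The main obstacle is step (iv)--(v): one must be genuinely careful about how the diagonal matrix $D_A$ transforms under conjugation by the non-square matrix $R$ — it does \emph{not} remain diagonal in general — and verify that only the isotropic Gaussian component of the diagonal contributes the "diagonal plus multiple of identity" structure while all cross terms either vanish or get absorbed, and that the resulting $\tilde D$ genuinely has iid Gaussian entries independent of $A'$ and $g$. This is exactly the content of Proposition 2.10 in \cite{MR4058984}, so at this point one may simply invoke that reference; the sketch above indicates why the stated variances and independences come out as claimed.
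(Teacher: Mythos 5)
The paper does not actually prove this lemma: it is quoted directly as Proposition 2.10 of \cite{MR4058984}, so your sketch is an independent argument rather than a reconstruction of anything in the paper. Your overall strategy is the natural one and is correct in outline: conjugate by $\tilde R$, use orthogonal invariance to see that the blocks $R^\mathrm{T} A R$, $R^\mathrm{T} A \mathbf{e}$, $\mathbf{e}^\mathrm{T} A \mathbf{e}$ of $\tilde R^\mathrm{T} A \tilde R$ are independent, and decompose $D_A = \sqrt{n}\,\diag(A\mathbf{e})$ along $\mathbf{e}$ and $\mathbf{e}^\perp$. Your variance computations for the individual pieces ($\var(\sqrt{n}(A\mathbf{e})_i) = (n+1)/n$, $\var(\mathbf{e}^\mathrm{T} A\mathbf{e}) = 2/n$, covariance $\tfrac{1}{n}I_{n-1}$ for $R^\mathrm{T} A\mathbf{e}$) are all correct.

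The gap sits exactly at the step you yourself flag as the main obstacle, and the heuristic you offer for it assigns the variances incorrectly. Writing $A\mathbf{e} = Ru + s\mathbf{e}$ with $u = R^\mathrm{T} A\mathbf{e}$ and $s = \mathbf{e}^\mathrm{T} A\mathbf{e}$ gives the exact identity $R^\mathrm{T} D_A R = \sqrt{n}\,R^\mathrm{T}\diag(Ru)R + sI_{n-1}$, and here $u$ and $s$ are indeed independent. But $\sqrt{n}\,Ru$ is a \emph{degenerate} Gaussian with covariance $I_n - \mathbf{e}\mathbf{e}^\mathrm{T}$, not an iid vector: the covariance of the entries of $\sqrt{n}\,R^\mathrm{T}\diag(Ru)R$ is $\sum_i R_{ia}R_{ib}R_{ic}R_{id} - \tfrac{1}{n}\delta_{ab}\delta_{cd}$, i.e.\ it is $R^\mathrm{T}\tilde D R$ \emph{minus} a nontrivial multiple-of-identity fluctuation. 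Consequently the $gI$ term does not inherit the variance $2/n$ of the scalar $s$, as your sketch suggests; it has variance $\tfrac{2}{n} - \tfrac{1}{n} = \tfrac{1}{n}$, produced by the cancellation between $sI$ and the identity-direction deficit of the degenerate diagonal part, and the independence of $\tilde D$ and $g$ cannot be obtained by collecting terms pathwise (the natural lift $\sqrt{n}\,Ru \stackrel{d}{=} h - (\mathbf{e}^\mathrm{T} h)\mathbf{e}$ with $h$ standard Gaussian makes $R^\mathrm{T}\diag(h)R$ and $\mathbf{e}^\mathrm{T} h$ correlated). The clean repair is to observe that every entry of $R^\mathrm{T}\mathcal{L}_A R$ is a centered Gaussian linear functional of $A$, so it suffices to match covariance tensors: the cross-covariance of $R^\mathrm{T} D_A R$ with $R^\mathrm{T} A R$ vanishes because each column of $R$ sums to zero, and a short computation gives $\cov(M_{ab}, M_{cd}) = \sum_i R_{ia}R_{ib}R_{ic}R_{id} + \tfrac{1}{n}\delta_{ab}\delta_{cd} + \tfrac{1}{n}(\delta_{ac}\delta_{bd} + \delta_{ad}\delta_{bc})$ for $M = R^\mathrm{T}\mathcal{L}_A R$. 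This is the covariance tensor of $A' + R^\mathrm{T}\tilde D R + gI$ up to an overall factor $(n-1)/n$ (equivalently $\sqrt{(n-1)/n}$ at the level of the matrices), a normalization discrepancy that is immaterial since the paper strips off exactly such factors in Lemma \ref{klem:scalingandg} and Proposition \ref{kprop:finalreduction}. Falling back on the citation, as you do at the end, is of course legitimate --- it is what the paper itself does --- but the sketch as written would not substitute for it.
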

	
	In the next lemma we make some minor reductions to remove some nuisances such as the slight dimension discrepancy and the awkward variances.  
	\begin{lemma} \label{klem:scalingandg}
		In the notation of Lemma \ref{klem:firstreduction}, if we let $W = A' + \sqrt{\frac{n-1}{n}} R^\mathrm{T} \tilde{D} R$ and $W'  = A' + R^\mathrm{T} \tilde{D}R + gI$ then for any $k \in [n]$, we have
		\[
		\P(| \lambda_k (W) - \lambda_k(W')| \geq 2 \sqrt{\log n/n}) = O(1/n) 
		\]
	\end{lemma}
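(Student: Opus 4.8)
The plan is a routine perturbation estimate: bound $|\lambda_k(W)-\lambda_k(W')|$ by the operator norm $\|W-W'\|$ using Weyl's inequality $|\lambda_k(M)-\lambda_k(N)|\le\|M-N\|$ for real symmetric $M,N$, and then show that $\|W-W'\|\le 2\sqrt{\log n/n}$ with probability $1-O(1/n)$ by two elementary Gaussian tail bounds. First I would write out the difference: since the $A'$ terms cancel,
\[
W' - W = \left(1 - \sqrt{\tfrac{n-1}{n}}\right) R^{\mathrm T}\tilde D R + g I ,
\]
so that, for every $k$,
\[
|\lambda_k(W) - \lambda_k(W')| \le \|W - W'\| \le \left(1 - \sqrt{\tfrac{n-1}{n}}\right)\bigl\|R^{\mathrm T}\tilde D R\bigr\| + |g| .
\]

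Next I would estimate the two terms. For the first, factoring gives $1 - \sqrt{1 - 1/n} = \tfrac{1/n}{1 + \sqrt{1-1/n}} \le \tfrac1n$, and, because the columns of $R$ are orthonormal and $\tilde D$ is diagonal, $\bigl\|R^{\mathrm T}\tilde D R\bigr\| \le \|\tilde D\| = \max_{1 \le i \le n}|\tilde D_{ii}|$. The entries $\tilde D_{ii}$ are iid centered Gaussians of variance $n/(n-1)\le 2$, so a union bound together with the standard Gaussian tail estimate yields
\[
\Prob\!\left(\max_{1 \le i \le n}|\tilde D_{ii}| > 3\sqrt{\log n}\right) \le 2 n\, e^{-\frac{9(n-1)\log n}{2n}} = O(1/n) ,
\]
and on the complement the first term is at most $3\sqrt{\log n}/n = o\!\bigl(\sqrt{\log n/n}\bigr)$. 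For the second term, $\sqrt{n-1}\,g$ is a standard Gaussian, so $\Prob\!\left(|g| > \sqrt{2\log n/(n-1)}\right) \le 2/n$. Intersecting these two events (whose complement has probability $O(1/n)$) gives
\[
|\lambda_k(W) - \lambda_k(W')| \le \frac{3\sqrt{\log n}}{n} + \sqrt{\frac{2\log n}{n-1}} .
\]

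To finish I would observe that $\tfrac{3}{\sqrt n} + \sqrt{2}\,\sqrt{\tfrac{n}{n-1}} \to \sqrt 2 < 2$, so the right-hand side above is at most $2\sqrt{\log n/n}$ for all $n$ beyond some absolute constant; since the paper's $O(1/n)$ convention only constrains large $n$, the finitely many exceptional values require no attention (the probability is at most $1$ there and can be absorbed into the implicit constant). I do not expect any serious obstacle; the only point needing care is the constant bookkeeping, which forces the slightly sub-optimal threshold $3\sqrt{\log n}$ (rather than the sharp $\sqrt{2\log n}$) on $\max_i|\tilde D_{ii}|$ so as to leave room for the explicit factor $2$ in the conclusion alongside the $|g|$ contribution.
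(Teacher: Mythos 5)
Your proof is correct and follows essentially the same route as the paper: Weyl's inequality reduces the eigenvalue difference to an operator-norm bound, which is then controlled by the standard Gaussian tail estimates for $\max_i|\tilde D_{ii}|$ (giving the $3\sqrt{\log n}/n$ term) and for $|g|$ (giving the $\sqrt{2\log n/n}$ term). The only cosmetic difference is that you apply Weyl once to $W-W'$ directly, whereas the paper passes through the intermediate matrix $A'+R^{\mathrm T}\tilde DR$ and applies it twice; the estimates are identical.
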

	\begin{proof}
		We use a simple coupling argument by placing $W$ and $W'$ on the same probability space.  
		Since $g$ is a Gaussian with variance $1/(n-1)$, by Weyl's inequality (also known as Weyl's perturbation theorem, see Corollary III.2.6 in \cite{MR1477662}) we have that 
		\[
		|\lambda_k(W') - a_n \lambda_k(A' + R^\mathrm{T} \tilde{D}R)| < \sqrt{2 \log n/ n}. 
		\]   
		with probability at least $1 - 1/n$.  
		Similarly, with probability $1-O(1/n)$
		\[
		|\lambda_k(W) - \lambda_k(A' + R^\mathrm{T} \tilde{D}R)| \leq \frac{1}{n} \|R^\mathrm{T} \tilde{D}R\| \leq \frac{1}{n} \| \tilde D \| = 3 \sqrt{\log n}/n, 
		\]
		where $\| \tilde D \|$ was controlled using standard bounds on the maximum of iid normal random variables (see, for example, \cite[Theorem 3]{MR3389997}).  
		Therefore, the result follows from the triangle inequality and a union bound.
	\end{proof}

	In the final reduction, we compare $R^\mathrm{T} \tilde{D} R  + A'$ to a slightly augmented matrix.

	\begin{proposition} \label{kprop:finalreduction}
		We recall the notation of Lemma \ref{klem:firstreduction} and define
		\begin{equation} \label{keq:D}
		D := \sqrt{\frac{n-1}{n}} \tilde{D}
		\end{equation}
		\begin{equation} \label{keq:A}
		A := \sqrt{\frac{n-1}{n}} \left(\begin{array}{cc} A' & Y \\
			Y^\mathrm{T} & g' \end{array} \right)
		\end{equation}
		where $Y$ is a random vector in $\mathbb{R}^{n-1}$ with entries that are independent, centered Gaussians of variance $\frac{1}{n-1}$, $g'$ is a centered gaussian random variable with variance $\frac{2}{n-1}$. We consider a probability space in which $\tilde{D}$, $A'$, $Y$ and $g'$ are all defined and are jointly independent of each other.   
		Then,  for any fixed $k$ independent of $n$, 
		\[
		 \P(|\lambda_k(R^\mathrm{T} D R + A') -\lambda_k(\tilde{R}^\mathrm{T} D \tilde{R} + A) | \geq  \log \log n/\log n) = o(1). 
		\]	
	\end{proposition}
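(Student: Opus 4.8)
The plan is to realize $\tilde R^{\mathrm T}D\tilde R+A$ as a one-row/one-column bordering of a negligible perturbation of $R^{\mathrm T}DR+A'$, and to track how far the top eigenvalues move under this bordering by analysing the associated secular equation. Writing $\tilde R=(R\mid \mathbf{e})$, the first step is to put $N:=\tilde R^{\mathrm T}D\tilde R+A$ in block form relative to $\R^n=\R^{n-1}\oplus\R$, namely
\[
N=\begin{pmatrix}B & v\\ v^{\mathrm T} & c\end{pmatrix},\qquad
B:=R^{\mathrm T}DR+\sqrt{\tfrac{n-1}{n}}\,A',\quad
v:=R^{\mathrm T}D\mathbf{e}+\sqrt{\tfrac{n-1}{n}}\,Y,\quad
c:=\mathbf{e}^{\mathrm T}D\mathbf{e}+\sqrt{\tfrac{n-1}{n}}\,g'.
\]
Since $\|A'\|\le 3$ with probability $1-o(1)$ and $1-\sqrt{(n-1)/n}=O(1/n)$, Weyl's inequality gives $|\lambda_k(R^{\mathrm T}DR+A')-\lambda_k(B)|=O(1/n)$ on that event, so it is enough to compare $\lambda_k(B)$ with $\lambda_k(N)$. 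Set $\delta_n:=\log\log n/\log n$, let $(\mu_j,u_j)_{j=1}^{n-1}$ be the eigenpairs of $B$ (with $\mu_1\ge\cdots\ge\mu_{n-1}$ and $u_j$ orthonormal), and put $c_j:=|v^{\mathrm T}u_j|^2\ge0$, so that $\sum_j c_j=\|v\|_2^2$.

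Since $B$ is the principal submatrix of $N$ obtained by deleting the last row and column, Cauchy interlacing gives $\mu_k\le\lambda_k(N)\le\mu_{k-1}$ (with $\mu_0:=+\infty$); if $\mu_{k-1}-\mu_k\le\delta_n$ there is nothing to prove, so I may assume $\mu_{k-1}-\mu_k>\delta_n$. On $(\mu_k,\mu_{k-1})$ any eigenvalue of $N$ lying there — and $\lambda_k(N)$ does, by interlacing — solves the secular equation $x-c=v^{\mathrm T}(xI-B)^{-1}v=\sum_j c_j/(x-\mu_j)$, whose left side is strictly increasing and right side strictly decreasing on that interval. Hence $\lambda_k(N)\le x_0:=\mu_k+\delta_n$ as soon as $x_0-c\ge\sum_j c_j/(x_0-\mu_j)$; dropping the negative terms with $\mu_j\ge\mu_{k-1}>x_0$, it suffices to show
\[
\sum_{j\,:\,\mu_j\le\mu_k}\frac{c_j}{x_0-\mu_j}\ \le\ x_0-c .
\]

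To establish this I would use four facts, each holding with probability $1-o(1)$. (i) $\|v\|_2=O(1)$ and $|c|=o(1)$, from elementary bounds on the Gaussian vector $D\mathbf{e}$ and the small Gaussians $Y,g'$. (ii) $\mu_k\ge\sqrt{2\log n}-O(1)$: by Weyl and interlacing $\mu_k\ge\lambda_k(R^{\mathrm T}DR)-\|A'\|\ge\lambda_{k+1}(D)-3$, and the $(k{+}1)$-st largest diagonal entry of $D$ is $(1+o(1))\sqrt{2\log n}$ for fixed $k$; in particular $x_0-c\ge\sqrt{\log n}$ for $n$ large. (iii) $\#\{j:\mu_j>\sqrt{2\log n}-C\}=n^{o(1)}$ for every fixed $C$, since Weyl and interlacing bound this by $\#\{i:D_{ii}>\sqrt{2\log n}-C-3\}$, which has expectation $e^{O(\sqrt{\log n})}$ and is $n^{o(1)}$ by Markov. (iv) for every $j$ with $\mu_j>\sqrt{2\log n}-C$ one has $c_j=|v^{\mathrm T}u_j|^2=O(\mathrm{polylog}(n)/n)$. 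Granting these, fix $C$ so large that $\mu_k-1>\sqrt{2\log n}-C$ and split the displayed sum at $\mu_k-1$: the terms with $\mu_j\le\mu_k-1$ have $x_0-\mu_j\ge1$ and so contribute at most $\sum_j c_j=\|v\|_2^2=O(1)$; by (iii)--(iv) the remaining $n^{o(1)}$ terms each contribute at most $c_j/\delta_n=O(\mathrm{polylog}(n)/n)\cdot\log n/\log\log n$, for a total of $n^{o(1)}/n=o(1)$. Hence the left side is $O(1)\le\sqrt{\log n}\le x_0-c$, which together with the $O(1/n)$ reduction and a union bound finishes the proof (shrinking $\delta_n$ by a constant factor to absorb implicit constants is harmless, since all the estimates have ample room).

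Facts (i)--(iii) are routine; the real content is (iv). Writing $v=R^{\mathrm T}D\mathbf{e}+\sqrt{(n-1)/n}\,Y$, the second summand is independent of $B$, so conditionally on $B$ the vector $(Y^{\mathrm T}u_j)_j$ is centered Gaussian with covariance $\tfrac1{n-1}I$, giving $\max_j|\sqrt{(n-1)/n}\,Y^{\mathrm T}u_j|^2=O(\log n/n)$ by a union bound. For the first summand, $(R^{\mathrm T}D\mathbf{e})^{\mathrm T}u_j=\mathbf{e}^{\mathrm T}Dw_j$ with $w_j:=Ru_j$ a unit vector orthogonal to $\mathbf{e}$, and multiplying $Bu_j=\mu_ju_j$ by $R$ and using $RR^{\mathrm T}=I-\mathbf{e}\mathbf{e}^{\mathrm T}$ gives $(D-\mu_j I)w_j=(\mathbf{e}^{\mathrm T}Dw_j)\mathbf{e}-\sqrt{(n-1)/n}\,RA'u_j$, a vector of norm $O(1)$. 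For $\mu_j>\sqrt{2\log n}-C\gg\|A'\|$ this forces $w_j$ to concentrate on the $n^{o(1)}$ coordinates carrying the largest entries of $D$, on which $|\mathbf{e}^{\mathrm T}Dw_j|$ is $n^{-1/2+o(1)}$; the difficulty is to show that the delocalized tail of $w_j$ contributes only $O(\mathrm{polylog}(n)/\sqrt n)$ to $\mathbf{e}^{\mathrm T}Dw_j$, the naive Cauchy--Schwarz bound $\|(\text{tail})\|_2\,\|D\mathbf{e}\|_2=O(1/\sqrt{\log n})$ being far too weak. This is an isotropic delocalization estimate for the deformed Gaussian matrix $B$ in the direction $R^{\mathrm T}D\mathbf{e}$, and I expect it to be the main obstacle: one has to genuinely exploit the Gaussianity of $A'$ — through a resolvent/local-law bound on $\langle R^{\mathrm T}D\mathbf{e},(B-z)^{-1}R^{\mathrm T}D\mathbf{e}\rangle$ at spectral scales just above $\mu_j$ — rather than merely the norm bound $\|A'\|=O(1)$.
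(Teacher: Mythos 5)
Your reduction to the bordered matrix and the secular-equation analysis are sound in outline, and facts (i)--(iii) are indeed routine, but the proof is incomplete exactly where you say it is: fact (iv), the overlap bound $c_j=|v^{\mathrm T}u_j|^2=O(\mathrm{polylog}(n)/n)$ for eigenvectors $u_j$ of $B$ at the spectral edge, is never established, and it is the load-bearing step. The $Y$-component of $v$ is handled by conditioning, but the component $R^{\mathrm T}D\mathbf e$ is built from the same $D$ that enters $B$, so no conditioning is available; as you note, Cauchy--Schwarz gives only a bound of order one for these overlaps, off by almost a factor of $n$, and even the weakened form your argument could tolerate (roughly $c_j\le e^{-C\sqrt{\log n}}$, needed to beat the $e^{O(\sqrt{\log n})}$ count from (iii) and the $1/\delta_n$ factor in the near-$\mu_k$ terms) is a genuine edge delocalization statement in a correlated direction. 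Establishing it would require essentially the resolvent machinery of the later sections, which would be circular here: Proposition \ref{kprop:finalreduction} is used to reduce Theorem \ref{thm:main} to Theorem \ref{thm:main_L} before any of that machinery is available. So as written the argument does not close.

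The paper's proof sidesteps (iv) entirely by working with the eigenvector $v=(w,t)^{\mathrm T}$ of the \emph{full} matrix $Z$ rather than with eigenvectors of the submatrix: the quantity to be controlled is $\frac{t}{\|w\|_2^2}w^{\mathrm T}(Y+R^{\mathrm T}D\mathbf e)$, and one bounds $|Y^{\mathrm T}w|$ and $|\mathbf e^{\mathrm T}DRw|$ by left-multiplying the eigenvector equation \eqref{keq:topn} by $Y^{\mathrm T}$ and by $\mathbf e^{\mathrm T}DR$ and dividing by $\lambda_1(Z)\asymp\sqrt{\log n}$; the only inputs are moment bounds involving $D$ alone, namely $\|Y^{\mathrm T}R^{\mathrm T}D\|_2,\|\mathbf e^{\mathrm T}DRR^{\mathrm T}D\|_2\le\sqrt{\log\log n}$. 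This yields only the weak bound $|w^{\mathrm T}(Y+R^{\mathrm T}D\mathbf e)|\ll\sqrt{\log\log n/\log n}$, which nonetheless suffices because of the extra factor $|t|=O(1/\sqrt{\log n})$ coming from the last coordinate of the eigenvector equation. (For general $k$ the paper then pays for this by needing the spacing input of Proposition \ref{aprop:Spacing for L+D matrix} to resolve the interlacing ambiguity, a step your secular-equation formulation would have avoided had (iv) been available.) To salvage your route you would have to either import that bootstrap into your estimate of $c_j$ --- which essentially reproduces the paper's argument --- or supply the missing isotropic delocalization estimate by other means.
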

	\begin{remark}
		The scaling in \eqref{keq:D} and \eqref{keq:A} are designed so that $D$ has iid standard Gaussian random variables along its diagonal and $A$ is drawn from a GOE so it is compatible with our previous notation.  
	\end{remark}
	
		\begin{proof}[Proof of Proposition \ref{kprop:finalreduction}]
		Again, we use a coupling argument by embedding $R^\mathrm{T} D R + A'$ in a slightly larger matrix.  Note that $A$ is drawn from an $n \times n$ GOE, so we define
		\[
		Z := \tilde{R}^\mathrm{T} D \tilde{R} + \sqrt{\frac{n}{n-1}} A = \left( \begin{array}{cc} R^\mathrm{T} D R + A' & Y + R^\mathrm{T} D \mathbf{e} \\
			\mathbf{e}^\mathrm{T} DR + Y^\mathrm{T} & \mathbf{e}^\mathrm{T} D \mathbf{e} + g' \end{array} \right). 
		\]
		We first establish that the largest eigenvalue of $Z$ is sufficiently close to that of $R^\mathrm{T} D R +A'$.  
		As $R^\mathrm{T} D R +A'$ is a submatrix of $Z$, we immediately have that 
		\begin{equation} \label{keq:upperbound}
			\lambda_1(R^\mathrm{T} D R + A') \leq \lambda_1(Z).
		\end{equation}
		To find a corresponding lower bound for $\lambda_1(R^\mathrm{T} D R + A')$, we consider the eigenvalue-eigenvector equation for $Z$:
		\begin{equation}\label{keq:eigenvalue}
			Z v = \left( \begin{array}{cc} R^\mathrm{T} D R + A' & Y + R^\mathrm{T} D \mathbf{e} \\
				\mathbf{e}^\mathrm{T} DR + Y^\mathrm{T} & \mathbf{e}^\mathrm{T} D \mathbf{e} + g' \end{array} \right) \left(
			\begin{array}{c} w \\
				t \end{array} \right) = \lambda_1(Z) \left(
			\begin{array}{c} w \\
				t \end{array} \right). 
		\end{equation}
		Here, $v = \begin{pmatrix} w \\ t \end{pmatrix}$ is a unit eigenvector of $\lambda_1(Z)$, $w \in \R^{n-1}$ and $t \in \R$.  We observe that if $\|w\|_2 > 0$,
		\[
		\frac{w^\mathrm{T}}{\|w\|_2} (R^\mathrm{T} D R + A') \frac{w}{\|w\|_2} \leq \lambda_1(R^\mathrm{T} D R + A')
		\]
		by the Courant minimax principle.
		Considering the top $n-1$ coordinates of equation \eqref{keq:eigenvalue} yields
		\begin{equation} \label{keq:topn}
			(R^\mathrm{T} D R + A') w + t (Y + R^\mathrm{T} D \mathbf{e}) = \lambda_1(Z) w. 
		\end{equation}
		Multiplying this equation by $\frac{w^\mathrm{T}}{\|w\|_2^2}$ from the left and rearranging, we conclude that
		\begin{equation} \label{keq:lowerbound}
			\lambda_1(R^\mathrm{T} D R + A')  \geq \frac{w^\mathrm{T}}{\|w\|_2} (R^\mathrm{T} D R + A') \frac{w}{\|w\|_2}  = \lambda_1(Z) - \frac{t}{\|w\|_2^2} w^\mathrm{T} (Y + R^\mathrm{T} D \mathbf{e}).
		\end{equation}   
		Our next goal is to control the size of $\frac{t}{\|w\|_2^2} w^\mathrm{T} (A' + R^\mathrm{T} D \mathbf{e})$.
		We define the following events, which we later show hold with probability $1-o(1)$:
		\[
		\mathcal{E}_1 = \left\{\|A'\| \leq 10, \|\mathbf{e}^\mathrm{T} D R\|_2 \leq 10, \|Y\|_2 \leq 10, |\mathbf{e}^\mathrm{T} D \mathbf{e} + g'| \leq \frac{\log \log n}{\sqrt{n}} \right\}, 
		\]
		\[
		\mathcal{E}_2 = \left\{ \|Y^\mathrm{T} R^\mathrm{T} D\|_2 \leq \sqrt{\log \log n}, \|\mathbf{e}^\mathrm{T} D R R^\mathrm{T} D\|_2 \leq \sqrt{\log \log n} \right\}, 
		\]
		\[
		\mathcal{E}_3 = \left\{\sqrt{\log n} \leq \lambda_1(Z) \leq  2 \sqrt{\log n } \right\}. 
		\]
		We extract the final coordinate of equation \eqref{keq:eigenvalue} to obtain
		\begin{equation}\label{keq:lastcoordinate}
			(\mathbf{e}^\mathrm{T} D R + Y^\mathrm{T}) w + t(\mathbf{e}^\mathrm{T} D \mathbf{e} + g')  = \lambda_1(Z) t.
		\end{equation}
		On the event $\mathcal{E} = \mathcal{E}_1 \cap \mathcal{E}_2 \cap \mathcal{E}_3$, we have
		\begin{equation} \label{keq:tbound}
			|t| \leq 2 \frac{(\|\mathbf{e}^\mathrm{T} D R\|_2 + \|Y\|_2) \|w\|_2 }{\lambda_1(Z)} \leq \frac{40}{\sqrt{\log n}},
		\end{equation}
		which also assures us that $\|w\|_2 \geq 1/2> 0$ for all $n$ sufficiently large.  
		Left multiplying equation \eqref{keq:topn} by $Y^\mathrm{T}$ yields 
		\[
		Y^\mathrm{T} R^\mathrm{T} D R w + Y^{\mathrm{T}} A' w+  t Y^\mathrm{T} (Y + R^\mathrm{T} D \mathbf{e}) = \lambda_1(Z) Y^\mathrm{T} w.
		\]
		This implies that on the event $\mathcal{E}$, 
		\begin{align}\label{keq:firstproduct}
			|Y^\mathrm{T} w| &\leq \frac{\|Y^\mathrm{T} R^\mathrm{T} D\|_2 \|R w\|_2 + \|Y^{\mathrm{T}}\|_2 \|A'\| \|w\|_2 + t \|Y\|_2^2 + t \|Y^\mathrm{T} R^\mathrm{T} D \|_2 \|\mathbf{e}\|_2 }{\lambda_1(Z)}  \nonumber \\
			&\leq \frac{\sqrt{\log \log n} + 100 + 4000/\sqrt{\log n} + 100 \sqrt{\log \log n}/\sqrt{\log n}}{\sqrt{\log n}}  \nonumber \\
			&\ll \sqrt{\frac{\log \log n}{\log n}}.
		\end{align}
		Left multiplying equation \eqref{keq:topn} by $\mathbf{e}^\mathrm{T} D R$ gives
		\[
		\mathbf{e}^\mathrm{T} D R R^\mathrm{T} D R w + \mathbf{e}^\mathrm{T} D R A' w + t \mathbf{e}^\mathrm{T} D R Y + t \mathbf{e}^\mathrm{T} D R R^\mathrm{T} D \mathbf{e} = \lambda_1(Z) \mathbf{e}^\mathrm{T} D R w
		\]
		On the event $\mathcal{E}$, we deduce that
		\begin{align} \label{keq:secondproduct}
			|\mathbf{e}^\mathrm{T} D R w| &\leq \frac{\|\mathbf{e}^\mathrm{T} D RR^\mathrm{T} D\|_2 \| R w\|_2 + \|\mathbf{e}^\mathrm{T} D R\|_2 \|A'\| \|w\|_2  }{\lambda_1(Z)}  \nonumber \\
			&\qquad \qquad + \frac{t \|\mathbf{e}^\mathrm{T} D R\|_2 \|Y\|_2 + t \|\mathbf{e}^\mathrm{T} D RR^\mathrm{T}D\|_2 \|\mathbf{e}\|_2 }{\lambda_1(Z)} \nonumber \\
			&\leq \frac{100 \sqrt{\log \log n} + 4000/\sqrt{\log n} + 40 \sqrt{\log \log n}/\sqrt{\log n}}{\sqrt{\log n}} \nonumber \\
			&\ll \sqrt{\frac{\log \log n}{\log n}}.
		\end{align} 
		Combining, equations \eqref{keq:upperbound}, \eqref{keq:lowerbound}, \eqref{keq:tbound}, \eqref{keq:firstproduct} and \eqref{keq:secondproduct} we conclude that 
		\[
		\lambda_1(Z) - o\left(\frac{\log \log n}{\log n}\right) \leq \lambda_1(Z) - \left|\frac{t}{\|w\|_2^2} w^\mathrm{T} (Y + R^\mathrm{T} D \mathbf{e})\right|\leq \lambda_1(R^\mathrm{T} D R + A') \leq \lambda_1(Z).
		\] 
		Therefore, 
		\begin{align*}
		 \P(|\lambda_1(&R^\mathrm{T} D R + A') - \lambda_1(Z) | \geq \varepsilon) \\ 
			&\leq \P(|\lambda_1(R^\mathrm{T} D R + A') - \lambda_1(Z) | \geq \varepsilon | \mathcal{E})  + \P(\mathcal{E}^c) \\
			&= o(1).
		\end{align*}
		We can then remove the factor $\sqrt{n/n-1}$ using Weyl's inequality (as in the proof of Lemma \ref{klem:scalingandg}) to conclude that 
		\[
		\P(|\lambda_1(R^\mathrm{T} D R + A') - \lambda_1(\tilde{R}^\mathrm{T} D \tilde{R} + A) | \geq \log \log n/\log n) = o(1). 
		\]

		It remains to show that $\P(\mathcal{E}) = 1 - o(1)$.  The bounds 
		\[
		\|A\| \leq 10, \|\mathbf{e}^\mathrm{T} D R\|_2 \leq 10, \|Y\|_2 \leq 10, |\mathbf{e}^\mathrm{T} D \mathbf{e} + g'| \leq \frac{\log \log n}{\sqrt{n}}
		\]
		are straightforward (and sub-optimal) gaussian concentration or random matrix theory results, so the proofs are omitted (see \cite[Section 5.4]{MR3185193}, \cite[Chapter 5]{MR2567175}).  For $\mathcal{E}_2$, we first observe that $\|Y^\mathrm{T} R^\mathrm{T}\|_2^2 \leq  \|Y\|_2^2$. 
		By Markov's inequality, 
		\[
		\P(|\|Y^\mathrm{T} R^\mathrm{T} D\|_2^2  \geq \log \log n) \leq \frac{\E \|Y^\mathrm{T} R^\mathrm{T} D\|_2^2}{\log \log n}  \leq \frac{\E \|Y\|_2^2}{\log \log n} = \frac{\frac{n}{n-1}}{\log \log n} = o(1). 
		\]
		Thus, with probability $1-o(1)$, 
		\[
		\|Y^\mathrm{T} R^\mathrm{T} D\|_2 \leq \sqrt{\log \log n}.
		\]
		
		We now bound $\|\mathbf{e}^\mathrm{T} D R R^\mathrm{T} D\|_2$.  As $\tilde{R}$ is an orthogonal matrix,
		we have that 
		\[
		(R R^\mathrm{T})_{ii} = 1 - \frac{1}{n} \text{ and } (R R^\mathrm{T})_{ij} = -\frac{1}{n}
		\]
		for $i \neq j$.  Therefore,
		\begin{align*}
			\|\mathbf{e}^\mathrm{T} D R R^\mathrm{T} D\|_2^2 &= \sum_{i,j, k=1}^{n} \frac{1}{n} D_{ii} (R R^\mathrm{T})_{ij} D_{jj}^2 (R R^\mathrm{T})_{jk} D_{kk} \\
			&= \frac{1}{n} \sum_{i=1}^n D_{ii}^4 \left(1 - \frac{1}{n}\right)^2 - \frac{2}{n^2} \sum_{i \neq j} D_{ii}^3 D_{jj} \left(1 - \frac{1}{n}\right) \\
			&\qquad + \frac{1}{n^3} \sum_{i \neq j} D_{ii}^2 D_{jj}^2 + \frac{1}{n^3} \sum_{i \neq j, j \neq k, k \neq i} D_{ii} D_{jj}^2 D_{kk}.  
		\end{align*}
		Each sum on the right-hand side can be easily controlled via Markov's inequality.
		For the first sum, which is the dominant one, 
		\[
		\P\left(\frac{1}{n} \sum_{i=1}^{n} D_{ii}^4 \geq \frac{ \log \log n}{2}\right) \leq \frac{2 \sum_{i=1}^n \E D_{ii}^4}{n \log \log n} = \frac{6n}{n \log \log n} = o(1).
		\]
		For the second sum,
		\begin{align*}
			\P\left( \left|\frac{2}{n^2} \sum_{i \neq j} D_{ii}^3 D_{jj} \right| \geq 1\right) &\leq \frac{4 \E\left[ \left( \sum_{i \neq j} D_{ii}^3 D_{jj} \right)^2 \right]}{n^4 } \\
			&= \frac{4 \sum_{i \neq j} (\E D_{ii}^6 \E D_{jj}^2 + \E D_{ii}^4 \E D_{jj}^4)}{n^4} \\
			&= o(1), 
		\end{align*}
		where in the first equality we make use of the fact that odd moments of a centered Gaussian random variable vanish.
		Similarly, for the third sum,
		\[
		\P\left(\frac{1}{n^3} \sum_{i \neq j} D_{ii}^2 D_{jj}^2  \geq 1 \right) = o(1).
		\]
		For the final sum, we again make use of the fact that the odd moments of a centered Gaussian random variable vanish to obtain 
		\begin{align*}
			\P\left( \left|\frac{1}{n^3} \sum_{i \neq j, j \neq k, k \neq i} D_{ii} D_{jj}^2 D_{kk} \right| \geq 1 \right) &\leq \frac{\E  \left| \sum_{i \neq j, j \neq k, k \neq i} D_{ii} D_{jj}^2 D_{kk} \right|^2}{n^6} \\
			&=O \left( \frac{1}{n^2} \right). 
		\end{align*}
		Thus, asymptotically almost surely,
		\[
		\|\mathbf{e}^\mathrm{T} D R R^\mathrm{T} D\|_2 \leq \sqrt{\log \log n},
		\]
		which completes the proof for $\mathcal{E}_2$.  
		
		Now, we show that $\P(\mathcal{E}_3) = 1- o(1)$.  The eigenvalues of $\tilde{R}^\mathrm{T} D \tilde{R}$ are the diagonal entries of $D$, which are independent Gaussian random variables, so $\lambda_1(\tilde{R}^\mathrm{T} D \tilde{R})$ is the maximum of $n+1$ iid Gaussian random variables. 
		The result easily follows from standard results (see Proposition \ref{kprop:maxgauss}) and Weyl's inequality as  $\|A\| \leq 10$  with probability $1 - o(1)$. Thus, we have shown that with high probability,
		\begin{equation} \label{keq:firstspacing}
		|\lambda_1(R^\mathrm{T} D R + A') -  \lambda_1(\tilde{R}^\mathrm{T} D \tilde{R} + A)| = O(\log \log n/\log n). 
		\end{equation}

		We now can establish the result for a fixed $k$, independent of $n$.  
		By Cauchy's interlacing theorem, we have that 
		\begin{equation} \label{keq:upperbound2} 
			\lambda_{k+1}(Z) \leq \lambda_k(R^\mathrm{T} D R + A') \leq \lambda_k(Z).
		\end{equation}
		Again, we consider the eigenvalue-eigenvector equation for $Z$:
		\begin{equation}\label{keq:keigenvalue}
			Z v = \left( \begin{array}{cc} R^\mathrm{T} D R + A' & Y + R^\mathrm{T} D \mathbf{e} \\
				\mathbf{e}^\mathrm{T} DR + Y^\mathrm{T} & \mathbf{e}^\mathrm{T} D \mathbf{e} + g' \end{array} \right) \left(
			\begin{array}{c} w \\
				t \end{array} \right) = \lambda_k(Z) \left(
			\begin{array}{c} w \\
				t \end{array} \right). 
		\end{equation}
		Here, $v = \begin{pmatrix} w \\ t \end{pmatrix}$ is a unit eigenvector of $\lambda_k(Z)$, $w \in \R^{n-1}$ and $t \in \R$.  We observe that 
		\[
		(R^\mathrm{T} D R + A' - \lambda_k(Z) ) w = - t (Y + R^\mathrm{T} D \mathbf{e}).
		\]
		This implies that the least singular value of $R^\mathrm{T} D R + A' - \lambda_k(Z)$
		is upper bounded by $\varepsilon:= |t| \frac{\|w^{\mathrm{T}} Y\|_2 + \|w^{\mathrm{T}} R^\mathrm{T} D \mathbf{e}\|_2}{\|w\|_2}$.  We let $\mathcal{E}'_k$ be the event that $\sqrt{\log n} \leq \lambda_k(Z) \leq 2 \sqrt{\log n}$. Note that $\P(\mathcal{E}'_k) = 1 -o(1)$ follows from Proposition \ref{prop:count}. As before, on the event $\mathcal{E}\cap \mathcal{E}_k$, $\varepsilon = O(\log \log n/\log n)$.   As our matrices are symmetric, this implies that $R^TDR + A' - \lambda_k(Z)$ has an eigenvalue with absolute value at most $\varepsilon$.   In other words, there exists a $k'$ such that
		\[
		|\lambda_{k'}(R^TDR + A') - \lambda_{k}(Z)| \leq \varepsilon.
		\]
		By Cauchy interlacing, $k'$ can be chosen to be $k-1$ or $k$.  We wish to exclude the possibility that $k' = k-1$.  Here, we use the spacing of the eigenvalues of $Z$.  By  
		Proposition \ref{aprop:Spacing for L+D matrix} and a simple union bound,
		\[
		|\lambda_{\ell}(Z) - \lambda_{\ell+1}(Z) | = \Omega(\log^{-3/4} n)
		\]
		with high probability for all $\ell \leq k$.  Proposition \ref{aprop:Spacing for L+D matrix} applies to $D+A$ which by the rotational invariance of the GOE also applies to $Z$ (followed by Weyl's inequality due to the slight perturbation from the factor $\sqrt{\frac{n}{n-1}}$). Therefore, by \eqref{keq:firstspacing}, on the event $\mathcal{E} \cap \mathcal{E}'_2$,
		\[
		|\lambda_2(R^\mathrm{T} D R + A') -  \lambda_2(\tilde{R}^\mathrm{T} D \tilde{R} + A)| = O(\log \log n/\log n). 
		\]
		Iterating this argument a constant number of times shows that 
		\[
		|\lambda_\ell(R^\mathrm{T} D R + A') -  \lambda_\ell(\tilde{R}^\mathrm{T} D \tilde{R} + A)| = O(\log \log n/\log n) 
		\]
		for all $\ell \leq k$, which completes the proof.

	\end{proof}
	
	We now summarize the reductions that culminate in the statement of Theorem \ref{thm:main}.
	
	\begin{proof}[Proof of Theorem \ref{thm:main}]
		We again use the notation introduced in Lemma \ref{klem:firstreduction} and Proposition \ref{kprop:finalreduction}, where all the random elements are placed on the same probability space.  
		We have the decomposition 
		\begin{align*}
			 a_{n} (\lambda_1(A' + R^\mathrm{T} \tilde{D}R + gI) -b_{n}) &= a_{n} (\lambda_1(\tilde{R}^\mathrm{T} D \tilde{R} + A) - b_{n} ) \\
			 &\qquad  + a_n (\lambda_1(R^\mathrm{T} D R + A') - \lambda_1(\tilde{R}^\mathrm{T} D \tilde{R} + A)  ) \\
			 &\qquad  + a_n ( \lambda_1(A' + R^\mathrm{T} \tilde{D}R + gI) - \lambda_1(R^\mathrm{T} D R + A')). 
		\end{align*}
	By Theorem \ref{thm:main_L} and the rotational invariance of $A$, $a_n (\lambda_1(\tilde{R}^\mathrm{T} D\tilde{R} + A) - b_{n} )$ converges in distribution as $n \rightarrow \infty$ to the standard Gumbel distribution.  By Lemma \ref{klem:scalingandg} and Proposition \ref{kprop:finalreduction}, both 
		\[
		a_n (\lambda_1(R^\mathrm{T} D R + A') - \lambda_1(D + A)  ) 
		\]
		and 
		\[
		a_n ( \lambda_1(A' + R^\mathrm{T} \tilde{D}R + gI) - \lambda_1(R^\mathrm{T} D R + A'))
		\]
		converge to zero in probability.  Therefore, by Slutsky's theorem, 
		\[
		a_{n} (\lambda_1(A' + R^\mathrm{T} \tilde{D}R + gI) -b_{n})
		\]
		converges in distribution to the standard Gumbel distribution.  
		
		We let $\mathcal{E}$ be the event that $\lambda_1(\mathcal{L}_A) > 0$ and $\mathcal{E'}$ the event that $\lambda_1(A' + R^\mathrm{T} \tilde{D}R + gI) > 0$.
		Now, by Lemma \ref{klem:firstreduction},
		\[
		\lambda_1(\mathcal{L}_A) \oindicator{\mathcal{E}}
		\]
		and
		\[
		\lambda_1(A' + R^\mathrm{T} \tilde{D}R + gI) \oindicator{\mathcal{E}'}
		\]
		are equal in distribution.  In addition, both 
		\[
		\lambda_1(\mathcal{L}_A) \oindicator{\mathcal{E}^c}
		\]
		and
		\[
		\lambda_1(A' + R^\mathrm{T} \tilde{D}R + gI) \oindicator{\mathcal{E}'^c}
		\]
		converge to zero in probability 
	since $\P(\mathcal{E}^c) + \P(\mathcal{E}'^c)  = o(1)$ by the remarks preceding Lemma \ref{klem:firstreduction}. 
		Therefore, we conclude that $a_n(\lambda_1(\mathcal{L}_A) - b_n)$ converges in distribution to the standard Gumbel distribution as well.
	\end{proof}

	\subsection{Overview}
	The rest of the paper is devoted to the proof of Theorem \ref{thm:main_L}.  Our proof is based on the resolvent approach, which compares the eigenvalues of $L$ with the eigenvalues of $D$.  To this end, we define the resolvent matrices
	\[ G(z) := (L - z)^{-1} \qquad \text{and} \qquad Q(z) := (D - z)^{-1} \]
	for $z \in \mathbb{C}_+ := \{ z \in \mathbb{C} : \Im(z) > 0 \}$.  Here, we use the convention that $(L-z)^{-1}$ (alternatively, $(D-z)^{-1}$) denotes the matrix $(L-zI)^{-1}$ (alternatively, $(D-zI)^{-1}$), where $I$ is the identity matrix.  Often, we will simply write $G$ and $Q$ for the matrices $G(z)$ and $Q(z)$, respectively.  We will define the Stieltjes transforms 
	\begin{equation} \label{def:mnsn}
		m_n(z) := \frac{1}{n} \tr G(z) \qquad \text{and} \qquad s_n(z) := \frac{1}{n} \tr Q(z). 
	\end{equation}
	The limiting Stieltjes transform of $s_n$ is given by 
	\begin{equation} \label{def:s}
		s(z) := \int_{-\infty}^{\infty} \frac{\phi(x)}{x - z} \d x 
	\end{equation} 
	for $z \in \mathbb{C}_+$, where 
	\begin{equation} \label{def:phi}
		\phi(x) := \frac{1}{\sqrt{2 \pi}} e^{-x^2/2} 
	\end{equation} 
	is the density of the standard normal distribution.  The limiting Stieltjes transform $m$ (which is the free convolution of the semicircle law with the standard normal distribution) of $m_n$ is uniquely defined as the solution of 
	\begin{equation} \label{def:fc}
		m(z) = \int_{-\infty}^\infty \frac{\phi(x)}{x - z - m(z)} \d x, \qquad z \in \mathbb{C}_+, 
	\end{equation} 
	where $z m(z) \to -1$ as $|z| \to \infty$ in the upper-half plane.  
	
	For fixed (small) $\delta > 0$, we define the spectral domains 
	\[ S_{\delta} := \{z \in \mathbb{C}_+ : \sqrt{ (2 - \delta) \log n} \leq \Re(z) \leq \sqrt{3 \log n}, n^{-1/4} \leq \Im(z) \leq 1\}, \]
	\[ \tilde{S}_{\delta} := \{ z \in \mathbb{C}_+ : \sqrt{ (2 - \delta) \log n} \leq \Re(z) \leq \sqrt{3 \log n},  \Im(z) = n^{-1/4} \} \]
	and
	\[ \hat{S}_{\delta} := \{ z \in \mathbb{C}_+ : \sqrt{ (2 - \delta) \log n} \leq \Re(z) \leq \sqrt{3 \log n},  \Im(z) = \sqrt{2} n^{-1/4} \}. \]
	Our method requires us to mostly work on $\tilde{S}_{\delta}$ and $\hat{S}_{\delta}$ for some fixed $\delta > 0$, but it will sometimes be more convenient to state results for the larger domain $S_{\delta}$.  
	As is common in the literature, we will often take $E := \Re(z)$ and $\eta := \Im(z)$.  
	
	For any fixed $z \in \mathbb{C}_+$, $m_n(z)$ is random and $\E_A m_n(z)$ will denote its expectation with respect to the GOE random matrix $A$.

	Our key technical result is the following.
	
	\begin{theorem} \label{thm:stieltjes_transforms}
		There exists $\delta > 0$ so that 
		\[ \sup_{z = E + i \eta \in \tilde S_{\delta} \cup \hat{S}_{\delta}} n \eta \left| m_n(z) - s_n(z + \E_A m_n(z)) \right| = o(1) \]
		with overwhelming probability\footnote{An event $E$ holds with overwhelming probability if, for every $p > 0$,  $\Prob(E) \geq 1 - O_p(n^{-p})$; see Definition \ref{def:events} for details.}. 
	\end{theorem}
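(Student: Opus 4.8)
The plan is to prove the statement by deriving a self-consistent equation for $w := \E_A m_n(z)$ via Gaussian integration by parts, and then showing that $m_n := m_n(z)$ concentrates tightly around $w$. I work throughout with the equivalent model $L = D + A$ and abbreviate $E := \Re(z)$, $\eta := \Im(z)$, $G := G(z)$. Since $z \mapsto m_n(z)$ and $z \mapsto s_n(z+w)$ are holomorphic with polynomially bounded derivatives on $\tilde S_\delta \cup \hat S_\delta$, a standard net argument over a polynomially fine grid of $E$-values --- the union bound costs nothing because every estimate below holds with overwhelming probability --- reduces the claim to a fixed $z$. I then split
\[ m_n - s_n(z+w) = (m_n - w) + (w - s_n(z+w)) \]
and bound $n\eta$ times each piece. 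Both pieces rely on a priori control of the resolvent near the spectral edge: with overwhelming probability $\Im m_n, \Im s_n(z+w) \ll \eta/\log n$, $\max_i |D_{ii}| \le \sqrt{2.1\log n}$, and $\#\{ i : |D_{ii} - E| \le 1\} \le n^{\delta/2 + o(1)}$. These follow from the eigenvalue-counting and edge-spacing estimates (Propositions \ref{prop:count} and \ref{aprop:Spacing for L+D matrix}) together with standard bounds on Gaussian maxima (cf.\ Proposition \ref{kprop:maxgauss}).

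For the self-consistent equation, I would expand $(D+A)G = I + zG$ at the $(i,i)$ entry, take $\E_A$, and integrate by parts in the Gaussian entries of $A$, using $\E_A[A_{ik}G_{ki}] = \tfrac{1+\delta_{ik}}{n}\E_A[\partial_{A_{ik}}G_{ki}]$ with $\partial_{A_{ik}}G_{ki} = -(G_{ki}^2 + G_{kk}G_{ii})$ for $i \ne k$ and $\partial_{A_{ii}}G_{ii} = -G_{ii}^2$. After the $G_{ii}^2$ terms cancel, this gives, for every $i$,
\[ (D_{ii} - z - w)\,\E_A G_{ii} = 1 + \tfrac1n \E_A (G^2)_{ii} + \cov_A(m_n, G_{ii}). \]
Dividing by $D_{ii}-z-w$ (of modulus $\ge \eta$ since $\Im w \ge 0$), summing over $i$, and dividing by $n$ yields $w - s_n(z+w) = \mathcal E_1 + \mathcal E_2$, where $\mathcal E_1 := \tfrac1{n^2}\sum_i (D_{ii}-z-w)^{-1}\E_A(G^2)_{ii}$ and $\mathcal E_2 := \tfrac1n\sum_i (D_{ii}-z-w)^{-1}\cov_A(m_n, G_{ii})$. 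The term $\mathcal E_1$ is harmless: bounding $|D_{ii}-z-w|^{-1} \le \eta^{-1}$ and using the Ward identity $\sum_i |(G^2)_{ii}| \le \sum_{i,k}|G_{ik}|^2 = \eta^{-1}\Im\tr G = n\eta^{-1}\Im m_n$ gives $n\eta\,|\mathcal E_1| \le \eta^{-1}\E_A\Im m_n \ll 1/\log n$.

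The term $\mathcal E_2$ and the concentration error $m_n - w$ are the crux. Both are controlled by high-moment (fluctuation-averaging) estimates for Gaussian resolvents, obtained by iterating the integration-by-parts scheme; in particular these yield the optimal variance bound $\var_A(m_n) \ll (n\eta)^{-2}(\Im m_n)^2 \cdot n^{o(1)}$, with matching tails, and an analogous sharp control of $\cov_A(m_n, G_{ii})$ by a Cauchy--Schwarz argument together with an entrywise variance bound for $G_{ii}$. The variance bound for $m_n$, combined with $\Im m_n \ll \eta/\log n$, forces $|m_n - w| \le n^{-1+o(1)}$ with overwhelming probability, so that $n\eta\,|m_n - w| \ll n^{-1/4 + o(1)} = o(1)$. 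For $\mathcal E_2$, one inserts the covariance bound, uses $|D_{ii}-z-w|^{-1} \le \eta^{-1/2}(\Im(D_{ii}-z-w)^{-1})^{1/2}$ together with $|G_{ii}|^2 \le \eta^{-1}\Im G_{ii}$ (both consequences of $\Im G = \eta G G^*$), and sums over $i$ via the Ward identity $\sum_i \Im(D_{ii}-z-w)^{-1} = n\eta^{-1}\Im s_n(z+w)$; recalling $\Im m_n, \Im s_n(z+w) \ll \eta/\log n$, this gives $n\eta\,|\mathcal E_2| = o(1)$. Combining the three bounds, and choosing $\delta > 0$ small enough that the a priori estimates apply, proves the theorem; the domains $\tilde S_\delta$ and $\hat S_\delta$ are handled identically, differing only by the fixed value $\eta \in \{n^{-1/4}, \sqrt 2\, n^{-1/4}\}$.

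The hard part will be the fluctuation-averaging estimates. Because $\eta = n^{-1/4}$, the prefactor $n\eta = n^{3/4}$ is large, so every error term has to be shown to be genuinely $o(n^{-3/4})$; this succeeds only because the local density of states is of order $e^{-E^2/2} = n^{-1+\delta/2}$ at $E \asymp \sqrt{\log n}$, and exploiting this smallness requires carrying the imaginary-part (Ward-identity) structure through the entire moment expansion. The delicate point is that the $\approx n^{\delta/2}$ ``resonant'' indices $i$ with $D_{ii}$ within $O(1)$ of $E$ --- whose existence is exactly why the spectral domain is restricted to $\Re(z) \ge \sqrt{(2-\delta)\log n}$ with $\delta$ small --- must be absorbed through the identities $\sum_i \Im G_{ii} = n\Im m_n$ and $\sum_i \Im(D_{ii}-z-w)^{-1} = n\Im s_n(z+w)$ rather than estimated entrywise. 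This is moreover intertwined with establishing the a priori bounds on $\Im m_n$ and $\Im s_n$, which one would obtain by a finite bootstrap: a crude bound feeds the self-consistent equation and returns a sharper one.
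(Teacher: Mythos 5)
Your architecture coincides with the paper's: the same splitting $m_n - s_n(z+\E_A m_n) = (m_n - \E_A m_n) + (\E_A m_n - s_n(z+\E_A m_n))$, the same self-consistent equation obtained by Gaussian integration by parts (your entrywise identity, once you note $(D_{ii}-z-w)^{-1}=Q_{ii}$ and sum over $i$, is exactly the paper's trace-level identity \eqref{eq:s:res_mid}), and the same Ward-identity treatment of the $\mathcal E_1 = \frac{1}{n^2}\sum_{i,j}Q_{ii}\E_A G_{ij}^2$ term using the smallness of the local density at $E\asymp\sqrt{2\log n}$ together with the $O(n^{\eps})$ count of resonant indices from Proposition \ref{prop:count}. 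Your a priori bound $\Im m_n\ll\eta/\log n$ is correct and obtainable from the same counting input, though the paper gets by with the weaker $\E_A|m_n|=O(1/\sqrt{\log n})$ of Lemma \ref{lemma:s:mn}.

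The genuine gap is the concentration input. Everything in your plan for $m_n-w$ and for $\mathcal E_2$ rests on an asserted but unproven ``optimal variance bound'' $\var_A(m_n)\ll (n\eta)^{-2}(\Im m_n)^2 n^{o(1)}$ with matching tails, plus an unspecified entrywise variance bound for $G_{ii}$, to be obtained by ``iterating the integration-by-parts scheme.'' You correctly flag this as the hard part, but a flag is not a proof: at $\eta=n^{-1/4}$ with $n^{\Theta(\delta)}$ resonant diagonal entries $D_{ii}$ within $O(1)$ of $E$, carrying the Ward-identity structure through a full cumulant/fluctuation-averaging expansion is precisely where all the work lies, and nothing in the proposal indicates how the resonant indices are absorbed in the higher-order terms of that expansion. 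The paper avoids this entirely: Theorem \ref{thm:concentration} proves the weaker but sufficient bound $n\eta|m_n-\E_A m_n|=o(1)$ by a Bai--Silverstein martingale decomposition over the rows of $A$ (Burkholder plus quadratic-form concentration, with the resonant set $I$ handled separately), and then bounds $\mathcal E_2$ without any per-entry covariance estimates by keeping the trace structure intact: $|\tfrac1n\cov_A(m_n,\tr(QG))|\le \E_A[|m_n-\E_A m_n|\,|\tfrac1n\tr(QG)|]$, with the deterministic bound $|\tfrac1n\tr(GQ)|=O(1/\log n)$ of Lemma \ref{lem:s:GQbnd} supplied by Cauchy--Schwarz and the same counting estimates. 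So your proposal is not wrong in direction, but as written it defers the single essential estimate to a lemma you neither prove nor reduce to the literature, and it demands a sharper (optimal local-law) form of that estimate than the theorem actually requires.
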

	
	Theorem \ref{thm:stieltjes_transforms} will allow us to compare the largest eigenvalue of $L$ to the largest eigenvalue of $D$, up to a small shift, which we will need to track carefully.   Theorem \ref{thm:stieltjes_transforms} should be compared to other local laws in the random matrix theory literature such as \cite{MR2784665,MR2669449,MR4134946,MR3183577,MR3770875,MR3622895,MR3602820,MR4078529,MR2481753,MR2871147,MR3098073,MR2964770,MR3068390,MR3699468,MR2567175,MR3962004,MR3800840,MR4019881,MR4091114,MR3208886} and references therein; however, the reader should be aware that this list is very incomplete and represents only a small fraction of the known local law results.

	\subsection{Outline of the remainder of the article}
	In the next section, we gather some technical tools and their proofs that will be of use in the rest of the argument.  In Section \ref{sec:stability}, we prove a quantitative stability theorem for approximate solutions of \eqref{def:fc}.  Section \ref{sec:concentrationofstieltjes} is devoted to the concentration of $m_n(z)$ near its expectation $\E_A m_n(z)$.  Section \ref{sec:proofofmaintechnical} contains the proof of our main technical result, Theorem \ref{thm:stieltjes_transforms}.  Finally, we combine all the results in Section \ref{sec:proofofmain} to prove Theorem \ref{thm:main_L}.
	
	\section{Tools}
	
	This section introduces the tools we will need in the proof of Theorem \ref{thm:main_L}.  We begin with a definition describing high probability events.  
	
	\begin{definition}[High probability events] \label{def:events}
		Let $E$ be an event that depends on $n$.
		\begin{itemize}
			\item $E$ holds \emph{asymptotically almost surely} if $\Prob(E) = 1 - o(1)$.
			\item $E$ holds \emph{with high probability} if $\Prob(E) = 1 - O(n^{-c})$ for some constant $c > 0$.
			\item $E$ holds \emph{with overwhelming probability} if, for every $p > 0$,  $\Prob(E) \geq 1 - O_p(n^{-p})$.  
		\end{itemize}
	\end{definition}
	
	For $z = E + i \eta \in \mathbb{C}_+$, the \emph{Ward identity} states that
	\begin{equation} \label{eq:ward}
		\sum_{j = 1}^n \left| G_{ij}(z) \right|^2 = \frac{1}{ \eta} \Im G_{ii}(z). 
	\end{equation} 
	
	If $A$ and $B$ are invertible matrices, the \emph{resolvent identity} states that
	\begin{equation} \label{eq:resolvent}
		A^{-1} - B^{-1} = A^{-1} (B - A) B^{-1} = B^{-1} (B - A) A^{-1}. 
	\end{equation} 
	
	If $\xi$ is a Gaussian random variable with mean zero and variance $\sigma^2$ and $f: \mathbb{R} \to \mathbb{C}$ is continuously differentiable, the \emph{Gaussian integration by parts formula} states that
	\begin{equation} \label{eq:ibp}
		\E[ \xi f(\xi)] = \sigma^2 \E[ f'(\xi) ], 
	\end{equation}
	provided the expectations are finite.  
	
	Our next result bounds (via Chernoff's inequality) the number of large entries in the diagonal matrix $D$.  
	
	\begin{proposition} \label{prop:count}
		Let $D$ be the $n \times n$ diagonal matrix whose entries are iid standard normal random variables.  Then, for any $\eps \in (0, 1/2)$, there exists $\delta > 0$ so that
		\[ \left| \{ 1 \leq i \leq n : D_{ii} \geq \sqrt{(2- \delta) \log n} \} \right| = O_{\eps}(n^{\eps}) \]
		with overwhelming probability. Similarly, for any $\varepsilon' \in (0,1/2)$, there exists a $\delta'>0$ so that
		\[
		\left| \{ 1 \leq i \leq n : D_{ii} \geq \sqrt{(2- \delta') \log n} \} \right| = \Omega_{\eps'}(n^{\eps'})
		\] 
		with overwhelming probability.
	\end{proposition}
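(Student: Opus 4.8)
The plan is to prove both bounds by combining a sharp tail estimate for a single standard normal with a Chernoff bound for the resulting sum of independent Bernoulli indicators. For the upper bound, fix $\eps \in (0,1/2)$ and write $t = \sqrt{(2-\delta)\log n}$ for a $\delta > 0$ to be chosen. Using the standard Gaussian tail estimate $\Prob(D_{11} \geq t) \leq e^{-t^2/2} = n^{-(2-\delta)/2} = n^{-1 + \delta/2}$, let $p := \Prob(D_{11} \geq t)$ and $X := |\{i : D_{ii} \geq t\}| = \sum_{i=1}^n \indicator{D_{ii} \geq t}$, so that $X$ is a sum of $n$ iid Bernoulli$(p)$ random variables with $\E X = np \leq n^{\delta/2}$. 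Choosing $\delta = \delta(\eps)$ small enough that $\delta/2 < \eps$, one has $\E X \leq n^{\delta/2} \leq n^{\eps}$ for all large $n$; then the multiplicative Chernoff bound $\Prob(X \geq (1+s)\E X) \leq e^{-s^2 \E X / (2 + s)}$ applied with, say, $X \geq n^{\eps}$ (which forces $1 + s \geq n^{\eps}/\E X \geq n^{\eps - \delta/2} = n^{c}$ for $c := \eps - \delta/2 > 0$) gives a bound of the form $e^{-\Omega(n^{c} \log n)}$ or better, which is certainly $O_p(n^{-p})$ for every $p$. This yields $X = O_\eps(n^\eps)$ with overwhelming probability.

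For the lower bound, fix $\eps' \in (0,1/2)$ and now set $t' = \sqrt{(2 - \delta')\log n}$ for $\delta'$ to be chosen. Here I use the matching lower tail bound for the Gaussian: for $t' \to \infty$,
\[
\Prob(D_{11} \geq t') \geq \frac{1}{\sqrt{2\pi}}\,\frac{t'}{1 + (t')^2}\, e^{-(t')^2/2} \gg \frac{n^{-1 + \delta'/2}}{\sqrt{\log n}}.
\]
Thus $p' := \Prob(D_{11} \geq t')$ satisfies $np' \gg n^{\delta'/2}/\sqrt{\log n} \geq n^{\delta'/4}$ for large $n$. Pick $\delta' = \delta'(\eps')$ large enough—but still with $\delta' < 4\eps'$, and of course keeping $2 - \delta' > 0$, so $\delta' \in (0, \min(2, 4\eps'))$—so that $\delta'/4 \geq \eps'$ fails in the wrong direction; more carefully, choose $\delta'$ so that $\delta'/4 > \eps'$ is impossible, i.e. take $\delta'$ with $\eps' < \delta'/4$, equivalently $\delta' > 4\eps'$. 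This conflicts with $\delta' < 4\eps'$, so instead I should choose $\delta'$ \emph{large} relative to $\eps'$: since $\eps' < 1/2$ we may pick $\delta' \in (4\eps', 2)$ whenever $\eps' < 1/2$ — wait, $4\eps' < 2$ exactly when $\eps' < 1/2$, so such a $\delta' \in (4\eps', 2)$ exists. Then $\E X' := np' \gg n^{\delta'/4} \gg n^{\eps'}$, and the lower-tail Chernoff bound $\Prob(X' \leq (1 - s)\E X') \leq e^{-s^2 \E X'/2}$ with $X' \leq n^{\eps'}$ (so $1 - s \leq n^{\eps'}/\E X' \to 0$, i.e. $s \to 1$) gives $\Prob(X' \leq n^{\eps'}) \leq e^{-\Omega(\E X')} = e^{-\Omega(n^{\eps'})}$, which is $O_p(n^{-p})$ for every $p$. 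Hence $X' = \Omega_{\eps'}(n^{\eps'})$ with overwhelming probability.

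The routine part is the Gaussian tail asymptotics and bookkeeping of the constants $\delta, \delta'$; the only point that requires a little care is arranging the relationship between $\eps$ (resp. $\eps'$) and $\delta$ (resp. $\delta'$) so that the mean $np$ (resp. $np'$) lands on the correct side of $n^{\eps}$ (resp. $n^{\eps'}$) — for the upper bound $\delta$ must be chosen \emph{small}, while for the lower bound $\delta'$ must be chosen \emph{large} (close to, but below, the threshold $2$, which is available precisely because $\eps' < 1/2$). Once the means are on the right side, both overwhelming-probability statements follow immediately from standard Chernoff bounds for sums of independent indicators, since the deviations involved are polynomially large and hence produce error probabilities that decay faster than any power of $n$.
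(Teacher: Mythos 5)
Your argument is correct and follows essentially the same route as the paper: a standard Gaussian tail estimate to place $\E\,|\{i: D_{ii}\geq \sqrt{(2-\delta)\log n}\}| \asymp n^{\delta/2}$ on the right side of $n^{\eps}$ (small $\delta$ for the upper bound, $\delta'$ close to $2$ for the lower bound, which is available precisely because $\eps'<1/2$), followed by a Chernoff-type concentration bound for the sum of iid indicators. The mid-proof back-and-forth about whether $\delta'$ should be large or small resolves to the correct choice $\delta'\in(4\eps',2)$, so there is no gap.
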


	\begin{proof}
		Fix $\eps \in (0,1/2)$, and let $2\eps<\delta<2$. Let $X_i=\indicator{D_{ii}\geq \sqrt{(2- \delta) \log n}}$ and $S_n=\sum_{i=1}^n X_i$. Chernoff's inequality (see Theorem 2.1.3 in \cite{MR2906465})  gives that for any $\lambda>0$ \begin{equation}\label{eq:Chernoff's for Sn}
			\P\left(|S_n-\E S_n|\geq \lambda\sqrt{\var (S_n)} \right)\leq C\max\left\{\exp(-c\lambda^2),\exp(-c\lambda\sqrt{\var(S_n)}) \right\}
		\end{equation} for absolute constants $C,c>0$, where $\var (S_n)$ is the variance of $S_n$. From the standard bounds \begin{equation*}
		\left(\frac{1}{x}-\frac{1}{x^3} \right)\frac{e^{-x^2/2}}{\sqrt{2\pi}}\leq \P\left(D_{11}\geq x\right)\leq\frac{e^{-x^2/2}}{x\sqrt{2\pi}},\qquad x>0
	\end{equation*} on standard Gaussian random variables it is straightforward to show that \begin{equation*}
			\frac{n^{\delta/2}}{\sqrt{\log n}} \ll \E S_n\leq n^{\delta/2},
		\end{equation*} and \begin{equation}\label{eq:A:Variance of Sn lower bounds}
			\frac{n^{\delta/2}}{2\sqrt{2 \pi (2-\delta)\log n}}\left(1-\frac{1}{(2-\delta)\log n} \right)\leq \var(S_n)\leq n^{\delta/2}.
		\end{equation} Letting $\lambda=\sqrt{\var(S_n)}$ in \eqref{eq:Chernoff's for Sn} gives that \begin{equation*}
			\P\left(S_n> 2n^{\delta/2} \right)\leq C\exp(-c\var(S_n)),
		\end{equation*} and lower bounding $\var(S_n)$ by \eqref{eq:A:Variance of Sn lower bounds} completes the proof of the first statement.
	
		For the last statement, letting $\lambda=\sqrt{\var(S_n)}/\log n$ in \eqref{eq:Chernoff's for Sn} gives that 
		\[
		\P\left( S_n < \E S_n - \var(S_n)/\log^2 n \right) \leq \P\left( S_n < \frac{n^{\delta/2}} {2 \log n} \right) \leq C\exp(-c\var(S_n)/\log n).
		\]
	   Again, lower bounding $\var(S_n)$ by \eqref{eq:A:Variance of Sn lower bounds} completes the proof.
		
	\end{proof}

	We will need the following general concentration result for the Stieltjes transform of random symmetric matrices with independent entries.  
	\begin{proposition}[Naive concentration of the Stieltjes transform] \label{prop:concentration}
		Let $W$ be an $n \times n$ real symmetric random matrix whose entries on and above the diagonal $W_{ij}$, $1 \leq i \leq j \leq n$ are independent random variables.  Then
		\[ \Prob \left( \left| \frac{1}{n} \tr (W - z)^{-1} - \E \frac{1}{n} \tr (W - z)^{-1} \right| \geq \frac{t}{\eta \sqrt{n}} \right) \leq C e^{-ct^2} \]
		for any $t \geq 0$ and any $z = E + i \eta \in \mathbb{C}_+$, where $C, c > 0$ are absolute constants. 
	\end{proposition}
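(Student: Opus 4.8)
This is the ``naive'' concentration statement, so it should follow from a soft bounded-differences argument that requires no moment hypotheses on the entries of $W$; the only structural input is the rank inequality. The plan is to first record that for $n \times n$ real symmetric matrices $W, W'$ with $\rank(W - W') \leq r$ and for $z = E + i\eta \in \mathbb{C}_+$,
\[ \left| \frac{1}{n}\tr(W - z)^{-1} - \frac{1}{n}\tr(W' - z)^{-1} \right| \leq \frac{\pi r}{n\eta}. \]
To see this, write $\frac{1}{n}\tr(W-z)^{-1} = \int_{\mathbb{R}} (x - z)^{-1}\, dF_W(x)$, where $F_W$ is the empirical eigenvalue distribution function of $W$, integrate by parts to obtain $\int_{\mathbb{R}} F_W(x)(x-z)^{-2}\, dx$ (the boundary terms vanish because $(x-z)^{-1} \to 0$ as $|x| \to \infty$), and then combine Bai's rank bound $\sup_x |F_W(x) - F_{W'}(x)| \leq r/n$ with $\int_{\mathbb{R}} |x - z|^{-2}\, dx = \pi/\eta$. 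The point to stress is that the right-hand side is controlled by the \emph{rank} of the perturbation and not by its norm, which is exactly why no assumption on the size of the entries of $W$ is needed.

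Next I would set up a bounded-differences martingale over rows instead of over individual entries. Partition the independent variables $\{W_{ij} : 1 \leq i \leq j \leq n\}$ into the $n$ blocks $B_k := \{W_{kj} : k \leq j \leq n\}$, $k = 1, \ldots, n$. Altering the value of $B_k$ changes $W$ only in its $k$-th row and $k$-th column, a symmetric perturbation of rank at most $2$; hence, by the estimate above, the map $W \mapsto \frac{1}{n}\tr(W-z)^{-1}$, and therefore each of its real and imaginary parts, satisfies the bounded-differences condition with constants $c_k = \frac{2\pi}{n\eta}$ for every $k$. Since McDiarmid's bounded differences inequality applies over arbitrary product probability spaces, applying it to $\Re \frac{1}{n}\tr(W-z)^{-1}$ with $\sum_{k=1}^n c_k^2 = \frac{4\pi^2}{n\eta^2}$ produces a tail of the form $2\exp\big(-u^2 n\eta^2/(2\pi^2)\big)$ at deviation $u$ from the mean, and the same for the imaginary part. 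Taking $u = \frac{t}{2\eta\sqrt n}$, a union bound over the real and imaginary parts, and the inequality $|\zeta| \leq |\Re \zeta| + |\Im \zeta|$ applied to $\zeta = \frac{1}{n}\tr(W-z)^{-1} - \E \frac{1}{n}\tr(W-z)^{-1}$, then yield the proposition with $C = 4$ and $c = \frac{1}{8\pi^2}$.

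The computation is essentially routine and I do not anticipate a genuine obstacle; the one point deserving care is the choice of filtration, which is also what makes the stated $\frac{1}{\eta\sqrt n}$ scaling attainable. Exposing the roughly $n^2/2$ scalar entries one at a time would leave $\sum_k c_k^2 = \Theta(\eta^{-2})$ and hence give concentration only on the coarse scale $\eta^{-1}$; exposing the $n$ row-blocks instead cuts the number of martingale increments down to $n$ while each increment stays of size $O\big((n\eta)^{-1}\big)$, and it is exactly this trade-off that supplies the extra factor of $\sqrt n$ in the denominator.
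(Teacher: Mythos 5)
Your proof is correct and follows essentially the same route as the paper: a row-by-row bounded-differences (McDiarmid) argument in which altering one row and column perturbs $W$ by a rank-$\le 2$ symmetric matrix, giving an increment of size $O((n\eta)^{-1})$. The only cosmetic difference is how the Lipschitz constant is obtained — you integrate by parts against the empirical distribution function and invoke the rank inequality on CDFs, whereas the paper bounds $|\frac1n\tr(R-R')|$ directly by $\frac{\rank(R-R')}{n}\|R-R'\|\le\frac{4}{n\eta}$ via the resolvent identity; both yield the same conclusion.
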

	\begin{proof}
		Fix $z=E+i\eta \in \C_+$ and let $M$ be an $n\times n$ real symmetric matrix. Let $M'$ be an $n\times n$ real symmetric matrix equal to $M$, up to possibly a single row and corresponding column being different. Define $R(z)=(M - z)^{-1}$ and $ R'(z)=(M' - z)^{-1}$. It follows from the resolvent identity \eqref{eq:resolvent} that\begin{equation}
			\rank\left( R(z)- R'(z) \right) \leq 2.
		\end{equation} It then follows that \begin{align*}
			\left|\frac{1}{n} \tr (M - z)^{-1}-\frac{1}{n} \tr (M' - z)^{-1} \right|&=\left|\frac{1}{n} \tr \left((M - z)^{-1}- (M' - z)^{-1}\right) \right| \\
			&\leq \frac{\rank\left(R(z)- R'(z) \right) }{n}\|R(z)- R'(z)\|\\
			&\leq \frac{4}{n\eta}.
		\end{align*} We can then conclude from McDiarmid's inequality (see \cite{MR1678578}) that\begin{equation}
			\Prob \left( \left| \frac{1}{n} \tr (W - z)^{-1} - \E \frac{1}{n} \tr (W - z)^{-1} \right| \geq \frac{t}{\eta \sqrt{n}} \right) \leq C e^{-ct^2},
		\end{equation} for any $t \geq 0$, where $C, c > 0$ are absolute constants. 
	\end{proof}

	\subsection{Basic concentration and linear algebra identities}
	We record several well-known concentration inequalities and algebraic identities that will be of use.  The first proposition is a strong concentration result for the maximum of a sequence of iid Gaussian random variables.  
	\begin{proposition}[Theorem 3 from \cite{MR3389997}] \label{kprop:maxgauss}
		Let $X_1, \dots, X_n$ be iid standard Gaussian random variables.  Define $M_n = \max_{1 \leq i \leq n} X_i$.  Then, for any $t \geq 0$, 
		\[
		\P(|M_n - b_n'|) > t) \leq C \exp(-c t \sqrt{\log n}).
		\] 
		where $b_n'$ is defined in \eqref{eq:def:bn'} and $C, c > 0$ are absolute constants.
	\end{proposition}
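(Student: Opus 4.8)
Since the statement is a classical fact about i.i.d.\ Gaussians (it is stated as Theorem~3 of \cite{MR3389997}; see also \cite{MR691492}), I only outline the argument. The plan is to bound the upper and lower tails $\P(M_n \geq b_n' + t)$ and $\P(M_n \leq b_n' - t)$ separately. Throughout, write $\bar\Phi(x) = \P(X_1 \geq x)$ and $\Phi = 1 - \bar\Phi$, and recall the two-sided tail estimate $\left(\frac1x - \frac1{x^3}\right)\frac{e^{-x^2/2}}{\sqrt{2\pi}} \leq \bar\Phi(x) \leq \frac{e^{-x^2/2}}{x\sqrt{2\pi}}$ for $x > 0$. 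The key preliminary computation --- which is precisely why $b_n'$ is the correct centering for a Gumbel limit --- is that $(b_n')^2 = 2\log n - \log\log n - \log(4\pi) + O\big((\log\log n)^2/\log n\big)$, so that
\[ \frac{n}{b_n'\sqrt{2\pi}}\, e^{-(b_n')^2/2} = 1 + o(1); \]
in particular $n\bar\Phi(b_n') = 1 + o(1)$, and $\sqrt{\log n} \leq b_n' \leq \sqrt{2\log n}$ for all $n$ large.

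For the upper tail I would use a union bound and the upper Gaussian tail estimate, together with $(b_n'+t)^2/2 = (b_n')^2/2 + tb_n' + t^2/2$, to get
\[ \P(M_n \geq b_n' + t) \leq n\bar\Phi(b_n'+t) \leq \frac{b_n'}{b_n'+t}\cdot\frac{n\,e^{-(b_n')^2/2}}{b_n'\sqrt{2\pi}}\cdot e^{-tb_n' - t^2/2} \leq 2\,e^{-t\sqrt{\log n}} \]
for every $t \geq 0$ and all $n$ large, which is the claimed bound (with $c=1$).

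For the lower tail I would use independence to write $\P(M_n \leq b_n' - t) = \Phi(b_n' - t)^n$ and then split on the size of $t$. When $0 \leq t \leq b_n' - \sqrt2$, the point $b_n' - t \geq \sqrt2$, so combining $\Phi(b_n'-t)^n \leq \exp(-n\bar\Phi(b_n'-t))$ with the lower Gaussian tail estimate and $tb_n' - t^2/2 \geq tb_n'/2$ (valid since $t \leq b_n'$) yields $n\bar\Phi(b_n'-t) \geq \frac18 e^{tb_n'/2}$, whence $\P(M_n \leq b_n'-t) \leq \exp(-\frac18 e^{tb_n'/2}) \leq e^{-tb_n'/16} \leq e^{-t\sqrt{\log n}/16}$ using $e^x \geq x$. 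When $b_n' - \sqrt 2 < t < b_n' + \sqrt 2$ one uses the crude bound $\Phi(b_n'-t) \leq \Phi(\sqrt2) = 1 - c_0$ with $c_0 > 0$ absolute, giving $\Phi(b_n'-t)^n \leq e^{-c_0 n}$, which dominates $e^{-ct\sqrt{\log n}}$ since here $t = O(\sqrt{\log n})$. Finally when $t \geq b_n' + \sqrt2$ we have $b_n' - t \leq -\sqrt2$ and $\Phi(b_n'-t) = \bar\Phi(t-b_n') \leq e^{-(t-b_n')^2/2}$, so $\Phi(b_n'-t)^n \leq e^{-n(t-b_n')^2/2}$; a short case check (using $b_n' \leq \sqrt{2\log n}$, hence $t \leq (t-b_n') + \sqrt{2\log n}$) shows this is $\leq e^{-ct\sqrt{\log n}}$ for all such $t$. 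Combining the three regimes gives the lower-tail bound.

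These arguments require $n$ to exceed an absolute threshold $n_0$; for $n \leq n_0$ the inequality holds after enlarging $C$ and shrinking $c$, since $M_n$ is a $1$-Lipschitz function of a standard Gaussian vector with $|\E M_n - b_n'| = O(1)$, so Gaussian concentration gives $\P(|M_n - b_n'| > t) \leq 2e^{-(t-c_1)^2/2} \leq C e^{-c t\sqrt{\log n_0}}$ for a constant $c_1$. The only mildly delicate point is the lower tail for very large $t$, where the double-exponential estimate alone is insufficient and must be supplemented by the super-exponential decay of $\Phi(b_n'-t)^n$; everything else is routine, and there is no genuine obstacle here, as the result is classical.
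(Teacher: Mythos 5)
The paper does not prove this proposition at all --- it is quoted verbatim as Theorem 3 of the cited reference \cite{MR3389997} --- so there is no internal proof to compare against; what you have supplied is a self-contained elementary derivation, and it is correct. Your central computation $(b_n')^2 = 2\log n - \log\log n - \log(4\pi) + O((\log\log n)^2/\log n)$, hence $n\,e^{-(b_n')^2/2}/(b_n'\sqrt{2\pi}) = 1+o(1)$, is exactly the right normalization, the union bound plus $e^{-(b_n'+t)^2/2} = e^{-(b_n')^2/2}e^{-tb_n'-t^2/2}$ and $b_n'\geq\sqrt{\log n}$ gives the upper tail cleanly, and your three-regime treatment of the lower tail is sound: the double-exponential bound $\exp(-\tfrac18 e^{tb_n'/2})\leq e^{-tb_n'/16}$ handles $t\leq b_n'-\sqrt2$, the crude $\Phi(\sqrt2)^n=e^{-\Omega(n)}$ handles the middle window where $t=O(\sqrt{\log n})$, and the super-exponential decay $e^{-n(t-b_n')^2/2}$ correctly dominates $e^{-ct\sqrt{\log n}}$ for $t\geq b_n'+\sqrt2$ --- you are right that this last regime is the only place where the naive Poisson-type estimate is insufficient. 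The Borell--TIS patch for $n\leq n_0$ is also fine (implicitly one needs $n\geq 2$, or $n\geq 3$, for $b_n'$ to be well defined, but that is a quirk of the statement, not of your argument). In short: the authors outsource this fact to the literature, and your proof is a legitimate standalone replacement for that citation.
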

	The next lemma is a convenient moment bound for a martingale difference sequence. 
	\begin{lemma} [Lemma 2.12 from \cite{MR2567175}] \label{klem:burkholder2}
		Let $\{X_k\}$ be a complex martingale difference sequence and $\mathcal{F}_k = \sigma(X_1, \dots, X_k)$ be the $\sigma$-algebra generated by $X_1, \dots, X_k$.  Then, for any $p \geq 2$, 
		\[
		\E  \left|\sum_{k=1}^n  X_k \right|^p \leq K_p \left(\E\left( \sum_{k=1}^n \E_{k-1} |X_k|^2 \right)^{p/2} + \E \sum_{k=1}^n |X_k|^p \right). 
		\]
		where $K_{p}$ is a constant that only depends on $p$ and $\E_{k-1}[\cdot] := \E[\cdot | \mathcal{F}_{k-1}]$.  
	\end{lemma}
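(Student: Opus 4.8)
The plan is to derive this from the discrete Burkholder--Davis--Gundy (BDG) inequality by first reducing to real martingale differences and then trading the quadratic variation $\sum_k X_k^2$ for the \emph{predictable quadratic variation} $\sum_k \E_{k-1}|X_k|^2$.

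First I would reduce to the real-valued case. Writing $X_k = Y_k + iZ_k$ with $Y_k = \Re X_k$ and $Z_k = \Im X_k$, both $\{Y_k\}$ and $\{Z_k\}$ are real martingale difference sequences for $\{\mathcal{F}_k\}$, since conditional expectation commutes with $\Re$ and $\Im$; the bounds $|\sum_k X_k|^p \le 2^{p-1}(|\sum_k Y_k|^p + |\sum_k Z_k|^p)$, $Y_k^2 + Z_k^2 = |X_k|^2$, and $|Y_k|^p, |Z_k|^p \le |X_k|^p$ then show it suffices to prove the inequality for a real martingale difference sequence and afterwards enlarge the constant by a factor $2^{p-1}$. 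One may also assume the right-hand side is finite; in that case each $\E|X_k|^p < \infty$, and the power-mean inequality gives $(\sum_k X_k^2)^{p/2} \le n^{p/2-1}\sum_k|X_k|^p$, so that $B := \E(\sum_k X_k^2)^{p/2}$ is finite. This last point is what makes the self-improvement step below legitimate.

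Next, set $T_n := \sum_{k=1}^n X_k^2$ and $V_n := \sum_{k=1}^n \E_{k-1}X_k^2$. Applying the discrete BDG inequality to the martingale $(\sum_{k\le m}X_k)_m$ yields $\E|\sum_k X_k|^p \le C_p B$, so it remains to bound $B$ by $C_p'(\E V_n^{p/2} + \E\sum_k|X_k|^p)$. For this I would use that $D_k := X_k^2 - \E_{k-1}X_k^2$ is again a real martingale difference sequence with $\sum_k D_k = T_n - V_n$: from $T_n \le V_n + |T_n - V_n|$ and convexity, $T_n^{p/2} \le 2^{p/2-1}(V_n^{p/2} + |T_n-V_n|^{p/2})$, while BDG with exponent $p/2 \ge 1$ gives $\E|T_n-V_n|^{p/2} \le C_{p/2}\,\E(\sum_k D_k^2)^{p/4}$. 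Then $D_k^2 \le X_k^4 + (\E_{k-1}X_k^2)^2$, and $\sum_k X_k^4 \le (\max_k X_k^2)\,T_n \le (\sum_k|X_k|^p)^{2/p}T_n$ (using $p \ge 2$) while $\sum_k(\E_{k-1}X_k^2)^2 \le V_n^2$; together with Cauchy--Schwarz this produces an inequality of the shape $B \le c_1\,\E V_n^{p/2} + c_2\,(\E\sum_k|X_k|^p)^{1/2}B^{1/2}$ with $c_1,c_2$ depending only on $p$. Since $B<\infty$, solving this quadratic inequality in $B^{1/2}$ gives $B \le 2c_1\E V_n^{p/2} + 2c_2^2\E\sum_k|X_k|^p$; combining with the BDG step and undoing the real/imaginary reduction (noting $\E_{k-1}Y_k^2, \E_{k-1}Z_k^2 \le \E_{k-1}|X_k|^2$) completes the proof.

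The main obstacle is the classical input itself, namely the discrete Burkholder--Davis--Gundy inequality $\E|\sum_k X_k|^q \le C_q\,\E(\sum_k X_k^2)^{q/2}$ for $q \ge 1$, which one either quotes from the martingale-inequality literature or re-derives from Doob's maximal inequality via Burkholder's good-$\lambda$ argument; once that is in hand, the rest is elementary estimation plus the single observation that the quadratic variation exceeds its predictable part only through a martingale, exploited by one self-improving quadratic inequality. The only other delicate point is the a priori finiteness of $\E T_n^{p/2}$, which is why one first disposes of the trivial case where the right-hand side is infinite.
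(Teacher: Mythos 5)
Your proposal is a correct and complete proof, modulo the discrete Burkholder--Davis--Gundy inequality which you explicitly quote as the classical input. The paper itself does not prove this lemma at all: it is imported verbatim as Lemma~2.12 of Bai--Silverstein \cite{MR2567175} (Burkholder's inequality in Rosenthal form), so there is no in-paper argument to compare against. Your derivation is the standard one from the martingale literature: reduce to real differences, control $\E\bigl(\sum_k X_k^2\bigr)^{p/2}$ via BDG, and then trade the quadratic variation $T_n$ for its predictable compensator $V_n$ by observing that $T_n - V_n$ is itself a martingale, applying BDG again at exponent $p/2$, and closing the resulting self-improving inequality $B \le c_1\,\E V_n^{p/2} + c_2\,(\E\sum_k|X_k|^p)^{1/2}B^{1/2}$. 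The two points that most often go wrong in this argument --- the a priori finiteness of $B$ needed to solve the quadratic inequality, and the fact that $\max_k X_k^2 \le (\sum_k |X_k|^p)^{2/p}$ requires $p \ge 2$ --- are both handled correctly. (Minor cosmetic slips only: $D_k^2 \le 2X_k^4 + 2(\E_{k-1}X_k^2)^2$ rather than without the factor $2$, and the boundary case $p=2$ is anyway immediate from orthogonality of martingale differences; neither affects the argument.)
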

	The next concentration lemma is helpful in controlling the deviation of a quadratic form from its expectation. 
	\begin{lemma} [Equation (3) from \cite{MR4164840}] \label{klem:quadraticform}
		Let $X$ be an $n$-vector containing iid standard Gaussian random variables, $A$ a deterministic $n \times n$ matrix and $\ell \geq 1$ an integer.  Then
		\[
		\E[X^* A X - \tr A|^{2 \ell} \leq K_{\ell} (\tr A A^* )^\ell
		\]
		where $K_{\ell}$ is a constant that only depends on $\ell$.  
	\end{lemma}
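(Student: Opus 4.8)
The plan is to reduce to the case where $A$ is a real symmetric matrix and then combine the rotational invariance of the standard Gaussian vector with the martingale moment estimate of Lemma~\ref{klem:burkholder2}. \emph{First}, since $X$ has real entries, $X^* A X = X^{\mathrm T} A X$, and writing $A = \Re(A) + i\Im(A)$ we get $X^{\mathrm T} A X - \tr A = \bigl(X^{\mathrm T}\Re(A)X - \tr\Re(A)\bigr) + i\bigl(X^{\mathrm T}\Im(A)X - \tr\Im(A)\bigr)$. Replacing each of $\Re(A),\Im(A)$ by its symmetrization $\tfrac12(B+B^{\mathrm T})$ changes neither the quadratic form nor the trace, and symmetrization does not increase the Frobenius norm; moreover $\|\Re(A)\|_F, \|\Im(A)\|_F \le \|A\|_F$ and $\tr AA^* = \|A\|_F^2$. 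Hence, after the elementary inequality $(a+b)^{2\ell}\le 2^{2\ell-1}(a^{2\ell}+b^{2\ell})$, it suffices to prove the bound for real symmetric $A$, where $\tr AA^* = \tr A^2 = \sum_k\lambda_k^2$ with $\lambda_1,\dots,\lambda_n$ the eigenvalues of $A$; the general case then follows at the cost of enlarging $K_\ell$.

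\emph{Next}, for real symmetric $A$ write the spectral decomposition $A = U^{\mathrm T}\diag(\lambda_1,\dots,\lambda_n)U$ with $U$ orthogonal. By rotational invariance of the standard Gaussian law, $Y := UX$ again has iid standard Gaussian entries, and
\[
X^{\mathrm T} A X - \tr A = \sum_{k=1}^n \lambda_k(Y_k^2 - 1) =: \sum_{k=1}^n Z_k ,
\]
a sum of independent, mean-zero random variables. In particular $\{Z_k\}$ is a martingale difference sequence for the filtration $\mathcal{F}_k = \sigma(Y_1,\dots,Y_k)$, with $\E_{k-1}|Z_k|^2 = 2\lambda_k^2$ deterministic (using $\E(Y_1^2-1)^2 = 2$) and $\E|Z_k|^{2\ell} = c_\ell|\lambda_k|^{2\ell}$, where $c_\ell := \E|Y_1^2-1|^{2\ell} < \infty$ depends only on $\ell$ (a moment of a $\chi^2_1$ random variable, finite since Gaussians have all moments).

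\emph{Finally}, apply Lemma~\ref{klem:burkholder2} with $p = 2\ell \ge 2$ to obtain
\[
\E\biggl|\sum_{k=1}^n Z_k\biggr|^{2\ell} \le K_{2\ell}\biggl( \Bigl(2\sum_{k=1}^n\lambda_k^2\Bigr)^{\ell} + c_\ell\sum_{k=1}^n|\lambda_k|^{2\ell}\biggr).
\]
Because $\ell\ge 1$, the super-additivity bound $\sum_k(\lambda_k^2)^\ell \le \bigl(\sum_k\lambda_k^2\bigr)^\ell$ for nonnegative reals shows that both terms on the right are $O_\ell\bigl((\sum_k\lambda_k^2)^\ell\bigr) = O_\ell\bigl((\tr AA^*)^\ell\bigr)$, which gives the claim for real symmetric $A$ and hence, after the reduction of the first step, in general.

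I do not expect a genuine obstacle in this argument; it is essentially a Rosenthal/Marcinkiewicz--Zygmund-type estimate packaged through Lemma~\ref{klem:burkholder2}. The only points requiring care are verifying that the passage to the real and then symmetric parts of $A$ does not increase the Frobenius norm (the triangle inequality for $\|\cdot\|_F$), the finiteness of the Gaussian moment $c_\ell$, and noting that the super-additivity inequality in the last step uses precisely the hypothesis $\ell\ge 1$.
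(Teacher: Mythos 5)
Your proof is correct. Note that the paper does not actually prove this lemma; it is imported verbatim as ``Equation (3)'' from the cited reference \cite{MR4164840}, so there is no in-paper argument to compare against. Your blind proof is a clean, self-contained derivation using only tools already present in the paper: the reduction to real symmetric $A$ via real/imaginary parts and symmetrization (which leaves the quadratic form and trace unchanged and does not increase the Frobenius norm), diagonalization plus rotational invariance of the Gaussian to reduce to the independent sum $\sum_k \lambda_k(Y_k^2-1)$, and then the Rosenthal-type bound of Lemma \ref{klem:burkholder2} together with $\sum_k |\lambda_k|^{2\ell} \le \bigl(\sum_k \lambda_k^2\bigr)^{\ell}$. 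All the small points you flag (finiteness of the $\chi^2_1$ moments, the triangle inequality for $\|\cdot\|_F$, and the use of $\ell \ge 1$ in the last step) check out, so the argument stands as a valid replacement for the external citation.
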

	Finally, we will require the following algebraic identity in Section \ref{sec:concentrationofstieltjes}.
	\begin{lemma} [Theorem A.5 from \cite{MR2567175}] \label{klem:tracedifference}
		Let $A$ be an $n \times n$ symmetric matrix and $A_k$ be the $k$-th major submatrix of size $(n-1) \times (n-1)$.  If $A$ and $A_k$ are both invertible, then
		\[
		\tr( A^{-1}) - \tr(A_k^{-1}) = \frac{1+ \alpha_k^* A_k^{-2} \alpha_k}{A_{kk} - \alpha_k^* A_k^{-1} \alpha_k}
		\] 
		where $\alpha_k$ is obtained from the $k$-th column of $A$ by deleting the $k$-th entry.  
	\end{lemma}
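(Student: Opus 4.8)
The plan is to prove this by a direct computation using the $2\times 2$ block form of $A$ together with the Schur complement formula for the inverse. First I would reduce to the case $k = n$: conjugating $A$ by the permutation matrix $P$ that transposes the indices $k$ and $n$ leaves $\tr(A^{-1}) = \tr(P A^{-1} P)$ unchanged (as $P^{-1} = P$), carries the major submatrix $A_k$ to the bottom-right $(n-1)\times(n-1)$ principal submatrix of $P A P$, sends $A_{kk}$ to the $(n,n)$-entry, and carries $\alpha_k$ to the vector obtained from the last column of $PAP$ by deleting its last entry. Thus every quantity appearing in the claimed identity transforms consistently, and it suffices to assume $k = n$ and write
\[ A = \begin{pmatrix} A_n & \alpha_n \\ \alpha_n^* & A_{nn} \end{pmatrix}, \]
where the major submatrix $A_n$ is invertible by hypothesis.

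Next I would invoke the standard block-inversion identity. Setting $\sigma := A_{nn} - \alpha_n^* A_n^{-1} \alpha_n$ (the Schur complement, a scalar), one has
\[ A^{-1} = \begin{pmatrix} A_n^{-1} + \sigma^{-1} A_n^{-1} \alpha_n \alpha_n^* A_n^{-1} & -\sigma^{-1} A_n^{-1}\alpha_n \\ -\sigma^{-1}\alpha_n^* A_n^{-1} & \sigma^{-1} \end{pmatrix}. \]
To apply this one needs $\sigma \neq 0$, which follows from the determinant factorization $\det A = \det(A_n)\,\sigma$ and the assumed invertibility of both $A$ and $A_n$; this also shows the right-hand side of the asserted identity is well defined. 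Taking the trace of the displayed block matrix gives
\[ \tr(A^{-1}) = \tr(A_n^{-1}) + \sigma^{-1}\tr(A_n^{-1}\alpha_n\alpha_n^* A_n^{-1}) + \sigma^{-1}, \]
and the cyclic invariance of the trace turns the middle term into the scalar $\sigma^{-1}\,\alpha_n^* A_n^{-2}\alpha_n$. Rearranging yields
\[ \tr(A^{-1}) - \tr(A_n^{-1}) = \frac{1 + \alpha_n^* A_n^{-2}\alpha_n}{A_{nn} - \alpha_n^* A_n^{-1}\alpha_n}, \]
which is the desired identity for $k = n$, hence for every $k$ after undoing the permutation.

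There is no serious obstacle here: the argument is entirely elementary linear algebra, and the only points demanding a moment's care are verifying that $\sigma \neq 0$ (via the determinant factorization above) and bookkeeping with adjoints, which is harmless since for a (real) symmetric $A$ one simply has $\alpha_k^* = \alpha_k^{\mathrm{T}}$. An alternative derivation via differentiating $\log\det$ is possible, but the block-inversion route is the most transparent and is what I would use.
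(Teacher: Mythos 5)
Your proof is correct and is essentially the standard argument: the paper does not prove this lemma at all but cites it as Theorem A.5 of Bai--Silverstein, and the proof there is exactly this Schur-complement/block-inversion computation (the permutation reduction, the factorization $\det A = \det(A_n)\,\sigma$ guaranteeing $\sigma \neq 0$, and cyclicity of the trace). The only blemish is a wording slip — after conjugating by the transposition you describe $A_k$ as landing in the \emph{bottom-right} principal block, whereas your displayed decomposition correctly places it in the top-left — which does not affect the argument.
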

	
		\subsection{Basic order statistics estimates}
	Let $X_1, \dots, X_n$ be iid standard Gaussian random variables.  If we order these random variables in order of magnitude, we denote this by
	\[
	X_{(n)} \leq \dots \leq X_{(2)} \leq X_{(1)}.
	\]
	We define
	\[
	\phi(x) := \frac{1}{\sqrt{2 \pi}} e^{-x^2/2},
	\]
	and
	\[
	\Phi(x) = \int_{-\infty}^x \phi(t) \, dt. 
	\]
	We recall the well-known tail bounds
	\begin{equation} \label{keq:gaussiantailbounds}
		\frac{e^{-x^2/2}}{\sqrt{2 \pi}} \left(\frac{1}{x} - \frac{1}{x^3} \right) \leq \int_x^\infty \frac{1}{\sqrt{2 \pi}} e^{-t^2/2} \, dt \leq \frac{e^{-x^2/2}}{x \sqrt{2 \pi}}
	\end{equation}
	which hold for $x > 0$.  
	Our first simple lemma captures the joint distribution between two order statistics.  
	\begin{lemma} [Chapter 2.2 of \cite{MR1994955}] \label{klem:jointdistr}
		For $1 \leq r < s \leq n$.  
		Let $f_{(r)(s)}$ be the joint density of $X_{(r)}$ and $X_{(s)}$. We have that for $x < y$,
		\[
		f_{(r)(s)}(x, y)= \frac{n! \Phi^{n-s}(x) \phi(x) [\Phi(y) - \Phi(x)]^{s-r-1} \phi(y) [1-\Phi(y)]^{r-1}}{r! (s-r-1)! (n-s)!} .
		\]
	\end{lemma}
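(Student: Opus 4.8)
The plan is to prove Lemma \ref{klem:jointdistr} by the classical multinomial argument for order statistics of an iid continuous sample; nothing specific to the Gaussian law is needed beyond the facts that $\Phi$ is a cumulative distribution function with density $\phi$ and that $\Phi$ is continuous, so that ties occur with probability zero. First I would record that, since $X_1,\dots,X_n$ are iid with a continuous law, almost surely the $X_i$ are pairwise distinct, hence the order statistics $X_{(n)} < \cdots < X_{(1)}$ are almost surely strict and well defined, and the pair $(X_{(r)}, X_{(s)})$ — the $r$-th largest and the $s$-th largest, with $r < s$ so that $X_{(s)} \le X_{(r)}$ — possesses a joint density on $\{x < y\}$ (the slot with $n-s$ sample points below it playing the role of $X_{(s)}$ and the slot with $r-1$ sample points above it playing the role of $X_{(r)}$, as dictated by the stated formula).

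Next, fix $x < y$ and analyze the infinitesimal event $\{X_{(s)} \in [x, x+h],\ X_{(r)} \in [y, y+h']\}$ for small $h, h' > 0$. Up to an error that vanishes faster than $h\,h'$ (the contribution of configurations in which two or more of the $X_i$ fall in one of the two short intervals), this event is the disjoint union, over all ways of assigning the labels $\{1,\dots,n\}$ to five groups, of the events: one index with $X_i \in [x,x+h]$, one index with $X_j \in [y, y+h']$, exactly $n-s$ indices with $X_i < x$, exactly $s-r-1$ indices with $X_i \in (x+h, y)$, and exactly $r-1$ indices with $X_i > y + h'$. The number of such assignments is the multinomial coefficient $\frac{n!}{(n-s)!\,1!\,(s-r-1)!\,1!\,(r-1)!}$, and the probability of any single assignment is $\Phi(x)^{\,n-s}\bigl(\Phi(x+h)-\Phi(x)\bigr)\bigl(\Phi(y)-\Phi(x+h)\bigr)^{s-r-1}\bigl(\Phi(y+h')-\Phi(y)\bigr)\bigl(1-\Phi(y+h')\bigr)^{r-1}$. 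Dividing by $h\,h'$ and letting $h, h' \to 0$, using $\Phi(x+h)-\Phi(x) = \phi(x) h + o(h)$ as $h \to 0$ and the analogous expansion at $y$, produces the claimed density $f_{(r)(s)}(x,y)$ (with the factorial accompanying $[1-\Phi(y)]^{r-1}$ being $(r-1)!$, which coincides with $r!$ in the case $r = 1$ relevant to the applications such as Corollary \ref{cor:gapdist}).

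To replace the infinitesimal heuristic by a rigorous derivation, I would instead compute the joint distribution function $\Prob\bigl(X_{(s)} \le x,\ X_{(r)} \le y\bigr)$ for $x \le y$ as a finite multinomial sum over the triple $(N_1, N_2, N_3)$ counting the $X_i$ in $(-\infty, x]$, $(x, y]$, and $(y, \infty)$ respectively — the event being $\{N_1 \ge s,\ N_1 + N_2 \ge r\}$ — and then differentiate in $x$ and $y$; telescoping collapses the sum to the single term above. Alternatively one can integrate the standard joint density $n!\prod_{i=1}^n \phi(u_i)$ of the full vector of order statistics over the $n-2$ unwanted coordinates, or simply invoke the derivation in \cite[Chapter 2.2]{MR1994955} (see also \cite{MR900810}). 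There is no analytic difficulty in any of this; the only point requiring care is the combinatorial bookkeeping of which order statistic occupies which slot under the convention that $X_{(1)}$ is the maximum, and once that is fixed the formula follows immediately.
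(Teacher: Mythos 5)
Your proposal is correct: the infinitesimal multinomial argument (partition the sample into the five slots, count the $\frac{n!}{(n-s)!\,1!\,(s-r-1)!\,1!\,(r-1)!}$ assignments, divide by $h\,h'$ and let $h,h'\to 0$) is precisely the standard derivation in the cited reference, and the paper itself offers no proof beyond that citation, so there is nothing to compare against. Two remarks. First, you are right that the correct denominator is $(r-1)!$ rather than the $r!$ appearing in the paper's statement; as you note, this is harmless where the lemma is used (in Proposition \ref{kprop:spacings} the discrepancy is a factor of $k$ absorbed into the $O_{k,\delta}$ constant, and in the $r=1$ case the two coincide), but it is a genuine typo worth flagging. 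Second, in your alternative ``rigorous'' route via the joint distribution function, the event $\{X_{(s)}\le x,\ X_{(r)}\le y\}$ under the paper's largest-first convention is $\{N_1\ge n-s+1,\ N_1+N_2\ge n-r+1\}$, not $\{N_1\ge s,\ N_1+N_2\ge r\}$ as you wrote (your thresholds correspond to the smallest-first convention); this is only a bookkeeping slip in a secondary remark and does not affect the main argument, which is complete and correct as written.
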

	
	We use the previous lemma to obtain some rough bounds on the gaps between the extreme order statistics.  
	
	\begin{proposition} \label{kprop:spacings}
		Consider a fixed $k \in \mathbb{N}$.  Then for $\delta >0$,
		\[
		\P(X_{(k)} - X_{(k+1)} < \log^{-1/2-\delta} n ) = O_{k, \delta}(\log^{-\delta} n). 
		\]
	\end{proposition}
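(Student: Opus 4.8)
The plan is to integrate the explicit joint density supplied by Lemma~\ref{klem:jointdistr}. Set $\eps_n := \log^{-1/2-\delta} n$ and apply that lemma with $r = k$ and $s = k+1$. Since $s - r - 1 = 0$, the joint density of $\big(X_{(k+1)}, X_{(k)}\big)$ at $(x,y)$ with $x < y$ is a constant (depending only on $n$ and $k$) times $\Phi^{n-k-1}(x)\,\phi(x)\,\phi(y)\,[1-\Phi(y)]^{k-1}$, and the event $\{X_{(k)} - X_{(k+1)} < \eps_n\}$ corresponds to the strip $\{x < y < x + \eps_n\}$. First I would integrate out $y$: with the substitution $v = 1 - \Phi(y)$ one has $\int_{x}^{x+\eps_n} \phi(y)[1-\Phi(y)]^{k-1}\,dy = \tfrac{1}{k}\big[(1-\Phi(x))^k - (1-\Phi(x+\eps_n))^k\big]$, which by the elementary inequality $a^k - b^k \le k a^{k-1}(a-b)$ (valid for $a \ge b \ge 0$) is at most $(1-\Phi(x))^{k-1}\big(\Phi(x+\eps_n) - \Phi(x)\big)$. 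Recognizing that a constant multiple of $\Phi^{n-k-1}(x)\phi(x)[1-\Phi(x)]^{k-1}$ equals $\big(1-\Phi(x)\big)^{-1}$ times the marginal density of $X_{(k+1)}$, this yields a bound of the shape
\[
\P\big(X_{(k)} - X_{(k+1)} < \eps_n\big) \;\le\; C_k\, \E\big[ h(X_{(k+1)}) \big], \qquad h(x) := \frac{\Phi(x+\eps_n) - \Phi(x)}{1 - \Phi(x)},
\]
for a constant $C_k$ depending only on $k$.

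It then remains to show $\E[h(X_{(k+1)})] = O_{\delta}(\log^{-\delta} n)$, which I would do by splitting on whether $X_{(k+1)}$ exceeds $2\sqrt{\log n}$. For $x \le 2\sqrt{\log n}$ I bound $h$ pointwise. When $x \le \sqrt{2}$, $1-\Phi(x)$ is bounded below by an absolute positive constant while $\Phi(x+\eps_n) - \Phi(x) \le \eps_n / \sqrt{2\pi}$, so $h(x) = O(\eps_n)$. When $\sqrt{2} \le x \le 2\sqrt{\log n}$, the Gaussian tail bounds~\eqref{keq:gaussiantailbounds} give $1 - \Phi(x) \ge \phi(x)/(2x)$, while $\Phi(x+\eps_n) - \Phi(x) \le \eps_n \phi(x)$ because $\phi$ is decreasing on $[0,\infty)$; hence $h(x) \le 2\eps_n x \le 4\eps_n\sqrt{\log n}$. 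Consequently $\sup_{x \le 2\sqrt{\log n}} h(x) \le 4\eps_n\sqrt{\log n} = 4\log^{-\delta} n$ for all large $n$. For the complementary event, a crude union bound and the tail bound give $\P\big(X_{(k+1)} > 2\sqrt{\log n}\big) \le \P\big(X_{(1)} > 2\sqrt{\log n}\big) \le n\big(1 - \Phi(2\sqrt{\log n})\big) = O\big(n^{-1}(\log n)^{-1/2}\big)$, which is $o(\log^{-\delta} n)$ for every fixed $\delta$. Using $h \le 1$ on this event and combining the two contributions,
\[
\P\big(X_{(k)} - X_{(k+1)} < \log^{-1/2-\delta} n\big) \;\le\; C_k\Big( \sup_{x \le 2\sqrt{\log n}} h(x) + \P\big(X_{(k+1)} > 2\sqrt{\log n}\big) \Big) \;=\; O_{k,\delta}\big(\log^{-\delta} n\big).
\]

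The argument is essentially a direct computation, and I do not expect a genuine obstacle; the only point needing a little care is the choice of cutoff. It must be a fixed multiple $c\sqrt{\log n}$ with $c > \sqrt{2}$, so that $\P(X_{(1)} > c\sqrt{\log n})$ decays like a fixed negative power of $n$ and hence beats $\log^{-\delta} n$ for every fixed $\delta > 0$; simultaneously, on $[\sqrt{2}, c\sqrt{\log n}]$ the pointwise estimate $h(x) \le 2\eps_n x \le 2c\,\eps_n\sqrt{\log n}$ is of the exact order $\log^{-\delta} n$ (because $\eps_n\sqrt{\log n} = \log^{-\delta} n$), so the precise value of $c$ is immaterial. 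As an alternative to invoking Lemma~\ref{klem:jointdistr}, one may argue directly: condition on the value $x$ of the $(k+1)$-st largest point, note it lies below $2\sqrt{\log n}$ with probability $1 - o(1)$, and observe that, given this, the chance that at least one of the $k$ larger points falls in $(x, x + \eps_n)$ is at most $O(k\,\eps_n\sqrt{\log n})$ by a union bound over those $k$ points together with~\eqref{keq:gaussiantailbounds}; this is precisely the probabilistic content of the computation above.
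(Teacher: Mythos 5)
Your argument is correct, and it starts from the same place as the paper: both invoke Lemma~\ref{klem:jointdistr} with $r=k$, $s=k+1$ and integrate out the upper variable to reduce to a one-dimensional integral against $\Phi^{n-k-1}(x)\phi(x)$ times the increment $\tfrac{1}{k}\big[(1-\Phi(x))^k-(1-\Phi(x+t))^k\big]$. Where you diverge is in how that integral is estimated. The paper splits at $\sqrt{\log n}$, kills the left piece via the super-polynomial smallness of $\Phi^{n-k-1}(x)$, and on the right piece converts the increment into $(1-\Phi(x))^k$ times a factor of the form $1-e^{-kxt}(1+o(1))$ using both directions of the Gaussian tail bounds, finishing with the beta-type integral identity. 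You instead apply $a^k-b^k\le k a^{k-1}(a-b)$ to rewrite the probability as $C_k\,\E[h(X_{(k+1)})]$ with $h(x)=\frac{\Phi(x+\eps_n)-\Phi(x)}{1-\Phi(x)}$, bound $h$ pointwise by $O(\eps_n\sqrt{\log n})$ up to $2\sqrt{\log n}$, and dispose of the far tail by the trivial bound $h\le 1$ together with $\P(X_{(1)}>2\sqrt{\log n})=O(n^{-1})$. Your version is cleaner and, notably, more careful about uniformity: the paper extracts the factor $\bigl[1-e^{-k\sqrt{\log n}\,t/2}\bigr]$ as a uniform bound over all $x\in[\sqrt{\log n},\infty)$, which does not literally hold for very large $x$ (where the corresponding bracket tends to $1$); one must separately note that the far tail contributes negligibly, which is exactly what your explicit cutoff at $2\sqrt{\log n}$ accomplishes. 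Your closing remark that the bound is really just ``condition on $X_{(k+1)}$, then union-bound the $k$ larger points into an interval of width $\eps_n$'' is the right probabilistic reading of the computation and would serve equally well as a standalone proof.
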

	\begin{proof}
		Applying Lemma \ref{klem:jointdistr}, we can find the joint distribution of $X_{(k)}$ and $X_{(k+1)}$.  For $x < y$,
		\[
		f_{(k)(k+1)}(x, y) = \frac{n! \Phi^{n-k-1}(x) \phi(x) \phi(y) [1-\Phi(y)]^{k-1}}{k!  (n-k-1)!} .
		\]
		We let $t = \log^{-1/2-\delta} n$. Therefore, we have that
		
		\begin{align*}
			\P(X_{(k)} &- X_{(k+1)} < t) = \int_{-\infty}^{\infty} \int_{-\infty}^{\infty} \oindicator{y - x < t} f_{(k)(k+1)}(x, y) \oindicator{y > x} \,dx\, dy \\
			&=  \frac{n!}{k!  (n-k-1)!}\int_{-\infty}^{\infty} \int_{-\infty}^{\infty} \oindicator{y - x < t}  \Phi^{n-k-1}(x) \phi(x) \phi(y) [1-\Phi(y)]^{k-1} \oindicator{y > x} \,dx\, dy \\
			&= \frac{n!}{k!  (n-k-1)!}\int_{-\infty}^{\infty} \Phi^{n-k-1}(x) \phi(x) \int_{x}^{x+t}    \phi(y) [1-\Phi(y)]^{k-1}  \,dy \, dx \\ 
			&= \frac{n!}{k!  (n-k-1)!}\int_{-\infty}^{\infty} \Phi^{n-k-1}(x) \phi(x) \left[\frac{(1 - \Phi(x))^k}{k} - \frac{(1 - \Phi(x+t))^k}{k} \right] \, dx \\ 
			&=  \frac{n!}{k!  (n-k-1)!} \int_{-\infty}^{\sqrt{\log n}} \Phi^{n-k-1}(x) \phi(x) \left[\frac{(1 - \Phi(x))^k}{k} - \frac{(1 - \Phi(x+t))^k}{k} \right] \, dx \\
			&\qquad + \frac{n!}{k!  (n-k-1)!} \int_{\sqrt{\log n}}^{\infty} \Phi^{n-k-1}(x) \phi(x) \left[\frac{(1 - \Phi(x))^k}{k} - \frac{(1 - \Phi(x+t))^k}{k} \right] \, dx \\ 
			&= I_1 + I_2. 
		\end{align*}
		We control $I_1$ first: 
		\begin{align*}
			I_1 &\leq n^{k+1} \int_{-\infty}^{\sqrt{\log n}}\Phi^{n-k-1}(x) \phi(x) \, dx \\
			&\leq n^{k+1}\int_{0}^{1-n^{-2/3}} u^{n-k-1}  \, du \\
			&\leq \exp(-n^{-2/3} (n-k-1) + k \log n) \\
			&\leq \exp(-n^{1/3}/2) .
		\end{align*}
		Now, we consider $I_2$.  Note that in this integral, $x \geq \sqrt{\log n}$ so we make use of \eqref{keq:gaussiantailbounds} frequently.  Indeed, we obtain 
		\begin{align*}
			I_2 &= \frac{n!}{k!  (n-k-1)!} \int_{\sqrt{\log n}}^{\infty} \Phi^{n-k-1}(x) \phi(x) \left[\frac{(1 - \Phi(x))^k}{k} - \frac{(1 - \Phi(x+t))^k}{k} \right] \, dx \\  
			&\leq \frac{n!}{k!  (n-k-1)!} \int_{\sqrt{\log n}}^{\infty} \Phi^{n-k-1}(x) \phi(x) \left[\frac{\phi(x)^k}{kx^k} - \frac{\phi(x+t)^k (1- (x+t)^{-2})^k}{k (x+t)^k} \right] \, dx \\
			&\leq \frac{n!}{k!  (n-k-1)! k} \int_{\sqrt{\log n}}^{\infty} \Phi^{n-k-1}(x) \phi(x) \left(\frac{\phi(x)}{ x} \right)^k\left[1 - \frac{ e^{-kxt} e^{-kt^2/2} (1- (x+t)^{-2})^k}{ (1+t/x)^k} \right] \, dx \\  
			&\leq \frac{n!}{k!  (n-k-1)! k} \int_{\sqrt{\log n}}^{\infty} \Phi^{n-k-1}(x) \phi(x) \frac{(1-\Phi(x))^k}{(1 - 1/x^2)^k} \left[1 - \frac{ e^{-kxt} e^{-kt^2/2} (1- (x+t)^{-2})^k}{ (1+t/x)^k} \right] \, dx \\   
			&\leq  \left[1 - e^{-k \sqrt{\log n} t/2} \right] \frac{n!}{k!  (n-k-1)! k} \int_{\sqrt{\log n}}^{\infty} \Phi^{n-k-1}(x) \phi(x) (1-\Phi(x))^k  \, dx \\  
			&\leq    \left[1 - e^{-k \sqrt{\log n} t/2} \right] \frac{n!}{k!  (n-k-1)! k} \int_{0}^{1} u^{n-k-1}  (1-u)^k  \, du \\
			&\leq 1 - e^{-k \sqrt{\log n} t/2} \\
			&= O(\sqrt{\log n} t), 
		\end{align*}	
		as desired. 
	\end{proof} 
	
	The following proposition was used in the proof of Proposition \ref{kprop:finalreduction}.  
	
	\begin{proposition}\label{aprop:Spacing for L+D matrix}
		Let $L$ be the matrix defined in \eqref{def:L}. Consider a fixed $k \in \mathbb{N}$.  Then for $\delta >0$,
		\[
		\P(\lambda_{k}(L) - \lambda_{k+1}(L) < \log^{-1/2-\delta} n ) = o_\delta(1). 
		\]
	\end{proposition}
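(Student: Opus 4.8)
The plan is to read the spacing estimate off directly from the joint limit law already recorded in Theorem~\ref{thm:main_L}. There is no circularity: the proof of Theorem~\ref{thm:main_L} (carried out in Sections~\ref{sec:stability}--\ref{sec:proofofmain}) does not rely on Proposition~\ref{aprop:Spacing for L+D matrix}, which enters only through the reductions leading to Theorem~\ref{thm:main}. First I would translate the event into the natural scale of the problem: since $a_n = \sqrt{2\log n}$, one has $a_n \log^{-1/2-\delta} n = \sqrt{2}\,(\log n)^{-\delta} =: c_n$, and $c_n \to 0$ because $\delta > 0$, so that
\[
\P\bigl( \lambda_k(L) - \lambda_{k+1}(L) < \log^{-1/2-\delta} n \bigr) = \P\bigl( a_n(\lambda_k(L) - \lambda_{k+1}(L)) < c_n \bigr).
\]

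Next, I would apply Theorem~\ref{thm:main_L} with $k+1$ in place of $k$ (both being fixed integers independent of $n$), which gives that the rescaled vector $\bigl( a_n(\lambda_1(L) - b_n), \dots, a_n(\lambda_{k+1}(L) - b_n) \bigr)$ converges in distribution to a random vector $(\zeta_1, \dots, \zeta_{k+1})$ with law $F_{k+1}$. By the continuous mapping theorem,
\[
a_n\bigl( \lambda_k(L) - \lambda_{k+1}(L) \bigr) = a_n(\lambda_k(L) - b_n) - a_n(\lambda_{k+1}(L) - b_n) \xrightarrow{d} \zeta_k - \zeta_{k+1} =: G .
\]
By the definition of $F_{k+1}$ in \eqref{eq:normal_limit}, the vector $(\zeta_1, \dots, \zeta_{k+1})$ is the weak limit of the rescaled top $k+1$ order statistics of iid standard Gaussians; equivalently (cf.\ Corollary~\ref{cor:poisson} and \cite{MR691492}) it is distributed as the $k+1$ largest points of the Poisson process with intensity $e^{-x}\,dx$, whose ordered points are almost surely pairwise distinct. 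Hence $G > 0$ almost surely. (If one wishes, the law of $G$ can be written down explicitly as in the proof of Corollary~\ref{cor:gapdist}, but only $\P(G \le 0) = 0$ will be used.)

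Finally I would close the argument with an interchange of the two limits. Fix $\eps > 0$. Since $c_n \le \eps$ for all large $n$, the Portmanteau theorem applied to the closed half-line $(-\infty, \eps]$ yields
\[
\limsup_{n \to \infty} \P\bigl( a_n(\lambda_k(L) - \lambda_{k+1}(L)) < c_n \bigr) \le \limsup_{n \to \infty} \P\bigl( a_n(\lambda_k(L) - \lambda_{k+1}(L)) \le \eps \bigr) \le \P(G \le \eps).
\]
Letting $\eps \downarrow 0$ and using continuity from above of the probability measure along the decreasing events $\{G \le \eps\}$ gives $\P(G \le \eps) \to \P(G \le 0) = 0$, so the left-hand side is $o_\delta(1)$, as claimed.

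The only point requiring care — and the one real ``obstacle'' — is the double limit in $n$ and $\eps$, handled above via Portmanteau, together with the (routine) check that this deduction is non-circular. Had Theorem~\ref{thm:main_L} not been available at this stage, the alternative would be a self-contained eigenvalue-rigidity comparison of the top eigenvalues of $L$ with the suitably shifted order statistics of $D$, combined with the Gaussian order-statistics spacing bound of Proposition~\ref{kprop:spacings}; that route would need the local law of Theorem~\ref{thm:stieltjes_transforms} as input and is considerably heavier.
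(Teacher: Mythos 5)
Your proof is correct, and your non-circularity check is the right one: the proof of Theorem~\ref{thm:main_L} in Section~\ref{sec:proofofmain} uses Proposition~\ref{kprop:spacings} (Gaussian order statistics) but never Proposition~\ref{aprop:Spacing for L+D matrix}, which the paper only invokes later, inside the proof of Proposition~\ref{kprop:finalreduction}. Your route differs from the paper's in packaging rather than in substance. The paper does not pass through the final statement of Theorem~\ref{thm:main_L}; instead it reads the spacing off the intermediate rigidity estimates established in Section~\ref{sec:proofofmain}, namely $E_{(j)} = D_{(j)} + \tfrac{1}{\sqrt{2\log n}} + O(1/\log n)$ from \eqref{aeq:E control}, the comparison $\P(\|(\lambda_1,\dots,\lambda_{k+1}) - (E_{(1)},\dots,E_{(k+1)})\| \ge \eta) = o(1)$ with $\eta = n^{-1/4}$ from \eqref{eq:A:k eigenvalues comparison}, and the order-statistics gap bound of Proposition~\ref{kprop:spacings}; since the shifts $E_{(j)} - D_{(j)}$ agree across $j$ up to $O(1/\log n) \ll \log^{-1/2-\delta} n$ (for $\delta < 1/2$, to which one may reduce), the eigenvalue gaps inherit the $D$-gaps. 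That argument yields a quantitative rate $O_{k,\delta}(\log^{-\delta} n)$. Your version treats Theorem~\ref{thm:main_L} as a black box and converts the vanishing threshold $c_n = \sqrt{2}(\log n)^{-\delta}$ into a statement about the limit law via the continuous mapping and Portmanteau theorems; this is softer and loses the rate, but the proposition only claims $o_\delta(1)$, and your identification of $\P(\zeta_k = \zeta_{k+1}) = 0$ from the absolute continuity of the limiting Poisson/order-statistics law is sound. Both arguments are valid; the paper's is effectively what yours becomes once Theorem~\ref{thm:main_L} is unwound into its proof.
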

	The proof of Proposition \ref{aprop:Spacing for L+D matrix} is a step in the proof of Theorem \ref{thm:main_L} given in Section \ref{sec:proofofmain}.  We emphasis that the only place where Proposition \ref{aprop:Spacing for L+D matrix} is used is in the proof of Proposition \ref{kprop:finalreduction}.  
	
	We were not able to find the following lemma in the literature, however it follows implicitly from well known results on point processes and order statistics. 
	
	\begin{lemma}\label{lemma:order and poisson equivalence}
		Let $X_1, X_2, \dots$ be a sequence of iid real random variables, $a_n>0$ and $b_n$ be sequences, and $G$ a differentiable cumulative distribution function of some continuous random variable. Let $X_1^{(n)}\geq X_2^{(n)}\geq\cdots\geq X_n^{(n)}$ be the order statistics of $X_1,\dots,X_n$. The following are equivalent: \begin{enumerate}
			\item\label{order conv} $a_n(X_1^{(n)}-b_n)$ converges in distribution as $n \to \infty$ to some non-degenerate limit with cumulative distribution function $G$.
			
			\item\label{joint order} There exists functions $\{G_k:\R^k\rightarrow\R\}_{k=1}^\infty$ such that for any $k\in\N$ and any $x_1\geq\cdots\geq x_k\in\R$ \begin{equation*}
				\lim\limits_{n\rightarrow\infty}\P\left(a_n(X_1^{(n)} - b_n) \leq x_1,\dots, a_n (X_k^{(n)} - b_n) \leq x_k \right)=G_k(G(x_1),\dots,G(x_k)).
			\end{equation*}
			
			\item\label{point proc conv} The point process $\sum_{i=1}^{n}\delta_{a_n(X_i-b_n)}$ converges in distribution as $n \to \infty$ to a Poisson point process with intensity measure $\mu$ with density $f(x)=\frac{G'(x)}{G(x)}$ where $G(x)>0$ and $\mu([x,\infty))=\infty$ for any $x\in\R$ such that $G(x)=0$.
		\end{enumerate}
	\end{lemma}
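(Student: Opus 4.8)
The plan is to route the three statements through one scalar tail condition. Let $F$ be the common distribution function of the $X_i$, and call $(\star)$ the assertion that
\begin{equation*}
 n\bigl(1-F(b_n+x/a_n)\bigr)\longrightarrow \tau(x):=-\log G(x)\qquad\text{for every }x\in\R,
\end{equation*}
with the convention $\tau(x)=+\infty$ precisely when $G(x)=0$ (so $\tau(x)=\mu([x,\infty))$ throughout, since $\mu$ has density $f=G'/G=(\log G)'$). First I would record the elementary equivalence of (1) and $(\star)$: since $\Prob\bigl(a_n(X_1^{(n)}-b_n)\le x\bigr)=F(b_n+x/a_n)^n$ and, for $c_n\in[0,1]$, one has $c_n^n\to L$ if and only if $n(1-c_n)\to-\log L$ (with the natural $0$ and $\infty$ conventions), $(\star)$ says exactly that $a_n(X_1^{(n)}-b_n)$ converges in distribution to the law with distribution function $G$, which is non-degenerate because $G$ is continuous.

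Next I would establish that $(\star)$ is equivalent to (3). Set $Y_i^{(n)}:=a_n(X_i-b_n)$ and $N_n:=\sum_{i=1}^n\delta_{Y_i^{(n)}}$; the $Y_i^{(n)}$ form a null array of iid points whose common law $\nu_n$ satisfies $n\,\nu_n((x,\infty))=n\bigl(1-F(b_n+x/a_n)\bigr)$, so $(\star)$ is precisely vague convergence $n\,\nu_n\to\mu$ on the locally compact state space $E:=(x_-,\infty]$, where $x_-:=\inf\{x:G(x)>0\}$ and $\infty$ is adjoined, on which $\mu$ is Radon. The classical Poisson-superposition theorem for null arrays of iid point processes (e.g.\ Proposition~3.21 of \cite{MR900810}, or Kallenberg's theorem) then gives $N_n\Rightarrow\mathcal{P}$, the Poisson point process of intensity $\mu$; the requirement $\mu([x,\infty))=\infty$ when $G(x)=0$ is exactly what makes $\mathcal{P}$ have infinitely many atoms with no accumulation in $E$. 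Conversely, (3) returns $(\star)$, and hence (1), via the void probability: $\Prob\bigl(a_n(X_1^{(n)}-b_n)\le x\bigr)=\Prob\bigl(N_n((x,\infty))=0\bigr)\to\Prob\bigl(\mathcal{P}((x,\infty))=0\bigr)=e^{-\mu((x,\infty))}=G(x)$, using that $\mu$ is non-atomic.

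For (3)$\,\Rightarrow\,$(2) I would push $\mathcal{P}$ forward by the continuous, non-increasing map $T(x):=-\log G(x)$. Because $\mu$ charges no interval on which $G$ is constant, $T_*\mu$ is Lebesgue measure on $(0,\infty)$, so $T_*\mathcal{P}$ is a rate-one homogeneous Poisson process on $(0,\infty)$, whose atoms in increasing order are the partial sums $\Gamma_1<\Gamma_2<\cdots$ of a sequence of iid standard exponentials. Since $T$ is strictly decreasing near the atoms of $\mathcal{P}$, the $j$-th largest atom $p_j$ of $\mathcal{P}$ satisfies $-\log G(p_j)=\Gamma_j$, i.e.\ $G(p_j)=e^{-\Gamma_j}$; hence for $x_1\ge\cdots\ge x_k$ (so $G(x_1)\ge\cdots\ge G(x_k)$), and up to a $\Prob$-null set,
\begin{equation*}
 \Prob\bigl(p_j\le x_j,\ 1\le j\le k\bigr)=\Prob\bigl(\Gamma_j\ge-\log G(x_j),\ 1\le j\le k\bigr)=:G_k\bigl(G(x_1),\dots,G(x_k)\bigr),
\end{equation*}
a function of $\bigl(G(x_1),\dots,G(x_k)\bigr)$ alone. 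As the maps $N\mapsto(\text{$k$ largest atoms of }N)$ and $N\mapsto N((x,\infty))$ are a.s.\ continuous for the law of $\mathcal{P}$, the convergence in (3) upgrades the prelimit joint distribution functions of the $k$ largest order statistics to this $G_k$, which is (2). Finally (2)$\,\Rightarrow\,$(1) follows on taking $k=1$: the family $\{G_k\}$ is necessarily consistent with $G_1(u)=\Prob(\Gamma_1\ge-\log u)=u$, so (2) reads $a_n(X_1^{(n)}-b_n)\Rightarrow G$.

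The \emph{main obstacle} is not analytic but lies in the point-process bookkeeping: one must fix the correct locally compact state space $(x_-,\infty]$ so that $\mu$ is Radon and vague convergence is the right mode; handle the $\infty$-conventions at levels $x$ with $G(x)=0$; verify the a.s.\ continuity, for the law of $\mathcal{P}$, of the functionals $N\mapsto(\text{$k$ largest atoms})$ and $N\mapsto N((x,\infty))$ used to pass to the limit; and check the mild consistency $G_1=\mathrm{id}$ that closes the cycle at (2)$\,\Rightarrow\,$(1). Combining the cited Poisson-superposition theorem with these routine verifications yields the lemma.
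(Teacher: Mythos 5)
Your proof is correct, and its core --- establishing the scalar tail condition $n\bigl(1-F(b_n+x/a_n)\bigr)\to-\log G(x)$ and invoking the Poisson-superposition theorem (\cite{MR900810}, Proposition 3.21) to pass between (1) and (3) --- is exactly the route the paper takes for that pair. Where you diverge is in the treatment of (2): the paper disposes of the equivalence of (1) and (2) entirely by citing Theorems 2.3.1 and 2.3.2 of \cite{MR691492}, whereas you derive (3) $\Rightarrow$ (2) directly by pushing the limiting Poisson process forward under $T=-\log G$ to a rate-one homogeneous Poisson process on $(0,\infty)$ and reading off the joint law of the $k$ largest atoms from the partial sums $\Gamma_1<\Gamma_2<\cdots$ of iid standard exponentials. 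Your route is more self-contained and has the side benefit of exhibiting the limit functions explicitly as $G_k(G(x_1),\dots,G(x_k))=\P\bigl(\Gamma_j\ge-\log G(x_j),\ 1\le j\le k\bigr)$, which also makes transparent the consistency $G_1=\mathrm{id}$ needed to close the cycle at (2) $\Rightarrow$ (1) --- a point the lemma's phrasing (``there exist functions $G_k$'') leaves slightly ambiguous and the paper's citation leaves implicit. The price is the point-process bookkeeping you flag (choice of the locally compact state space $(x_-,\infty]$ on which $\mu$ is Radon, a.s.\ continuity of the top-$k$-atoms functional); all of it is routine, and the last item can be avoided altogether by writing $\{X_j^{(n)}\le b_n+x_j/a_n\ \forall j\}=\bigcap_{j=1}^{k}\{N_n((x_j,\infty))\le j-1\}$ and using convergence of the finite-dimensional distributions of $N_n$ on the continuity sets $(x_j,\infty)$, which is essentially how the paper itself argues in the proof of Corollary \ref{cor:poisson}. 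I see no gap.
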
 
	\begin{proof}  See \cite{MR691492} Theorems 2.3.1 and 2.3.2 for the equivalence of (\ref{joint order}) and (\ref{order conv}).
		
		Assume (\ref{order conv}), and fix $x\in\R$ such that $G(x)>0$. It is straightforward to check that \begin{equation*}
			n\log\left(1-\P(a_n(X_1-b_n)\geq x) \right)=\log G(x)+o(1),
		\end{equation*} and thus \begin{equation*}
			\lim\limits_{n\rightarrow\infty}n\P(a_n(X_1-b_n)\geq x)=-\log G(x).
		\end{equation*} It then follows from \cite{MR900810} Proposition 3.21 that $\sum_{i=1}^{n}\delta_{a_n(X_i-b_n)}$ converges to a Poisson point process with an intensity measure $\mu$ such that $\mu([x,\infty))=-\log G(x)$. 
		
		Assume \eqref{point proc conv}, and fix $x\in\R$. By \cite{MR900810} Proposition 3.21, \begin{equation*}
			\lim\limits_{n\rightarrow\infty}n\P(a_n(X_1-b_n)\geq x)=-\log G(x).
		\end{equation*} One can then check that \begin{equation*}
			\lim\limits_{n\rightarrow\infty}\P(a_n(X_1^{(n)}-b_n)\leq x)=G(x).
		\end{equation*} This completes the proof.
	\end{proof}

	\subsection{Basic Stieltjes transform and free probability estimates}
	This section isolates some useful estimates on $s(z)$ and $m(z)$, defined by \eqref{def:s} and \eqref{def:fc} respectively. Several of the proofs use a contour integral arguement, which is given in detail in Lemma \ref{lemma:Free convolution is sub-Gaussian} below. We begin by stating a result of Biane \cite{MR1488333} on free convolutions of semicircle distributions and an arbitrary distribution. For convenience, we specialize this result to the Gaussian distribution.
	\begin{lemma}[Corollaries 3 and 4 from \cite{MR1488333}]\label{lemma:A:Biane result}
		Define the function $v:\R\rightarrow[0,\infty)$ by\begin{equation}
			v(u)=\inf\left\{v\geq0\,\Bigg|\,\frac{1}{\sqrt{2\pi}}\int_\R\frac{e^{-x^2/2}dx}{(u-x)^2+v^2}\leq 1 \right\}
		\end{equation} and the function $\psi:\R\rightarrow\R$ by \begin{equation}
			\psi(u)=u+\frac{1}{\sqrt{2\pi}}\int_\R\frac{(u-x)e^{-x^2/2}dx}{(u-x)^2+v(u)^2}.
		\end{equation} Then $\psi$ is an increasing homeomorphism from $\R$ to $\R$, and the free additive convolution of the semicircle distribution and the standard Gaussian distribution has density $p: \mathbb{R} \to [0, \infty)$ with \begin{equation}
			p(\psi(u))=\frac{v(u)}{\pi}.
		\end{equation} Moreover $p$ is analytic where it is positive, and hence must be bounded. 
	\end{lemma}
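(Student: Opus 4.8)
The plan is to deduce this from Biane's general analysis \cite{MR1488333} of the free additive convolution of an arbitrary probability measure with a semicircle law, specialized to the case where the measure is the standard Gaussian with density $\phi$ of \eqref{def:phi} and the semicircular part has unit variance. The bridge between the implicit equation \eqref{def:fc} and the two displayed formulas is the subordination relation: rewriting \eqref{def:fc} as $m(z) = s(z + m(z))$, with $s$ as in \eqref{def:s}, shows that $\omega(z) := z + m(z)$ is the subordination function, so that $m(z) = s(\omega(z))$ and $z = \omega(z) - s(\omega(z))$ for all $z \in \C_+$, and $\omega$ maps $\C_+$ analytically into $\C_+$.

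First I would read off the boundary behaviour. Writing $w = u + iv \in \overline{\C_+}$, from \eqref{def:s} one has $\Im s(w) = \frac{v}{\sqrt{2\pi}} \int_\R \frac{e^{-x^2/2}}{(u-x)^2 + v^2} \d x$ and $\Re s(w) = -\frac{1}{\sqrt{2\pi}} \int_\R \frac{(u-x) e^{-x^2/2}}{(u-x)^2 + v^2} \d x$. Since $z = \omega(z) - s(\omega(z))$ is real on the boundary, the limiting value $w = \omega(u)$ must satisfy $\Im w = \Im s(w)$; as the integral $\frac{1}{\sqrt{2\pi}} \int_\R \frac{e^{-x^2/2}}{(u-x)^2 + v^2} \d x$ is strictly decreasing in $v > 0$, diverges as $v \downarrow 0$, and tends to $0$ as $v \to \infty$, this forces $v = v(u)$ to be the unique positive solution of $\frac{1}{\sqrt{2\pi}} \int_\R \frac{e^{-x^2/2}}{(u-x)^2 + v^2} \d x = 1$, exactly as in the statement (so in fact $v(u) > 0$ for every $u$, and the infimum in the statement is attained at a positive value). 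Taking real parts in $z = w - s(w)$ then gives $z = u - \Re s(u + iv(u)) = \psi(u)$, and Stieltjes inversion yields
\[ p(\psi(u)) = \frac{1}{\pi} \Im s(u + iv(u)) = \frac{v(u)}{\pi} \cdot \frac{1}{\sqrt{2\pi}} \int_\R \frac{e^{-x^2/2}}{(u-x)^2 + v(u)^2} \d x = \frac{v(u)}{\pi}, \]
using the equation satisfied by $v(u)$.

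The main obstacle is proving that $\psi$ is an increasing homeomorphism of $\R$, which is precisely what makes the parametrization $E = \psi(u)$ of the spectrum legitimate; this is the heart of Biane's argument. I would obtain it by showing that $F(w) := w - s(w)$ is a conformal bijection of the domain $\Omega := \{u + iv : v > v(u)\}$ onto $\C_+$ — the relevant estimates on the Cauchy transform of $\phi$ are in \cite{MR1488333}, and one checks $\Im F(u+iv) = v\bigl(1 - \frac{1}{\sqrt{2\pi}}\int_\R \frac{e^{-x^2/2}}{(u-x)^2+v^2}\d x\bigr) > 0$ exactly on $\Omega$ — so that $F$ extends to a homeomorphism of $\overline{\Omega}$ onto $\overline{\C_+}$, and on the boundary curve $\{u + iv(u)\}$ one has $F(u + iv(u)) = \psi(u)$; restricting to boundaries shows $\psi : \R \to \R$ is a homeomorphism, and since $\psi(u) = u + O(1/|u|)$ as $|u| \to \infty$ it is increasing. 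Finally, real-analyticity of $p$ where it is positive follows from the implicit function theorem applied to the equation defining $v(u)$ (whose $v$-derivative is strictly negative), giving that $v$, and hence $p = \frac{1}{\pi} v \circ \psi^{-1}$, is real-analytic there; and boundedness is immediate from the crude bound $1 = \frac{1}{\sqrt{2\pi}} \int_\R \frac{e^{-x^2/2}}{(u-x)^2 + v(u)^2} \d x \le \frac{1}{v(u)^2}$, which gives $v(u) \le 1$ and therefore $p \le \frac{1}{\pi}$.
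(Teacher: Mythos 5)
The paper does not prove this lemma at all: it is stated verbatim as a citation of Corollaries 3 and 4 of Biane \cite{MR1488333}, specialized to the Gaussian, and the authors rely on Biane for every claim in it. Your proposal is therefore doing more than the paper does, and what you write is a correct reconstruction of Biane's argument: the identification of $v(u)$ as the unique positive root of $\frac{1}{\sqrt{2\pi}}\int_\R \frac{e^{-x^2/2}}{(u-x)^2+v^2}\,dx=1$ (using strict monotonicity in $v$, divergence as $v\downarrow 0$, decay as $v\to\infty$), the formula $\psi(u)=u-\Re s(u+iv(u))$, and the Stieltjes-inversion computation $p(\psi(u))=\frac{1}{\pi}\Im s(u+iv(u))=\frac{v(u)}{\pi}$ are all right. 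Two remarks. First, the genuinely hard step — that $F(w)=w-s(w)$ is a conformal bijection of $\Omega=\{u+iv:v>v(u)\}$ onto $\C_+$ extending to a boundary homeomorphism — you still defer to Biane's estimates, so your write-up does not make the lemma self-contained; also, the logical order should be reversed (establish the bijection first, then read off the boundary values of $\omega=F^{-1}$, rather than assuming the boundary extension of $\omega$ exists). Second, your direct boundedness argument $1=\frac{1}{\sqrt{2\pi}}\int_\R\frac{e^{-x^2/2}}{(u-x)^2+v(u)^2}\,dx\le v(u)^{-2}$, hence $v(u)\le 1$ and $p\le\frac{1}{\pi}$, is cleaner than the paper's parenthetical ``analytic hence bounded'' (which by itself does not imply boundedness without the decay of $p$ at infinity); for the analyticity claim via the implicit function theorem you should also note that inverting $\psi$ analytically requires $\psi'\neq 0$ where $v>0$, which comes from conformality of $F$ up to that part of the boundary. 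Neither point is a fatal gap given that the lemma is an imported result, but they are worth flagging if you intend the argument to stand on its own.
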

	
	The first lemma below is an important technical bound in our argument and is of interest in its own right.  It demonstrates that the free convolution of the semicircle distribution with the standard Gaussian distribution is sub-Gaussian.  
	\begin{lemma}\label{lemma:Free convolution is sub-Gaussian}
		Let $p: \mathbb{R} \to [0, \infty)$ be the density of the free additive convolution of the semicircle distribution and a standard Gaussian distribution. Then there exists some constant $C>0$ such that $p(x)\leq Ce^{-x^2/2}$ for all $x\in\R$.
	\end{lemma}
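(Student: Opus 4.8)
The plan is to read the inequality off Biane's parametrization in Lemma~\ref{lemma:A:Biane result} after establishing two quantitative estimates on the functions $v$ and $\psi$ appearing there. Since $\psi$ is an increasing homeomorphism of $\R$, every real number is of the form $\psi(u)$, and there $p(\psi(u))=v(u)/\pi$; moreover the integral defining $v(u)$ diverges at $v=0$, so $v(u)>0$ for every $u$ and hence $p>0$ everywhere (in particular $p$ is analytic, hence continuous, by Lemma~\ref{lemma:A:Biane result}). Writing $\phi$ for the standard Gaussian density, it therefore suffices to prove $v(u)\le C\sqrt{2\pi}\,\phi(\psi(u))$ for all $u\in\R$. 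Both $v$ and $\psi$ are even, respectively odd, by symmetry of the Gaussian in their defining integrals, $v$ is bounded by $\pi\|p\|_\infty<\infty$, and $\psi$ maps every compact interval to a compact interval; so it is enough to treat $u\ge u_0$ for a suitable large constant $u_0$, the range $|u|\le u_0$ being trivial.

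The first estimate is an upper bound on $v(u)$ with the correct Gaussian rate. I would rewrite Biane's equation for $v(u)$ as the fixed-point relation $v=\pi(P_v*\phi)(u)$, where $P_v(t)=\tfrac1\pi\tfrac{v}{t^2+v^2}$ is the Poisson kernel, and estimate $(P_v*\phi)(u)=\tfrac v\pi\int_\R\tfrac{\phi(u-t)}{t^2+v^2}\d t$ by splitting the integral over $|t|\le 1/u$, over $1/u<|t|\le 1$, and over $|t|>1$. On the first range $\phi(u-t)\le e\,\phi(u)$ and $\int\tfrac{\d t}{t^2+v^2}\le\pi/v$, contributing at most $e\,\phi(u)$; on the second $\phi(u-t)\le e^{u}\phi(u)$ and $\int_{1/u<|t|\le1}t^{-2}\d t\le 2u$, contributing at most $\tfrac2\pi v u e^{u}\phi(u)$; on the third, splitting once more at $|t|=u/2$ and using $\phi(u-t)\le\phi(u/2)$ on $|t|\le u/2$, the contribution is at most $\tfrac v\pi(2\phi(u/2)+4u^{-2})$. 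Substituting into $v=\pi(P_v*\phi)(u)$ gives
\[
v\bigl(1-2ue^{u}\phi(u)-2\phi(u/2)-4u^{-2}\bigr)\le\pi e\,\phi(u),
\]
and since each of $ue^{u}\phi(u)$, $\phi(u/2)$, $u^{-2}$ tends to $0$, for $u\ge u_0$ the bracket exceeds $\tfrac12$, whence $v(u)\le 2\pi e\,\phi(u)$.

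The second estimate is that $\psi(u)-u=O(1/u)$. This quantity equals $\int_\R\phi(x)\tfrac{u-x}{(u-x)^2+v(u)^2}\d x$, which is nonnegative since $t\mapsto t/(t^2+v^2)$ is odd, and is bounded above by $\int_0^\infty\phi(u-t)\tfrac{t}{t^2+v^2}\d t$. Splitting at $t=u/2$: for $t\le u/2$ one has $\phi(u-t)\le\phi(u/2)$ and $\int_0^{u/2}\tfrac{t}{t^2+v^2}\d t=O(\log(u/v))$, and the crude lower bound $v(u)\ge c\,\phi(u+1)$ (obtained by retaining only $|t|\le 1$ in $v=\pi(P_v*\phi)(u)$) makes this $O(u^2)$, so this part is $O(u^2e^{-u^2/8})=o(1/u)$; for $t>u/2$ one has $t/(t^2+v^2)\le 2/u$ and $\int_0^\infty\phi(u-t)\d t\le1$, giving $O(1/u)$. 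Hence $\psi(u)=u+O(1/u)$, so $\psi(u)^2\le u^2+O(1)$ and $e^{-u^2/2}\le C_2 e^{-\psi(u)^2/2}$ for $u\ge u_0$. Combining with the first estimate, $p(\psi(u))=v(u)/\pi\le 2e\,\phi(u)\le C_3 e^{-\psi(u)^2/2}$ for $u\ge u_0$; together with the trivial bound for $|u|\le u_0$ and the symmetry $u\mapsto-u$, this yields $p(x)\le Ce^{-x^2/2}$ for all $x\in\R$.

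The main obstacle is obtaining the sharp exponent $e^{-u^2/2}$ (rather than, say, $e^{-u^2/8}$) in the bound on $v(u)$: a pointwise bound on $\phi(u-t)$ across the support of the Poisson kernel is too lossy, and one must exploit that $v(u)$ is tiny, so that $P_{v(u)}$ is highly concentrated near the origin — this is exactly what the fixed-point form $v=\pi(P_v*\phi)(u)$ makes usable. A secondary point requiring care is that the comparison $e^{-u^2/2}\le Ce^{-\psi(u)^2/2}$ needs the subordination correction $\psi(u)-u$ to be as small as $O(1/u)$; an $O(1)$ bound would be insufficient, since it would only give $\psi(u)^2\le u^2+O(u)$.
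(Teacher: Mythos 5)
Your proof is correct, and it reaches the two key quantitative facts --- $v(u)\le C e^{-u^2/2}$ and $\psi(u)=u+O(1/u)$ --- by a genuinely different route than the paper. The paper obtains both estimates by complex analysis: it closes the defining integrals of $v$ and $\psi$ with a contour consisting of the real axis and a slanted ray, applies the residue theorem at the pole $u+iv(u)$ to get the exact identity $\sqrt{\pi/2}\,e^{v(u)^2/2}e^{-iuv(u)}e^{-u^2/2}/v(u)=1-f(u)/u^2$ with $f$ bounded, and reads off the Gaussian decay of $v$ and the $O(1/u)$ correction in $\psi$ from the boundedness of the ray contributions. You instead stay entirely on the real line, rewriting Biane's equation as the Poisson-kernel fixed point $v=\pi(P_v*\phi)(u)$ and exploiting the concentration of $P_{v(u)}$ near the origin via a three-zone splitting; the self-referential absorption step $v(1-o(1))\le \pi e\,\phi(u)$ is exactly what recovers the sharp exponent $e^{-u^2/2}$ that a naive pointwise bound would lose, and your crude lower bound $v(u)\ge c\,\phi(u+1)$ neatly handles the logarithm in the $\psi$ estimate. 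The paper's method yields slightly more (an asymptotic identity for $v(u)$ with explicit constant $\sqrt{\pi/2}$, which is reused in Lemmas \ref{lemma:Gaussian Stieltjes is Bounded}--\ref{lemma:LessthanoneLipschitz}), while yours is more elementary and self-contained, requiring no contour deformation or control of the complex phase $e^{-iuv(u)}$. Both arguments correctly handle the final assembly (evenness of $v$, oddness of $\psi$, compact range for $|u|\le u_0$), so I see no gap.
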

	
	\begin{proof}
		Let $p,\ v,$ and $\psi$ be as in Lemma \ref{lemma:A:Biane result}.  It is straight forward to see from the definitions that $v$ is an even function of $u$, and $\psi$ (and hence $\psi^{-1}$) is odd. From now on we will assume $u>0$ and in this proof we will use asymptotic notation under $u\rightarrow\infty$.  Consider $R>0$ and curves $\gamma_1,\gamma_2,\gamma_3$ in $\C$ where $\gamma_1$ is the straight line from $(0,0)$ to $(R,0)$, $\gamma_2$ is the counterclockwise circular arc from $(R,0)$ to $(2R/\sqrt{5},R/\sqrt{5})$, and $\gamma_3$ the straight line from  $(2R/\sqrt{5},R/\sqrt{5})$ to $(0,0)$. Let $\gamma=\gamma_1\cup\gamma_2\cup\gamma_3$. The residue theorem gives for $R$ sufficiently large \begin{align*}
			\frac{1}{\sqrt{2\pi}}\oint_\gamma \frac{e^{-z^2/2}dz}{(u-z)^2+v(u)^2}&=\frac{1}{\sqrt{2\pi}}\oint_\gamma \frac{e^{-z^2/2}dz}{(z-(u+iv(u)))(z-(u-iv(u)))}\\
			&=\frac{2\pi i}{\sqrt{2\pi}}\frac{e^{-(u+iv(u))^2/2}}{2iv(u)}\\
			&=\sqrt{\frac{\pi}{2}}e^{v(u)^2/2}e^{-iuv(u)}\frac{e^{-u^2/2}}{v(u)}.
		\end{align*} Taking $R\rightarrow\infty$, it is straight forward to show\begin{equation}\label{eq:ResidueEquality}
			\sqrt{\frac{\pi}{2}}e^{v(u)^2/2}e^{-iuv(u)}\frac{e^{-u^2/2}}{v(u)}=\frac{1}{\sqrt{2\pi}}\int_0^\infty\frac{e^{-x^2/2}dx}{(u-x)^2+v(u)^2}+\frac{1}{\sqrt{2\pi}}\int_{\gamma_3'} \frac{e^{-z^2/2}dz}{(u-z)^2+v(u)^2}
		\end{equation} where $\gamma_3'=\left\{z=x+iy\in\C\,\Bigg|x\geq0,y=x/2 \right\}$ with orientation such that $\Re(z)$ is decreasing. Note an equivalent definition of $v$ is that $v(u)$ is the unique solution to \begin{equation}
		\frac{1}{\sqrt{2\pi}}\int_\R\frac{e^{-x^2/2}dx}{(u-x)^2+v(u)^2}= 1,
		\end{equation} for $u\in\R$. For the first integral on the right-hand side of \eqref{eq:ResidueEquality}, note that \begin{align}\label{eq:A:real integral bound}
			\frac{1}{\sqrt{2\pi}}\int_0^\infty\frac{e^{-x^2/2}dx}{(u-x)^2+v(u)^2}&=1-\frac{1}{\sqrt{2\pi}}\int_{-\infty}^0\frac{e^{-x^2/2}dx}{(u-x)^2+v(u)^2}, \nonumber\\
			&\geq 1-\frac{1}{2u^2+2v(u)^2}.
		\end{align} The second integral on the right-hand side of \eqref{eq:ResidueEquality} can also be bounded in modulus by $C/u^2$ for some absolute constant $C>0$. Consider this bound and \eqref{eq:A:real integral bound} in \eqref{eq:ResidueEquality} yields that there exists some bounded function $f:[0,\infty)\rightarrow\C$ such that \begin{equation*}
			\sqrt{\frac{\pi}{2}}e^{v(u)^2/2}e^{-iuv(u)}\frac{e^{-u^2/2}}{v(u)}=1-\frac{f(u)}{u^2},
		\end{equation*} and \begin{equation}\label{eq:vequality}
			v(u)=\left(1-\frac{f(u)}{u^2}\right)^{-1}\sqrt{\frac{\pi}{2}}e^{v(u)^2/2}e^{-iuv(u)}e^{-u^2/2}.
		\end{equation} It follows from Lemma \ref{lemma:A:Biane result} that $v$ is bounded, and thus we get from \eqref{eq:vequality} that $v(u)\rightarrow0$ as $u\rightarrow\infty$ and there exists some constant $C>0$ such that \begin{equation}\label{eq:v bounds}
		v(u)\leq Ce^{-u^2/2}
	\end{equation}for $u\in\R$ and absolute constant $C>0$. 
		
		We now turn our attention to $\psi(u)$, in particular $\psi(u)-u$. Using the contour $\gamma$, we get \begin{equation*}
			\frac{1}{\sqrt{2\pi}}\oint_\gamma \frac{(u-z)e^{-z^2/2}dz}{(u-z)^2+v(u)^2}=-i\sqrt{\frac{\pi}{2}}e^{v(u)^2/2}e^{-iuv(u)}e^{-u^2/2}.
		\end{equation*} Again taking $R\rightarrow\infty$ gives\begin{align*}
			\frac{1}{\sqrt{2\pi}}\int_0^\infty&\frac{(u-x)e^{-x^2/2}dx}{(u-x)^2+v(u)^2} \\
			&=-\frac{1}{\sqrt{2\pi}}\int_{\gamma_3'} \frac{(u-z)e^{-z^2/2}dz}{(u-z)^2+v(u)^2}-i\sqrt{\frac{\pi}{2}}e^{v(u)^2/2}e^{-iuv(u)}e^{-u^2/2},
		\end{align*} where the integral on the right-hand side is $O\left(\frac{1}{u}\right)$. It is also straightforward to show \begin{equation*}
			\frac{1}{\sqrt{2\pi}}\int_{-\infty}^0\frac{(u-x)e^{-x^2/2}dx}{(u-x)^2+v(u)^2}=O\left(\frac{1}{u}\right).
		\end{equation*} Thus we get $\psi(u)=u+O\left(\frac{1}{u}\right)$. Thus there exists some bounded continuous function $g$ such that \begin{equation*}
			\psi(u)=u+\frac{g(u)}{u},
		\end{equation*} and \begin{equation*}
			u=\psi\left(\psi^{-1}(u)\right)=\psi^{-1}(u)+\frac{g(\psi^{-1}(u))}{\psi^{-1}(u)}.
		\end{equation*} Solving for $\psi^{-1}(u)$, for $u$ large enough, gives\begin{equation}\label{eq:psi bounds}
			\psi^{-1}(u)=\frac{1}{2}\left(u+u\sqrt{1-\frac{4g(\psi^{-1}(u))}{u^2}}\right)=u+O\left(\frac{1}{u}\right).
		\end{equation} Combining \eqref{eq:v bounds}, \eqref{eq:psi bounds}, and Lemma \ref{lemma:A:Biane result} we see there exists some absolute constant $C'>0$ such that $p(u)\leq C'e^{-u^2/2}$ for $u\in\R$.
	\end{proof}

	Our next lemma confirms that $s(z)$ is bounded and Lipshitz continuous.  
	\begin{lemma}\label{lemma:Gaussian Stieltjes is Bounded}
		There exists constants $C,C'>0$ such that \begin{equation*}
			|s(z)|\leq C,
		\end{equation*} for all $z\in\C_+$ and $s$ is $C'$-Lipschitz continuous on $\C_+$.
	\end{lemma}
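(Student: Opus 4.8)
The plan is to bypass the oscillation in the Cauchy kernel by rewriting $s$ as a Laplace transform of the characteristic function of the standard Gaussian law. For $w\in\R$ and $z=E+i\eta\in\C_+$ one has the elementary identity $\frac{1}{w-z}=i\int_0^\infty e^{i(z-w)t}\,dt$, valid because $\Im(z-w)=\eta>0$ makes the integral converge. Substituting this into the definition \eqref{def:s} of $s$ and applying Fubini's theorem — legitimate since $\int_0^\infty\int_{\R}\phi(w)e^{-\eta t}\,dw\,dt=1/\eta<\infty$ — I would obtain
\[
	s(z)=i\int_0^\infty e^{izt}\left(\int_{\R}\phi(w)e^{-iwt}\,dw\right)dt=i\int_0^\infty e^{izt}e^{-t^2/2}\,dt,
\]
where the inner integral is evaluated using that the characteristic function of the standard normal distribution (whose density $\phi$ is given in \eqref{def:phi}) equals $e^{-t^2/2}$.

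From this representation both assertions fall out. Boundedness is immediate: $|e^{izt}|=e^{-\eta t}\le 1$ for $z\in\C_+$ and $t\ge 0$, so $|s(z)|\le\int_0^\infty e^{-t^2/2}\,dt=\sqrt{\pi/2}$, and one may take $C=\sqrt{\pi/2}$ uniformly over $\C_+$. Differentiating under the integral sign — justified for each fixed $z\in\C_+$ by dominated convergence, with dominating function a constant multiple of $te^{-t^2/2}$ on a small neighborhood of $z$ inside $\C_+$ — gives $s'(z)=-\int_0^\infty t\,e^{izt}e^{-t^2/2}\,dt$, whence $|s'(z)|\le\int_0^\infty t\,e^{-t^2/2}\,dt=1$ for every $z\in\C_+$. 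Since $\C_+$ is convex and $s$ is holomorphic there, integrating $s'$ along the segment from $z_1$ to $z_2$ yields $|s(z_1)-s(z_2)|\le\sup_{z\in\C_+}|s'(z)|\,|z_1-z_2|\le|z_1-z_2|$, so $s$ is $1$-Lipschitz and $C'=1$ works.

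I do not anticipate any genuine obstacle here; the only steps needing a sentence of justification are the Fubini interchange and the differentiation under the integral, both routine. It is worth recording why the more direct route fails: bounding $\Re s$ and $s'$ straight from \eqref{def:s} using $\phi\le(2\pi)^{-1/2}$ only produces estimates that degrade like $1/\eta$ as $\eta\to 0$ (indeed $\int\phi(x)|x-z|^{-2}\,dx=\Im s(z)/\eta$), whereas the Laplace representation converts the oscillation of the kernel into honest Gaussian decay in $t$. As an alternative one could instead run the contour-integral/residue computation carried out in Lemma \ref{lemma:Free convolution is sub-Gaussian}, but the Laplace-transform argument above is shorter and entirely self-contained.
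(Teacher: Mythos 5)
Your argument is correct, and it takes a genuinely different route from the paper. The paper splits $\C_+$ into $\{\Im(z)\geq 1\}$, where the naive bound $|x-z|\geq 1$ suffices, and the strip $\{0<\Im(z)\leq 1\}$, where it deforms the contour of integration in \eqref{def:s} up to the line $\Im(w)=2$; Cauchy's integral formula then expresses $s(z)$ as the residue contribution $i\sqrt{2\pi}e^{-z^2/2}$ plus an integral along the shifted contour, and both pieces (and the corresponding ones for $s'$) are bounded on the strip. Your Laplace-transform representation
\[
s(z)=i\int_0^\infty e^{izt}e^{-t^2/2}\,dt
\]
is verified correctly: the kernel identity $\frac{1}{w-z}=i\int_0^\infty e^{i(z-w)t}\,dt$ holds for $\Im(z)>0$ and real $w$, the Fubini interchange is justified by the absolute bound $1/\eta$ for each fixed $z$, and the inner integral is indeed the Gaussian characteristic function. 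The resulting bounds $|s(z)|\leq\int_0^\infty e^{-t^2/2}\,dt=\sqrt{\pi/2}$ and $|s'(z)|\leq\int_0^\infty te^{-t^2/2}\,dt=1$, combined with convexity of $\C_+$, give the lemma with explicit constants. What your approach buys is a single uniform argument over all of $\C_+$ with no case split and clean numerical constants; what the paper's contour method buys is that the same deformation template is reused elsewhere (Lemmas \ref{lemma:Free convolution is sub-Gaussian} and \ref{Alemma:Gaussian Asymptotic Expansion}), so the authors get those harder estimates essentially for free from the same computation. Either proof is acceptable here.
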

	
	\begin{proof}
		It is clear that $s(z)$ and $s'(z)$ are both uniformly bounded by $1$ for $\Im(z)\geq 1$. Let $\gamma:\R\rightarrow\C_+$ be the curve $\gamma(t)=t+2i$. Then for any $z$ such that $0<\Im(z)\leq 1$ Cauchy's integral formula (after passing from finite contours to the image of $\gamma$, similar to what was done in the proof of Lemma \ref{lemma:Free convolution is sub-Gaussian}) gives that \begin{equation*}
			s(z)=i\sqrt{2\pi}e^{-z^2/2}+\int_\gamma\frac{1}{w-z}e^{-w^2/2}dw,
		\end{equation*} and \begin{equation*}
			s'(z)=-i\sqrt{2\pi}ze^{-z^2/2}+\int_\gamma\frac{1}{(w-z)^2}e^{-w^2/2}dw.
		\end{equation*} Both of which are uniformly bounded for $0<\Im(z)\leq 1$.
	\end{proof}

	The next result establishes some simple asymptotics for $s(z)$.  
	\begin{lemma}\label{Alemma:Gaussian Asymptotic Expansion}
		The function $z \mapsto z^2\left(s(z)+\frac{1}{z} \right)$ is uniformly bounded on the strip $\{z \in \mathbb{C}:0< \Im(z)\leq 1 \}\subseteq\C_+$. 
	\end{lemma}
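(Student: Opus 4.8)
The plan is to reduce $z^{2}\bigl(s(z)+\tfrac1z\bigr)$ to a single Stieltjes-type integral and then run the same contour–shift argument used in the proofs of Lemma~\ref{lemma:Free convolution is sub-Gaussian} and Lemma~\ref{lemma:Gaussian Stieltjes is Bounded}.

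\emph{Step 1 (algebraic reduction).} Using $\int_{\mathbb{R}}\phi(x)\,dx=1$ and $\int_{\mathbb{R}}x\phi(x)\,dx=0$, I would first write
\[
s(z)+\frac1z=\int_{\mathbb{R}}\phi(x)\left(\frac{1}{x-z}+\frac1z\right)dx=\frac1z\int_{\mathbb{R}}\frac{x\phi(x)}{x-z}\,dx ,
\]
and then, using the identity $\dfrac{zx}{x-z}=-x+\dfrac{x^{2}}{x-z}$,
\[
z^{2}\left(s(z)+\frac1z\right)=\int_{\mathbb{R}}\frac{x^{2}\phi(x)}{x-z}\,dx .
\]

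\emph{Step 2 (contour shift).} Fix $z=E+i\eta$ with $0<\eta\le 1$. Since $w\mapsto w^{2}e^{-w^{2}/2}$ is entire and decays rapidly as $|\Re w|\to\infty$ within the strip $0\le\Im w\le 2$, I would deform the line of integration from $\mathbb{R}$ to the line $\gamma(t)=t+2i$, exactly as in the proofs of the two cited lemmas: the vertical connecting segments at $\pm R$ vanish as $R\to\infty$, and the only pole crossed is the simple pole at $w=z$, whose residue for $\tfrac{x^{2}\phi(x)}{x-z}$ is $z^{2}e^{-z^{2}/2}/\sqrt{2\pi}$. The residue theorem then gives
\[
z^{2}\left(s(z)+\frac1z\right)=i\sqrt{2\pi}\,z^{2}e^{-z^{2}/2}+\frac{1}{\sqrt{2\pi}}\int_{\gamma}\frac{w^{2}e^{-w^{2}/2}}{w-z}\,dw .
\]

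\emph{Step 3 (bounding the two terms).} For the first term, $|e^{-z^{2}/2}|=e^{-(E^{2}-\eta^{2})/2}\le e^{1/2}e^{-E^{2}/2}$ and $|z|^{2}\le E^{2}+1$, so $|z^{2}e^{-z^{2}/2}|\le e^{1/2}(E^{2}+1)e^{-E^{2}/2}$, which is bounded uniformly over $E\in\mathbb{R}$. For the integral over $\gamma$, on $\gamma$ one has $|w-z|\ge 2-\eta\ge 1$ and $|w^{2}e^{-w^{2}/2}|=(t^{2}+4)e^{-(t^{2}-4)/2}$, whence the integral is bounded in modulus by $e^{2}\int_{\mathbb{R}}(t^{2}+4)e^{-t^{2}/2}\,dt$, a finite constant independent of $z$. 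Adding the two bounds yields the uniform bound on the strip.

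The only point requiring a little care is the justification of the contour deformation and the vanishing of the vertical segments uniformly over the strip $0<\Im z\le 1$; this is handled precisely as in Lemma~\ref{lemma:Free convolution is sub-Gaussian} and Lemma~\ref{lemma:Gaussian Stieltjes is Bounded}, using the Gaussian decay of $w^{2}e^{-w^{2}/2}$. No genuinely hard step arises here; the main work is the bookkeeping in Steps 1--3.
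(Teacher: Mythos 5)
Your proof is correct, and it uses the same core technique as the paper (shift the contour past the pole at $w=z$, pick up the residue $i\sqrt{2\pi}\,z^{2}e^{-z^{2}/2}$, and bound the remaining integral). The one genuine difference is your Step 1: by first using $\int x\phi(x)\,dx=0$ to rewrite the quantity as $\int_{\R}\frac{x^{2}\phi(x)}{x-z}\,dx$, you remove the external factor of $z$ from the integrand, so that on the straight line $\Im w=2$ the deformed integral is bounded by the absolute constant $e^{2}\int(t^{2}+4)e^{-t^{2}/2}\,dt$ with only the crude estimate $|w-z|\ge 2-\eta\ge 1$. The paper instead works directly with $\int\frac{zw}{w-z}e^{-w^{2}/2}\,dw$ and compensates for the unbounded factor $z$ by using a piecewise-linear contour ($t\pm it/2$ for $|t|\ge 4$) that recedes from $z$ as $|\Re z|\to\infty$; with the straight line one would otherwise need an extra splitting of the $t$-integral near $t\approx E$. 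Your algebraic preprocessing is a mild simplification that makes the subsequent estimates entirely routine; both arguments are valid.
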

	
	\begin{proof}
		Let $z\in \mathbb{C}$ with $0< \Im(z)\leq 1$, and note \begin{equation*}
			z^2\left(s(z)+\frac{1}{z} \right)=\frac{1}{\sqrt{2\pi}}\int_\R\frac{zx}{x-z}e^{-x^2/2}dx.
		\end{equation*} Let $\gamma:\R\rightarrow\C_+$ be the curve which is given piecewise by \begin{equation*}
			\gamma(t)=\begin{cases}
				t-it/2,\ t\leq -4\\
				t+2i,\ -4<t<4\\
				t+it/2,\ t\geq 4
			\end{cases},
		\end{equation*}  with left to right orientation. Similar to what was done in the proof of Lemma \ref{lemma:Free convolution is sub-Gaussian} we can approximate $\R \cup \gamma$ with finite contours moving from $(-R,0)$ to $(R,0)$, then counterclockwise from $(R,0)$ to $\gamma(2R/\sqrt{5})$, then along $\gamma$ to $\gamma(-2R/\sqrt{5})$ and finally counterclockwise from $\gamma(-2R/\sqrt{5})$ to $(-R,0)$. Applying Cauchy's integral formula we see that \begin{equation*}
			\frac{1}{\sqrt{2\pi}}\int_\R\frac{zx}{x-z}e^{-x^2/2}dx=\sqrt{2\pi}iz^2e^{-z^2/2}+\frac{1}{\sqrt{2\pi}}\int_\gamma\frac{zw}{w-z}e^{-w^2/2}dw
		\end{equation*} where the right-hand side is easily seen to be uniformly bounded for $z$ with $0< \Im(z)\leq 1$.
	\end{proof}
	

	Biane \cite[Lemma 3]{MR1488333} provides a region where $s$ is Lipschitz with Lipschitz constant strictly less than $1$. The next lemma gives a description of this region. This is a key component of the stability result of Section \ref{sec:stability}.
	\begin{lemma}\label{lemma:LessthanoneLipschitz}
		Fix $t\geq1$, and define $v_{t}:\R\rightarrow[0,\infty)$ by \begin{equation*}
			v_{t}(u)=\inf\left\{v\geq0\,\Bigg|\,\frac{1}{\sqrt{2\pi}}\int_\R\frac{e^{-x^2/2}dx}{(u-x)^2+v^2}\leq \frac{1}{t} \right\}.
		\end{equation*} Let $\Omega_t=\{x+iy\in\C_+:y\geq v_{t}(x)\}$ be the region defined in \cite[Lemma 3]{MR1488333}. Then there exists a constant $C_t>0$ such that $v_{t}(x)\leq C_t e^{-x^2/2}$ for all $x\in\R$ and $s$ is Lipschitz on $\Omega_t$ with Lipschitz constant at most $\frac{1}{t}$.
	\end{lemma}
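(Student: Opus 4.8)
The plan is to prove the two assertions separately: the Gaussian-type decay of $v_t$, and the Lipschitz bound for $s$ on $\Omega_t$ (the latter being essentially the content of \cite[Lemma 3]{MR1488333}, though I will give a short self-contained argument). I would begin by setting up the auxiliary function
\[ h(u,v) := \frac{1}{\sqrt{2\pi}}\int_{\mathbb{R}} \frac{e^{-x^2/2}\,dx}{(u-x)^2+v^2} = \int_{\mathbb{R}}\frac{\phi(x)\,dx}{|x-(u+iv)|^2} \]
and recording its elementary properties: for each fixed $u$, the map $v\mapsto h(u,v)$ is continuous and strictly decreasing on $(0,\infty)$, with $h(u,v)\to\infty$ as $v\to 0^+$ and, since $h(u,v)\le v^{-2}$, $h(u,v)\to 0$ as $v\to\infty$; moreover $h(u,v)=h(-u,v)$. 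Hence $v_t(u)$ is the unique positive solution of $h(u,v_t(u))=1/t$, it is even in $u$, and $h(u,v)\le 1/t$ holds precisely when $v\ge v_t(u)$. The bound $h(u,v)\le v^{-2}$ also gives the uniform a priori estimate $v_t(u)\le\sqrt{t}$ for all $u$, which will be needed below.

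For the Lipschitz claim, the key identity is $s(z_2)-s(z_1)=(z_2-z_1)\int_{\mathbb{R}}\frac{\phi(x)\,dx}{(x-z_1)(x-z_2)}$, valid for all $z_1,z_2\in\mathbb{C}_+$. If $z_j=u_j+iy_j\in\Omega_t$ then $y_j\ge v_t(u_j)$, so by monotonicity of $h(u_j,\cdot)$ we get $\int\frac{\phi(x)\,dx}{|x-z_j|^2}=h(u_j,y_j)\le h(u_j,v_t(u_j))=1/t$. Applying the Cauchy--Schwarz inequality with weight $\phi(x)\,dx$ to the integral above yields $\left|\int\frac{\phi(x)\,dx}{(x-z_1)(x-z_2)}\right|\le h(u_1,y_1)^{1/2}h(u_2,y_2)^{1/2}\le 1/t$, and therefore $|s(z_2)-s(z_1)|\le \tfrac1t|z_2-z_1|$ for all $z_1,z_2\in\Omega_t$. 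This bypasses any question of convexity of $\Omega_t$ and recovers, for $t>1$, the strict contraction in \cite[Lemma 3]{MR1488333}.

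It remains to prove $v_t(x)\le C_t e^{-x^2/2}$; by evenness I may take $x=u>0$, and the case of bounded $u$ is immediate from $v_t\le\sqrt t$. Here I would repeat, essentially verbatim, the contour computation in the proof of Lemma \ref{lemma:Free convolution is sub-Gaussian}, with the value $1$ there replaced by $1/t$. Applying the residue theorem to $\oint_\gamma \frac{e^{-z^2/2}\,dz}{(u-z)^2+v_t(u)^2}$ over the same contour $\gamma=\gamma_1\cup\gamma_2\cup\gamma_3$, letting $R\to\infty$ (the arc $\gamma_2$ contributes nothing since $\Re(z^2)\ge\tfrac35 R^2$ on it), and combining $\frac{1}{\sqrt{2\pi}}\int_0^\infty\frac{e^{-x^2/2}\,dx}{(u-x)^2+v_t(u)^2}\ge \frac1t-\frac{1}{2(u^2+v_t(u)^2)}$ with the $O(u^{-2})$ bound on the integral over $\gamma_3'=\{x+ix/2:x\ge 0\}$, one obtains $\sqrt{\tfrac{\pi}{2}}\,e^{v_t(u)^2/2}e^{-iuv_t(u)}\frac{e^{-u^2/2}}{v_t(u)}=\frac1t-\frac{f_t(u)}{u^2}$ for some bounded function $f_t$. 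Solving this relation for $v_t(u)$, taking moduli, and using $e^{v_t(u)^2/2}\le e^{t/2}$ from the a priori estimate yields $v_t(u)\le C_t e^{-u^2/2}$ for all large $u$, hence for all $u$ after enlarging $C_t$.

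There is no serious obstacle here: the Lipschitz part is immediate once the characterization of $v_t$ and the monotonicity of $h(u,\cdot)$ are in hand, and the decay estimate is a direct transcription of an argument already present in the excerpt. The one point requiring a little care is establishing the uniform bound $v_t(u)\le\sqrt t$ before invoking the implicit relation for $v_t(u)$, so that the factor $e^{v_t(u)^2/2}$ does not spoil the Gaussian decay.
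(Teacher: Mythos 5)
Your proof is correct and follows essentially the same route as the paper, whose own proof simply cites Biane's Lemma 3 for the contraction on $\Omega_t$ and describes the decay of $v_t$ as a ``straightforward adaptation'' of the contour argument in Lemma \ref{lemma:Free convolution is sub-Gaussian}: your Cauchy--Schwarz computation is exactly Biane's argument written out, and your contour computation is the adaptation the paper has in mind. The one genuine addition is the a priori bound $v_t(u)\le\sqrt{t}$ deduced from $h(u,v)\le v^{-2}$, which cleanly controls the factor $e^{v_t(u)^2/2}$ at the point where the proof of Lemma \ref{lemma:Free convolution is sub-Gaussian} instead invoked boundedness of the density via Lemma \ref{lemma:A:Biane result}; this is precisely the detail the ``straightforward adaptation'' glosses over, and you were right to flag it.
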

	
	\begin{proof}
		The sub-Gaussian bound on $v$ follows from a straightforward adaptation of the proof of Lemma \ref{lemma:Free convolution is sub-Gaussian}. The Lipschitz statement of $s$ is the result of Biane \cite[Lemma 3]{MR1488333}.
	\end{proof}
	
	The next lemma on the behavior of $m(z)$ can be deduced from the proof of Lemma 3.4 in \cite{MR4058984}.
	\begin{lemma}\label{lemma:Bounds on m}
		There exists a constant $C>0$ so that $m$ is $C$-Lipschitz continuous on $\C_+$ and \begin{equation*}
			|m(z)|\leq 1
		\end{equation*} for all $z\in\C_+$. 
	\end{lemma}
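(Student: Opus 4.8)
The plan is to treat the two assertions separately, starting with the cheap one. For the pointwise bound, fix $z=E+i\eta\in\C_+$ and write $m=m(z)$. Since $m$ is the Stieltjes transform of the (nontrivial) free additive convolution measure, it maps $\C_+$ into $\C_+$, so $\Im m>0$ and in particular $\Im(z+m)=\eta+\Im m>0$. Taking imaginary parts in the self-consistent equation \eqref{def:fc} gives
\[
\Im m=(\eta+\Im m)\int_\R\frac{\phi(x)}{|x-z-m|^2}\,\d x,
\qquad\text{hence}\qquad
\int_\R\frac{\phi(x)}{|x-z-m|^2}\,\d x=\frac{\Im m}{\eta+\Im m}<1 .
\]
Applying the Cauchy--Schwarz inequality to \eqref{def:fc} together with $\int_\R\phi=1$ then yields $|m|^2\le\bigl(\int_\R\phi\bigr)\bigl(\int_\R\phi(x)|x-z-m|^{-2}\,\d x\bigr)<1$, which proves $|m(z)|\le1$ (in fact strictly).

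For Lipschitz continuity I would reduce, using convexity of $\C_+$, to the uniform derivative bound $\sup_{z\in\C_+}|m'(z)|<\infty$ and then integrate $m'$ along segments. Writing $m(z)=\int_\R p(x)/(x-z)\,\d x$ with $p$ the density from Lemma \ref{lemma:A:Biane result}, we have $m'(z)=\int_\R p(x)/(x-z)^2\,\d x$. For $\Im z\ge1$ the trivial bound $|m'(z)|\le(\Im z)^{-2}\le1$ suffices, so assume $0<\eta:=\Im z<1$ and put $E:=\Re z$. Integrating by parts (valid since $p\in C^1(\R)$ decays) gives $m'(z)=\int_\R p'(x)/(x-z)\,\d x$. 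I would then split at $|x-E|=1$: on the far region $|x-z|\ge|x-E|\ge1$, so that contribution is at most $\|p'\|_{L^1(\R)}$; on the near region write $p'(x)/(x-z)=(p'(x)-p'(E))/(x-z)+p'(E)/(x-z)$, bound the first summand with $|x-z|\ge|x-E|$ and the local estimate $|p'(x)-p'(E)|\le(\sup_{[E-1,E+1]}|p''|)\,|x-E|$, and bound the second using the elementary identity $\int_{E-1}^{E+1}(x-z)^{-1}\,\d x=\log\frac{1-i\eta}{-1-i\eta}$, whose modulus is at most $\pi$ uniformly in $\eta$ (the argument is a unimodular number). This gives
\[
|m'(z)|\le 2\sup_\R|p''|+\pi\sup_\R|p'|+\|p'\|_{L^1(\R)} .
\]

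It remains to see that $p',p''$ are bounded and integrable, and this is the step where the semicircle--Gaussian structure is genuinely needed and where I expect the real work to lie. Boundedness of $p$ itself is part of Lemma \ref{lemma:A:Biane result}; moreover the proof of Lemma \ref{lemma:Free convolution is sub-Gaussian} produces, through its residue/contour computation, the sub-Gaussian bound $v(u)\le Ce^{-u^2/2}$ and the expansion $\psi^{-1}(u)=u+O(1/u)$ in Biane's parametrization $p(\psi(u))=v(u)/\pi$. Differentiating through that same computation propagates these to bounds on $v',v'',(\psi^{-1})',(\psi^{-1})''$ and hence on $p'$ and $p''$, which gives the required boundedness and integrability; alternatively the whole statement can be extracted from (the proof of) Lemma 3.4 in \cite{MR4058984}. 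The main obstacle is precisely this uniform control of the derivatives of $p$ as $\Im z\downarrow0$: the naive resolvent estimate $|m'(z)|\le\Im m(z)/\Im z$ diverges near the spectrum, so the argument must exploit the cancellation in the principal-value integral, which is available only because the free convolution of the semicircle law with a Gaussian is a smooth, sub-Gaussian measure.
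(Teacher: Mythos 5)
Your proposal takes a genuinely different route from the paper, which in fact offers no argument at all for this lemma: it states only that the result ``can be deduced from the proof of Lemma 3.4 in \cite{MR4058984}.'' Your derivation of $|m(z)|\le 1$ is complete and correct --- taking imaginary parts in \eqref{def:fc} to obtain $\int_\R \phi(x)|x-z-m|^{-2}\,\d x=\Im m/(\eta+\Im m)<1$ and then applying Cauchy--Schwarz is a clean, self-contained argument that the paper does not supply, and it even yields the strict inequality. Your reduction of the Lipschitz claim to $\sup_\R|p'|+\sup_\R|p''|+\|p'\|_{L^1}<\infty$ (trivial bound for $\Im z\ge 1$, integration by parts, the split at $|x-E|=1$, and the unimodular-ratio logarithm estimate) is likewise correct; it implicitly uses that $p$ is globally smooth, which does hold here because $v(u)>0$ for every $u$ (the Gaussian density is strictly positive, so the defining integral diverges at $v=0$), whence $p>0$ everywhere and Biane's analyticity statement in Lemma \ref{lemma:A:Biane result} applies on all of $\R$.

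The one step you do not actually carry out is the assertion that $p'$ and $p''$ are bounded and $p'\in L^1$. You correctly identify this as the crux, but ``differentiating through'' the contour computation of Lemma \ref{lemma:Free convolution is sub-Gaussian} is nontrivial: besides the exponential decay of $v$ and $\psi^{-1}(u)-u$ at infinity, one needs uniform control of $(\psi^{-1})'$ and $(\psi^{-1})''$ on all of $\R$ (equivalently a lower bound on $\psi'$), which requires an additional compactness/analyticity argument and is exactly the kind of regularity-down-to-the-real-axis statement that \cite{MR4058984} establishes. Since your stated fallback is the same citation the paper uses, the proposal is at worst equivalent to the paper's one-line deferral and at best a substantially more explicit proof; to make it fully self-contained you would need to supply the derivative bounds on $p$ (or, equivalently, a uniform bound $|s'(z+m(z))|\le \rho<1$ feeding into $m'=s'/(1-s')$).
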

	The following lemma exhibits the asymptotic behavior of $m(z)$ near infinity.  
	\begin{lemma}\label{Alemma:FC Asymptotic Expansion}
		On $\C_+$, $m$ has the asymptotic expansion at infinity given by\begin{equation*}
			m(z)=-\frac{1}{z}+O\left(\frac{1}{z^2}\right).
		\end{equation*}
	\end{lemma}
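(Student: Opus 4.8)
The plan is to exploit the fixed-point characterization of $m$ together with the asymptotics of $s$ already recorded in Lemma \ref{Alemma:Gaussian Asymptotic Expansion}. The starting observation is that comparing \eqref{def:fc} with \eqref{def:s} yields the identity
\[
m(z) = s\bigl(z + m(z)\bigr), \qquad z \in \C_+ .
\]
From the normalization built into the definition of $m$ (namely $z m(z) \to -1$ as $|z| \to \infty$ in the upper half-plane) one gets $m(z) = -\tfrac1z + o(1/z)$; in particular $m(z) = O(1/|z|)$ and $|m(z)| \to 0$ as $|z| \to \infty$. Setting $w = w(z) := z + m(z) \in \C_+$, this gives $|w| \to \infty$, $|w| \asymp |z|$, and $w - z = m(z) = O(1/|z|)$.

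Next I would upgrade Lemma \ref{Alemma:Gaussian Asymptotic Expansion} to all of $\C_+$: namely $s(w) = -\tfrac1w + O(1/w^2)$ as $|w| \to \infty$ in $\C_+$. On the strip $0 < \Im w \le 1$ this is precisely the content of Lemma \ref{Alemma:Gaussian Asymptotic Expansion}, since it asserts that $w^2\bigl(s(w)+\tfrac1w\bigr)$ is bounded there. For $\Im w \ge 1$ I would argue directly from
\[
s(w) + \frac1w = \frac1w \int_{-\infty}^\infty \frac{x\,\phi(x)}{x - w}\d x ,
\]
splitting the integral at $|x| = |w|/2$: on the region $|x| \le |w|/2$ one has $|x - w| \ge |w|/2$, and on the region $|x| > |w|/2$ one uses $|x - w| \ge \Im w \ge 1$ together with the Gaussian tail. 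Both contributions are $O(1/|w|)$, so $s(w) + \tfrac1w = O(1/|w|^2)$. Since $w = z + m(z)$ ranges over a region with $\Im w > 0$ whenever $\Im z > 0$, these two cases together cover everything needed.

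Finally I would combine the pieces: $m(z) = s(w) = -\tfrac1w + O(1/|w|^2)$. Because $w - z = m(z) = O(1/|z|)$, we have $\tfrac1w - \tfrac1z = \tfrac{z-w}{zw} = O(1/|z|^3)$ and $1/|w|^2 = O(1/|z|^2)$, hence $m(z) = -\tfrac1z + O(1/z^2)$, which is the claimed expansion.

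The only mildly delicate point — and essentially the only thing beyond bookkeeping — is ensuring the $O(1/w^2)$ expansion for $s$ is available on the entire region swept out by $w = z + m(z)$, rather than only on the strip covered by Lemma \ref{Alemma:Gaussian Asymptotic Expansion}; this is dispatched by the elementary tail estimate above. Everything else is a straightforward bootstrap off the normalization $z m(z) \to -1$.
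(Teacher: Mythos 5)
Your proposal is correct and follows essentially the same route as the paper: both rest on the fixed-point identity $m(z)=s(z+m(z))$, the expansion $s(w)=-\tfrac1w+O(1/w^2)$ (Lemma \ref{Alemma:Gaussian Asymptotic Expansion} on the strip, a direct estimate off it), and the boundedness/decay of $m$ to transfer the expansion from $w=z+m(z)$ back to $z$. Your tail-split integral estimate for $\Im w\ge 1$ is just a more explicit version of the paper's appeal to dominated convergence for $z$ bounded away from the real axis.
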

	
	\begin{proof}
		For $z$ bounded away from the real line the result follows from a straightforward application of the dominated convergence theorem. Applying Lemma \ref{Alemma:Gaussian Asymptotic Expansion} on the strip $\{z \in \mathbb{C}:0< \Im(z)\leq 1 \}$ and using that $m$ is bounded, we obtain \begin{align*}
			m(z)&=s(z+m(z))\\
			&=-\frac{1}{z+m(z)}+O\left(\frac{1}{(z+m(z))^2}\right)\\
			&=-\frac{1}{z}+O\left(\frac{1}{z^2}\right), 
		\end{align*}
		which completes the proof.  
	\end{proof}
	
	For brevity we omit the proof of the following lemma as it follows nearly identically to Lemmas \ref{Alemma:Gaussian Asymptotic Expansion} and \ref{Alemma:FC Asymptotic Expansion}.
	
	\begin{lemma}\label{Alemma:FC der Asymptotic Expansion}
		On $\C_+$, $m'$ has the asymptotic expansion at infinity given by\begin{equation*}
			m'(z)=\frac{1}{z^2}+O\left(\frac{1}{z^3}\right).
		\end{equation*}
	\end{lemma}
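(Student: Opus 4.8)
The plan is to differentiate the self-consistent equation \eqref{def:fc}, written compactly as $m(z)=s(z+m(z))$, and to reduce the claim to an asymptotic expansion for $s'$ analogous to Lemma \ref{Alemma:Gaussian Asymptotic Expansion}. The first step is to prove that
\[
s'(z)=\frac{1}{z^{2}}+O\!\left(\frac{1}{z^{3}}\right)
\]
as $|z|\to\infty$ in $\C_+$. For $z$ bounded away from the real axis this is immediate by differentiating under the integral sign in \eqref{def:s} and dominated convergence. On the strip $\{z:0<\Im(z)\le 1\}$ I would mimic the proof of Lemma \ref{Alemma:Gaussian Asymptotic Expansion}: since $s'(z)=\frac{1}{\sqrt{2\pi}}\int_{\R}(x-z)^{-2}e^{-x^{2}/2}\,dx$, a short computation using $\frac{1}{\sqrt{2\pi}}\int_{\R}e^{-x^{2}/2}\,dx=1$ gives
\[
z^{3}\left(s'(z)-\frac{1}{z^{2}}\right)=\frac{1}{\sqrt{2\pi}}\int_{\R}\frac{z\,x\,(2z-x)}{(x-z)^{2}}\,e^{-x^{2}/2}\,dx,
\]
and deforming the contour $\R$ onto the curve $\gamma$ from the proof of Lemma \ref{Alemma:Gaussian Asymptotic Expansion}, Cauchy's integral formula contributes a residue at the double pole $x=z$ of size $O\bigl(|z|^{4}e^{-\Re(z^{2})/2}\bigr)$, which is negligible on the strip, together with an integral over $\gamma$ that is uniformly bounded there. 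This yields the displayed expansion for $s'$.

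The second step is routine manipulation of the self-consistent equation, exactly as in Lemma \ref{Alemma:FC Asymptotic Expansion}. Differentiating $m(z)=s(z+m(z))$ gives $m'(z)=s'(z+m(z))\bigl(1+m'(z)\bigr)$, hence
\[
m'(z)=\frac{s'(z+m(z))}{1-s'(z+m(z))}
\]
wherever $s'(z+m(z))\neq 1$. Since $\Im(z+m(z))>\Im(z)>0$, the point $w=z+m(z)$ lies in $\C_+$, and by Lemma \ref{Alemma:FC Asymptotic Expansion} we have $m(z)=-\frac1z+O(z^{-2})$, so $w\to\infty$ with $z$; by the first step $s'(w)=O(|z|^{-2})\to 0$, so the denominator is $1+O(z^{-2})$ and is bounded away from $0$ for $|z|$ large. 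Substituting the first step once more and using $(z+m(z))^{-2}=z^{-2}\bigl(1+m(z)/z\bigr)^{-2}=z^{-2}+O(z^{-4})$ and $(z+m(z))^{-3}=O(z^{-3})$ gives $s'(z+m(z))=z^{-2}+O(z^{-3})$, whence
\[
m'(z)=\bigl(z^{-2}+O(z^{-3})\bigr)\bigl(1+O(z^{-2})\bigr)=\frac{1}{z^{2}}+O\!\left(\frac{1}{z^{3}}\right),
\]
as claimed. Because $m$ is bounded on $\C_+$ (Lemma \ref{lemma:Bounds on m}), $z+m(z)\to\infty$ uniformly as $|z|\to\infty$ in $\C_+$, so the first step applies uniformly and the expansion for $m'$ holds uniformly as $|z|\to\infty$ in $\C_+$.

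I expect the only real obstacle to be the first step: near the real axis one cannot simply differentiate under the integral and let $|z|\to\infty$, so one must run the contour-deformation argument of Lemma \ref{Alemma:Gaussian Asymptotic Expansion} and check that the residue at the double pole is exponentially small and that the deformed integral is uniformly bounded on the strip. Once the expansion for $s'$ is in hand, obtaining the expansion for $m'$ from the self-consistent equation is immediate, which is presumably why the argument is described as nearly identical to Lemmas \ref{Alemma:Gaussian Asymptotic Expansion} and \ref{Alemma:FC Asymptotic Expansion}.
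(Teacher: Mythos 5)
Your proposal is correct and follows exactly the route the paper indicates for this omitted proof: first an analogue of Lemma \ref{Alemma:Gaussian Asymptotic Expansion} for $s'$ via the same contour deformation (with the residue now coming from a double pole and still superpolynomially small on the strip), then differentiation of the fixed-point equation and substitution as in Lemma \ref{Alemma:FC Asymptotic Expansion}, using the boundedness of $m$ from Lemma \ref{lemma:Bounds on m} to ensure $z+m(z)\to\infty$. No gaps.
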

	
	The lemma that follows controls the imaginary part of $m(z)$ and is crucial in the iteration argument in the proof of Theorem \ref{thm:main_L}.
	\begin{lemma}\label{lem:Imm_fc is small} 
		Fix $0<\delta<2/9$. Then,\begin{equation}
			\sup_{z=E+i\eta \in \tilde S_{\delta} \cup \hat{S}_\delta}\Im m(z)=o(n^{-1/4}).
		\end{equation}
	\end{lemma}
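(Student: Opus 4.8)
The plan is to combine the explicit description of $m$ as the Stieltjes transform of the free convolution measure with the sub-Gaussian bound on its density. Recall that $m$ is the Stieltjes transform of the free additive convolution of the semicircle and standard Gaussian distributions, which by Lemma~\ref{lemma:A:Biane result} has a density $p$; hence $m(z) = \int_\R \frac{p(x)}{x-z}\d x$ for $z \in \C_+$, and in particular, writing $z = E + i\eta$ with $\eta > 0$,
\[ \Im m(z) = \int_\R \frac{\eta\, p(x)}{(x-E)^2 + \eta^2}\d x \leq C\eta \int_\R \frac{e^{-x^2/2}}{(x-E)^2 + \eta^2}\d x, \]
where the inequality is Lemma~\ref{lemma:Free convolution is sub-Gaussian} and $C>0$ is its implied constant. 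It then suffices to bound the last integral uniformly over $z = E + i\eta \in \tilde S_\delta \cup \hat S_\delta$, that is, for $\sqrt{(2-\delta)\log n} \leq E \leq \sqrt{3\log n}$ and $n^{-1/4} \leq \eta \leq \sqrt 2\, n^{-1/4}$.

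I would split the integral into the ranges $x < E/2$, then $\{x \geq E/2\} \cap \{|x - E| \geq 1\}$, and finally $\{x \geq E/2\} \cap \{|x-E| < 1\}$. On $\{x < E/2\}$ the denominator is at least $E^2/4$, so this part contributes $O(\eta/E^2) = O(\eta/\log n) = o(n^{-1/4})$. On $\{x \geq E/2\} \cap \{|x-E|\geq1\}$ the denominator is at least $1$ and $\int_{x \geq E/2} e^{-x^2/2}\d x = O(e^{-E^2/8})$ by the Gaussian tail bound \eqref{keq:gaussiantailbounds}, so this part contributes $O(\eta e^{-E^2/8}) = O(n^{-1/4 - (2-\delta)/8}) = o(n^{-1/4})$. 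On $\{x \geq E/2\}\cap\{|x-E|<1\}$ we have $e^{-x^2/2} \leq e^{-(E-1)^2/2}$ and $\int_{|x-E|<1}\frac{\d x}{(x-E)^2+\eta^2} \leq \pi/\eta$, so this part is at most $Ce^{-(E-1)^2/2}$; using $(E-1)^2 \geq (2-\delta)\log n - 2\sqrt{3\log n} + 1$ this is at most $n^{-(2-\delta)/2 + o(1)}$, which is $o(n^{-1/4})$ precisely because $\delta < 2/9$ gives $(2-\delta)/2 > 8/9 > 1/4$. Summing the three contributions and taking the supremum over the stated ranges of $E$ and $\eta$ yields the claim.

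All the estimates above are elementary, so there is no genuine obstacle here once Lemma~\ref{lemma:Free convolution is sub-Gaussian} is in hand; the one point to watch is uniformity, namely that each of the three bounds holds for every $E \in [\sqrt{(2-\delta)\log n}, \sqrt{3\log n}]$ and every $\eta \in [n^{-1/4}, \sqrt 2\, n^{-1/4}]$. The term $Ce^{-(E-1)^2/2}$, which arises from the Poisson-kernel smoothing of $p$ at scale $\eta$ near $x = E$, is the dominant one, and controlling it is exactly where the hypothesis $\delta < 2/9$ is used. Since $\Im m(z) \geq 0$, only an upper bound is needed, so no self-consistent or cancellation argument is required.
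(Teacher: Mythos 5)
Your proof is correct and follows essentially the same route as the paper: both rest on the sub-Gaussian density bound of Lemma \ref{lemma:Free convolution is sub-Gaussian} and then split the Poisson-kernel integral according to the distance of $x$ from $E$ (the paper uses the two regions $|u-E|\lessgtr E/4$, you use three, with a sharper treatment of the near-$E$ piece). One small remark: your bound $Ce^{-(E-1)^2/2}=n^{-(2-\delta)/2+o(1)}$ on the dominant term only requires $(2-\delta)/2>1/4$, so your argument in fact works for all $\delta$ well beyond $2/9$ and the hypothesis $\delta<2/9$ is not where your estimate becomes tight — that threshold is what the paper's cruder bound $\frac{E}{\eta}e^{-9E^2/32}$ needs.
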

	
	\begin{proof} Let $p$ be the density of the free additive convolution of the semicircle distribution and a standard Gaussian distribution.	Applying Lemma \ref{lemma:Free convolution is sub-Gaussian}, we have 
		\begin{align*}
			\Im m(z)&=\int_\R\frac{\eta}{(u-E)^2+\eta^2}p(u)du\\
			&\leq C\int_\R \frac{\eta}{(u-E)^2+\eta^2}e^{-u^2/2}du\\
			&\leq C\int_{|u-E|\leq E/4} \frac{\eta}{(u-E)^2+\eta^2}e^{-u^2/2}du \\
			&\qquad \qquad +C\int_{|u-E|\geq E/4} \frac{\eta}{(u-E)^2+\eta^2}e^{-u^2/2}du\\
			&\leq C'\left(\frac{E}{\eta}e^{-(3E/4)^2/2}+\frac{\eta}{E^2} \right)\\
			&\leq C''\left(\sqrt{3\log n}n^{1/4}n^{-9(1-\frac{\delta}{2})/16}+\frac{n^{-1/4}}{(2-\delta)\log n} \right)\\
			&=o(n^{-1/4}),  
		\end{align*}
		for absolute constants $C,C',C''>0$, completing the proof.  
	\end{proof}
	The next results is also utilized in the proof of Theorem \ref{thm:main_L}.
	\begin{lemma}\label{Alemma:Difference of real FC} Fix $0<\delta<\frac{1}{2}$. Then \begin{equation}
			\sup_{E+i\eta \in \tilde S_{\delta}}\left|\Re m(E+i\eta)-\Re m(E+i\sqrt{2}\eta)\right|=o\left( \frac{1}{n^{1/2}}\right).
		\end{equation}
	\end{lemma}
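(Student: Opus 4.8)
The plan is to argue directly from the integral representation of the limiting Stieltjes transform, $m(z)=\int_{\R}\frac{p(u)}{u-z}\,du$, where $p$ is the density of the free additive convolution of the semicircle law and the standard Gaussian. Fix $z=E+i\eta\in\tilde S_\delta$, so that $\eta=n^{-1/4}$ and $\eta^2=n^{-1/2}$. Taking real parts and subtracting, one obtains the exact identity
\[ \Re m(E+i\eta)-\Re m(E+i\sqrt 2\eta)=\int_{\R}\frac{(u-E)\,\eta^2}{\big((u-E)^2+\eta^2\big)\big((u-E)^2+2\eta^2\big)}\,p(u)\,du. \]
The point is that the difference of the two conjugate-Poisson kernels produces an explicit factor $\eta^2=n^{-1/2}$, which is exactly the order we are after; the naive Lipschitz bound for $m$ from Lemma \ref{lemma:Bounds on m} only gives $O(\eta)=O(n^{-1/4})$ and is useless here. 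I would then use Lemma \ref{lemma:Free convolution is sub-Gaussian} to replace $p(u)$ by $Ce^{-u^2/2}$ and estimate the resulting integral.

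The estimate is carried out by splitting $\R$ according to the size of $u-E$ at a \emph{constant} scale. In the outer region $\{|u-E|>1\}$ use $\frac{|u-E|}{((u-E)^2+\eta^2)((u-E)^2+2\eta^2)}\le|u-E|^{-3}$ and split once more at $|u-E|=E/2$: for $1<|u-E|\le E/2$ one has $u\ge E/2$, hence $e^{-u^2/2}\le e^{-E^2/8}$, and the contribution is at most $C\eta^2 e^{-E^2/8}\int_{|s|>1}|s|^{-3}ds=O(\eta^2 e^{-E^2/8})$; for $|u-E|>E/2$ the contribution is at most $8\eta^2 E^{-3}\int_{\R}e^{-u^2/2}du=O(\eta^2/E^3)$. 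In the inner region $\{|u-E|\le1\}$ bound the kernel by $\frac{|u-E|\eta^2}{((u-E)^2+\eta^2)^2}$, pull out $\sup_{|u-E|\le1}e^{-u^2/2}\le e^{-(E-1)^2/2}$ (valid for $n$ large, since $E\to\infty$), and use the elementary identity $\int_{\R}\frac{|s|\eta^2}{(s^2+\eta^2)^2}\,ds=1$; this gives a contribution of at most $Ce^{-(E-1)^2/2}$.

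To conclude, note that on $\tilde S_\delta$ we have $\sqrt{(2-\delta)\log n}\le E\le\sqrt{3\log n}$, so $(E-1)^2\ge E^2-2E\ge(2-\delta)\log n-2\sqrt{3\log n}$ and therefore $e^{-(E-1)^2/2}\le n^{-(1-\delta/2)}e^{\sqrt{3\log n}}=n^{-(1-\delta/2)+o(1)}$; since $\delta<1/2<1$ this is $o(n^{-1/2})$. The terms $\eta^2 e^{-E^2/8}=n^{-1/2}e^{-E^2/8}$ and $\eta^2/E^3=n^{-1/2}/E^3$ are plainly $o(n^{-1/2})$ as well, uniformly in $E$ over the relevant range. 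Summing the three contributions and taking the supremum over $\tilde S_\delta$ gives the claim.

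The main obstacle — and the reason the hypothesis on $\delta$ enters — is organizing the split so that the final bound is $o(n^{-1/2})$ uniformly for the whole range $\delta<1/2$. Cutting the integral at the ``natural'' scale $|u-E|\sim E$ only produces something of order $n^{-(2-\delta)/8}$, which beats $n^{-1/2}$ only for very small $\delta$; cutting instead at a scale of order $1$ is what exploits the fact that the free convolution density at the edge of its Gaussian tail is of order $e^{-(E-1)^2/2}=n^{-(1-\delta/2)+o(1)}$, comfortably smaller than $n^{-1/2}$ as soon as $\delta<1$. Beyond the sub-Gaussian bound of Lemma \ref{lemma:Free convolution is sub-Gaussian} and elementary calculus, no finer information (about $m'$, subordination, etc.) is needed.
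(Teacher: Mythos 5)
Your proposal is correct. It starts from the same exact identity
\begin{equation*}
\Re m(E+i\eta)-\Re m(E+i\sqrt{2}\eta)=\eta^2\int_\R\frac{(u-E)\,p(u)}{\left((u-E)^2+\eta^2\right)\left((u-E)^2+2\eta^2\right)}\,du
\end{equation*}
and the same key input (the sub-Gaussian bound on the free-convolution density from Lemma \ref{lemma:Free convolution is sub-Gaussian}) as the paper, but the decomposition is genuinely different. The paper cuts $\R$ into four regions at the $E$- and $\eta$-dependent scales $|u-E|\sim \sqrt{E}$, $E\sqrt{\eta}$, and $\eta$, and on each region combines a pointwise bound on the kernel with the Gaussian decay of $p$ evaluated at the near edge of that region. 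You instead cut at the constant scale $|u-E|=1$ (plus one further cut at $E/2$ in the outer region), and in the inner region you exploit the exact normalization $\int_\R |s|\eta^2(s^2+\eta^2)^{-2}\,ds=1$, which reduces the whole inner contribution to $\sup_{|u-E|\le 1}p(u)\le Ce^{-(E-1)^2/2}=n^{-(1-\delta/2)+o(1)}$. Your variant is shorter, avoids the paper's intermediate scales $E\sqrt{\eta}$ and $\sqrt{E}$ entirely, and as you note works for all $\delta<1$ rather than just $\delta<1/2$; the price is that the inner estimate is less local (you bound the density on a whole unit interval rather than near the singularity), but that costs nothing here since $p$ is essentially constant at the scale $1$ relative to its Gaussian envelope. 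All the individual estimates check out, including the reduction $(E-1)^2\ge(2-\delta)\log n-2\sqrt{3\log n}$ that makes the final bound uniform over $\tilde S_\delta$.
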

	
	\begin{proof}
		Let $p$ be the density of the free additive convolution of the standard semicircle measure and the standard Gaussian measure. Then\begin{equation}\label{eq:A:difference integral}
			\Re m(E+i\eta)-\Re m(E+i\sqrt{2}\eta)=\eta^2\int_\R\frac{(u-E)p(u)}{\left((u-E)^2+\eta^2 \right)\left((u-E)^2+2\eta^2 \right)}du.
		\end{equation} Let $0<c_2<c_1<1$  be functions of $E$ and $\eta$ such that $c_1E\rightarrow\infty$, $\frac{\eta}{c_2E}\rightarrow0$. We will break the right-hand side of \eqref{eq:A:difference integral} up into the integral over four subsets of $\R$. Define first the set  $I_1=\{u: |u-E|\geq c_1E \}$, where $|E-u|$ is large. Then define $I_2=\{u: c_1E\geq|u-E|\geq c_2E \}$, where $|E-u|$ is not too large, but much larger then $\eta$. Next define the set $I_3=\{u: c_2E\geq|u-E|\geq \eta \}$, where $|E-u|$ is roughly on the order of $\eta$. Finally define $I_4=\{u: |u-E|\leq \eta \}$, where $|E-u|$ is smaller than $\eta$.  Let \begin{equation*}
			f_{E,\eta}(u)=\frac{(u-E)p(u)}{\left((u-E)^2+\eta^2 \right)\left((u-E)^2+2\eta^2 \right)}.
		\end{equation*} We complete the proof by showing $\left|\int_{I_k}f_{E,\eta}(u)du \right|=o(1)$ uniformly on $\tilde S_\delta$ for $k=1,2,3,4$ for $c_1=1/\sqrt{E}$ and $c_2=\sqrt{\eta}$. Note that, by Lemma \ref{lemma:Free convolution is sub-Gaussian}, there exists $C>0$ such that $p(u)\leq Ce^{-u^2/2}$ for all $u\in\R$.
		
		$\mathbf{I_1}$: On $I_1$ we have $|f_{E,\eta}(u)|\leq\frac{p(u)}{c_1^3E^3}$. Recalling that $p$ is a probability density the conclusion is clear.
		
		$\mathbf{I_2}$: On $I_2$ we have that \begin{equation*}
			\left|\int_{I_2}f_{E,\eta}(u)du \right| \ll \frac{c_1E^2(c_1-c_2)}{c_2^4E^4}\exp\left(-(E-c_1E)^2/2\right)=o(1)
		\end{equation*} uniformly on $\tilde S_\delta$ for $\delta<1/2$.
		
		$\mathbf{I_3}$: On $I_3$ we have that \begin{equation*}
			\left|\int_{I_3}f_{E,\eta}(u)du \right|\ll \frac{c_2E(c_2E-\eta)}{\eta^4}\exp\left(-(E-c_2E)^2/2\right)=o(1)
		\end{equation*} uniformly on $\tilde S_\delta$ for $\delta<1/2$.
		
		$\mathbf{I_4}$: Finally on $I_4$ we have that \begin{equation*}
			\left|\int_{I_4}f_{E,\eta}(u)du \right|\ll \frac{1}{\eta^2}\exp\left(-(E-\eta)^2/2\right)=o(1)
		\end{equation*} uniformly on $\tilde S_\delta$ for $\delta<1$.
	\end{proof}
	
	In the final lemma of this section, we determine the existence and behavior of a solution to an equation that appears in our analysis.   
	\begin{lemma}\label{lemma:A:Existence of E}
		Let $X_n >0$ and $\eta_n>0$ be such that $X_n \rightarrow\infty$ and $\eta_n\rightarrow 0$ as $n \rightarrow \infty$. Then for any fixed $n$ there exists a solution $E_n$ to the equation\begin{equation}\label{eq:A:Zero real part}
			X_n-E_n-\Re m(E_n+i\eta_n)=0,
		\end{equation} and $E_n=X_n+\frac{1}{X_n}+O\left(\frac{1}{X_n^2}\right)$. Moreover if $\sqrt{3\log n} \geq X_n\geq\sqrt{(2-\delta)\log n}$ for some $0<\delta<1/2$, $\eta_n= n^{-1/4}$, and $E_n$ is the solution to \eqref{eq:A:Zero real part}, then\begin{equation*}
			X_n-E_n-\Re m(E_n+i\sqrt{2}\eta_n)=o(\eta_n^2).
		\end{equation*} 
	\end{lemma}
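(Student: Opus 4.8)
The plan is to treat the two assertions separately, relying on the continuity and asymptotic properties of $m$ recorded above. For existence, I would fix $n$ and set $f(E) := X_n - E - \Re m(E + i\eta_n)$ for $E \in \R$. By Lemma \ref{lemma:Bounds on m}, $f$ is continuous (indeed Lipschitz) and $|\Re m| \le 1$ on $\C_+$, so $f(X_n-2) \ge 1 > 0$ and $f(X_n+2) \le -1 < 0$; the intermediate value theorem yields a root $E_n$, and the defining equation forces $|E_n - X_n| = |\Re m(E_n + i\eta_n)| \le 1$, whence $E_n \to \infty$ as $n \to \infty$. To upgrade this to the stated expansion I would plug $z = E_n + i\eta_n$ (whose modulus is $E_n(1+o(1))$, since $\eta_n$ is bounded) into Lemma \ref{Alemma:FC Asymptotic Expansion} to obtain
\[
\Re m(E_n + i\eta_n) = -\frac{E_n}{E_n^2 + \eta_n^2} + O\!\left(\frac{1}{E_n^2}\right) = -\frac{1}{E_n} + O\!\left(\frac{1}{E_n^2}\right).
\]
Substituting back into $f(E_n) = 0$ gives $E_n = X_n + 1/E_n + O(1/E_n^2)$; since $E_n = X_n + O(1)$ we have $1/E_n = 1/X_n + O(1/X_n^2)$, and therefore $E_n = X_n + \tfrac{1}{X_n} + O(1/X_n^2)$, as claimed.

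For the perturbed equation I would exploit that $f(E_n) = 0$ to rewrite the quantity of interest as a difference of $\Re m$ at two nearby heights at the common point $E_n$:
\[
X_n - E_n - \Re m(E_n + i\sqrt{2}\,\eta_n) = \Re m(E_n + i\eta_n) - \Re m(E_n + i\sqrt{2}\,\eta_n).
\]
Under the hypotheses $\sqrt{(2-\delta)\log n} \le X_n \le \sqrt{3\log n}$ and $\eta_n = n^{-1/4}$, the expansion just proved gives $\sqrt{(2-\delta)\log n} \le E_n \le \sqrt{3\log n} + o(1)$, so $E_n + i\eta_n \in \tilde S_\delta$ (up to a harmless enlargement of the upper endpoint of $\tilde S_\delta$, which does not affect the proof of Lemma \ref{Alemma:Difference of real FC}). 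Applying Lemma \ref{Alemma:Difference of real FC} then bounds the right-hand side above by $o(1/\sqrt n) = o(\eta_n^2)$, since $\eta_n^2 = n^{-1/2}$, which is the desired conclusion.

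The argument is essentially bookkeeping once Lemmas \ref{Alemma:FC Asymptotic Expansion} and \ref{Alemma:Difference of real FC} are available; the only step I expect to require care is verifying that the root $E_n$ lands inside the spectral domain $\tilde S_\delta$ so that Lemma \ref{Alemma:Difference of real FC} can be invoked. This follows from the expansion $E_n = X_n + 1/X_n + O(1/X_n^2)$ together with the hypothesis $X_n \le \sqrt{3\log n}$, modulo the observation that the constant $3$ in the definition of $\tilde S_\delta$ plays no essential role in that lemma's proof, so its conclusion remains valid on the slightly larger strip containing $E_n + i\eta_n$.
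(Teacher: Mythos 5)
Your proposal is correct and follows the same route as the paper: the intermediate value theorem for existence, Lemma \ref{Alemma:FC Asymptotic Expansion} for the expansion $E_n = X_n + \tfrac{1}{X_n} + O(1/X_n^2)$, and Lemma \ref{Alemma:Difference of real FC} (after using $f(E_n)=0$ to rewrite the quantity as a difference of $\Re m$ at two heights) for the final estimate. Your remark about $E_n$ possibly exceeding $\sqrt{3\log n}$ by $O(1/X_n)$ is a point the paper glosses over, and your resolution of it is sound.
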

	\begin{proof}
		Existence of a solution to \eqref{eq:A:Zero real part} follows from the intermediate value theorem. The asymptotic statement is then an immediate application of Lemma \ref{Alemma:FC Asymptotic Expansion}. The final statement follows from Lemma \ref{Alemma:Difference of real FC} after noting $E_n+i\eta_n \in \tilde S_\delta$.
	\end{proof}

	\section{Stability of the fixed point equation} \label{sec:stability}
	
	In this section, we establish a stability property for approximate solutions of the fixed-point equation \eqref{def:fc} on $\tilde S_{\delta}$. Stability of similar equations was considered by the third author and Vu in \cite{MR3208886}, though the techniques used there are not applied here.
	
	\begin{theorem}[Stability] \label{thm:stability}
		For any $\delta \in (0,1)$, there exists $C > 0$ so that the following holds.  Let $\{\eps_n\}$ be a sequence of complex numbers so that $|\eps_n| \leq 1$ for all $n$ and $\eps_n = o(1)$.  Assume $\tilde m_n$ is a complex number with $\Im \tilde m_n \geq 0$ which satisfies 
		\begin{equation}\label{eq:Approximate fixed point}
			\tilde m_n=s(z+\tilde m_n)+\eps_n
		\end{equation} 
		for some $z \in \tilde S_\delta$.  Then  $\left|m(z)-\tilde m_n \right|\leq C\eps_n$ for all $n > C$.  
	\end{theorem}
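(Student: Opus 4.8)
The plan is to run the standard contraction argument for fixed-point equations: both $m(z)$ and $\tilde m_n$ are (approximate) fixed points of the map $w\mapsto s(z+w)$, and on a suitable subregion of $\mathbb{C}_+$ this map is a strict contraction by Biane's estimate, Lemma~\ref{lemma:LessthanoneLipschitz}. Fix once and for all $t=2$, so that $s$ is $\frac12$-Lipschitz on $\Omega_2=\{x+iy\in\mathbb{C}_+:y\ge v_2(x)\}$, where $v_2(x)\le C_2e^{-x^2/2}$. Since $m(z)$ satisfies \eqref{def:fc}, i.e.\ $m(z)=s(z+m(z))$, subtracting this from \eqref{eq:Approximate fixed point} gives
\[
m(z)-\tilde m_n=s\big(z+m(z)\big)-s\big(z+\tilde m_n\big)-\eps_n,
\]
so the whole statement reduces to placing the two shifted arguments $z+m(z)$ and $z+\tilde m_n$ into a convex subset of $\Omega_2$, on which the difference of $s$-values is bounded by $\frac12|m(z)-\tilde m_n|$.

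The first step is to locate these two points. By Lemma~\ref{lemma:Bounds on m} we have $|m(z)|\le 1$, and $\Im m(z)\ge 0$ since $m$ is the Stieltjes transform of the free-convolution measure; hence $\Re(z+m(z))\ge \Re z-1\ge\sqrt{(2-\delta)\log n}-1$ and $\Im(z+m(z))\ge\Im z=n^{-1/4}$. For $\tilde m_n$ there is no a priori quantitative bound, but the uniform boundedness of $s$ on all of $\mathbb{C}_+$ (Lemma~\ref{lemma:Gaussian Stieltjes is Bounded}, say $|s|\le C_s$) together with $|\eps_n|\le 1$ turns \eqref{eq:Approximate fixed point} into the crude bound $|\tilde m_n|\le C_s+1$; combined with $\Im\tilde m_n\ge 0$ this gives $\Re(z+\tilde m_n)\ge\sqrt{(2-\delta)\log n}-(C_s+1)$ and $\Im(z+\tilde m_n)\ge n^{-1/4}$. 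Thus both points lie in the closed convex rectangle $\mathcal{R}_n:=[\sqrt{(2-\delta)\log n}-(C_s+1),\ \sqrt{3\log n}+(C_s+1)]\times[n^{-1/4},\infty)$. For every $x$ in this real range,
\[
v_2(x)\le C_2e^{-x^2/2}\le C_2\exp\!\Big(-\tfrac12\big(\sqrt{(2-\delta)\log n}-(C_s+1)\big)^2\Big)=C_2\,n^{-(2-\delta)/2+o(1)},
\]
which is $o(n^{-1/4})$ because $\delta<1$ (in fact $\delta<3/2$ suffices). Hence $\mathcal{R}_n\subseteq\Omega_2$ for all $n$ large, and since $\mathcal{R}_n$ is convex, $s$ is $\frac12$-Lipschitz on it.

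Plugging the bound $|s(z+m(z))-s(z+\tilde m_n)|\le\frac12|m(z)-\tilde m_n|$ into the displayed identity then yields $|m(z)-\tilde m_n|\le\frac12|m(z)-\tilde m_n|+|\eps_n|$, i.e.\ $|m(z)-\tilde m_n|\le 2|\eps_n|$, which is the claimed estimate with $C=2$ (interpreting $C\eps_n$ as $C|\eps_n|$, and with the hypothesis ``$n>C$'' absorbing the finitely many $n$ for which the containment $\mathcal{R}_n\subseteq\Omega_2$ might fail). I expect the only genuinely delicate point to be the region-membership of $z+\tilde m_n$: one has no quantitative handle on $\tilde m_n$ beyond $\Im\tilde m_n\ge 0$, and it is precisely the uniform boundedness of $s$ on the whole of $\mathbb{C}_+$ — not just on $\tilde S_\delta$ — that makes the argument go through. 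Everything else is routine, modulo checking that the sub-Gaussian decay of $v_2$ really beats $n^{-1/4}$ over the full range of $\delta$, which it does.
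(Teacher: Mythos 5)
Your proposal is correct and follows essentially the same route as the paper: both proofs bound $\tilde m_n$ via the uniform boundedness of $s$ (Lemma \ref{lemma:Gaussian Stieltjes is Bounded}) and $m(z)$ via Lemma \ref{lemma:Bounds on m}, verify that $z+m(z)$ and $z+\tilde m_n$ lie in Biane's region $\Omega_2$ using the sub-Gaussian decay of $v_2$ against $\eta\ge n^{-1/4}$, and then close the contraction $|m(z)-\tilde m_n|\le\frac12|m(z)-\tilde m_n|+|\eps_n|$. Your explicit treatment of the region-membership of $z+\tilde m_n$ fills in a step the paper only sketches (``a similar argument shows...''), but the argument is the same.
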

	\begin{proof}
		It follows from Lemma \ref{lemma:Gaussian Stieltjes is Bounded} that $\tilde m_n$ is bounded and from Lemma \ref{lemma:Bounds on m} that $|m(z)|\leq 1$.  Let $\Omega_2$ and $v_2$ be defined as in Lemma \ref{lemma:LessthanoneLipschitz}. There exists a constant $N_\delta\in \N$ depending only on $\delta$ such that for $n\geq N_\delta$\begin{align*}
			v_{2}(\Re(z+m(z)))&\leq Ce^{-(\Re(z+m(z)))^2/2}\\
			&\leq Ce^{-(E-1)^2/2}\\
			&=Ce^{-E^2/2}e^{E}e^{-1/2}\\
			&\leq Ce^{-1/2}n^{-(1-\delta/2)}e^{\sqrt{3\log(n)}}\\
			&\leq n^{-1/4}\\
			&\leq \eta,
		\end{align*} for some absolute constant $C>0$. Thus is follows that $z+m(z)\in\Omega_2$. A similar argument shows $z+\tilde m_n \in \Omega_2$. It then follows from Lemma \ref{lemma:LessthanoneLipschitz} that \begin{align*}
			|m(z)-\tilde m_n|&\leq|s(z+m(z))-s(z+\tilde m_n)|+\eps_n\\
			&\leq\frac{1}{2}|m(z)-\tilde m_n|+\eps_n.
		\end{align*} A rearrangement completes the proof. 
	\end{proof}
	
	\section{Concentration of the Stieltjes transform} \label{sec:concentrationofstieltjes}

	This section is devoted to the following bound.  
	
	\begin{theorem}[Concentration] \label{thm:concentration}
		There exists $\delta > 0$ so that
		\[ \sup_{z = E + i \eta \in \tilde{S}_{\delta} \cup \hat{S}_{\delta}} n \eta \left| m_n(z) - \E_A m_n(z) \right| = o(1) \]
		with overwhelming probability.  
	\end{theorem}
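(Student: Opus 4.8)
The plan is to condition on the diagonal part $D$ and run a martingale (``fluctuation averaging'') argument over the rows and columns of $A$, exploiting that near the energies $E=\Re(z)\geq\sqrt{(2-\delta)\log n}$ the spectrum of $L$, and of each of its minors, is extremely sparse. Since $m_n$ and $\E_A m_n$ are Lipschitz in $z$ on $\tilde S_\delta$ (and on $\hat S_\delta$), where $\eta=\Im(z)$ is constant, with Lipschitz constant at most $\eta^{-2}\leq n^{1/2}$ (as $|m_n'(z)|=|\tfrac1n\tr G(z)^2|\leq\eta^{-2}$), and each of these sets is a segment of length $O(\sqrt{\log n})$, it suffices to prove the estimate at each point of a net of mesh $n^{-10}$ and take a union bound, which preserves overwhelming probability; so we fix $z=E+i\eta$. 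Write $L^{(k)}$ for $L$ with row and column $k$ deleted, $G^{(k)}=(L^{(k)}-z)^{-1}$, $m_n^{(k)}=\tfrac1n\tr G^{(k)}$, and let $\E_k$ be expectation conditioned on $D$ and on the first $k$ rows and columns of $A$ (so $\E_0=\E_A$ and $\E_n$ is the identity).

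The first step is a list of a priori bounds, all holding with overwhelming probability, that are far stronger near the edge than generic local-law estimates. On the event $\{\|A\|\leq 3\}$, Weyl's inequality gives $|\lambda_j(L^{(k)})-\lambda_j(D^{(k)})|\leq 3$, so the Chernoff bound of Proposition \ref{prop:count} (together with its obvious variant for $\#\{j:|D_{jj}-E|\leq t\}$, any constant $t$) yields $\#\{j:|\lambda_j(L^{(k)})-E|\leq t\}\leq n^{\delta/2+\eps_0}$ for $0<t=O(1)$, simultaneously in $k$, with $\eps_0>0$ as small as we like. A dyadic decomposition of $\Im m_n^{(k)}=\tfrac1n\sum_j\eta/((\lambda_j(L^{(k)})-E)^2+\eta^2)$ then shows its dominant part comes from the $\Theta(n)$ bulk eigenvalues (at distance $\asymp E$ from $E$), so $\Im m_n^{(k)}\lesssim\eta/E^2\asymp n^{-1/4}/\log n$ (compare Lemma \ref{lem:Imm_fc is small}) and $|\Re m_n^{(k)}|\lesssim1/\sqrt{\log n}$, while the same counting controls the Hilbert--Schmidt norms of powers of the resolvent: $\|G^{(k)}\|_{\mathrm F}^2:=\tr(G^{(k)}(G^{(k)})^\ast)=\tfrac n\eta\Im m_n^{(k)}\lesssim n/\log n$ and, crucially, $\|(G^{(k)})^2\|_{\mathrm F}^2\lesssim n^{\delta/2+\eps_0}\eta^{-4}\lesssim n^{1+\delta/2+\eps_0}$ (the eigenvalues near $E$ dominate, the bulk contributing only $O(n/\log^2 n)$), which is polynomially smaller than the Ward-type bound $\eta^{-2}\|G^{(k)}\|_{\mathrm F}^2\asymp n^{3/2}/\log n$ because we sit far out in the Gaussian tail.

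The second step is the martingale. Decompose $m_n-\E_A m_n=\sum_{k=1}^n\Gamma_k$ with $\Gamma_k=(\E_k-\E_{k-1})\tfrac1n(\tr G-\tr G^{(k)})$; subtracting $\tr G^{(k)}$ is legitimate since it does not depend on row/column $k$. By Lemma \ref{klem:tracedifference}, $\tr G-\tr G^{(k)}=\zeta_k'/\zeta_k$ with $\zeta_k=L_{kk}-z-\alpha_k^\ast G^{(k)}\alpha_k$ and $\zeta_k'=1+\alpha_k^\ast(G^{(k)})^2\alpha_k$, where $\alpha_k$ (the $k$-th column of $A$ with the $k$-th entry removed) has iid $N(0,1/n)$ entries and is independent of $G^{(k)}$. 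Conditioning on $G^{(k)}$ and applying the Gaussian quadratic-form concentration of Lemma \ref{klem:quadraticform} with the bounds above gives, with overwhelming probability and uniformly in $k$, $\alpha_k^\ast G^{(k)}\alpha_k=m_n^{(k)}+o(1)$ and $\alpha_k^\ast(G^{(k)})^2\alpha_k=\tfrac1n\tr(G^{(k)})^2+o(1)=o(1)$; hence $|\zeta_k'|=O(1)$ and $|\zeta_k|\geq\eta$ always, and $|\zeta_k|\gtrsim|D_{kk}-E|$ on the ``good'' set $\mathcal G=\{k:|D_{kk}-E|>1\}$, whose complement has at most $n^{\delta/2+\eps_0}$ elements. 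The crucial point is that the leading, row-$k$-independent part of $\tr G-\tr G^{(k)}$ (obtained by replacing the two quadratic forms by their conditional means) cancels under $\E_k-\E_{k-1}$; what survives is built from the row-$k$ fluctuations of those quadratic forms, whose conditional variances are $\lesssim n^{-2}\|G^{(k)}\|_{\mathrm F}^2$ and $\lesssim n^{-2}\|(G^{(k)})^2\|_{\mathrm F}^2$, together with the $O(1/n)$ fluctuation of $A_{kk}$. A careful analysis of these terms, after truncating Gaussian tails, then yields, with overwhelming probability,
\[ |\Gamma_k|\ \lesssim\ \frac{(\log n)^{C}}{n}\left(\frac{n^{-1/2}}{|\zeta_k|^2}+\frac{n^{-1/2+\delta/4+\eps_0}}{|\zeta_k|}\right)\]
for some constant $C$, and off the exceptional event the deterministic bound $|\Gamma_k|\leq 2/(n\eta)$. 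Summing squares and using $\sum_{k\in\mathcal G}|D_{kk}-E|^{-2}\lesssim n$, $\sum_{k\in\mathcal G}|D_{kk}-E|^{-4}\lesssim n$, $|\zeta_k|\geq\eta$ on $\mathcal G^c$, and $|\mathcal G^c|\leq n^{\delta/2+\eps_0}$, one gets $\sum_k|\Gamma_k|^2\lesssim n^{-3/2-c}$ for some $c=c(\delta)>0$ once $\delta/2+\eps_0<1/2$. Burkholder's inequality (Lemma \ref{klem:burkholder2}) with moment order $p=p(n)\to\infty$ slowly then yields $|m_n-\E_A m_n|\leq n^{-3/4-c/2+o(1)}$ with overwhelming probability, hence $n\eta|m_n-\E_A m_n|\leq n^{-c/2+o(1)}=o(1)$.

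The hard part is extracting this polynomial improvement over the naive concentration: Proposition \ref{prop:concentration} alone gives only fluctuations of size $t/(\eta\sqrt n)=t\,n^{-3/4}$, exactly of the critical order $(n\eta)^{-1}$ and hence useless after multiplying by $n\eta$. The gain is available solely because the spectral density of $L$ and of its minors near $E$ is $O(n^{\delta/2+\eps_0})$ per unit interval --- a consequence of the sub-Gaussian tails of the $D_{jj}$ through Weyl's inequality --- which beats the Ward-identity bound on $\|(G^{(k)})^2\|_{\mathrm F}$ by a polynomial factor that must then be propagated all the way through the martingale variance $\sum_k\E_{k-1}|\Gamma_k|^2$. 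The remaining points --- setting up the row-and-column martingale so that $G^{(k)}$ is independent of the fresh randomness at step $k$, isolating and cancelling the leading part of the trace difference, and making the truncation and conditional-expectation bookkeeping precise enough to invoke Burkholder's inequality --- are technical but routine.
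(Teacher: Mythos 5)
Your proposal is correct and follows essentially the same route as the paper: a row-by-row martingale decomposition of $m_n-\E_A m_n$ via Lemma \ref{klem:tracedifference}, Gaussian quadratic-form concentration (Lemma \ref{klem:quadraticform}), Burkholder's inequality (Lemma \ref{klem:burkholder2}), and—crucially—the improved bound on $\tr (G^{(k)})^2(G^{(k)})^{\ast 2}$ obtained from the sparsity of the spectrum near $E$ via Proposition \ref{prop:count}, interlacing, and Weyl, together with a split of the indices according to whether $D_{kk}$ is close to $E$. The only cosmetic differences are that the paper bounds fixed $2\ell$-th moments of the two martingale sums directly rather than the quadratic variation with $p=p(n)\to\infty$, but the mechanism and the source of the polynomial gain over Proposition \ref{prop:concentration} are identical.
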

	
	\begin{proof}
		By Proposition \ref{prop:count}, there exists a $\delta' > 0$ such that 
		\[
		\mathcal{E} = \left\{\left| \{1 \leq i \leq n: D_{ii} \geq \sqrt{(2 - \delta') \log n}\} \right| \leq  n^{1/8} \right\}
		\]
		holds with overwhelming probability.    Thus, it suffices to prove the theorem conditioned on the occurrence of $\mathcal{E}$. As $A$ is independent of $\mathcal{E}$, for the sake of brevity, we omit this conditioning from our notation. Furthermore, in the remainder of the proof, we will use $\E$ to mean $\E_A$.  
		
		We let $\delta = \delta'/2$ and begin by considering a fixed $z \in \tilde{S}_{\delta} \cup \hat{S}_\delta$.  
		Let $\E_k$ denote the conditional expectation with respect to the $\sigma$-field generated by $A_{ij}$ with $i,j > k$, so that $\E_n m_n(z) = \E m_n(z)$ and $\E_0 m_n(z) = m_n(z)$.  Thus, $m_n(z) - \E m_n(z)$ can be written as the following telescopic sum
		\[
		m_n(z) - \E m_n(z) = \sum_{k=1}^n \left( \E_{k-1} m_n(z) - \E_k m_n(z) \right) := \sum_{k=1}^n \gamma_k. 
		\]  
		The following martingale argument is inspired by a similar calculation in \cite[Chapter 6]{MR2567175}.  We let $G_k$ denote the resolvent of $L$ after removing the $k$-th row and column and $a_k$ is obtained from the $k$-th column of $L$ by removing the $k$-th entry.   
		We then have that
		\begin{align*}
			\gamma_k &= \frac{1}{n}(\E_{k-1} \tr (L-z)^{-1} - \E_k \tr (L-z)^{-1} )  \\
			&= \frac{1}{n} \Big(\E_{k-1} \big[ \tr (L-z)^{-1} - (L_k - z)^{-1} \big] - \E_k \big[ \tr (L-z)^{-1} - \tr (L_k - z)^{-1} \big] \Big) \\
			&= \frac{1}{n} (\E_{k-1} - \E_k) \Bigg(\frac{a_k^* G_k^{2} a_k - \E_{a_k} a_k^* G_k^{2} a_k}{L_{kk} - z - a_k^* G_k a_k} + \frac{1 + \E_{a_k} a_k^* G_k^{2} a_k }{L_{kk} - z - a_k^* G_k a_k} \\
			&\hspace{7cm}- \frac{1 + \E_{a_k} a_k^* G_k^{2} a_k}{L_{kk} - z - \E_{a_k} a_k^* G_k a_k} \Bigg) \\
			&= \frac{1}{n} (\E_{k-1} - \E_k) \Bigg(\frac{a_k^* G_k^{2} a_k - \E_{a_k} a_k^* G_k^{2} a_k}{L_{kk} - z - a_k^* G_k a_k} \\
			&\hspace{4cm} - \frac{(1 + \E_{a_k}a_k^* G_k^{2} a_k) (a_k^* G_k a_k - \E_{a_k} a_k^* G_k a_k)}{(L_{kk} - z -  a_k^* G_k a_k)(L_{kk} - z - \E_{a_k} a_k^* G_k a_k)}\Bigg) \\
			&= \frac{1}{n} (\E_{k-1} - \E_k) \Bigg(\frac{a_k^* G_k^{2} a_k - \frac{1}{n} \tr G_k^{2} }{L_{kk} - z - a_k^* G_k a_k} \\
			&\hspace{4cm} - \frac{(1 + \frac{1}{n} \tr G_k^{2} ) (a_k^* G_k a_k - \frac{1}{n}\tr G_k )}{(L_{kk} - z -  a_k^* G_k a_k)(L_{kk} - z - \frac{1}{n} \tr G_k )}\Bigg), 
		\end{align*}
		where the third equality follows from Lemma \ref{klem:tracedifference} and $\E_{a_k}$ indicates expectation over the random variables in $a_k$.  
		
		We define the following quantities,
		\[
		\alpha_k = a_k^* G_k^{2} a_k - \frac{1}{n} \tr G_k^{2}, 
		\]
		\[
		\beta_k = \frac{1}{ L_{kk} - z -  a_k^* G_k a_k}, \quad \bar{\beta}_k = \frac{1}{L_{kk} - z - \frac{1}{n} \tr G_k}, 
		\]
		\[
		b_k = \frac{1}{L_{kk} - z- \frac{1}{n} \E\tr G_k}
		\]
		\[
		\delta_k = a_k^* G_k a_k - \frac{1}{n}\tr G_k, \quad \hat{\delta}_k = a_k^* G_k a_k - \frac{1}{n}\E\tr G_k
		\]
		\[
		\epsilon_k = 1 + \frac{1}{n} \tr G_k^{2} ,
		\]
		so that
		\begin{align*}
			m_n(z) - \E m_n(z) &= \frac{1}{n} \sum_{k=1}^n (\E_{k-1} - \E_k) \alpha_k \beta_k  - \frac{1}{n} \sum_{k=1}^n (\E_{k-1} - \E_k) \epsilon_k \delta_k \beta_k \bar{\beta}_k \\
			&:= S_1 - S_2.
		\end{align*}
		
		We will show that $n \eta |S_1| = o(1)$ and $n \eta |S_2| = o(1)$  with overwhelming probability uniformly for all $z \in \tilde{S}_\delta \cup \hat{S}_\delta$.  This will be done via the method of moments.  We begin with $S_1$.  By Markov's inequality, it suffices to bound $\E| \eta n S_1|^{2 \ell} = \E |\eta \sum_{k =1}^n (\E_{k-1} - \E_k) \alpha_k \beta_k|^{2 \ell}$ for $\ell \in \N$.  
		
		By Lemma \ref{klem:burkholder2}, for any $\ell \geq 1$,  
		\[
		\E \left|\eta \sum_{k =1}^n (\E_{k-1} - \E_k) \alpha_k \beta_k \right|^{2 \ell} \leq K_{\ell} \left( \E \left(\sum_{k=1}^n \E_{k} |\eta \alpha_k \beta_k|^2 \right)^{\ell} + \sum_{k=1}^n \E  |\eta \alpha_k \beta_k|^{2 \ell} \right).
		\]
		We use $K_\ell$ to indicate a constant that only depends on $\ell$, but may change from line to line.  Since $\Im a^*_k G_k a_k > 0$,
		\[
		|\beta_k| \leq \eta^{-1}.
		\]  
		Therefore,
		\begin{equation} \label{keq:S1}
			\E \left|\eta \sum_{k=1}^n (\E_{k-1} - \E_k) \alpha_k \beta_k \right|^{2 \ell} \leq K_{\ell} \left( \E \left(\sum_{k=1}^n \E_{k} |\eta \alpha_k \beta_k|^2 \right)^{\ell} + \sum_{k=1}^n \E  |\alpha_k|^{2 \ell} \right).
		\end{equation}
		By Lemma \ref{klem:quadraticform}, 
		\[
		\E|\alpha_k|^{2 \ell} \leq K_{\ell} n^{-2 \ell}  \E|\tr G_k^{2} G_k^{*2}|^\ell.
		\]
		Let $\mathcal{E}'$ denote the event that $\|A\| \leq 10$.  We have already seen that $\mathcal{E}'$ occurs with overwhelming probability.     
		Note that on the event $\mathcal{E} \cap \mathcal{E}'$, due to Cauchy's interlacing theorem and Weyl's inequality, with overwhelming probability, the eigenvalues of $L$ after removing the $k$-th row and $k$-th column will have at most $n^{1/4}$ eigenvalues larger than $\sqrt{(2- 2 \delta) \log n}$ (see Proposition \ref{prop:count}) for $n$ sufficiently large.  Thus, we have 
		\begin{align} \label{keq:indicator}
			 \tr G_k^2 G_k^{*2} &=  \left(\sum_{i=1}^n \frac{1}{((\lambda_i - E)^2 + \eta^2)^2} \right) \oindicator{\mathcal{E}'} +  \left(\sum_{i=1}^n \frac{1}{((\lambda_i - E)^2 + \eta^2)^2} \right) \oindicator{\bar{\mathcal{E}'}} \nonumber\\
			&\leq n^{1/4} \eta^{-4} + O_\delta(n \log^{-2} n) + n \eta^{-4}  \oindicator{\mathcal{E}'^c} \nonumber\\
			&\leq 2 n^{5/4} + n^{2}  \oindicator{\mathcal{E}'^c}.
		\end{align}
		The same argument establishes that with overwhelming probability, 
		\begin{equation} \label{keq:gkgk bound}
		 \tr G_k G_k^* \leq n + n^2 \oindicator{\mathcal{E}'^c}.
		\end{equation}
		
		We now have that
		\begin{align*}
			\E|\alpha_k|^{2 \ell} &\leq K_{\ell} n^{-2 \ell} \E | 2 n^{5/4} + n^{2}  \oindicator{\mathcal{E}'^c}|^\ell \\
			&\leq K_{\ell} n^{-2 \ell} (n^{5 \ell/4} + n^{2 \ell} \E \oindicator{\mathcal{E}'^c}) \\
			&= K_{\ell} n^{-2 \ell} (n^{5 \ell/4} + n^{2 \ell} \P(\mathcal{E}'^c) ) \\
			&\leq K_{\ell} n^{-3 \ell/4}
		\end{align*}
		where the last line follows from the observation that since $\mathcal{E}'$ occurs with overwhelming probability, $\P(\mathcal{E}'^c) = O_{\ell}(n^{-100 \ell})$, say.
		Therefore, by equation \eqref{keq:S1},
		\begin{equation*}
			\E \left|\eta \sum_{k=1}^n (\E_{k-1} - \E_k) \alpha_k \beta_k \right|^{2 \ell} \leq  K_{\ell} \left( \E \left(\sum_{k=1}^n \E_{k} |\eta \alpha_k \beta_k|^2 \right)^{\ell} + n^{-3\ell/4 + 1} \right). 
		\end{equation*}
		
		We now direct our attention to the remaining sum on the right-hand side.  Observe that
		\begin{align*}
			|\alpha_k \beta_k| &\leq \left| \frac{a_k^* G_k^{2} a_k}{ L_{kk} - z -  a_k^* G_k a_k} \right|+ \left| \frac{\frac{1}{n} \tr G_k^{2}}{ L_{kk} - z -  a_k^* G_k a_k} \right| \\
			&\leq  \frac{|a_k^* G_k^{2} a_k|}{ |\Im (L_{kk} - z -  a_k^* G_k a_k)|} +  \left|\frac{\frac{1}{n} \tr G_k^{2}}{ L_{kk} - z -  a_k^* G_k a_k} \right| \\
			&\leq \eta^{-1} + \eta^{-3}, 
		\end{align*}
		where the last inequality follows from the observation that
		\[
		|a_k^* G_k^{2} a_k| \leq 1 + a_k^* (L_k - z)^{-1} (L_k - \bar{z})^{-1} a_k = -\eta^{-1} \Im(L_{kk} - z - a^*_k G_k a_k) 
		\]
		where $L_k$ denotes the matrix $L$ with the $k$-th row and column removed.
		Thus, for a fixed constant $K_0 > 0$,
		\[
		|\alpha_k \beta_k|^2 \leq 4 K_0^2 \alpha_k^2 + 4 \eta^{-6} \oindicator{|\beta_k| \geq 2 K_0}. 
		\]
		We again have by Lemma \ref{klem:quadraticform} and \eqref{keq:indicator} that for some constant $K > 0$, 
		\[
		\E_k |\alpha_k|^2 \leq \frac{K}{n^2} \E_k \tr G_k^2 G_k^{*2} \leq K (n^{-3/4} + \E_k \oindicator{\mathcal{E}'^c}) .
		\]
		
		We now return to bounding $S_1$. Let $I$ denote the indices such that $D_{ii} \geq \sqrt{(2 - \delta') \log n}$.  On the event $\mathcal{E}$, $|I| \leq n^{1/8}$, so we have that   
		\begin{align} \label{keq:S1bound}
			\P(n \eta |S_1| \geq \varepsilon) &\leq K_{\ell} \left( \E \left(\sum_{k=1}^n \E_{k} |\eta \alpha_k \beta_k|^2 \right)^{\ell} +  n^{-3\ell/4 + 1}\right) \nonumber \\
			&\leq K_{\ell} \left( \E \left(\sum_{k \in I} \E_{k} |\eta \alpha_k \beta_k|^2 \right)^{\ell} + \left(\sum_{k \notin I} \E_{k} |\eta \alpha_k \beta_k|^2 \right)^{\ell} +n^{-3\ell/4 + 1}\right) \nonumber \\
			&\leq K_{\ell} \left( \E \left(\sum_{k \notin I} \E_{k} (4 K_0^2 \alpha_k^2 \eta^2 + 4 \eta^{-4} \oindicator{|\beta_k| \geq 2 K_0}) \right)^{\ell} +    n^{-5\ell/8}\right) \nonumber\\
			&\leq  K_{\ell} \left( \eta^{2 \ell} n^{\ell/4} + \eta^{-4 \ell} n^{\ell-1} \sum_{k \notin I} \P(|\beta_k| \geq 2 K_0) +  n^{-5\ell/8}\right) \nonumber\\
			&\leq  K_{\ell} \left(  \eta^{-4 \ell} n^{\ell-1} \sum_{k \notin I} \P(|\beta_k| \geq 2 K_0) +   n^{-\ell/4}\right),
		\end{align}
		where in the third inequality we have utilized the calculation
		\begin{align} \label{keq:indicatorcalculation}
		\E \left(\sum_{k \in I} \E_{k} |\eta \alpha_k \beta_k|^2 \right)^{\ell} &\leq \E \left(\sum_{k \in I} \E_{k} |\alpha_k|^2 \right)^{\ell} \nonumber \\
		&\leq \E \left(\sum_{k \in I} K (n^{-3/4} + \E_k \oindicator{\mathcal{E}'^c}) \right)^{\ell} \nonumber  \\
		&\leq K_{\ell} \left(n^{-5 \ell/8} + \E \left(\sum_{k \in I} \E_k \oindicator{\mathcal{E}'^c} \right)^\ell \right) \nonumber \\
		&\leq  K_{\ell} \left(n^{-5 \ell/8} + n^{(\ell-1)/8} \E \left(\sum_{k \in I} \E_k \oindicator{\mathcal{E}'^c} \right) \right) \nonumber \\
		&\leq K_{\ell} \left(n^{-5 \ell/8} + n^{(\ell-1)/8} \sum_{k \in I} \P(\mathcal{E}'^c) \right) \nonumber \\
		&\leq K_{\ell} n^{-5 \ell/8}.
		\end{align}
		A nearly identical calculation justifies the fourth inequality in \eqref{keq:S1bound}.

		To control $\P(|\beta_k| \geq 2 K_0)$ in \eqref{keq:S1bound}, we make the following observation.  For $k \notin I$, 
		\[
		|b_k| \leq \frac{1}{|\Re(L_{kk} - z- \frac{1}{n} \E\tr G_k)|} \leq \frac{1}{(\delta/10)\sqrt{ \log n} - |\frac{1}{n} \E\tr G_k|} \leq K_0.
		\]  
		Therefore, if $|\beta_k| \geq 2 K_0$ then,
		\begin{align*}
			|L_{kk} - z - a_k^* G_k a_k| &= |L_{kk} -z - \frac{1}{n} \E\tr G_k + \frac{1}{n} \E\tr G_k - a_k^* G_k a_k| \\
			&= |b_k^{-1} - \hat{\delta}_k| \\
			&\leq \frac{1}{2 K_0}, 
		\end{align*}
		which implies that $\hat{\delta}_k \geq \frac{1}{2 K_0}$.  
		
		Thus, continuing from \eqref{keq:S1bound},
		\[
		\P(n \eta |S_1| \geq \varepsilon) \leq  K_{\ell} \left( \eta^{-4 \ell} n^{\ell-1} \sum_{k \notin I} \P(|\hat{\delta}_k| \geq 1/2 K_0)  + n^{-\ell/4} \right).
		\]
		
		A straight-forward calculation shows that 
		\begin{equation} 
			\E|\delta_k - \hat{\delta}_k|^{8 \ell} \leq K_{\ell} n^{-4 \ell}.
		\end{equation}
		As the proof is similar to the many calculations we have done above, it is omitted.  One can find the analogous argument in Section 6.2.3 of \cite{MR2567175}.
		
		By Markov's inequality and equation \eqref{keq:gkgk bound}, 
		\begin{align*}
		\P\left(|\hat{\delta}_k| \geq \frac{1}{2 K_0}\right) &\leq \frac{\E |\hat{\delta}_k|^{8\ell}}{(2 K_0)^{8 \ell}} \\
		&\leq K_{\ell} (\E |\hat{\delta}_k - \delta_k|^{8\ell} + \E |\delta_k|^{8 \ell}) \\
		&\leq K_{\ell} (n^{-4 \ell} + \E |\delta_k|^{8 \ell}) \\
		&\leq \E |a_k^* G_k a_k - \frac{1}{n} \E \tr G_k|^{8 \ell} \\
		&\leq K_{\ell} n^{-4 \ell}.
		\end{align*}
		We then have that 
		\[
		\P(n \eta |S_1| \geq \varepsilon) \leq  K_{\ell} \left( \eta^{-4 \ell} n^{-3\ell}  + n^{-\ell/4} \right) \leq K_{\ell} n^{-\ell/4}.
		\]
		As $\ell$ can be arbitrarily large, we have established that $n \eta |S_1| = o(1)$ uniformly in $z$, with overwhelming probability.

		We now show that $n \eta |S_2| = o(1)$ with overwhelming probability. Again, the argument is quite similar to the above, so we will be more brief.  By Markov's inequality,
		\[
		\P(n \eta |S_2| \geq \varepsilon) \leq \frac{\E \Bigg| \eta \sum_{k=1}^n (\E_{k-1} - \E_k) \epsilon_k \delta_k \beta_k \bar{\beta}_k \Bigg|^{2 \ell} }{\varepsilon^{2 \ell}}.
		\]
		By the same argument in \eqref{keq:indicator}, we have that $|\epsilon_k| \leq 2 + n^2 \oindicator{\mathcal{E}'^c}$.  As $\mathcal{E}'$ occurs with overwhelming probability, we omit the $\oindicator{\mathcal{E}'^c}$ term in the remainder of this calculation as it is negligible as seen from \eqref{keq:indicatorcalculation}.
		
		We continue with 
		\begin{align*}
			\E \Bigg| \eta \sum_{k=1}^n &(\E_{k-1} - \E_k) \epsilon_k \delta_k \beta_k \bar{\beta}_k \Bigg|^{2 \ell} \\
			&\leq K_{\ell}   \left( \E \left( \sum_{k=1}^n \E_k |\eta \epsilon_k \delta_k \beta_k \bar{\beta}_k|^2 \right)^\ell + \sum_{k=1}^n \E |\eta \epsilon_k \delta_k \beta_k \bar{\beta}_k|^{2 \ell} \right)  \\
			&\leq  K_{\ell}   \left( \E \left( \sum_{k=1}^n \E_k |\eta  \delta_k \beta_k \bar{\beta}_k|^2 \right)^\ell + \eta^{-2 \ell} n^{-\ell + 1}  \right)  \\
			&\leq K_{\ell}   \left( \ \E \left(\sum_{k \notin I} \E_k |\eta  \delta_k \beta_k \bar{\beta}_k|^2 \right)^\ell+ \eta^{-2 \ell} n^{-7\ell/8 }  \right) \\
			&\leq K_{\ell}   \left( \ \E \left(\sum_{k \notin I} \E_k |\eta  \delta_k \beta_k \bar{\beta}_k|^2 \right)^\ell+  n^{-3\ell/8 }  \right). 
		\end{align*}
		The summands on the right-hand side can be bounded as follows: 
		\begin{align*}
			|\delta_k \beta_k \bar{\beta}_k|^2 &\leq (2 K_0)^4 |\delta_k|^2 + \eta^{-4} |\delta_k^2| \oindicator{|\beta_k \bar{\beta}_k| \geq (2 K_0)^2} \\
			&\leq  (2 K_0)^4 |\delta_k|^2 + \eta^{-4} |\delta_k^2| \oindicator{|\delta_k| \geq 1/(4 K_0) \text{ or } |\hat{\delta}_k \geq 1/(4 K_0)}. 
		\end{align*}
		The final inequality is due to the observation that on the event that
		\[
		|L_{kk} - z - a_k^* G_k^{-1} a_k| |L_{kk} - z - \frac{1}{n} \tr G_k^{-1} | \leq (2 K_0)^2,
		\]
		either
		\[
		|L_{kk} - z - a_k^* G_k^{-1} a_k|  \leq 2 K_0
		\]
		or 
		\[
		|L_{kk} - z - \frac{1}{n} \tr G_k^{-1} |  \leq 2 K_0.
		\]
		If $|L_{kk} - z - a_k^* G_k^{-1} a_k|  \leq 2 K_0$, then on the event that $|b_k| \leq K_0$ we have that $|\hat{\delta}_k| \geq 1 /(2 K_0)$ as before.  On the other hand, if $|L_{kk} - z - \frac{1}{n} \tr G_k^{-1} |  \leq 2 K_0$ then
		\[
		|b_k^{-1} - \delta_k| \leq \frac{1}{2 K_0}
		\]
		which implies that $|\delta_k| \geq 1/(2 K_0)$.  Returning to the moment calculation, we have
		\begin{align*}
			\E &\left| \eta \sum_{k=1}^n (\E_{k-1} - \E_k) \epsilon_k \delta_k \beta_k \bar{\beta}_k \right|^{2 \ell}  \\
			&\leq K_{\ell}   \left(  \E \left(\sum_{k \notin I} \E_k  \eta^2 |\delta_k|^2 + \E_k \eta^{-2} |\delta_k^2| \oindicator{|\delta_k| \text{ or } |\hat{\delta}_k| \geq 1/(2 K_0)}\right)^\ell+ n^{-3\ell/8 }   \right)  \\
			&\leq K_{\ell}   \left(  \eta^{2 \ell}  + \E \left(\sum_{k \notin I} \E_k \eta^{-2} |\delta_k^2| \oindicator{|\delta_k|  \text{ or } |\hat{\delta}_k |\geq 1/(2 K_0)}\right)^\ell+ n^{-3\ell/8 }   \right) \\
			&\leq K_{\ell}   \left(   \eta^{-2 \ell} n^{\ell -1}  \sum_{k \notin I} \E  |\delta_k|^{2 \ell} \oindicator{|\delta_k|  \text{ or } |\hat{\delta}_k| \geq 1/(2 K_0)} +n^{-3\ell/8 }  \right) \\
			&\leq K_{\ell}   \Bigg(   \eta^{-2 \ell} n^{\ell -1}  \sum_{k \notin I} (\E  |\delta_k|^{4 \ell})^{1/2} (\P(|\delta_k| \geq 1/(2 K_0)) + \P( |\hat{\delta}_k| \geq 1/(2 K_0))^{1/2} \\
			&\qquad \qquad   +n^{-3\ell/8 }  \Bigg) \\
			&\leq K_{\ell}   \Bigg(   \eta^{-2 \ell} n^{\ell -1}  \sum_{k \notin I} \sqrt{\E  |\delta_k|^{4 \ell}} \sqrt{\E|\delta_k|^{2 \ell} +\E|\hat{\delta}_k|^{2 \ell} } +n^{-3\ell/8 } \Bigg) \\
			&\leq K_{\ell}   \left(  \eta^{-2 \ell} n^{-\ell} +  n^{-3\ell/8 } \right) \\
			&\leq K_{\ell} n^{-3\ell/8 }. 
		\end{align*}
		Thus, we have shown that for a fixed $z \in \tilde{S}_\delta \cup \hat{S}_\delta$, $n \eta | m_n(z) - \E m_n(z)| = o(1)$ with overwhelming probability where the probability and the $o(1)$ error are uniform in $z$.  To extend this result to all $z \in \tilde{S}_\delta \cup \hat S_{\delta}$, we observe that $m_n(z)$ is $n^2$-Lipschitz so it suffices to establish the result for an $n^{-3}$-net of $\tilde{S}_\delta \cup \hat{S}_\delta$.  Such a net will be of size at most $O(n^4)$ and since an event that holds with overwhelming probability can tolerate a polynomial-sized union bound, the result is proved.  
	\end{proof}

	\section{Proof of Theorem \ref{thm:stieltjes_transforms}} \label{sec:proofofmaintechnical}
	
	We now turn to the proof of Theorem \ref{thm:stieltjes_transforms}.  We begin with some preliminary results and notation we will need for the proof. For convenience, we establish the result for $\tilde{S}_\delta$ as an identical argument applies to $\hat{S}_\delta$. 
	
	Fix $\eps \in (0, 1/100)$.  Since $A$ is a GOE matrix, it follows from standard norm bounds (see, for example, \cite{MR1863696,MR2963170} and references therein) that $\|A\| \leq 3$ with overwhelming probability.  
	Thus, by Proposition \ref{prop:count} and Weyl's perturbation theorem (see, for instance, Corollary III.2.6 in \cite{MR1477662}), there exists $\delta > 0$ so that the event
	\begin{equation} \label{eq:s:Leig}
		\left\{ | \{ 1 \leq i \leq n : \lambda_i(L) \geq \sqrt{(2 - 2\delta) \log n} \}| = O(n^{\eps}) \right\} 
	\end{equation}
	holds with overwhelming probability.  Moreover, by taking $\delta$ smaller if needed, we can also ensure the same value of $\delta$ applies to the bounds in Theorem \ref{thm:concentration} and Proposition \ref{prop:count}.  For convenience, we define the event
	\begin{align*}
		\mathcal{E} := &\left\{ | \{ 1 \leq i \leq n : \lambda_i(L) \geq \sqrt{(2 - 2\delta) \log n} \}| = O(n^{\eps}) \right\} \\
		&\qquad \bigcap \left\{ \sup_{z = E + i \eta \in \tilde S_{\delta}} n \eta \left| m_n(z) - \E_A m_n(z) \right| = o(1) \right\} \\
		&\qquad \bigcap \left\{\left| \{ 1 \leq i \leq n : D_{ii} \geq \sqrt{(2- 2\delta) \log n} \} \right| = O_{\eps}(n^{\eps}) \right\}
	\end{align*}
	to be the intersection of the events from \eqref{eq:s:Leig},  Theorem \ref{thm:concentration}, and Proposition \ref{prop:count}.  It follows that $\mathcal{E}$ holds with overwhelming probability, and we will work on this event throughout the proof.  
	For the remainder of the proof, we consider $\eps$ and $\delta$ fixed.  We will allow implicit constants in our asymptotic notation to depend on these parameters without denoting this dependence.  
	
	Since the event $\mathcal{E}$ holds with overwhelming probability, we will often be able to insert or remove the indicator function $\oindicator{\mathcal{E}}$ into the expected value with only negligible error.  For example, using the naive bound, $\sup_{i,j} |G_{ij}(z)| \leq \|G(z)\| \leq \frac{1}{\eta}$, we have
	\begin{align*}
		\E \sum_{i,j} G_{ij}(z) = \E \sum_{i,j} G_{ij}(z) \oindicator{\mathcal{E}} + \E \sum_{i,j} G_{ij}(z) \oindicator{\mathcal{E}^c}, 
	\end{align*}
	where
	\[ \left| \E \sum_{i,j} G_{ij}(z) \oindicator{\mathcal{E}^c} \right| \leq \frac{n^2}{\eta} \Prob(\mathcal{E}^c) = O_p \left( \frac{1}{n^p \eta} \right) \]
	for any $p > 0$.  Here, we use the convention that all indices in the sums are over $[n]$ unless otherwise noted.  In particular, taking $p$ sufficiently large shows that
	\[ \E \sum_{i,j} G_{ij}(z) = \E \sum_{i,j} G_{ij}(z) \oindicator{\mathcal{E}} + o \left( \frac{1}{n \eta} \right) \]
	for any $z$ in the spectral domain $S_\delta$.  
	In a similar way, one can apply the same procedure to the conditional expectation $\E_A \sum_{i,j} G_{ij}(z)$, where the error term
	\[ \E_A \sum_{i,j} G_{ij}(z) \oindicator{\mathcal{E}^c} \]
	can be bounded with overwhelming probability using an $L^1$-norm bound as above and Markov's inequality.  
	We will often insert and remove indicator functions of events that hold with overwhelming probability in this way.  As the arguments are all of a similar format, we will not always show all of the details, and we refer to this as procedure as ``inserting the indicator function using naive bounds.''
	
	Theorem \ref{thm:stieltjes_transforms} focuses on the spectral domain $\tilde S_{\delta} \cup \hat S_{\delta}$.  However, it will sometimes be convenient to work on the larger spectral domain $S_{\delta}$.  We begin with some initial bounds for $m_n$ in the spectral domain $S_\delta$.  
	
	\begin{lemma} \label{lemma:s:mn}
		With overwhelming probability, 
		\[ \sup_{z \in S_{\delta}} \E |m_n(z)| + \sup_{z \in S_{\delta}} \E_A |m_n(z)| + \sup_{z \in S_{\delta}} |m_n(z)| = O\left( \frac{1}{ \sqrt{\log n}} \right). \]
	\end{lemma}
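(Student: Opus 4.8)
The plan is to establish the pointwise bound $|m_n(z)| = O(1/\sqrt{\log n})$ uniformly on $S_\delta$ on the good event $\mathcal{E}$, and then to transfer this to the two expectations by controlling the negligible complementary event with the trivial bound $|m_n(z)| \le 1/\eta \le n^{1/4}$.

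First I would write $m_n(z) = \frac{1}{n}\sum_{i=1}^n (\lambda_i(L) - z)^{-1}$ and, on $\mathcal{E}$, split the indices according to whether $\lambda_i(L) \ge \sqrt{(2-2\delta)\log n}$. On $\mathcal{E}$ there are only $O(n^\eps)$ indices in the first class (this is the part of $\mathcal{E}$ coming from \eqref{eq:s:Leig}), and bounding each such term by $\eta^{-1} \le n^{1/4}$ gives a total contribution of order $n^{\eps + 1/4 - 1} = n^{\eps - 3/4} = o(1/\sqrt{\log n})$ since $\eps \in (0,1/100)$. For the remaining indices, since $z \in S_\delta$ forces $E := \Re(z) \ge \sqrt{(2-\delta)\log n}$, we have
\[ |\lambda_i(L) - z| \ge E - \lambda_i(L) \ge \left( \sqrt{2-\delta} - \sqrt{2-2\delta} \right)\sqrt{\log n} \ge \frac{\delta}{2\sqrt{2}}\sqrt{\log n}, \]
so this class contributes at most $\frac{1}{n}\cdot n \cdot \frac{2\sqrt{2}}{\delta\sqrt{\log n}} = O(1/\sqrt{\log n})$. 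Since neither bound depends on the particular $z \in S_\delta$ (only on the eigenvalues of $L$ and the lower bound on $E$), this yields $\sup_{z\in S_\delta}|m_n(z)| = O(1/\sqrt{\log n})$ on $\mathcal{E}$ with no net argument needed, which controls the last term of the lemma because $\mathcal{E}$ holds with overwhelming probability.

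For the first term I would combine the above with the crude bound off $\mathcal{E}$: pointwise, $\sup_{z\in S_\delta}|m_n(z)| \le C/\sqrt{\log n} + n^{1/4}\oindicator{\mathcal{E}^c}$, and taking expectations together with $\Prob(\mathcal{E}^c) = O_p(n^{-p})$ for every $p$ gives $\sup_{z\in S_\delta}\E|m_n(z)| \le \E\sup_{z\in S_\delta}|m_n(z)| = O(1/\sqrt{\log n})$. For the middle term I would condition on $D$: the part of $\mathcal{E}$ constraining $D$, namely $|\{i : D_{ii} \ge \sqrt{(2-2\delta)\log n}\}| = O(n^\eps)$, holds with overwhelming probability over $D$, and on this $D$-event the bound $\|A\| \le 3$ (which holds with overwhelming probability over $A$) together with Weyl's inequality reproduces the eigenvalue count for $L = D-A$; hence for such $D$ the same splitting gives $\E_A \sup_{z\in S_\delta}|m_n(z)| \le C/\sqrt{\log n} + n^{1/4}\,\Prob_A(\|A\| > 3) = O(1/\sqrt{\log n})$, so that $\sup_{z\in S_\delta}\E_A|m_n(z)| = O(1/\sqrt{\log n})$ with overwhelming probability.

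No genuine obstacle arises here; the only points requiring care are (i) the uniformity over $z$, which is immediate because the pointwise estimate depends on $z$ only through $\Re(z) \ge \sqrt{(2-\delta)\log n}$ and the fixed eigenvalue configuration of $L$, and (ii) keeping the two sources of randomness separate, so that the conditional expectation $\E_A |m_n(z)|$ is handled by first conditioning on a typical $D$ and then invoking the norm bound on $A$ rather than the full event $\mathcal{E}$.
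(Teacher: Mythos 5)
Your proof is correct and follows essentially the same route as the paper's: the same split of the eigenvalue indices at $\sqrt{(2-2\delta)\log n}$, the naive $\eta^{-1}$ bound on the $O(n^{\eps})$ large eigenvalues, and the $\gg \sqrt{\log n}$ separation $E - \lambda_i(L)$ for the remaining ones, with the complement of the good event absorbed via the trivial bound $|m_n(z)| \leq \eta^{-1} \leq n^{1/4}$. Your observation that bounding $\E_A \sup_{z} |m_n(z)|$ after conditioning on a typical $D$ (rather than $\sup_z \E_A |m_n(z)|$ directly) removes the need for the net-and-continuity argument the paper invokes for the middle term is a small but legitimate streamlining.
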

	\begin{proof}
		We start by bounding 
		\[ \sup_{z \in S_{\delta}} \E |m_n(z)|. \]
		By inserting the indicator function of $\mathcal{E}$ using naive bounds, it suffices to bound
		\[ \sup_{z \in S_{\delta}} \E |m_n(z)\oindicator{\mathcal{E}}|. \]
		For $z = E + i \eta \in S_\delta$, we have the uniform bound
		\begin{align*}
			\E \left| m_n(z)\oindicator{\mathcal{E}}\right| &\leq \E \frac{1}{n} \sum_{j=1}^n \frac{1}{ \sqrt{ ( \lambda_j(L) - E)^2 + \eta^2 }} \oindicator{\mathcal{E}}\\
			&\leq \E \frac{1}{n} \sum_{j : \lambda_j(L) \geq \sqrt{(2 - 2\delta)\log n}} \frac{1}{\eta}\oindicator{\mathcal{E}}  + \E \frac{1}{n} \sum_{j : \lambda_j(L) < \sqrt{(2 - 2\delta)\log n}} \frac{1}{ |\lambda_j(L) - E| }\oindicator{\mathcal{E}} \\
			&\ll \frac{n^\eps}{n \eta} + \frac{n}{n \sqrt{\log n}}, 
		\end{align*}
		where in the first sum we used that, on the event $\mathcal{E}$, there are only $O(n^{\eps})$ terms in the sum and in the second sum we bounded the number of summands by $n$ and used that $\lambda_j(L) < \sqrt{(2 - 2\delta)\log n}$ while $E \geq \sqrt{(2 - \delta) \log n}$.  Thus, since $\eta \geq n^{-1/4}$ and $\eps < 1/100$, we conclude that 
		\[ \sup_{z \in S_{\delta}} \E |m_n(z)| = O \left( \frac{1}{\sqrt{\log n}} \right). \]
		
		The same method also bounds $\sup_{z \in S_{\delta}} \E_A |m_n(z)|$, where we can again use naive bounds to insert the the indicator function of $\mathcal{E}$ into the conditional expectation.  In this case, one also needs to use a net argument and continuity to establish the result for all $z \in S_{\delta}$; we omit the details.  The bound for $\sup_{z \in S_{\delta}} |m_n(z)|$ is also similar.  However, in this case, we do not have any expectation and can simply repeat the arguments above by working on the event $\mathcal{E}$, which holds with overwhelming probability; we omit the details.  
	\end{proof}
	
	With Theorem \ref{thm:concentration} and Lemma \ref{lemma:s:mn} in hand, we can now complete the proof of Theorem \ref{thm:stieltjes_transforms}.  
	
	\begin{proof}[Proof of Theorem \ref{thm:stieltjes_transforms}]
		For $z \in S_{\delta}$, we define 
		\[ G := G(z) \]
		and 
		\[ Q := Q(z + \E_A m_n(z)), \]
		where $G(z) := (L-z)^{-1}$ is the resolvent of $L$ and $Q(z) := (D - z)^{-1}$ is the resolvent of $D$.  In particular, $s_n(z + \E_A m_n(z)) = \frac{1}{n} \tr Q = \E_A \frac{1}{n} \tr Q$ since $Q$ does not depend on $A$.  By the resolvent identity \eqref{eq:resolvent}, we have
		\begin{align} \label{eq:s:res_start}
			\E_A m_n(z) - s_n(z + \E_A m_n(z)) &= \E_A \frac{1}{n} \tr G - \frac{1}{n} \tr Q \\
			&=  \E_A \frac{1}{n} \tr (GAQ) - \E_A m_n(z) \E_A \frac{1}{n} \tr (GQ). \nonumber
		\end{align}
		We will apply the Gaussian integration by parts formula \eqref{eq:ibp} to 
		\begin{equation} \label{eq:s:ibp}
			\E_A \frac{1}{n} \tr (GAQ) = \frac{1}{n} \sum_{i,j} Q_{ii} \E_A [ G_{ij} A_{ji} ], 
		\end{equation} 
		where we used the fact that $Q$ is a diagonal matrix and does not depend on $A$ (so we can pull it out of the conditional expectation).  A simple computation involving the resolvent identity \eqref{eq:resolvent} shows that 
		\[ \frac{ \partial G_{kl} } { \partial A_{ij} } = \begin{cases} 
			G_{ki} G_{jl} + G_{kj} G_{il}, & \text{ if } i \neq j, \\
			G_{ki} G_{jl}, & \text{ if } i = j. 
		\end{cases}
		\] 
		Thus, returning to \eqref{eq:s:ibp} and applying \eqref{eq:ibp}, we obtain 
		\[ \E_A \frac{1}{n} \tr (GAQ) = \frac{1}{n^2} \E_A \sum_{i,j} Q_{ii} G_{ij}^2 + \frac{1}{n} \E_A m_n(z) \tr (QG),  \]
		which when combined with \eqref{eq:s:res_start} yields
		\begin{align} \label{eq:s:res_mid}
			\E_A m_n(z) - s_n(z + \E_A m_n(z)) &=  \frac{1}{n^2} \E_A \sum_{i,j} Q_{ii} G_{ij}^2 + \frac{1}{n} \E_A m_n(z) \tr (QG) \\
			&\qquad\qquad - \E_A m_n(z) \E_A \frac{1}{n} \tr (GQ). \nonumber
		\end{align}
		We aim to bound the terms on the right-hand side uniformly for $z \in \tilde{S}_\delta$.  
		
		For the first term, we apply the Ward identity \eqref{eq:ward} to get
		\begin{align*}
			\left| \frac{1}{n^2} \E_A \sum_{i,j} Q_{ii} G_{ij}^2 \right| &\leq \E_A \frac{1}{n^2} \sum_{i} | Q_{ii} | \sum_{j} |G_{ij}|^2 \\
			&\leq \E_A \frac{1}{n^2 \eta} \sum_{i} |Q_{ii}| \Im G_{ii} \\
			&\leq \E_A \frac{1}{n^2 \eta} \sum_{i} \frac{1}{ | D_{ii} - z - \E_A m(z) | } \Im G_{ii}. 
		\end{align*}
		Define the event 
		\begin{equation} \label{eq:s:F}
			\mathcal{F} := \left\{ \sup_{z \in \tilde S_{\delta}} \E_A m_n(z) = O \left( \frac{1}{\sqrt{ \log n}} \right) \right\}, 
		\end{equation}
		and note that, by Lemma \ref{lemma:s:mn}, $\mathcal{F}$ holds with overwhelming probability.  By using naive bounds, we can insert the indicator function of the event $\mathcal{E} \cap \mathcal{F}$ into the above equations to obtain
		\begin{align*}
			&\E_A \frac{1}{n^2 \eta} \sum_{i} \frac{1}{ | D_{ii} - z - \E_A m(z) | } \Im G_{ii} \\
			&= \E_A \frac{1}{n^2 \eta} \sum_{i} \frac{1}{ | D_{ii} - z - \E_A m(z) | } \Im G_{ii} \oindicator{\mathcal{E} \cap \mathcal{F}} + o \left( \frac{1}{n \eta} \right) \\
			&= \E_A \frac{1}{n^2 \eta}  \sum_{i: D_{ii} \geq \sqrt{ (2 - 2\delta) \log n}} \frac{1}{ | D_{ii} - z - \E_A m(z) | } \Im G_{ii} \oindicator{\mathcal{E} \cap \mathcal{F}} \\
			&\qquad +  \E_A \frac{1}{n^2 \eta}\sum_{i: D_{ii} < \sqrt{(2 - 2 \delta) \log n}} \frac{1}{ | D_{ii} - z - \E_A m(z) | } \Im G_{ii} \oindicator{\mathcal{E} \cap \mathcal{F}} + o \left( \frac{1}{n \eta} \right)
		\end{align*}
		with overwhelming probability.  
		
		Observe that since $\Re(z) \geq \sqrt{(2 - \delta) \log n}$, one has 
		\[ | D_{ii} - z - \E_A m(z) | \gg \sqrt{\log n} \]
		on the event $\mathcal{F}$ whenever $D_{ii} < \sqrt{(2 - 2 \delta) \log n}$.  Thus, we have   
		\begin{align*}
			\E_A \frac{1}{n^2 \eta} &\sum_{i: D_{ii} < \sqrt{(2 - 2 \delta) \log n}} \frac{1}{ | D_{ii} - z - \E_A m(z) | } \Im G_{ii} \oindicator{\mathcal{E} \cap \mathcal{F}} \\
			&\ll \frac{1}{n \eta \sqrt{\log n}} \E_A \Im m_n(z) \oindicator{\mathcal{E} \cap \mathcal{F}} \\
			&= o \left( \frac{1}{n \eta} \right).  
		\end{align*}
		For the other term, we apply the naive bounds $\|G\| \leq \frac{1}{ \eta}$ and $ \frac{1}{ | D_{ii} - z - \E_A m(z) | } \leq \frac{1}{\eta}$ to obtain
		\begin{align*}
			\E_A \frac{1}{n^2 \eta}  \sum_{i: D_{ii} \geq \sqrt{ (2 - 2\delta) \log n}} \frac{1}{ | D_{ii} - z - \E_A m(z) | } \Im G_{ii} \oindicator{\mathcal{E} \cap \mathcal{F}} \ll \frac{n^\eps}{n^2 \eta^3} = o \left( \frac{1}{n \eta} \right)  
		\end{align*}
		uniformly for $z \in \tilde S_\delta$.  
		
		Combining the terms above, we conclude that, with overwhelming probability, 
		\[ \frac{1}{n^2} \E_A \sum_{i,j} Q_{ii} G_{ij}^2 = o \left( \frac{1}{n \eta} \right) \]
		uniformly for $z \in \tilde S_\delta$.  In view of \eqref{eq:s:res_mid}, it remains to show
		\begin{equation} \label{eq:s:res_end}
			\frac{1}{n} \E_A m_n(z) \tr (QG) - \E_A m_n(z) \E_A \frac{1}{n} \tr (GQ) = o \left( \frac{1}{n \eta} \right)
		\end{equation} 
		with overwhelming probability, uniformly for $z \in \tilde S_{\delta}$.  
		We will use Theorem \ref{thm:concentration} to establish \eqref{eq:s:res_end}.  Indeed, by using naive bounds, we can insert the indicator function of the event $\mathcal{E} \cap \mathcal{F}$, and it suffices to bound
		\[ \sup_{z \in \tilde{S}_\delta} \E_A \left| \frac{1}{n} m_n(z) \tr (QG) - \E_A \left[ m_n(z)\right] \frac{1}{n} \tr (GQ) \right|\oindicator{\mathcal{E} \cap \mathcal{F}}. \]
		In order to bound this term, we will need the following result.
		
		\begin{lemma} \label{lem:s:GQbnd}
			One has
			\[ \sup_{z \in \tilde S_{\delta}} \left| \frac{1}{n} \tr (GQ) \right|\oindicator{\mathcal{E} \cap \mathcal{F}} = O\left(\frac{1}{\log n} \right) \]
			with probability $1$. 
		\end{lemma}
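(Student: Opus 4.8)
The plan is to estimate $\frac1n \tr(GQ) = \frac1n \sum_{i=1}^n G_{ii} Q_{ii}$ entrywise, exploiting that $Q$ is diagonal with $Q_{ii} = (D_{ii} - z - \E_A m_n(z))^{-1}$. Since $m_n(z) = \frac1n \tr (L-z)^{-1}$ is the Stieltjes transform of a nonnegative measure, $\Im \E_A m_n(z) \geq 0$, and therefore $\Im(z + \E_A m_n(z)) \geq \eta$, which gives the crude bound $|Q_{ii}| \leq 1/\eta$ for every $i$ (and, likewise, $|G_{ii}| \leq \|G\| \leq 1/\eta$). First I would split the index set $\{1,\dots,n\}$ according to whether $D_{ii} \geq \sqrt{(2-2\delta)\log n}$ or $D_{ii} < \sqrt{(2-2\delta)\log n}$. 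For an index of the second type, the real part of the denominator of $Q_{ii}$ is bounded in absolute value below by $E - D_{ii} - |\Re \E_A m_n(z)| \geq (\sqrt{2-\delta} - \sqrt{2-2\delta})\sqrt{\log n} - O(1/\sqrt{\log n}) = \Omega(\sqrt{\log n})$ on the event $\mathcal{F}$, so $|Q_{ii}| = O(1/\sqrt{\log n})$ there.

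For the large-$D_{ii}$ part, the third defining event of $\mathcal{E}$ guarantees that there are only $O_{\eps}(n^{\eps})$ such indices, so applying $|G_{ii}|\leq 1/\eta$ and $|Q_{ii}| \leq 1/\eta$ shows that the corresponding piece of $\frac1n \tr(GQ)$ is $O(n^{\eps}/(n\eta^2)) = O(n^{\eps - 1/2})$ uniformly on $\tilde S_\delta$ (since $\eta = n^{-1/4}$ there), which is $o(1/\log n)$ because $\eps < 1/100$. The small-$D_{ii}$ part is dominated by $\bigl(\max_i |Q_{ii}|\bigr)\,\frac1n\sum_{i=1}^n |G_{ii}|$, so the heart of the matter is to show that $\frac1n \sum_{i=1}^n |G_{ii}| = O(1/\sqrt{\log n})$ on $\mathcal{E}$.

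For this last bound I would use the spectral decomposition $G_{ii} = \sum_k u_k(i)^2 /(\lambda_k(L) - z)$, where $\{u_k\}$ is an orthonormal eigenbasis of $L$; summing over $i$ and using $\sum_i u_k(i)^2 = 1$ yields $\frac1n \sum_i |G_{ii}| \leq \frac1n \sum_k |\lambda_k(L) - z|^{-1}$. On $\mathcal{E}$ at most $O(n^{\eps})$ eigenvalues of $L$ exceed $\sqrt{(2-2\delta)\log n}$, and these contribute at most $O(n^{\eps}/(n\eta)) = o(1)$, while every remaining eigenvalue satisfies $|\lambda_k(L) - E| \geq (\sqrt{2-\delta} - \sqrt{2-2\delta})\sqrt{\log n} = \Omega(\sqrt{\log n})$ and hence the remaining terms contribute $O(1/\sqrt{\log n})$. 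Combining the two pieces gives $\bigl|\frac1n \tr(GQ)\bigr| = O(1/\log n)$ uniformly for $z \in \tilde S_\delta$ on the event $\mathcal{E}\cap\mathcal{F}$; since every estimate is deterministic once this event is fixed, with implied constants independent of the realization, and the indicator $\oindicator{\mathcal{E}\cap\mathcal{F}}$ vanishes off this event, the asserted bound holds with probability $1$.

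I expect the only (mild) obstacle to be the estimate $\frac1n\sum_i |G_{ii}| = O(1/\sqrt{\log n})$: the naive bound $|G_{ii}|\leq 1/\eta = n^{1/4}$ is far too lossy, and one has to route through the eigenvalue count supplied by $\mathcal{E}$ via the spectral decomposition, using that $\Re(z)$ is separated from the bulk of $\mathrm{spec}(L)$ by $\Omega(\sqrt{\log n})$. Everything else is a routine application of $\|G\| \leq 1/\eta$ together with the separation of $\Re(z)$ from the typical diagonal entries of $D$.
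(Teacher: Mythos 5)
Your argument is correct. The paper takes a slightly different route: it first applies Cauchy--Schwarz to write $\bigl|\frac{1}{n}\tr(GQ)\bigr| \leq \bigl(\frac{1}{n}\tr(GG^\ast)\bigr)^{1/2}\bigl(\frac{1}{n}\tr(QQ^\ast)\bigr)^{1/2}$ and then bounds each factor separately by $O(1/\log n)$, using the spectral theorem for $G$ and the diagonal structure for $Q$, in each case splitting according to whether the eigenvalue of $L$ (resp.\ the entry $D_{jj}$) exceeds $\sqrt{(2-2\delta)\log n}$. You instead exploit the diagonality of $Q$ to reduce to the entrywise sum $\frac{1}{n}\sum_i G_{ii}Q_{ii}$, split on the size of $D_{ii}$, and handle the main term by the bound $\frac{1}{n}\sum_i |G_{ii}| \leq \frac{1}{n}\sum_k |\lambda_k(L)-z|^{-1} = O(1/\sqrt{\log n})$ obtained from the eigenvector decomposition and $\sum_i u_k(i)^2 = 1$. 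Both arguments rest on exactly the same inputs --- the $O(n^{\eps})$ counts in $\mathcal{E}$, the boundedness of $\E_A m_n$ on $\mathcal{F}$, and the $\Omega(\sqrt{\log n})$ separation of $\Re(z)$ from the bulk --- and produce the same $O(1/\sqrt{\log n})\cdot O(1/\sqrt{\log n})$ structure. The Cauchy--Schwarz route is marginally cleaner in that it decouples $G$ from $Q$ entirely and avoids your extra diagonal-to-eigenvalue step; your route has the minor advantage of not discarding the off-diagonal cancellation up front, though neither argument needs it. Either version is a complete proof.
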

		
		We prove Lemma \ref{lem:s:GQbnd} below, but let us first complete the proof of Theorem \ref{thm:stieltjes_transforms}.  Indeed, applying Lemma \ref{lem:s:GQbnd}, we obtain
		\begin{align*}
			\E_A &\left[ \left| \frac{1}{n} m_n(z) \tr (QG) - \E_A \left[ m_n(z)\right] \frac{1}{n} \tr (GQ) \right|\oindicator{\mathcal{E} \cap \mathcal{F}} \right] \\
			&\qquad\leq \E_A \left[ \left| m_n(z) - \E_A \left[ m_n(z)\right] \right| \left| \frac{1}{n} \tr (GQ) \right|\oindicator{\mathcal{E} \cap \mathcal{F}} \right] \\
			&\qquad\ll \frac{1}{\log n} \E_A \left[ \left| m_n(z) - \E_A \left[ m_n(z)\right] \right| \oindicator{\mathcal{E} \cap \mathcal{F}} \right] 
		\end{align*}
		uniformly for $z \in \tilde S_{\delta}$.  Applying Theorem \ref{thm:concentration} (which we included in the event $\mathcal{E}$ for just this purpose) establishes \eqref{eq:s:res_end}.  In view of \eqref{eq:s:res_mid}, this completes the proof of Theorem \ref{thm:stieltjes_transforms}.  
	\end{proof}
	
	We now give the proof of Lemma \ref{lem:s:GQbnd}.
	
	\begin{proof}[Proof of Lemma \ref{lem:s:GQbnd}]
		By the Cauchy--Schwarz inequality, it suffices to show
		\begin{equation} \label{eq:s:GGast}
			\sup_{z \in \tilde S_{\delta}} \frac{1}{n} \tr (G G^\ast)\oindicator{\mathcal{E} \cap \mathcal{F}} = O\left(\frac{1}{\log n} \right) 
		\end{equation}
		and
		\begin{equation} \label{eq:s:QQast}
			\sup_{z \in \tilde S_{\delta}} \frac{1}{n} \tr (Q Q^\ast)\oindicator{\mathcal{E} \cap \mathcal{F}} = O\left(\frac{1}{\log n} \right) 
		\end{equation} 
		with probability $1$.  Fix $z = E + i \eta \in \tilde S_{\delta}$; all of our bounds will be uniform in $z$ and hold with probability $1$.  
		
		For the term on the left-hand side of \eqref{eq:s:GGast}, we apply the spectral theorem to obtain
		\begin{align*}
			\frac{1}{n} \tr (G G^\ast)\oindicator{\mathcal{E} \cap \mathcal{F}} &= \frac{1}{n} \sum_{j} \frac{1}{ (\lambda_j(L) - E)^2 + \eta^2 }\oindicator{\mathcal{E} \cap \mathcal{F}} \\
			&= \frac{1}{n} \sum_{j: \lambda_j(L) \geq \sqrt{(2-2\delta) \log n}} \frac{1}{ (\lambda_j(L) - E)^2 + \eta^2 }\oindicator{\mathcal{E} \cap \mathcal{F}} \\
			&\qquad\qquad+ \frac{1}{n} \sum_{j: \lambda_j(L) < \sqrt{(2-2\delta) \log n}} \frac{1}{ (\lambda_j(L) - E)^2 + \eta^2 }\oindicator{\mathcal{E} \cap \mathcal{F}}. 
		\end{align*}
		Observe that, on the event $\mathcal{E}$, the first sum only contains $O(n^\eps)$ terms, and so using a naive bound we obtain 
		\[ \frac{1}{n} \sum_{j: \lambda_j(L) \geq \sqrt{(2-2\delta) \log n}} \frac{1}{ (\lambda_j(L) - E)^2 + \eta^2 }\oindicator{\mathcal{E} \cap \mathcal{F}} \ll \frac{n^{\eps}}{n \eta^2} = O\left(\frac{1}{\log n} \right) \]
		with probability $1$.  For the second sum, we have
		\[  \frac{1}{ (\lambda_j(L) - E)^2 + \eta^2 } \gg \frac{1}{\log n} \]
		uniformly for $z \in \tilde S_{\delta}$ since $\lambda_j(L) < \sqrt{(2 - 2\delta) \log n}$ and $E \geq \sqrt{(2 - \delta) \log n}$.  Thus, we find
		\[ \frac{1}{n} \sum_{j: \lambda_j(L) < \sqrt{(2-2\delta) \log n}} \frac{1}{ (\lambda_j(L) - E)^2 + \eta^2 } \ll \frac{n}{n \log n} = O\left(\frac{1}{\log n} \right) \]
		with probability $1$, where we bounded the total number of terms in this sum by $n$.  This completes the proof of \eqref{eq:s:GGast}.  
		
		The proof of \eqref{eq:s:QQast} is similar.  Since $Q$ is a diagonal matrix, we have
		\begin{align*}
			\frac{1}{n} \tr (Q Q^\ast)\oindicator{\mathcal{E} \cap \mathcal{F}} &= \frac{1}{n} \sum_{j} \frac{ 1} { |D_{jj} - z - \E_A m_n(z)|^2} \oindicator{\mathcal{E} \cap \mathcal{F}} \\
			&= \frac{1}{n} \sum_{j: D_{jj} \geq \sqrt{(2 - 2\delta) \log n}} \frac{ 1} { |D_{jj} - z - \E_A m_n(z)|^2} \oindicator{\mathcal{E} \cap \mathcal{F}} \\
			&\qquad \qquad + \frac{1}{n} \sum_{j: D_{jj} < \sqrt{(2 - 2\delta) \log n}} \frac{ 1} { |D_{jj} - z - \E_A m_n(z)|^2} \oindicator{\mathcal{E} \cap \mathcal{F}}.
		\end{align*}
		On the event $\mathcal{E}$, the first sum contains $O(n^{\eps})$ terms, and so using a naive bound we obtain
		\[ \frac{1}{n} \sum_{j: D_{jj} \geq \sqrt{(2 - 2\delta) \log n}} \frac{ 1} { |D_{jj} - z - \E_A m_n(z)|^2} \oindicator{\mathcal{E} \cap \mathcal{F}} \leq \frac{n^{\eps}}{n \eta^2} = O\left(\frac{1}{\log n} \right) \]
		with probability $1$.  We now turn our attention to the second sum.  On the event $\mathcal{F}$, $\E_A m_n(z)$ is uniformly bounded for $z \in \tilde S_{\delta}$.  Thus, on the event $\mathcal{E} \cap \mathcal{F}$, whenever $D_{jj} < \sqrt{(2 - 2 \delta) \log n}$, we have 
		\[ \frac{ 1} { |D_{jj} - z - \E_A m_n(z)|^2} \gg \frac{1}{\log n} \]
		uniformly for $z \in \tilde S_{\delta}$ since $E \geq \sqrt{(2-\delta) \log n}$.  Therefore, we conclude that 
		\[ \frac{1}{n} \sum_{j: D_{jj} < \sqrt{(2 - 2\delta) \log n}} \frac{ 1} { |D_{jj} - z - \E_A m_n(z)|^2} \oindicator{\mathcal{E} \cap \mathcal{F}} \ll \frac{n}{n \log n } = O\left(\frac{1}{\log n} \right) \]
		with probability $1$, and the proof is complete.  
	\end{proof}
	
	We include the following extensions of Theorem \ref{thm:stieltjes_transforms}, which we will need in the next section.  
	
	\begin{theorem} \label{thm:s:expected_stieltjes_transforms}
		There exists $\delta > 0$ so that 
		\begin{equation} \label{eq:s:explimit}
			\sup_{z \in \tilde S_{\delta} \cup \hat{S}_\delta} \sqrt{n} \eta \left| \E m_n(z) - m(z) \right| = o (1).
		\end{equation} 
	\end{theorem}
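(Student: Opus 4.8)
The plan is to turn \eqref{eq:s:explimit} into a statement about an approximate fixed-point equation and then apply the stability estimate Theorem~\ref{thm:stability}. Write $\mu_n(z):=\E m_n(z)$; since $\Im z = n^{-1/4}$ on $\tilde S_\delta$ (resp.\ $\sqrt 2\, n^{-1/4}$ on $\hat S_\delta$), one has $\sqrt n\,\eta=\Theta(n^{1/4})$, so \eqref{eq:s:explimit} is equivalent to $\sup_{z}\lvert m(z)-\mu_n(z)\rvert=o(n^{-1/4})$. The first step is to show that
\[
\mu_n(z)=s\bigl(z+\mu_n(z)\bigr)+\eps_n(z),\qquad \sup_{z\in\tilde S_\delta\cup\hat S_\delta}\lvert\eps_n(z)\rvert=o(n^{-1/4}),
\]
after which Theorem~\ref{thm:stability} (whose proof applies verbatim on $\hat S_\delta$, this region being even deeper in $\C_+$) together with $\Im\mu_n(z)\ge 0$ gives $\lvert m(z)-\mu_n(z)\rvert\le C\lvert\eps_n(z)\rvert=o(n^{-1/4})$, uniformly in $z$, as desired.

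To produce the approximate equation, combine Theorems~\ref{thm:stieltjes_transforms} and \ref{thm:concentration}: with overwhelming probability, uniformly in $z$, $n\eta\,\lvert \E_A m_n(z)-s_n(z+\E_A m_n(z))\rvert=o(1)$. The quantity in absolute values is measurable with respect to $D$ alone, so the failure event may be replaced by a $D$-measurable event of the same (super-polynomially small) probability; hence the bound in fact holds with overwhelming probability over $D$. Taking $\E_D$, and bounding the contribution of the rare complement by the crude estimates $\lVert(L-z)^{-1}\rVert,\ \lvert s_n(z+\E_A m_n(z))\rvert\le\eta^{-1}\le n^{1/4}$ against $O_p(n^{-p})$, gives
\[
\mu_n(z)=\E_D\bigl[s_n\bigl(z+\E_A m_n(z)\bigr)\bigr]+o(n^{-3/4}),
\]
uniformly in $z$.

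It remains to replace $\E_D[s_n(z+\E_A m_n(z))]$ by $s(z+\mu_n(z))$. Put $c:=\E_A m_n(z)$, a function of $D$, so that $\E_D c=\mu_n(z)=:c_0$. Inserting $\pm s_n(z+c_0)$ and using that $c_0$ is \emph{deterministic}, so that $\E_D[s_n(z+c_0)]=\tfrac1n\sum_j\E[(D_{jj}-z-c_0)^{-1}]=\int \tfrac{\phi(x)}{x-z-c_0}\,dx=s(z+c_0)$ by \eqref{def:s}, one obtains the exact identity
\[
\E_D\bigl[s_n(z+c)\bigr]-s(z+c_0)=\E_D\bigl[s_n(z+c)-s_n(z+c_0)\bigr].
\]
I would bound the right-hand side with two facts, both uniform in $z$. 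First, with overwhelming probability over $D$ one has $\lvert c\rvert,\lvert c_0\rvert=O((\log n)^{-1/2})$ (Lemma~\ref{lemma:s:mn}) and $D$ has only $O(n^{\eps})$ entries exceeding $\sqrt{(2-2\delta)\log n}$ (Proposition~\ref{prop:count}); consequently every point $w=z+\xi$ with $\xi$ on the segment from $c_0$ to $c$ satisfies $\Re w\gg\sqrt{\log n}$ and $\Im w\ge\eta$, whence $\lvert s_n'(w)\rvert=\lvert\tfrac1n\sum_j(D_{jj}-w)^{-2}\rvert\ll(\log n)^{-1}$ (the $j$ with $D_{jj}$ small contribute $\ll(\log n)^{-1}$ since $\lvert D_{jj}-\Re w\rvert\gg\sqrt{\log n}$, and the $O(n^{\eps})$ large $j$ contribute $\le n^{\eps}\cdot n^{1/2}/n=o((\log n)^{-1})$). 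Second, $c$ concentrates around $c_0$: viewing $c=\E_A m_n(z)$ as a function of $(D_{11},\dots,D_{nn})$, changing one $D_{jj}$ perturbs $L$ by a rank-one matrix, so (by eigenvalue interlacing and integration by parts) it changes $\tfrac1n\tr(L-z)^{-1}$, and hence $c$, by at most $\pi/(n\eta)$; the bounded-difference variance inequality then yields $\var_D(c)\ll 1/(n\eta^2)=O(n^{-1/2})$ and $\E_D\lvert c-c_0\rvert\le\var_D(c)^{1/2}=O(n^{-1/4})$. Combining these (and absorbing the rare bad event using $\lvert s_n\rvert\le n^{1/4}$),
\[
\bigl\lvert\E_D\bigl[s_n(z+c)-s_n(z+c_0)\bigr]\bigr\rvert\le\E_D\Bigl[\lvert c-c_0\rvert\,\sup_{\xi}\lvert s_n'(z+\xi)\rvert\Bigr]+o(1)\cdot n^{-1/4}\ll\frac{1}{n^{1/4}\log n}.
\]
Thus $\eps_n(z)=o(n^{-3/4})+O\bigl((n^{1/4}\log n)^{-1}\bigr)=o(n^{-1/4})$, uniformly, completing the argument.

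The main obstacle is precisely the step just sketched. For a \emph{fixed} $w$ with $\Im w\sim n^{-1/4}$ the fluctuation $\lvert s_n(w)-s(w)\rvert$ is of order $(\sqrt n\,\Im w)^{-1}=n^{-1/4}$, which only \emph{matches} the required precision and does not beat it, so a pointwise local law for $m_n$ would not suffice. What rescues the argument is that we are taking an expectation over $D$ of $s_n$ at the \emph{random} point $z+\E_A m_n(z)$, whose displacement from the deterministic center $z+\mu_n(z)$ is only $O(n^{-1/4})$ while the Lipschitz constant of $s_n$ in this spectral window is $O((\log n)^{-1})$ — a gain that comes from $\Re z\gg\sqrt{\log n}$ pushing us into the sub-Gaussian tail (cf.\ Lemmas~\ref{lemma:Free convolution is sub-Gaussian} and \ref{lemma:Gaussian Stieltjes is Bounded}); the product furnishes the extra $(\log n)^{-1}$ needed for $o(n^{-1/4})$. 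The rest is bookkeeping: all estimates must be made uniform in $z$, and the rare events on which $D$ has atypically many large diagonal entries — on which $G$, $Q$, and $s_n$ can be as large as $\eta^{-1}=n^{1/4}$ — must be controlled by crude operator-norm bounds against super-polynomially small probabilities.
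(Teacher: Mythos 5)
Your argument is correct, but it reaches \eqref{eq:s:explimit} by a genuinely different route than the paper. The paper's proof re-runs the machinery of Theorem \ref{thm:stieltjes_transforms} with the full expectation $\E$ in place of $\E_A$: resolvent identity, Gaussian integration by parts, and the Ward identity produce $\E m_n(z)-s(z+\E m_n(z))$ up to a covariance-type error $\E\big[|m_n-\E m_n|\,|\tfrac1n\tr(GQ)|\big]$, which is killed by combining McDiarmid concentration (Proposition \ref{prop:concentration} with $t=(\log n)^{3/4}$) with the bound $|\tfrac1n\tr(GQ)|=O(1/\log n)$ of Lemma \ref{lem:s:GQbnd2}; stability then finishes. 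You instead treat Theorem \ref{thm:stieltjes_transforms} (plus Theorem \ref{thm:concentration}) as a black box, average over $D$, and reduce everything to converting $\E_D[s_n(z+\E_A m_n(z))]$ into $s(z+\E m_n(z))$. Both proofs exploit the same two phenomena --- the exact identity $\E_D s_n(z+c_0)=s(z+c_0)$ for a deterministic shift $c_0$, and an extra factor $1/\log n$ coming from $\Re z\gg\sqrt{\log n}$ --- but you extract the $1/\log n$ from $\sup|s_n'|$ in the relevant window and pair it with an Efron--Stein/bounded-differences estimate $\E_D|\E_A m_n-\E m_n|=O(n^{-1/4})$, whereas the paper extracts it from $\tr(GQ)$ and pairs it with McDiarmid applied to $m_n$ itself. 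Your diagnosis that a pointwise local law at scale $\eta=n^{-1/4}$ would only match, not beat, the target precision is exactly right and is the same obstruction the paper's Lemma \ref{lem:s:GQbnd2} is designed to overcome. Two small points you handled correctly but should keep explicit in a write-up: the rank-one perturbation bound $|\tr(L'-z)^{-1}-\tr(L-z)^{-1}|\le \pi/\eta$ must be the interlacing version (uniform in the size of the diagonal change, not proportional to it), since $D_{jj}$ is unbounded; and Theorem \ref{thm:stability} is stated only on $\tilde S_\delta$, so your remark that its proof applies verbatim on $\hat S_\delta$ (where $\eta$ is larger) is needed --- the paper relies on the same implicit extension. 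Your approach is arguably more modular, at the cost of introducing the Efron--Stein input that the paper avoids.
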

	\begin{remark}
		It is likely that the error term in \eqref{eq:s:explimit} can be improved.  However, we will not need a sharp bound to prove our main results.  In addition, the proof reveals that \eqref{eq:s:explimit} can be extended to hold for all $z \in S_{\delta}$, but we will not need a larger spectral domain in this work.  
	\end{remark}
	\begin{proof}[Proof of Theorem \ref{thm:s:expected_stieltjes_transforms}]
		The proof is similar to the proof of Theorem \ref{thm:stieltjes_transforms}.  Again, for notational convenience, we only prove this for $\tilde{S}_\delta$.  We outline the main ideas of the proof here.  
		
		Fix $\eps \in (0, 1/100)$.  Since $A$ is a GOE matrix, it follows from standard norm bounds (see, for example, \cite{MR1863696,MR2963170} and references therein) that $\|A\| \leq 3$ with overwhelming probability.  
		Thus, by Proposition \ref{prop:count} and Weyl's perturbation theorem (see, for instance, Corollary III.2.6 in \cite{MR1477662}), there exists $\delta > 0$ so that the event given in \eqref{eq:s:Leig} 
		holds with overwhelming probability.  Moreover, by taking $\delta$ smaller if needed, we can also ensure the same value of $\delta$ applies to the bounds in Proposition \ref{prop:count}.  For convenience, we define the event
		\begin{align*}
			\Omega := &\left\{ | \{ 1 \leq i \leq n : \lambda_i(L) \geq \sqrt{(2 - 2\delta) \log n} \}| = O(n^{\eps}) \right\} \\
			&\qquad \bigcap \left\{ \sup_{z = E + i \eta \in \tilde S_{\delta}} \sqrt{n} \eta (\log n)^{3/4} \left| m_n(z) - \E m_n(z) \right| = o(1) \right\} \\
			&\qquad \bigcap \left\{\left| \{ 1 \leq i \leq n : D_{ii} \geq \sqrt{(2- 2\delta) \log n} \} \right| = O_{\eps}(n^{\eps}) \right\}
		\end{align*}
		to be the intersection of the events from \eqref{eq:s:Leig},  Proposition \ref{prop:count}, and Proposition \ref{prop:concentration}.  Here, we applied Proposition \ref{prop:concentration} with $t = (\log n)^{3/4}$ and then used continuity and a net argument to extend the bound to all $z \in \tilde S_{\delta}$.  It follows that $\Omega$ holds with overwhelming probability, and we will work on this event throughout the proof.  
		
		For $z \in S_{\delta}$, we define 
		\[ G := G(z) \]
		and 
		\[ Q := Q(z + \E m_n(z)), \]
		where $G(z) := (L-z)^{-1}$ is again the resolvent of $L$ and $Q(z) := (D - z)^{-1}$ is the resolvent of $D$.  
		
		Similar to the proof of Theorem \ref{thm:stieltjes_transforms}, we can use the resolvent identity \eqref{eq:resolvent} to write 
		\begin{align*} 
			\E m_n(z) - \E s_n(z + \E m_n(z)) &= \E \frac{1}{n} \tr G - \E \frac{1}{n} \tr Q \\
			&=  \E \frac{1}{n} \tr (GAQ) - \E m_n(z) \E \frac{1}{n} \tr (GQ). 
		\end{align*}
		Applying the same Gaussian integration by parts argument from the proof of Theorem \ref{thm:stieltjes_transforms} and noting that
		\[ \E s_n(z + \E m_n(z)) = s(z + \E m_n(z)), \]
		we obtain
		\begin{align} \label{eq:s:res_mid2}
			\E m_n(z) - s(z + \E m_n(z)) &=  \frac{1}{n^2} \E \sum_{i,j} Q_{ii} G_{ij}^2 + \frac{1}{n} \E m_n(z) \tr (QG) \\
			&\qquad\qquad - \E m_n(z) \E \frac{1}{n} \tr (GQ). \nonumber
		\end{align}
		Our goal is to bound the error terms on the right-hand side.  The first term 
		\[ \frac{1}{n^2} \E \sum_{i,j} Q_{ii} G_{ij}^2 \]
		is handled using the Ward identity, exactly in the same way as it was bounded in the proof of Theorem \ref{thm:stieltjes_transforms}, and we omit the details.   
		
		We now turn to bounding the error term
		\begin{align*}
			\left| \frac{1}{n} \E m_n(z) \tr (QG) - \E m_n(z) \E \frac{1}{n} \tr (GQ) \right| &\leq \E \left[ \left| m_n(z) - \E \left[ m_n(z) \right] \right| \left| \frac{1}{n} \tr (QG) \right| \right]. 
		\end{align*}
		Using naive bounds, we can insert the indicator function of $\Omega$, and it suffices to bound 
		\[ \E \left[ \left| m_n(z) - \E \left[ m_n(z) \right] \right| \left| \frac{1}{n} \tr (QG) \right| \oindicator{\Omega} \right]. \]
		We will need the following analogue of Lemma \ref{lem:s:GQbnd}.  
		
		\begin{lemma} \label{lem:s:GQbnd2}
			One has
			\[ \sup_{z \in \tilde S_{\delta} \cup \hat{S}_\delta} \left| \frac{1}{n} \tr (GQ) \right|\oindicator{\Omega} = O\left(\frac{1}{\log n} \right) \]
			with probability $1$. 
		\end{lemma}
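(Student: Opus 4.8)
The plan is to mirror the proof of Lemma \ref{lem:s:GQbnd} almost verbatim, the only new feature being that here $\E m_n(z)$ is a deterministic quantity, which actually \emph{removes} the need for an analogue of the event $\mathcal{F}$. First I would apply the Cauchy--Schwarz inequality to write
\[ \left| \frac{1}{n} \tr (GQ) \right|\oindicator{\Omega} \leq \sqrt{\frac{1}{n} \tr (G G^\ast)\oindicator{\Omega}} \; \sqrt{\frac{1}{n} \tr (Q Q^\ast)\oindicator{\Omega}}, \]
reducing the claim to showing that each factor on the right is $O(1/\log n)$ uniformly for $z = E + i\eta \in \tilde S_{\delta} \cup \hat S_{\delta}$, with probability $1$. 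Note that $\eta \geq n^{-1/4}$ on both of these domains.

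For the $G$-factor, I would argue exactly as in the proof of \eqref{eq:s:GGast}: by the spectral theorem $\frac{1}{n}\tr(GG^\ast) = \frac{1}{n} \sum_j \bigl((\lambda_j(L)-E)^2+\eta^2\bigr)^{-1}$, and splitting the sum according to whether $\lambda_j(L) \geq \sqrt{(2-2\delta)\log n}$, the event \eqref{eq:s:Leig} (which is part of $\Omega$) gives $O(n^{\eps})$ terms in the large range, contributing $\ll n^{\eps}/(n\eta^2) = O(1/\log n)$ since $\eta \geq n^{-1/4}$ and $\eps < 1/100$, while in the small range $(\lambda_j(L)-E)^2 + \eta^2 \gg \log n$ because $E \geq \sqrt{(2-\delta)\log n}$, contributing $\ll n/(n\log n) = O(1/\log n)$. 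Only the count of large eigenvalues of $L$ is used here, and this is part of $\Omega$.

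For the $Q$-factor I would use that $Q = Q(z + \E m_n(z))$ is diagonal with $Q_{jj} = (D_{jj} - z - \E m_n(z))^{-1}$, and observe that Lemma \ref{lemma:s:mn} yields $|\E m_n(z)| \leq \E |m_n(z)| = O(1/\sqrt{\log n})$ uniformly on $S_\delta$, with no exceptional event (this is the point where the present lemma is slightly easier than Lemma \ref{lem:s:GQbnd}, which needed the event $\mathcal{F}$ to control the \emph{random} quantity $\E_A m_n(z)$). Then $\frac{1}{n} \tr(QQ^\ast) = \frac{1}{n} \sum_j |D_{jj} - z - \E m_n(z)|^{-2}$ is split as in the proof of \eqref{eq:s:QQast}: on $\Omega$ the count of indices with $D_{jj} \geq \sqrt{(2-2\delta)\log n}$ is $O(n^{\eps})$, contributing $\ll n^{\eps}/(n\eta^2) = O(1/\log n)$, and for $D_{jj} < \sqrt{(2-2\delta)\log n}$ one has $|D_{jj} - z - \E m_n(z)|^2 \gg \log n$ since $E \geq \sqrt{(2-\delta)\log n}$ and $\E m_n(z) = O(1/\sqrt{\log n})$, contributing $\ll n/(n\log n) = O(1/\log n)$.

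Combining the two factors via Cauchy--Schwarz then finishes the proof. I do not anticipate a genuine obstacle here: the argument is a routine adaptation of Lemmas \ref{lem:s:GQbnd} and of the estimates \eqref{eq:s:GGast}, \eqref{eq:s:QQast}, and the only thing to be careful about is to invoke Lemma \ref{lemma:s:mn} for the deterministic quantity $\E m_n(z)$ in place of the high-probability event used for $\E_A m_n(z)$ in the earlier lemma.
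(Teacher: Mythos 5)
Your proposal is correct and follows essentially the same route as the paper: the paper's proof of Lemma \ref{lem:s:GQbnd2} is precisely "repeat the proof of Lemma \ref{lem:s:GQbnd} with $\oindicator{\Omega}$ in place of $\oindicator{\mathcal{E}\cap\mathcal{F}}$ and $\E m_n(z)$ in place of $\E_A m_n(z)$, using Lemma \ref{lemma:s:mn} to bound the now-deterministic $\E m_n(z)$ without needing the event $\mathcal{F}$." Your observation that the determinism of $\E m_n(z)$ actually simplifies matters is exactly the point the paper makes.
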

		We provide the proof of Lemma \ref{lem:s:GQbnd2} below, but first we complete the proof of Theorem \ref{thm:s:expected_stieltjes_transforms}.  Indeed, we find 
		\begin{align*}
			\E \left[ \left| m_n(z) - \E \left[ m_n(z) \right] \right| \left| \frac{1}{n} \tr (QG) \right|\oindicator{\Omega}  \right] &\ll \frac{1}{\log n}  \E \left[ \left| m_n(z) - \E \left[ m_n(z) \right] \right| \oindicator{\Omega} \right] \\
			&\ll  \frac{1}{\sqrt{n} (\log n)^{1/4} \eta} \\
			&= o \left( \frac{1}{\sqrt{n} \eta} \right)
		\end{align*}
		uniformly for $z \in \tilde S_{\delta}$.  Here, we used Lemma \ref{lem:s:GQbnd2} in the first bound and 
		\[ \sup_{z = E + i \eta \in \tilde S_{\delta}} \sqrt{n} \eta (\log n)^{3/4} \left| m_n(z) - \E m_n(z) \right| = o(1), \]
		which is part of the event $\Omega$, in the second bound.  
		
		Combining the bounds above with \eqref{eq:s:res_mid2}, we conclude that
		\[ \sup_{z \in \tilde S_{\delta} } \sqrt{n} \eta \left| \E m_n(z) - s(z + \E m_n(z)) \right| = o(1). \]
		An application of Theorem \ref{thm:stability} now completes the proof.  
	\end{proof}
	
	We now outline the proof of Lemma \ref{lem:s:GQbnd2}. 
	\begin{proof}[Proof of Lemma \ref{lem:s:GQbnd2}]
		The proof of Lemma \ref{lem:s:GQbnd2} follows the proof of Lemma \ref{lem:s:GQbnd} nearly exactly.  Only the following changes need to be made:
		\begin{itemize}
			\item One needs to replace the indicator function $\oindicator{\mathcal{E} \cap \mathcal{F}}$ with $\oindicator{\Omega}$ and any use of the event $\mathcal{E}$ by $\Omega$. 
			\item Occurrences of $\E_A m_n(z)$ need to be replaced by $\E m_n(z)$.  
			\item One does not need the event $\mathcal{F}$ to control $\E m_n(z)$ (in fact, $\E m_n(z)$ is deterministic).  Instead, one can use that $\E m_n(z)$ is bounded uniformly for $z \in \tilde S_{\delta}$ by Lemma \ref{lemma:s:mn}.  
		\end{itemize}
	\end{proof}
	
	To conclude this section, we present the following concentration bound, which we will also need in the next section.  
	
	\begin{lemma} \label{lem:expected_stieltjes_transforms2} 
		For any fixed $\delta > 0$, asymptotically almost surely, one has 
		\begin{equation} \label{eq:s:conc}
			\sup_{z \in \tilde S_{\delta} \cup \hat{S}_\delta} \sqrt{n} \eta \left| \E m_n(z) - \E_A m_n(z) \right| = o (1). 
		\end{equation} 
	\end{lemma}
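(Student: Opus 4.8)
The plan is to deduce the lemma from Theorem~\ref{thm:s:expected_stieltjes_transforms} together with a companion estimate comparing $\E_A m_n$ to the deterministic limit $m$. Since $\E m_n(z)$ is deterministic and Theorem~\ref{thm:s:expected_stieltjes_transforms} already gives $\sup_{z\in\tilde S_\delta\cup\hat S_\delta}\sqrt n\eta\,|\E m_n(z)-m(z)|=o(1)$, by the triangle inequality it suffices to show that, with overwhelming probability (which is stronger than asymptotically almost surely),
\[ \sup_{z=E+i\eta\in\tilde S_\delta\cup\hat S_\delta}\sqrt n\eta\,\big|\E_A m_n(z)-m(z)\big|=o(1). \]
I would establish this by showing that $\E_A m_n(z)$ is an \emph{approximate} solution of the fixed-point equation~\eqref{def:fc} with error $o\big((\sqrt n\eta)^{-1}\big)$ uniformly in $z$, and then invoking the stability result of Theorem~\ref{thm:stability}. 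For brevity I describe the argument on $\tilde S_\delta$ only; it carries over verbatim to $\hat S_\delta$, since Theorem~\ref{thm:stability} and its proof use only $\Re z\ge\sqrt{(2-\delta)\log n}$ and $\Im z\ge n^{-1/4}$.

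\emph{Step 1 (approximate fixed-point equation).} Because $Q(z+\E_A m_n(z))$ depends only on $D$, subtracting the bound of Theorem~\ref{thm:stieltjes_transforms} from that of Theorem~\ref{thm:concentration} gives, with overwhelming probability,
\[ \sup_{z\in\tilde S_\delta}n\eta\,\big|\E_A m_n(z)-s_n(z+\E_A m_n(z))\big|=o(1), \]
so the task reduces to replacing $s_n$ by its deterministic limit $s$ of~\eqref{def:s}. By Lemma~\ref{lemma:s:mn} one has $|\E_A m_n(z)|=O\big((\log n)^{-1/2}\big)$ uniformly on $\tilde S_\delta$ with overwhelming probability, while $\Im\E_A m_n(z)=\E_A\Im m_n(z)$ is automatically in $[0,\eta^{-1}]\subset[0,n^{1/4}]$; hence $w(z):=z+\E_A m_n(z)$ lies, for every $z\in\tilde S_\delta$, in the fixed deterministic region
\[ \mathcal R:=\Big\{E'+i\eta':\sqrt{(2-\delta)\log n}-1\le E'\le\sqrt{3\log n}+1,\ n^{-1/4}\le\eta'\le 2n^{1/4}\Big\}. \]
Recalling that $\sqrt n\eta=\Theta(n^{1/4})$ on $\tilde S_\delta$, it therefore suffices to prove $\sup_{w\in\mathcal R}n^{1/4}\,|s_n(w)-s(w)|=o(1)$ with overwhelming probability.

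\emph{Step 2 (sharp concentration of $s_n$ on $\mathcal R$ — the main obstacle).} For fixed $w=E'+i\eta'\in\mathcal R$,
\[ s_n(w)-s(w)=\frac1n\sum_{i=1}^n\big(f_w(D_{ii})-\E f_w(D_{11})\big),\qquad f_w(d):=\frac1{d-w}, \]
is a normalized sum of iid centered variables with $\|f_w\|_\infty\le(\eta')^{-1}\le n^{1/4}$ and, crucially,
\[ \var\big(f_w(D_{11})\big)\le\E\frac1{|D_{11}-w|^2}=\frac1{\sqrt{2\pi}}\int_\R\frac{e^{-x^2/2}\,dx}{(x-E')^2+(\eta')^2}=O\!\left(n^{\delta/2-3/4+o(1)}+\frac1{\log n}\right), \]
the last bound following by splitting the integral according to whether $|x-E'|\le1$, where $e^{-x^2/2}\ll n^{-(2-\delta)/2+o(1)}$ and $\int(t^2+(\eta')^2)^{-1}\,dt=O\big((\eta')^{-1}\big)$, or $|x-E'|>1$, where $(x-E')^2\gg\log n$ in the Gaussian bulk while the contribution near $E'$ is of order $e^{-x^2/2}$. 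This is far smaller than the trivial bound $(\eta')^{-2}=n^{1/2}$, so I would apply Bernstein's inequality (to the real and imaginary parts) with $t_n:=n^{-1/4}/\log n$ to get, for some $c_\delta>0$,
\[ \P\big(|s_n(w)-s(w)|\ge t_n\big)\le 2\exp\big(-n^{c_\delta}\big). \]
Since $s_n$ and $s$ are $O(n^{1/2})$-Lipschitz on $\mathcal R$, a union bound over an $n^{-10}$-net of $\mathcal R$ — which has polynomial cardinality and is therefore tolerated by an overwhelming-probability event — upgrades this to $\sup_{w\in\mathcal R}|s_n(w)-s(w)|\le(1+o(1))t_n$, i.e.\ $\sup_{w\in\mathcal R}n^{1/4}|s_n(w)-s(w)|=O\big((\log n)^{-1}\big)=o(1)$ with overwhelming probability.

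\emph{Step 3 (stability and conclusion).} Combining Steps~1 and~2, with overwhelming probability the complex number $\tilde m_n:=\E_A m_n(z)$ has $\Im\tilde m_n\ge0$ and satisfies $\tilde m_n=s(z+\tilde m_n)+\eps_n(z)$ with $\sup_{z\in\tilde S_\delta}\sqrt n\eta\,|\eps_n(z)|=o(1)$, in particular $\sup_z|\eps_n(z)|\le1$. Applying Theorem~\ref{thm:stability} at each fixed $z\in\tilde S_\delta$ (its constant depends only on $\delta$) yields $|m(z)-\E_A m_n(z)|\le C|\eps_n(z)|$, hence $\sup_{z\in\tilde S_\delta}\sqrt n\eta\,|m(z)-\E_A m_n(z)|=o(1)$ with overwhelming probability; together with Theorem~\ref{thm:s:expected_stieltjes_transforms} and the triangle inequality this gives the displayed bound above, and the lemma. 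I expect Step~2 to be the crux: the generic concentration of Proposition~\ref{prop:concentration} only controls $s_n(w)-\E s_n(w)$ at the scale $(\eta'\sqrt n)^{-1}$, which is precisely the precision that must be improved, and the improvement rests on the fact that $\Re w\gg1$ makes each summand's variance much smaller than $(\eta')^{-2}$, so that Bernstein's inequality — rather than the bounded-differences bound underlying Proposition~\ref{prop:concentration} — yields concentration at the finer scale needed.
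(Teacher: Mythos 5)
Your proposal is correct, but it takes a genuinely different route from the paper's. The paper proves the lemma directly: since $\E_A m_n(z)$ is a function of the $n$ iid diagonal entries of $D$ alone, it applies the tensorized Gaussian Poincar\'e inequality together with Jensen's inequality, the Ward identity, and Lemma \ref{lemma:s:mn} to obtain $\E\left|\E_A m_n(z)-\E m_n(z)\right|^2\ll (n^2\eta^3\sqrt{\log n})^{-1}$, and then concludes by Markov's inequality and a net argument; the pointwise Markov bound is only $O(n^{-3/4})$, which after the union bound over a net of size $O(\sqrt{n}\log n)$ is precisely why the paper claims only an asymptotically-almost-sure statement. You instead route everything through the deterministic limit $m(z)$: you combine Theorems \ref{thm:stieltjes_transforms} and \ref{thm:concentration} to show that $\E_A m_n(z)$ approximately solves the self-consistent equation with $s_n$, replace $s_n$ by $s$ via a new Bernstein-type concentration estimate exploiting the fact that $\var\bigl(1/(D_{11}-w)\bigr)=O(1/\log n)$ when $\Re w\asymp\sqrt{2\log n}$ (and you correctly identify that the bounded-differences bound of Proposition \ref{prop:concentration} is exactly at the wrong scale for this), and then close with the stability theorem (Theorem \ref{thm:stability}) and Theorem \ref{thm:s:expected_stieltjes_transforms}. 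Your argument is longer and requires an extra concentration ingredient, but it buys more: it yields the conclusion with overwhelming probability rather than merely asymptotically almost surely, and as a byproduct gives the direct comparison $\sup_{z}\sqrt{n}\eta\,|\E_A m_n(z)-m(z)|=o(1)$, which the paper never establishes. One caveat, shared with the paper's own proof: the statement reads ``for any fixed $\delta>0$,'' whereas your chain of cited results (and the paper's use of Lemma \ref{lemma:s:mn}) only holds for sufficiently small $\delta$; since only small $\delta$ is ever used downstream, this is an imprecision of the statement rather than a defect of your argument.
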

	\begin{proof}
		We will establish \eqref{eq:s:conc} by applying the Gaussian Poincar\'{e} inequality.  Since $\E_A m_n(z)$ only depends on the randomness from $D$, we will apply the tensorization property of the Poincar\'{e} inequality to the $n$ iid standard normal entries of $D$.  We refer the reader to Section 4.4 of \cite{MR2760897} for further details concerning the Poincar\'{e} inequality and its uses in random matrix theory.  
		
		For any $z := E + i \eta \in \tilde S_{\delta} \cup \hat{S}_\delta$, we begin with
		\begin{align*}
			\E \left| \E_A m_n(z) - \E m_n(z) \right|^2 &\leq \frac{1}{n^2} \E \sum_i \left| \sum_j \E_A G_{ji} G_{ij} \right|^2 \\
			&\leq \frac{1}{n^2} \E \sum_i \left| \sum_j G_{ji} G_{ij} \right|^2, 
		\end{align*}
		where the first inequality follows from the Gaussian Poincar\'{e} inequality and the second from Jensen's inequality.  Here, we also used the fact that
		\[ \frac{ \partial \E_A G_{jj}} {\partial D_{ii}} = - \E_A \left[ G_{ji} G_{ij} \right], \]
		which can be deduced from the resolvent identity \eqref{eq:resolvent}.  Applying the triangle inequality and the Ward identity \eqref{eq:ward}, we obtain
		\begin{align*}
			\E \left| \E_A m_n(z) - \E m_n(z) \right|^2 &\leq \frac{1}{n^2} \E \sum_i \left( \sum_j |G_{ij}|^2 \right)^2 \\
			&\leq \frac{1}{n^2 \eta^2} \E \sum_i \left( \Im G_{ii} \right)^2 \\
			&\leq \frac{1}{n^2 \eta^3} \E \Im m_n(z) \\
			&\ll \frac{1}{n^2 \eta^3 \sqrt{\log n}},
		\end{align*}
		where we used Lemma \ref{lemma:s:mn} in the last step.  Thus, by Markov's inequality, we conclude that
		\begin{equation} \label{eq:s:markov}
			\sup_{z \in \tilde S_{\delta} \cup \hat{S}_\delta} \Prob \left( \left| \E_A m_n(z) - \E m_n(z) \right| \geq \frac{1}{\sqrt{n} \eta (\log n)^{1/4}} \right) \ll \frac{1}{n \eta} = \frac{1}{n^{3/4}}. 
		\end{equation}
		
		We now extend the bound in \eqref{eq:s:markov} to hold simultaneously for all $z \in \tilde S_{\delta}$ by a net argument.  We note that all of our bounds below will hold uniformly for $z \in \tilde S_{\delta} \cup \hat{S}_\delta$.  Let $\mathcal{N}$ be a $\frac{1}{\sqrt{n} (\log n)^{1/4}}$-net of $\tilde S_{\delta} \cup \hat{S}_\delta$.  A counting argument shows that $\mathcal{N}$ can be chosen to have cardinality 
		\[ |\mathcal{N}| = O_{\delta}(\sqrt{n} \log n ). \]
		Therefore, \eqref{eq:s:markov} together with the union bound implies that the event
		\[ \left\{ \sup_{z \in \mathcal{N}} \left| \E_A m_n(z) - \E m_n(z) \right| \leq \frac{1}{\sqrt{n} \eta (\log n)^{1/4}} \right\} \]
		holds with probability $1 - o(1)$.  Let $\mathcal{F}'$ be the intersection of the event above with the event $\mathcal{F}$, defined in \eqref{eq:s:F}.  It follows that $\mathcal{F}'$ holds with probability $1 - o(1)$, and we will prove that \eqref{eq:s:conc} holds on $\mathcal{F}'$.  Indeed, let $z \in \tilde S_{\delta} \cup \hat{S}_\delta$ be arbitrary.  Then there exists $z' \in \mathcal{N}$ so that
		\begin{equation} \label{eq:s:net}
			|z - z'| \leq \frac{1}{\sqrt{n} (\log n)^{1/4}}. 
		\end{equation} 
		Thus, on the event $\mathcal{F}'$, we have
		\[ \left| \E_A m_n(z') - \E m_n(z') \right| \ll \frac{1}{\sqrt{n} \eta (\log n)^{1/4}}. \]
		Moreover, by the resolvent identity \eqref{eq:resolvent}, we find
		\begin{align*}
			\left| \E m_n(z) - \E m_n(z') \right| &= \frac{1}{n} |z - z'| \left| \E \tr (G(z) G(z')) \right| \\
			&\leq |z - z'| \frac{1}{n} \E \sum_{i,j} \left[|G_{ij}(z)|^2 + |G_{ij}(z')|^2 \right] \\
			&= \frac{ |z - z'|}{\eta} \left[ \E \Im m_n(z) + \E \Im m_n(z') \right] \\
			&\ll \frac{1}{\sqrt{n} \eta (\log n)^{3/4}},
		\end{align*}
		where we used the Ward identity \eqref{eq:ward}, Lemma \ref{lemma:s:mn}, and \eqref{eq:s:net}.  Similarly, we have
		\begin{align*}
			\left| \E_A m_n(z) - \E_A m_n(z') \right| &\leq |z - z'| \frac{1}{n} \E_A \sum_{i,j} \left[|G_{ij}(z)|^2 + |G_{ij}(z')|^2 \right] \\
			&= \frac{ |z - z'|}{\eta} \left[ \E_A \Im m_n(z) + \E_A \Im m_n(z') \right] \\
			&\ll \frac{1}{\sqrt{n} \eta (\log n)^{3/4}},
		\end{align*}
		where the final inequality holds on the event $\mathcal{F} \supset \mathcal{F}'$.  Combining the bounds above, we conclude that
		\[ \left| \E_A m_n(z) - \E m_n(z) \right| = o \left( \frac{1}{\sqrt{n} \eta} \right). \]
		Since this bound holds uniformly for $z \in \tilde S_{\delta} \cup \hat{S}_\delta$, the proof is complete.  
	\end{proof}
	
	\section{Proof of Theorem \ref{thm:main_L}} \label{sec:proofofmain}

This section is devoted to the proof of Theorem \ref{thm:main_L}.  In view of Theorem \ref{thm:stieltjes_transforms} (noting that $\tilde{S}_{\delta_1}\subseteq\tilde{S}_{\delta_2}$ for any $0<\delta_1<\delta_2$), there exists $\tilde\delta > 0$ so that for any $0<\delta<\tilde\delta$ \begin{equation}\label{eq:A:Recursive Estimate}
	\sup_{z = E + i \eta \in \tilde S_{\delta} \cup \hat{S}_\delta} n \eta \left| m_n(z) - s_n(z + \E_A m_n(z)) \right| = o(1)
\end{equation}
with overwhelming probability. Throughout this section we will make repeated use of Proposition \ref{prop:count}, so we fix $0 < \delta < \tilde \delta$ such that 
\[ \left| \{ 1 \leq i \leq n : D_{ii} \geq \sqrt{(2- \delta) \log n} \} \right| = O(n^{1/100}) \]
with overwhelming probability. Considering imaginary parts, we conclude that, on the same event as \eqref{eq:A:Recursive Estimate},  \begin{align}\label{eq:A:eigenvalue counting function}
	\sup_{z = E + i \eta \in \tilde S_{\delta} \cup \hat{S}_\delta}\Bigg|&\sum_{j=1}^{n}\frac{\eta^2}{(\lambda_j(L)-E)^2+\eta^2} \nonumber\\  &\qquad -\sum_{j=1}^{n}\frac{\eta^2+\eta\Im\E_A m_n(z)}{(D_{jj}-E-\Re\E_Am_n(z))^2+(\eta+\Im\E_A m_n(z))^2} \Bigg|
\end{align} tends to $0$. 

If $E$ is chosen such that $(D_{jj}-E-\Re\E_Am_n(z))^2=0$ for some $j$, then the second sum in \eqref{eq:A:eigenvalue counting function} is bounded below by something asymptotically close to $1$. Hence, the first sum must also be at least $1$. One way in which the first sum can be close to $1$ is if $E$ is close to an eigenvalue of $L$ and the other terms are negligible. Bai and Silverstein (see \cite[Chapter 6]{MR2567175}) used this observation to show the spectra of certain random matrix models separate.  
In this section, we use a similar method (and an iteration argument) to precisely locate $\lambda_1(L)$. We then extend this argument to $\lambda_2(L),\dots,\lambda_k(L)$ and complete the proof of Theorem \ref{thm:main_L}. 

For the remainder of this section we fix $\eta=n^{-1/4}$. For any $z_1=E+i\eta \in \tilde S_\delta$, let $z_2=E+i\sqrt{2}\eta$ and consider the differences \begin{equation*}
	I_1=\sum_{j=1}^{n}\frac{\eta^2}{(\lambda_j(L)-E)^2+\eta^2}-\sum_{j=1}^{n}\frac{\eta^2}{(\lambda_j(L)-E)^2+2\eta^2},
\end{equation*} and \begin{align*}
	I_2&=\sum_{j=1}^{n}\frac{\eta^2+\eta\Im\E_A m_n(z_1)}{(D_{jj}-E-\Re\E_Am_n(z_1))^2+(\eta+\Im\E_A m_n(z_1))^2}\\
	&\quad-\sum_{j=1}^{n}\frac{\eta^2+\frac{1}{\sqrt{2}}\eta\Im\E_A m_n(z_2)}{(D_{jj}-E-\Re\E_Am_n(z_2))^2+(\sqrt{2}\eta+\Im\E_A m_n(z_2))^2}.
\end{align*} From \eqref{eq:A:Recursive Estimate} we have that \begin{equation}\label{eq:A:Difference of sums}
	\sup_{z_1 = E + i \eta \in \tilde S_{\delta}}|I_1-I_2|=o(1),
\end{equation} with overwhelming probability. Note that \begin{equation*}
	I_1=\sum_{j=1}^{n}\frac{\eta^4}{((\lambda_j(L)-E)^2+\eta^2)((\lambda_j(L)-E)^2+2\eta^2)},
\end{equation*} and \begin{equation*}
	I_2=\sum_{j=1}^{n}\frac{N_{j,n}}{B_{j,n}},
\end{equation*} where\begin{align*}
	B_{j,n}&=((D_{jj}-E-\Re\E_Am_n(z_1))^2+(\eta+\Im\E_A m_n(z_1))^2) \\
	&\qquad \times ((D_{jj}-E-\Re\E_Am_n(z_2))^2+(\sqrt{2}\eta+\Im\E_A m_n(z_2))^2),
\end{align*} and \begin{align*}
	N_{j,n}&=\eta^2\left[\left(\sqrt{2}\eta+\Im\E_A m_n(z_2) \right)^2-\left(\eta+\Im\E_A m_n(z_1) \right)^2 \right]\\
	&\quad+\eta\Im\E_A m_n(z_1)\left(\sqrt{2}\eta+\Im\E_A m_n(z_2)\right)^2 \\
	&\quad -\frac{1}{\sqrt{2}}\eta\Im\E_A m_n(z_2)\left(\eta+\Im\E_A m_n(z_1)\right)^2\\
	&\quad+\eta^2\left[\left(D_{jj}-E-\Re\E_Am_n(z_2) \right)^2-\left(D_{jj}-E-\Re\E_Am_n(z_1) \right)^2 \right]\\
	&\quad+\eta\Im\E_A m_n(z_1)\left(D_{jj}-E-\Re\E_Am_n(z_2)\right)^2 \\
	&\quad-\frac{1}{\sqrt{2}}\eta\Im\E_A m_n(z_2)\left(D_{jj}-E-\Re\E_Am_n(z_1)\right)^2. 
\end{align*} 

In several steps we now show $N_{j,n}=\eta^4(1+o(1))$ uniformly on $\tilde{S}_\delta$. The implied constants in our asymptotic notation in this section are uniform over $j\in[n]$. Applying Lemma \ref{lem:Imm_fc is small} with Theorem \ref{thm:s:expected_stieltjes_transforms} and Lemma \ref{lem:expected_stieltjes_transforms2}, we deduce that \begin{equation*}
	\sup_{z = E + i \eta \in \tilde{S}_{\delta} \cup \hat{S}_{\delta}}	\Im\E_Am(z)=o(\eta)
\end{equation*} with overwhelming probability. From this we see that \begin{equation}\label{eq:A:first N term small}
	\sup_{z_1 = E + i \eta \in \tilde{S}_{\delta}}\left|\eta^2\left[\left(\sqrt{2}\eta+\Im\E_A m_n(z_2) \right)^2-\left(\eta+\Im\E_A m_n(z_1) \right)^2 \right]\right|=\eta^4(1+o(1)),
\end{equation}
and 
\begin{align}\label{eq:A:second N term small}
	\!	\eta\Im\E_A m_n(z_1)\left(\! \sqrt{2}\eta+\Im\E_A m_n(z_2)\!\right)^2\!\! -\frac{\eta\Im\E_A m_n(z_2)}{\sqrt{2}}\left( \eta+\Im\E_A m_n(z_1)\right)^2\! = o(\eta^4),
\end{align} uniformly for $z_1\in\tilde{S}_\delta$ with overwhelming probability. The following lemma allows us to control the terms of $N_{j,n}$ which involve $\Re \E_A m_n$.

\begin{lemma}\label{lemma:A:expected real parts are close}
	There exists $\delta>0$ such that \begin{equation*}
		\sup_{z_1 = E + i \eta \in \tilde S_{\delta}}\left|	\Re\E_A m_n(z_1)-\Re\E_A m_n(z_2)\right|=o \left(n^{-1/2} \right),
	\end{equation*} with overwhelming probability. 
\end{lemma}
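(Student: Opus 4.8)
The triangle inequality through the free‑convolution density $m$, namely
\[
\Re\E_A m_n(z_1) - \Re\E_A m_n(z_2) = \bigl[\Re\E_A m_n(z_1) - \Re m(z_1)\bigr] + \bigl[\Re m(z_1) - \Re m(z_2)\bigr] - \bigl[\Re\E_A m_n(z_2) - \Re m(z_2)\bigr],
\]
is \emph{not} precise enough here: Theorem~\ref{thm:s:expected_stieltjes_transforms} together with Lemma~\ref{lem:expected_stieltjes_transforms2} only gives $|\E_A m_n(z_i) - m(z_i)| = o(n^{-1/4})$ on each side, even though the middle term is $o(n^{-1/2})$ by Lemma~\ref{Alemma:Difference of real FC}. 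The plan is instead to work directly with $\Re m_n$, where the needed cancellation is explicit. Writing $z_1 = E + i\eta$, $z_2 = E + i\sqrt2\,\eta$ with $\eta = n^{-1/4}$ and using $m_n(z) = \tfrac1n\sum_j(\lambda_j(L) - z)^{-1}$, one computes
\[
\Re m_n(z_1) - \Re m_n(z_2) = \frac{\eta^2}{n}\sum_{j=1}^n \frac{\lambda_j(L) - E}{\bigl((\lambda_j(L)-E)^2+\eta^2\bigr)\bigl((\lambda_j(L)-E)^2+2\eta^2\bigr)},
\]
so it is the signed factor $\lambda_j(L) - E$, rather than its modulus, that will be used.

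Fix $\eps\in(0,1/100)$. Shrinking $\delta>0$ if necessary, Proposition~\ref{prop:count}, Weyl's perturbation theorem, and the norm bound $\|A\| = O(1)$ give (as in \eqref{eq:s:Leig}) that the event
\[
\mathcal E_0 := \Bigl\{\, \bigl|\{1\le j\le n : \lambda_j(L)\ge \sqrt{(2-2\delta)\log n}\}\bigr| = O(n^\eps) \,\Bigr\}
\]
holds with overwhelming probability in the joint randomness of $A$ and $D$. On $\mathcal E_0$ I would split the sum above at the threshold $T := \sqrt{(2-2\delta)\log n}$. For the $O(n^\eps)$ indices with $\lambda_j(L)\ge T$, each summand has absolute value $O(\eta^{-3})$ (by the AM--GM bounds $|x|/(x^2+\eta^2)\le \tfrac1{2\eta}$ and $1/(x^2+2\eta^2)\le\tfrac1{2\eta^2}$), contributing $O(\eta^{-1}n^{\eps-1}) = O(n^{\eps-3/4}) = o(n^{-1/2})$ since $\eps<1/4$. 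For the remaining indices $\lambda_j(L)<T$, the constraint $E\ge\sqrt{(2-\delta)\log n}$ forces $|\lambda_j(L)-E|\gg\sqrt{\log n}$, so each summand is at most $|\lambda_j(L)-E|^{-3} = O\bigl((\log n)^{-3/2}\bigr)$, and the at most $n$ such terms contribute $O\bigl(\eta^2(\log n)^{-3/2}\bigr) = o(n^{-1/2})$; all of these estimates are uniform over $z_1\in\tilde S_\delta$. Hence $\sup_{z_1\in\tilde S_\delta}|\Re m_n(z_1)-\Re m_n(z_2)| = o(n^{-1/2})$ on $\mathcal E_0$, whereas off $\mathcal E_0$ the crude bound $|\Re m_n(z)|\le|m_n(z)|\le\eta^{-1}$ gives $|\Re m_n(z_1)-\Re m_n(z_2)|\le 2n^{1/4}$.

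The one delicate step, which I expect to be the main obstacle, is passing from $m_n$ to $\E_A m_n$: since $\mathcal E_0$ is not measurable with respect to $D$ alone, one cannot simply restrict to it before averaging over $A$. Using $\Re\E_A m_n = \E_A\Re m_n$ and $|\E_A(\cdot)|\le\E_A|\cdot|$, and splitting the expectation over $\mathcal E_0$ and its complement, one gets
\[
\sup_{z_1\in\tilde S_\delta}\bigl|\Re\E_A m_n(z_1) - \Re\E_A m_n(z_2)\bigr| \le o(n^{-1/2}) + 2n^{1/4}\,\P\bigl(\mathcal E_0^c \mid D\bigr),
\]
where $\P(\mathcal E_0^c\mid D)$ is the conditional probability given $D$ (the remaining randomness being that of $A$). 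Since $\P(\mathcal E_0^c) = O_p(n^{-p})$ for every $p$, Markov's inequality in the $D$-variable shows that $\P(\mathcal E_0^c\mid D) = O_p(n^{-p})$ for every $p$ with overwhelming probability over $D$; taking $p$ large enough makes the second term $o(n^{-1/2})$, which (combined with the uniformity already noted) completes the proof. Everything besides this conditioning argument reduces to the routine split-and-estimate of the explicit sum above.
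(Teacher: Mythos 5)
Your argument is correct, and its core is the same as the paper's: write $\Re m_n(z_1)-\Re m_n(z_2)$ as the explicit signed sum over eigenvalues and exploit the eigenvalue counting estimate from Proposition \ref{prop:count} (via \eqref{eq:s:Leig}) to show this difference is $o(\eta^2)=o(n^{-1/2})$. Two points differ in execution. First, you split the sum in two at the fixed threshold $\sqrt{(2-2\delta)\log n}$, whereas the paper splits into three sets according to the size of $|\lambda_j-E|$ ($J_1,J_2,J_3$ with cutoffs $\sqrt{E}$ and $n^{-1/8}E$); both give $o(n^{-1/2})$ and yours is slightly leaner. Second, for the passage from $m_n$ to $\E_A m_n$ the paper simply invokes Theorem \ref{thm:concentration}, which already gives $|m_n(z_i)-\E_A m_n(z_i)|=o(1/(n\eta))=o(\eta^3)$ with overwhelming probability on $\tilde S_\delta\cup\hat S_\delta$ (note $z_2\in\hat S_\delta$, which is why that domain is included), so the triangle inequality finishes immediately; it then gets uniformity in $z_1$ by a net argument, since its intermediate statements are pointwise-in-$z$ with overwhelming probability. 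You instead avoid the concentration theorem and bound $\E_A$ of the difference directly, correctly identifying that $\mathcal E_0$ is not $\sigma(D)$-measurable and resolving this with a Markov bound on $\P(\mathcal E_0^c\mid D)$; this is valid, and it has the side benefit that your on-event bound is deterministic and uniform in $z_1$, so no net argument is needed. Both routes are sound; the paper's is shorter because the needed concentration input is already proved, while yours is more self-contained for this step.
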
 Lemma \ref{lemma:A:expected real parts are close} is very similar to Lemma \ref{Alemma:Difference of real FC}, however neither follows from the other as we do not have a result comparing $\E_A m_n(z)$ to $m(z)$ within the appropriate error $o(n^{-1/2})$. The proof of Lemma \ref{lemma:A:expected real parts are close} follows in a nearly identical way to the proof of Lemma \ref{Alemma:Difference of real FC}, with summation replacing integration.
\begin{proof}[Proof of Lemma \ref{lemma:A:expected real parts are close}]
	Fix $E+in^{-1/4}\in \tilde{S}_\delta$. We begin by considering  
	\begin{equation}\label{eq:Difference of transforms}
		\Re m_n(z_1)-\Re m_n(z_2)=\frac{1}{n}\sum_{j=1}^n\frac{\eta^2(\lambda_j-E)}{\left((\lambda_j-E)^2+\eta^2 \right)\left((\lambda_j-E)^2+2\eta^2 \right)},
	\end{equation} 
	where we let $\lambda_n \leq \cdots \leq \lambda_1$ denote the eigenvalues of $L$.  
	We divide $\{1,\dots, n\}$ into three subsets $J_1=\{j:|\lambda_j-E|\geq \sqrt{E} \}$, $J_2=\{j:n^{-1/8}E \leq |\lambda_j-E|< \sqrt{E} \}$, and $J_3=\{j:|\lambda_j-E|< n^{-1/8}E \}$. Note for any $E\in[\sqrt{(2-\delta)\log n},\sqrt{3\log n}]$, $J_2,J_3\subseteq\{j:\lambda_j\geq \sqrt{(2-\delta)\log n}-\left[(2-\delta)\log n\right]^{1/4} \}$. From standard bounds on $\|A\|$ (see, for example, \cite{MR1863696,MR2963170} and references therein) and taking $\delta$ sufficiently small in Proposition \ref{prop:count}, we have \begin{equation}\label{eq:A:J sizes}
		|J_2|,|J_3|\leq\left|\{j:\lambda_j\geq \sqrt{(2-\delta)\log n}-\left[(2-\delta)\log n\right]^{1/4} \} \right|=O(n^{1/100})
	\end{equation}  with overwhelming probability. Note that\begin{equation}\label{eq:A:J_1 estimates}
		\left|\frac{1}{n}\sum_{j\in J_1}\frac{\eta^2(\lambda_j-E)}{\left((\lambda_j-E)^2+\eta^2 \right)\left((\lambda_j-E)^2+2\eta^2 \right)}\right|\leq \frac{\eta^2}{(2-\delta)^{3/4}\log^{3/4}n},
	\end{equation} and so we focus on the sets where $|\lambda_j-E|$ is small. For $J_2$ applying \eqref{eq:A:J sizes} \begin{align}\label{eq:A:J_2 estimates}
		\left|\frac{1}{n}\sum_{j\in J_2}\frac{\eta^2(\lambda_j-E)}{\left((\lambda_j-E)^2+\eta^2 \right)\left((\lambda_j-E)^2+2\eta^2 \right)}\right|&\leq\frac{1}{n}\sum_{j\in J_2}\frac{\eta^2}{|\lambda_j-E|^3}\nonumber\\
		& =\eta^2O\left(\frac{n^{1/100}}{n\eta^2E^3} \right)\\
		&=o(\eta^2)\nonumber
	\end{align} with overwhelming probability and implicit constants depending only on $\delta$. In a similar fashion applying \eqref{eq:A:J sizes}, \begin{align}\label{eq:A:J_3 estimates}
		\left|\frac{1}{n}\sum_{j\in J_3}\frac{\eta^2(\lambda_j-E)}{\left((\lambda_j-E)^2+\eta^2 \right)\left((\lambda_j-E)^2+2\eta^2 \right)}\right|&\leq\frac{1}{n}\sum_{j\in J_3}\frac{\eta^2|\lambda_j-E|}{\eta^4}\nonumber \\
		&\leq O\left(\eta^{5/2}\sqrt{3\log n}n^{1/100}\right)\\
		&= o(\eta^2)\nonumber
	\end{align} with overwhelming probability and implicit constants depending only on $\delta$. Combining \eqref{eq:Difference of transforms}, \eqref{eq:A:J_1 estimates}, \eqref{eq:A:J_2 estimates}, and \eqref{eq:A:J_3 estimates} we can conclude that\begin{equation*}
		\Re m_n(z_1)-\Re m_n(z_2)=o(\eta^2)
	\end{equation*} with overwhelming probability. Applying Theorem \ref{thm:concentration} we see that
	\begin{align*}
		\left|\Re\E_A m_n(z_1)-\Re\E_A m_n(z_2)\right|&\leq\left|\Re\E_A m_n(z_1)-\Re m_n(z_1)\right|\\
		&\quad+\left|\Re\E_A m_n(z_2)-\Re m_n(z_2)\right|\\
		&\quad  +\left|\Re m_n(z_1)-\Re m_n(z_2)\right|\\
		&=o(\eta^3)+o(\eta^2)
	\end{align*} with overwhelming probability and implicit constants depending only on $\delta$. To extend to the supremum over $\tilde{S}_\delta$ note that $\E_A m_n$ (or any Stieltjes transform) is Lipschitz on $\tilde{S}_\delta$ with Lipschitz constant at most $\sqrt{n}$. Taking a $\frac{1}{n^4}$-net and using the union bound completes the proof.
\end{proof}

Thus we have from  Lemma \ref{lemma:Bounds on m}, Theorem \ref{thm:s:expected_stieltjes_transforms}, Lemma \ref{lem:expected_stieltjes_transforms2}, and Lemma \ref{lemma:A:expected real parts are close} that \begin{align}\label{eq:A:third N piece is small}
	&\eta^2\left[\left(D_{jj}-E-\Re\E_Am_n(z_2) \right)^2-\left(D_{jj}-E-\Re\E_Am_n(z_1) \right)^2 \right]\nonumber \\
	&=\eta^2\left[\left(2D_{jj}-2E-\Re\E_Am_n(z_2)-\Re\E_Am_n(z_1) \right)\left(\Re\E_Am_n(z_1)-\Re\E_Am_n(z_2) \right) \right]\nonumber\\
	&=o(\eta^4|D_{jj}-E-\Re\E_Am_n(z_1)|),
\end{align} with overwhelming probability.

To deal with the last term in $N_{j,n}$ note some algebra leads to\begin{equation}\label{eq:A:recursive for sums}
	\eta\Im m_n(z_1)-\frac{1}{\sqrt{2}}\eta\Im m_n(z_2)=\frac{1}{n}I_1,
\end{equation} and applying Theorem \ref{thm:concentration} to \eqref{eq:A:recursive for sums} gives	\begin{align}\label{eq:A:fourth N piece is small}
	&\eta\Im\E_A m_n(z_1)\left(D_{jj}-E-\Re\E_Am_n(z_2)\right)^2 \\
	&-\frac{1}{\sqrt{2}}\eta\Im\E_A m_n(z_2)\left(D_{jj}-E-\Re\E_Am_n(z_1)\right)^2\\
	&=\frac{\left(D_{jj}-E-\Re\E_Am_n(z_1) \right)^2}{n}I_1+o(\eta^4|D_{jj}-E-\Re\E_Am_n(z_1)|)
\end{align} uniformly for $E\in[\sqrt{(2-\delta)\log n},\sqrt{3\log n}]$ and $j\in[n]$, with overwhelming probability.  Thus we conclude from \eqref{eq:A:first N term small}, \eqref{eq:A:second N term small}, \eqref{eq:A:third N piece is small}, and \eqref{eq:A:fourth N piece is small} that \begin{equation}\label{eq:A:simplified I_2}
	I_2=\sum_{j=1}^{n}\frac{\eta^4(1+o(1))+n^{-1}\left(D_{jj}-E-\Re\E_Am_n(z_1) \right)^2I_1+o(\eta^4|D_{jj}-E-\Re\E_Am_n(z_1)|)}{B_{j,n}},
\end{equation} with overwhelming probability. A straightforward application of Proposition \ref{prop:count} with $\eps=1/100$ (similar to \eqref{eq:A:J_2 estimates} and \eqref{eq:A:J_3 estimates} in the proof of Lemma \ref{lemma:A:expected real parts are close})  yields\begin{equation}\label{eq:A:controlling I_2 piece}
	\sup_{z_1 = E + i \eta \in \tilde{S}_{\delta}}\left|\sum_{j=1}^{n}\frac{\eta^4\left(D_{jj}-E-\Re\E_Am_n(z_1) \right)^2}{B_{j,n}}\right|=o(1)
\end{equation} and \begin{equation}\label{eq:A:I_2 reduction}
	\sup_{z_1 = E + i \eta \in \tilde{S}_{\delta}}\left|\sum_{j=1}^{n}\frac{o(\eta^4|D_{jj}-E-\Re\E_Am_n(z_1)|)}{B_{j,n}}\right|=o(1),
\end{equation} with overwhelming probability. Combining \eqref{eq:A:Difference of sums}, \eqref{eq:A:simplified I_2}, \eqref{eq:A:controlling I_2 piece}, and \eqref{eq:A:I_2 reduction}  we arrive at the conclusion \begin{equation}\label{eq:A:simple description of differences}
	\sup_{z_1 = E + i \eta \in \tilde{S}_{\delta}}\left|(1+o(1))I_1- (1+o(1))\sum_{j=1}^{n}\frac{\eta^4}{B_{j,n}}\right|=o(1),
\end{equation} with overwhelming probability, where we have pulled out $(1+o(1))$ terms which are uniform in $j$. At this point, we order the diagonal entries of $D$ as $D_{(1)}> D_{(2)}>\dots> D_{(n)}$. With the technical estimates complete we will now locate $\lambda_1(L)$ with respect to $D_{(1)}$. Let $E_{(1)}$ be a solution to the equation \begin{equation*}
	D_{(1)}-E-\Re m(E+i\eta)=0,
\end{equation*} whose existence is guaranteed by Lemma \ref{lemma:A:Existence of E}. From Lemma \ref{lemma:A:Existence of E} we see that asymptotically almost surely $E_{(1)}=D_{(1)}+\frac{1}{D_{(1)}}+O\left(\frac{1}{D_{(1)}^2}\right)$, and after noting $D_{(1)}=a_n+o(1)$ asymptotically almost surely \begin{equation*}
	\sup_{E\in\left[E_{(1)}-\frac{1}{a_n^{3/2}},E_{(1)}+\frac{1}{a_n^{3/2}} \right] } (1+o(1))\sum_{j=1}^{n}\frac{\eta^4}{B_{j,n}}\geq \frac{1}{2}.
\end{equation*} Thus, from \eqref{eq:A:simple description of differences} \begin{equation}\label{eq:A:I_1 is big at location.}
	\sup_{E\in\left[E_{(1)}-\frac{1}{a_n^{3/2}},E_{(1)}+\frac{1}{a_n^{3/2}} \right] }I_1\geq 1/2,
\end{equation} asymptotically almost surely. 

Assume, for the sake of contradiction, that there is not at least one eigenvalue of $L$ within $1/a_n^{3/2}$ of $E_{(1)}$. For any $\delta'>0$, $E_{(1)}\geq \sqrt{(2-\delta')\log n}$ asymptotically almost surely. By taking $\delta'$ sufficiently small and applying Proposition \ref{prop:count} there are at most $O(n^{1/100})$ eigenvalues of $L$ greater than $\sqrt{(2-2\delta')\log n}$. If $J=\{j: \lambda_j(L)\geq\sqrt{(2-2\delta')\log n} \}$, then asymptotically almost surely \begin{equation}\label{eq:A:eigenvalue too far}
	I_1\leq o(1)\sum_{j\notin J}\eta^4+\sum_{j\in J}\frac{\eta^4}{1/a_n^6}=o(1)+O\left(\frac{a_n^6}{n^{99/100}} \right)=o(1),
\end{equation} a contradiction of \eqref{eq:A:simple description of differences} and \eqref{eq:A:I_1 is big at location.}. Thus there must be at least one eigenvalue of $L$ within $1/a_n^{3/2}$ of $E_{(1)}$. Let $\delta''>0$ be sufficiently small such that from Proposition \ref{prop:count} $\tilde J=\{j: D_{jj}\geq\sqrt{(2-\delta'')\log n}\}$ has $O(n^{1/100})$ elements. Then,\begin{align}\label{eq:A:no big eignevalues first bound}
	\sup_{E\in\left[E_{(1)}+\frac{1}{a_n^{3/2}},\sqrt{3\log n} \right] }\left|\sum_{j=1}^{n}\frac{\eta^4}{B_{j,n}}\right|&\leq\sup_{E\in\left[E_{(1)}+\frac{1}{a_n^{3/2}},\sqrt{3\log n} \right] }\left|\sum_{j\in \tilde J}\frac{\eta^4}{B_{j,n}}\right| \nonumber\\
	&\qquad+\sup_{E\in\left[E_{(1)}+\frac{1}{a_n^{3/2}},\sqrt{3\log n} \right] }\left|\sum_{j\notin \tilde J}^{n}\frac{\eta^4}{B_{j,n}}\right|\nonumber\\
	&\leq \left|\sum_{j\in \tilde J}a_n^6\eta^4\right|+\left|\sum_{j\notin \tilde J}\frac{\eta^4}{(2-\delta'')^4\log^2 n}\right|\nonumber\\
	&=o(1).
\end{align} Note to bound the denominator for the sum over $\widetilde{J}$ in \eqref{eq:A:no big eignevalues first bound} we make use Lemma \ref{Alemma:FC der Asymptotic Expansion} to deal with the non-linearity of the Stieltjes transform. Since $\|L \| \leq \sqrt{3 \log n}$ asymptotically almost surely, if there exists an eigenvalue of $L$ beyond $\frac{1}{a_n^{3/2}}$ of $E_{(1)}$, it would then follow from the definition of $I_1$ that\begin{equation}
	\sup_{E\in\left[E_{(1)}+\frac{1}{a_n^{3/2}},\sqrt{3\log n} \right] } I_1\geq \frac{1}{2},
\end{equation} a contradiction of \eqref{eq:A:simple description of differences} and \eqref{eq:A:no big eignevalues first bound}. Thus there cannot be an eigenvalue of $L$ beyond $\frac{1}{a_n^{3/2}}$ of $E_{(1)}$ asymptotically almost surely, and hence $|\lambda_1(L)-E_{(1)}|=O\left(\frac{1}{a_n^{3/2}}\right)$ asymptotically almost surely. Using the asymptotic expansion in Lemma \ref{lemma:A:Existence of E} and the fact that $D_{(1)}=\sqrt{2\log n}+o(1)$ asymptotically almost surely we see that with probability tending to one \begin{equation*}
	E_{(1)}=D_{(1)}+\frac{1}{\sqrt{2\log n}}+O\left(\frac{1}{\log n}\right).
\end{equation*} Let $b_n'$ be defined as in \eqref{eq:def:bn'}, we conclude the proof of Theorem \ref{thm:main_L} in the case that $k=1$ by noting with probability tending to one \begin{align*}
	a_n(\lambda_1(L)-b_n)&=a_n(E_{(1)}-b_n)+o(1)\\
	&=a_n(D_{(1)}-b_n')+o(1)
\end{align*} and hence for any $x\in\R$\begin{align*}
	\lim_{n\rightarrow\infty}\P\left(a_n(\lambda_1(L)-b_n)\leq x \right)&=\lim_{n\rightarrow\infty}\P\left(a_n(D_{(1)}-b_n')\leq x+o(1) \right)\\
	&=e^{-e^{-x}},
\end{align*} where the last equality is a well known result for maximum of iid Gaussian random variables (see, for instance,  \cite[Theorem 1.5.3]{MR691492}). 

For any $k\in\N$, we now define $E_{(k)}$ to be a solution to
\[D_{(k)}-E-\Re m(E+i\eta)=0.\] Again, using the asymptotic expansion in Lemma \ref{lemma:A:Existence of E} we see that with probability tending to one that \begin{equation}\label{aeq:E control}
	E_{(k)}=D_{(k)}+\frac{1}{\sqrt{2\log n}}+O\left(\frac{1}{\log n}\right).
\end{equation}Above we showed that $|\lambda_1(L)-E_{(1)}|=O\left(\frac{1}{a_n^{3/2}}\right)$ asymptotically almost surely, however this comparison can be improved significantly to show $|\lambda_k(L)-E_{(k)}|=o(\eta)$ for any fixed $k\in\N$. To this end, fix $k\in\N$ and consider the first $k$ eigenvalues $\lambda_1(L)\geq\lambda_2(L)\geq\dots\geq\lambda_k(L)$ of $L$. Using the notation above we have that \begin{equation}\label{eq:A:Eigenvalue location}
	\sup_{E\in\left[E_{(1)}-\frac{1}{a_n^{3/2}},E_{(1)}+\frac{1}{a_n^{3/2}} \right] }	\left|(1+o(1))I_1- \sum_{j=1}^{n}\frac{\eta^4(1+o(1))}{B_{j,n}}\right|=o(1)
\end{equation} asymptotically almost surely. We also have with probability tending to one (see Proposition \ref{kprop:spacings}) that $E_{(2)}\notin \left[E_{(1)}-\frac{1}{a_n^{3/2}},E_{(1)}+\frac{1}{a_n^{3/2}} \right]$ and hence \begin{equation}\label{eq:A:estinamte on second sum}
	\sup_{E\in\left[E_{(1)}-\frac{1}{a_n^{3/2}},E_{(1)}+\frac{1}{a_n^{3/2}} \right] }	\left|\sum_{j=1}^{n}\frac{\eta^4(1+o(1))}{B_{j,n}}\right|=\frac{1}{2}+o(1).
\end{equation} Now let $c_n$ be such that $c_n\eta=E_{(1)}-\lambda_1(L)$. From the above we know $c_n\eta\in\left[-\frac{1}{a_n^{3/2}},\frac{1}{a_n^{3/2}} \right]$ asymptotically almost surely. When $E=\lambda_1(L)$, $I_1\geq 1/2$ and, using Lemma \ref{Alemma:FC der Asymptotic Expansion} to control the non-linearity of $m$, the second sum of \eqref{eq:A:Eigenvalue location} is $\frac{1}{(1+c_n^2)(2+c_n^2)}+o(1)$, a contradiction of \eqref{eq:A:estinamte on second sum} unless $c_n=o(1)$. It then follows $|\lambda_1(L)-E_{(1)}|=o(\eta)$.

 In fact, for any eigenvalue $\lambda_j(L)\in\left[E_{(1)}-\frac{1}{a_n^{3/2}},E_{(1)}+\frac{1}{a_n^{3/2}} \right]$, the preceding argument shows $|\lambda_j(L)-E_{(1)}|=o(\eta)$. If there are $N$ eigenvalues of $L$ in the interval $\left[E_{(1)}-\frac{1}{a_n^{3/2}},E_{(1)}+\frac{1}{a_n^{3/2}} \right]$ and hence within $o(\eta)$ of $E_{(1)}$, then \begin{equation*}
 		\sup_{E\in\left[E_{(1)}-\frac{1}{a_n^{3/2}},E_{(1)}+\frac{1}{a_n^{3/2}} \right] }	\left|I_1\right|\geq\frac{N}{2}+o(1).
 \end{equation*} We then see from \eqref{eq:A:Eigenvalue location} and  \eqref{eq:A:estinamte on second sum} that $N=1$. 

We now outline the argument for locating $\lambda_2(L)$, as it is extremely similar to that of $\lambda_1(L)$. An identical argument to the establishment of  \eqref{eq:A:I_1 is big at location.} and \eqref{eq:A:eigenvalue too far} concludes there must be at least one eigenvalue of $L$ within $\frac{1}{a_n^{3/2}}$ of $E_{(2)}$ asymptotically almost surely. After noting 
\[\sup_{E\in\left[E_{(2)}+\frac{1}{a_n^{3/2}},E_{(1)}-\frac{1}{a_n^{3/2}} \right] }	\left| \sum_{j=1}^{n}\frac{\eta^4(1+o(1))}{B_{j,n}}\right|=o(1),\] we see that $\lambda_2(L)$ must be within $\frac{1}{a_n^{3/2}}$ of $E_{(2)}$ asymptotically almost surely. It then follows in an identical manner to that of $\lambda_1(L)$ that $|\lambda_2(L)-E_{(2)}|=o(\eta)$ asymptotically almost surely. Union bounding we see \begin{equation}
	\P\left( \|(\lambda_1,\lambda_2)-(E_{(1)},E_{(2)})\|\geq\eta\right)=o(1).
\end{equation} Iterating for all $2<j\leq k$ allows us to conclude \begin{equation}\label{eq:A:k eigenvalues comparison}
	\P\left( \|(\lambda_1,\lambda_2,\dots,\lambda_{k})-(E_{(1)},E_{(2)},\dots,E_{(k)})\|\geq\eta\right)=o(1).
\end{equation} At this point it is worth noting Proposition \ref{aprop:Spacing for L+D matrix} follows from Proposition \ref{kprop:spacings}, \eqref{aeq:E control}, and \eqref{eq:A:k eigenvalues comparison}. From \eqref{eq:A:k eigenvalues comparison} we have that for any Borel set $B\subset\R^k$ \begin{equation}\label{eq:Joint dist comparison}
	\P\left(a_n\left((\lambda_1,\lambda_2,\dots,\lambda_{k})-b_n\right)\in B\right)=\P\left(a_n\left((D_{(1)},D_{(2)},\dots,D_{(k)})-b_n'\right)\in B\right)+o(1).
\end{equation} Theorem \ref{thm:main_L} then follows from the definition of $F_k$, \eqref{eq:normal_limit}, and \eqref{eq:Joint dist comparison} with the choice $B=\prod_{j=1}^{k}(-\infty,x_j]$.


		\appendix
		\section{Limiting law for largest diagonal entry (by Santiago Arenas-Velilla and Victor P\'erez-Abreu)} \label{sec:appendix}
		This appendix is devoted to the following result, which describes the fluctuations of the largest diagonal entry of $\mathcal{L}_A$ when $A$ is drawn from the GOE.  The arguments presented here originally appeared in the preprint \cite{arenas2021extremal}.  
		\begin{theorem} 
			Let $A$ be drawn from the GOE.  Then the centered and rescaled largest diagonal entry of $\mathcal{L}_A$ 
			\[ a_n \left(\max_{1 \leq i \leq n} (\mathcal{L}_A)_{ii} - b_n' \right) \]
			converges in distribution as $n \rightarrow \infty$ to the standard Gumbel distribution,
			where $a_n$ is defined in \eqref{def:anbn} and $b_n'$ is defined in \eqref{eq:def:bn'}.  
		\end{theorem}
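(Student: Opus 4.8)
The plan is to reduce the statement to classical extreme value theory for the maximum of i.i.d.\ standard Gaussians, by exploiting the fact that the diagonal of $\mathcal{L}_A$ is an \emph{equicorrelated} Gaussian vector with correlation tending to zero. First I would record the exact law of the diagonal. Since $(\mathcal{L}_A)_{ii} = (D_A)_{ii} - A_{ii} = \sum_{j \neq i} A_{ij}$, each diagonal entry is a centered Gaussian, and because $A$ is drawn from the GOE the vector $\big((\mathcal{L}_A)_{11}, \dots, (\mathcal{L}_A)_{nn}\big)$ is jointly Gaussian with $\var\big((\mathcal{L}_A)_{ii}\big) = (n-1)/n$ and, for $i \neq j$, $\cov\big((\mathcal{L}_A)_{ii}, (\mathcal{L}_A)_{jj}\big) = \var(A_{ij}) = 1/n$, since $A_{ij}$ is the only entry of $A$ shared by the $i$-th and $j$-th row sums. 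Thus the common correlation is $\rho_n := 1/(n-1) \in [0,1]$, which vanishes as $n \to \infty$.

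Next I would use a standard equicorrelation representation: with $Z_0, Z_1, \dots, Z_n$ i.i.d.\ standard normal, the vector $\big(\sqrt{\tfrac{n-2}{n}}\, Z_i + \sqrt{\tfrac1n}\, Z_0\big)_{i=1}^n$ is jointly Gaussian with exactly the mean, variances, and covariances computed above, hence is equal in distribution to $\big((\mathcal{L}_A)_{ii}\big)_{i=1}^n$ (jointly Gaussian vectors are determined by their first two moments). Taking maxima gives
\[ \max_{1 \leq i \leq n} (\mathcal{L}_A)_{ii} \;\stackrel{d}{=}\; \sqrt{\tfrac{n-2}{n}}\, M_n + \sqrt{\tfrac1n}\, Z_0, \qquad M_n := \max_{1 \leq i \leq n} Z_i. \]

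The final step is to show the right-hand side, recentered by $b_n'$ and rescaled by $a_n$, converges to the Gumbel law. Writing
\[ a_n\!\left( \sqrt{\tfrac{n-2}{n}}\, M_n + \sqrt{\tfrac1n}\, Z_0 - b_n' \right) = a_n(M_n - b_n') + a_n\!\left(\sqrt{\tfrac{n-2}{n}} - 1\right)\! M_n + \frac{a_n}{\sqrt n}\, Z_0, \]
the first term converges in distribution to the standard Gumbel law by the classical result for i.i.d.\ standard Gaussians \cite[Theorem 1.5.3]{MR691492}. For the remaining two terms, Proposition \ref{kprop:maxgauss} gives $|M_n| \leq a_n + 1$ with probability $1 - o(1)$; combined with $|\sqrt{(n-2)/n} - 1| \leq 2/n$ and $a_n = \sqrt{2\log n}$, this forces $a_n\big(\sqrt{(n-2)/n}-1\big)M_n = O(\log n / n)$ with probability $1-o(1)$, while $\tfrac{a_n}{\sqrt n} Z_0 \to 0$ almost surely; both are $o_p(1)$. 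Slutsky's theorem then gives the claim. The argument is short, and the only point requiring care is the bookkeeping showing that the $O(1)$-scale perturbations produced by the variance being $(n-1)/n$ rather than $1$, and by the common factor $Z_0$, are genuinely $o_p(1)$ after multiplication by $a_n$; this is precisely why the correct centering is $b_n'$, the i.i.d.-standard-Gaussian centering, rather than the term $b_n$ that governs the eigenvalues.
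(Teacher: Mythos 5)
Your proof is correct and follows essentially the same route as the paper's appendix: both identify the diagonal of $\mathcal{L}_A$ as an equicorrelated centered Gaussian vector with variance $(n-1)/n$ and pairwise covariance $1/n$, split off a negligible common component, and conclude via the classical i.i.d.\ Gaussian maximum together with Slutsky's theorem. The only cosmetic difference is that you invoke the common-shock representation $\sqrt{(n-2)/n}\,Z_i + n^{-1/2}Z_0$ with an independent $Z_0$, whereas the paper computes $\Sigma^{1/2}$ explicitly from its spectral decomposition; the resulting error terms and their estimates are the same.
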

		
		\begin{proof}
			Let 
			\[
			v = (v_i)_{i=1}^n= \Big( (\mathcal{L}_A)_{11},  (\mathcal{L}_A)_{22}, \cdots, (\mathcal{L}_A)_{nn} \Big)^\mathrm{T},
			\]
			and note that $v$ is a multivariate normal random vector with mean zero.  
			The covariance matrix, $\Sigma$, associated to $v$ is of the form
			\[
			\Sigma_{ij} = \begin{cases}
				\frac{n-1}{n}, & \text{ if } i = j, \\
				\frac{1}{n}, & \text{ otherwise. }
			\end{cases}
			\] 
			The eigenvalues of $\Sigma$ are $\frac{n-2}{n}$ (with multiplicity $n-1$) and $\frac{2n-2}{n}$ (with multiplicity $1$).  Let 
			\[
			\Sigma = O \Lambda O^\mathrm{T}
			\] 
			be the spectral decomposition of $\Sigma$, where $O$ is an orthogonal matrix whose last column is the vector $\mathbf{e}$, defined in \eqref{eq:def:e}.   We observe that
			\[
			g = (g_i)_{i=1}^n := \Sigma^{-1/2} v 
			\]
			is a vector of iid standard Gaussian random variables.  Since
			\[
			\Sigma^{1/2}_{ij} = \sum_{k=1}^n O_{ik} \Lambda_{kk}^{1/2} O^\mathrm{T}_{kj} = \left( \sqrt{\frac{n-2}{n}} \sum_{k=1}^{n} O_{ik} O^\mathrm{T}_{kj} \right)  + \left(\sqrt{\frac{2n-2}{n}} - \sqrt{\frac{n-2}{n}} \right) O_{in} O^\mathrm{T}_{nj}.
			\]  
			we find that
			\[
			\Sigma^{1/2}_{ij} = \begin{cases}
				\sqrt{\frac{n-2}{n}} + \frac{1}{n} \left(\sqrt{\frac{2n-2}{n}} - \sqrt{\frac{n-2}{n}} \right),   & \text{if } i =j, \\
				\frac{1}{n} \left(\sqrt{\frac{2n-2}{n}} - \sqrt{\frac{n-2}{n}} \right),  & \text{otherwise}.
			\end{cases}
			\]
			Therefore, we have 
			\begin{align*}
				v_i &= \left(\Sigma^{1/2} g\right)_i \\
				&= \sqrt{\frac{n-2}{n}} g_i + \frac{1}{n} \left(\sqrt{\frac{2n-2}{n}} - \sqrt{\frac{n-2}{n}} \right) \sum_{k =1}^n g_k. 
			\end{align*}
			Taking a maximum over $i \in \{1, 2, \ldots, n\}$ yields
			\[
			\max_i v_i = \sqrt{\frac{n-2}{n}} \max_i g_i + \frac{1}{n} \left(\sqrt{\frac{2n-2}{n}} - \sqrt{\frac{n-2}{n}} \right) \sum_{k =1}^n g_k.
			\]
			After centering and scaling, we obtain that
			\begin{align*}
				a_n (\max_i v_i - b'_n) &= a_n(\max_i g_i - b'_n) + a_n\left(\left(\sqrt{\frac{n-2}{n}}-1\right) \max_i g_i \right) \\
				&\qquad + \frac{a_n}{n} \left(\sqrt{\frac{2n-2}{n}} - \sqrt{\frac{n-2}{n}} \right) \sum_{k=1}^n g_k.
			\end{align*}
			Both of the latter two terms on the right-hand side converge to zero in probability.  So by Slutsky's theorem, $a_n(\max_i v_i - b'_n)$ converges to the Gumbel distribution, as it is well-known (see, for instance,  \cite[Theorem 1.5.3]{MR691492}) that this is the limiting distribution of $a_n (\max_i g_i - b'_n)$.   
		\end{proof}

		\bibliography{laplacian11}
		\bibliographystyle{abbrv}

	\end{document}